\definecolor{ps}{RGB}{0,0,200}
\definecolor{yl}{RGB}{200,0,0}
\newcounter{algsubstate}
\renewcommand{\thealgsubstate}{\alph{algsubstate}}
\newcommand{\Rbm}{\mathbf{R}}
\newcommand{\vbf}{\mathbf{v}}
\newcommand{\Vbf}{\mathbf{V}}
\newcommand{\bbeta}{\bm{\beta}}
\newcommand{\E}{\mathbb{E}}
\newcommand{\Xstar}{\mathsf{B}^\star}
\newcommand{\D}{\mathsf{D}}
\newcommand{\Ls}{\mathsf{L}}
\newcommand{\Ps}{\mathsf{P}}
\newcommand{\Es}{\mathsf{E}}
\newcommand{\PP}{\mathbb{P}}
\newcommand{\R}{\mathbb{R}}
\newcommand{\Gs}{\mathsf{G}}
\newcommand{\Hs}{\mathsf{H}}
\newcommand{\Us}{\mathsf{U}}
\newcommand{\Vs}{\mathsf{V}}
\newcommand{\Xs}{\mathsf{X}}
\newcommand{\Ys}{\mathsf{Y}}
\newcommand{\Zs}{\mathsf{Z}}
\newcommand{\Ss}{\mathsf{S}}
\newcommand{\N}{\mathbb{N}}
\newcommand{\Dc}{\mathfrak{D}}
\newcommand{\V}{\mathbb{V}}
\newcommand{\co}{c_0}
\newcommand{\pfrak}{\mathfrak{p}}
\newcommand{\bUm}{\mathbf{U}}
\newcommand{\Jbm}{\mathbf{J}}
\newcommand{\Sbm}{\mathbf{S}}
\newcommand{\y}{\mathbf{y}}
\newcommand{\xonet}{\hat{\mathbf{x}}_{1t}}
\newcommand{\ronetm}{\mathbf{r}_{1, t-1}}
\newcommand{\ronet}{\mathbf{r}_{1t}}
\newcommand{\rtwot}{\mathbf{r}_{2t}}
\newcommand{\xtwot}{\hat{\mathbf{x}}_{2t}}
\newcommand{\xtwotm}{\hat{\mathbf{x}}_{2,t-1}}
\newcommand{\rtwotm}{\mathbf{r}_{2,t-1}}
\newcommand{\ampst}{\mathbf{s}^t}
\newcommand{\ampxt}{\mathbf{x}^t}
\newcommand{\ampxtpo}{\mathbf{x}^{t+1}}
\newcommand{\ampxone}{{\mathbf{x}}^{1}}
\newcommand{\ampyt}{\mathbf{y}^t}
\newcommand{\ampytpo}{\mathbf{y}^{t+1}}
\newcommand{\ampstpo}{\mathbf{s}^{t+1}}
\newcommand{\ampyone}{{\mathbf{y}}^{1}}
\newcommand{\toW}{\stackrel{W_2}{\to}}
\renewcommand{\Tr}{\operatorname{Tr}}
\newcommand{\adj}{\widehat{\mathsf{adj}}}
\newcommand{\tauh}{\hat{\tau}_*}
\newcommand{\bhetah}{\hat{\bm{\beta}}^u}
\newcommand{\bhetahi}{\hat{{\beta}}^u_i}
\newcommand{\taustar}{\tau_{*}}
\newcommand{\hv}{\vec{h}}
\newcommand{\bigsum}{\mathlarger{\mathlarger{\sum}}}
\newcommand{\X}{\mathbf{X}}
\newcommand{\Zetr}{\mathsf{C}^\star}
\newcommand{\alphr}{\alphstar}
\newcommand{\rstar}{\mathbf{r}_{*}}
\newcommand{\rstst}{\mathbf{r}_{**}}
\newcommand{\hatrstst}{\hat{\mathbf{{r}}}_{**}}
\newcommand{\pcrdb}{\bhetah_{\mathsf{pcr}}}
\newcommand{\pcrdbi}{\hat{\beta}^u_{\mathsf{pcr},i}}
\newcommand{\pcrdbI}{\hat{\bm{\beta}}^u_{\mathsf{pcr},\mathcal{I}}}
\newcommand{\pcr}{\hat{\bm{\theta}}_{\mathsf{pcr}}}
\newcommand{\pcri}{\hat{\theta}_{\mathsf{pcr},i}}
\newcommand{\pcrt}{\hat{\bm{\beta}}_{\mathsf{al}}}
\newcommand{\pcrtI}{\hat{\bm{\beta}}_{\mathsf{al},\mathcal{I}}}
\newcommand{\Js}{\mathcal{J}}
\newcommand{\Jsb}{\bar{\mathcal{J}}}
\newcommand{\ynew}{\y_\mathsf{new}}
\newcommand{\Xnew}{\X_\mathsf{new}}
\newcommand{\Qnew}{\Qbm_\mathsf{new}}
\newcommand{\Dnew}{\Dbm_\mathsf{new}}
\newcommand{\epnew}{\epsilon_\mathsf{new}}
\newcommand{\pcrc}{\hat{\bm{\beta}}_{\mathsf{co}}}
\newcommand{\pcrcI}{\hat{\bm{\beta}}_{\mathsf{co},\mathcal{I}}}
\newcommand{\tauhpcr}{\hat{\tau}_{*}}
\newcommand{\alphstar}{\bm{\upsilon}^\star}
\newcommand{\alphstari}{\upsilon^\star_i}
\newcommand{\hatbt}{\hat{\bm{\beta}}}
\newcommand{\hjatbtj}{\hat{{\beta}}_j}
\newcommand{\hiatbti}{\hat{{\beta}}_i}
\newcommand{\zetr}{\bm{\zeta}^\star}
\newcommand{\zetri}{{\zeta}^\star_i}
\newcommand{\zetrj}{{\zeta}^\star_j}
\newcommand{\Dbm}{\mathbf{D}}
\newcommand{\Obm}{\mathbf{O}}
\newcommand{\Qbm}{\mathbf{Q}}
\newcommand{\epbm}{\bm{\varepsilon}}
\newcommand{\epi}{{\varepsilon}_i}
\newcommand{\st}{\bm{\beta}^\star}
\newcommand{\sti}{{\beta}^\star_i}
\newcommand{\tauc}{\hat{\tau}_{*}}
\newcommand{\stal}{\st_{\mathsf{al}}}
\newcommand{\stalI}{\bm{\beta}^\star_{\mathsf{al},\mathcal{I}}}
\newcommand{\ebit}{\mathbf{e}_b}
\newcommand{\magzetr}{\hat{\omega}}
\newcommand{\adg}{\breve{\mathsf{adj}}}
\newtheorem{Theorem}{Theorem}[section]
\newtheorem{Proposition}{Proposition}[section]
\newtheorem{Lemma}[Proposition]{Lemma}
\newtheorem{Corollary}[Proposition]{Corollary}
\theoremstyle{definition}
\newtheorem{Assumption}{Assumption}
\newtheorem*{Assumption*}{Assumption}
\newtheorem{Definition}[Proposition]{Definition}
\newtheorem{Remark}[Proposition]{Remark}
\newtheorem{Conjecture}[Proposition]{Conjecture}
\newtheorem{Claim}[Proposition]{Claim}
\newtheorem{Example}[Proposition]{Example}
\Crefname{Theorem}{Theorem}{Theorem}
\Crefname{Proposition}{Proposition}{Proposition}
\Crefname{Lemma}{Lemma}{Lemma}
\Crefname{Corollary}{Corollary}{Corollary}
\Crefname{Assumption}{Assumption}{Assumption}
\Crefname{Remark}{Remark}{Remark}
\Crefname{Notation}{Notation}{Notation}
\Crefname{Definition}{Definition}{Definition}
\Crefname{Conjecture}{Conjecture}{Conjecture}
\Crefname{Claim}{Claim}{Claim}
\Crefname{Example}{Example}{Example}
\Crefname{Algorithm}{Algorithm}{Algorithm}
\def\O{\mathbb{O}}
\def\1{\mathbf{1}}
\DeclareMathOperator{\Haar}{Haar}
\DeclareMathOperator{\diag}{diag}
\begin{document}

\begin{frontmatter}
\title{Spectrum-Aware Debiasing: A Modern Inference Framework with Applications to Principal Components Regression}
\runtitle{Spectrum-Aware Debiasing}
\begin{aug}
\author[A]{\fnms{Yufan}~\snm{Li}\ead[label=e1]{yufan\_li@g.harvard.edu}}
\and
\author[A]{\fnms{Pragya}~\snm{Sur}\ead[label=e2]{pragya@fas.harvard.edu}}
\address[A]{Department of Statistics,
Harvard University \printead[presep={,\ }]{e1,e2}}
\end{aug}

\maketitle
\begin{abstract}
Debiasing is a fundamental concept in high-dimensional statistics.  While degrees-of-freedom adjustment is the state-of-the-art debiasing technique in high-dimensional linear regression, it largely remains limited to independent, identically distributed samples and sub-Gaussian covariates. These limitations hinder its wider practical use. In this paper, we break this barrier and introduce Spectrum-Aware Debiasing----a novel inference method that applies to challenging high-dimensional regression problems with structured row-column dependencies, heavy tails, asymmetric properties, and latent low-rank structures.   Our method achieves debiasing through a rescaled gradient descent step, where the rescaling factor is derived from the spectral properties of the sample covariance matrix.  This spectrum-based approach enables accurate debiasing in much broader contexts. We study the common modern regime where the number of features and samples scale proportionally. We establish asymptotic normality of our proposed estimator (suitably centered and scaled) under various convergence notions when the covariates are right-rotationally invariant. {We further prove a spectral universality result, extending our guarantees to a much broader class of covariate distributions.} Furthermore, we devise a consistent estimator for the asymptotic variance. 

Our work has two notable by-products: first, Spectrum-Aware Debiasing rectifies the bias in principal components regression (PCR), providing the first debiased PCR estimator in high dimensions. Second, we introduce a principled test for checking the presence of alignment between the signal and the eigenvectors of the sample covariance matrix. This test is independently valuable for statistical methods developed using approximate message passing, leave-one-out, random matrix theory, or convex Gaussian min-max theorems. We demonstrate the utility of our method through diverse simulated and real data experiments.
\end{abstract}

\begin{keyword}[class=MSC]
\kwd[Primary]{62E20}
\kwd[; secondary ]{62F12}
\end{keyword}
\begin{keyword}
\kwd{High-dimensional inference}
\kwd{Debiasing}
\kwd{Spectral Properties}
\kwd{Principal Components Regression}
\kwd{Debiased PCR}
\kwd{Right-rotationally invariant designs}
\kwd{Vector approximate message passing}
\end{keyword}

\end{frontmatter}

\section{Introduction}
Regularized estimators constitute a basic staple of high-dimensional regression. These estimators incur a regularization bias, and characterizing this bias is imperative for accurate uncertainty quantification. This motivated debiased versions of these estimators \cite{zhang2014confidence,javanmard2018debiasing,van2014asymptotically} that remain unbiased asymptotically around the signal of interest. To describe debiasing, consider the setting of a canonical linear model where one observes a sample of size $n$ satisfying  
$$
\y=\X \st+\epbm.
$$
Here $\y \in \mathbb{R}^n$ denotes the vector of outcomes, $\X \in \mathbb{R}^{n \times p}$ the design matrix, $\st \in \mathbb{R}^p$
the unknown coefficient vector, 
and $\epbm$ the unknown noise vector. Suppose $\boldsymbol{\hatbt}$ denotes the estimator obtained by minimizing 
$\mathcal{L}(\bm{\cdot} \; ;\X,\y):\R^p \mapsto \R_+$ given by 
\begin{equation}\label{deflasso}
\mathcal{L}(\bm{\beta};\X,\y):=\frac{1}{2}\|\y-\X \bm{\beta}\|^2+\sum_{i=1}^p h\left(\beta_i\right), \qquad \bm{\beta} \in \mathbb{R}^p,
\end{equation}
where  $h: \mathbb{R} \mapsto [0,+\infty)$ is some convex penalty function. Commonly used penalties include the ridge $h(b)=\lambda b^2, \lambda>0$, the Lasso $h(b)=\lambda |b|, \lambda>0$, the Elastic Net $h(b)=\lambda_1 |b|+\lambda_2 b^2, \lambda_1,\lambda_2>0$, etc. The debiased version of $\boldsymbol{\hatbt}$ takes the form 
\begin{equation}\label{eq:debiased}
    \bhetah=\hatbt+\frac{1}{\adj} \boldsymbol{M} \X^{\top}(\y-\X \hat{\boldsymbol{\beta}}),
\end{equation}
for suitable choices of $\boldsymbol{M} \in\mathbb{R}^{p\times p} $ and adjustment coefficient $\adj>0$\footnote{We adopt a scaling where $\|\X\|_{\mathrm{op}}$ and $\frac{1}{\sqrt{p}}\|\st\|_2$ remain at a constant order as $n$ and $p$ tend to infinity. Prior literature (e.g. \cite{bellec2019biasing}) often adopts a scaling where $\frac{1}{\sqrt{p}}\|\X\|_{\mathrm{op}}$ and $\|\st\|_2$ maintains constant order as $n$ and $p$ approach infinity. These scalings should be viewed as equivalent up to a change of variable. }. At a high level, one expects the debiasing term $\frac{1}{\adj} \boldsymbol{M} \X^{\top}(\y-\X \hat{\boldsymbol{\beta}})$ will compensate for the regularization bias and lead to asymptotic normality in entries of $\bhetah- \st$, whereby one can develop associated inference procedures. 

Classical statistics textbooks tell us that when the dimension $p$ is fixed and the sample size $n$ approaches infinity, the debiased estimator $\bhetah$ reduces to the well-known one-step estimator. In this case, Gaussianity of $\bhetah- \st$ follows from \cite[Theorem 5.45]{van2000asymptotic}  by  choosing $\mathbf{M}=(\X^{\top} \X)^{-1}$, the inverse of the sample covariance matrix,  and $\adj=1$, requiring no adjustment. Early work on ultra high-dimensional problems ($p \gg n$) \cite{van2014asymptotically,zhang2014confidence,javanmard2018debiasing,CaiGuoMini} established that when the signal $\boldsymbol{\beta}^{\star}$ is sufficiently sparse, 
the Lasso can be debiased by taking $\mathbf{M}$ as suitable ``high-dimensional''  substitutes of $(\X^{\top} \X)^{-1}$ and setting $\adj=1$. However, later work uncovered that an adjustment of $\adj<1$ is necessary to relax sparsity assumptions on $\boldsymbol{\beta}^{\star}$ or to debias general regularized estimators beyond the Lasso. For instance, \cite{javanmard2014hypothesis,bellec2022biasing} established under the proportional regime ($n/p\to w>0$) that when the signal is not sufficiently sparse, the adjustment for the Lasso should be $\adj=1-\hat{s}/n$ with $\mathbf{M}=\bm{\Sigma}^{-1}$, where $\hat{s}$ denotes the number of non-zero entries in $\hat{\bm{\beta}}$ and $\bm{\Sigma}$ is the covariance matrix of i.i.d. Gaussian rows of $\mathbf{X}$. This correction term was named the ``degrees-of-freedom adjustment'' since $\hat{s}$ corresponds to the degrees-of-freedom of the estimator $\hat{\bbeta}$ \cite{DOFOG}. 

\begin{figure}
    \centering
    \includegraphics[width=0.73\linewidth]{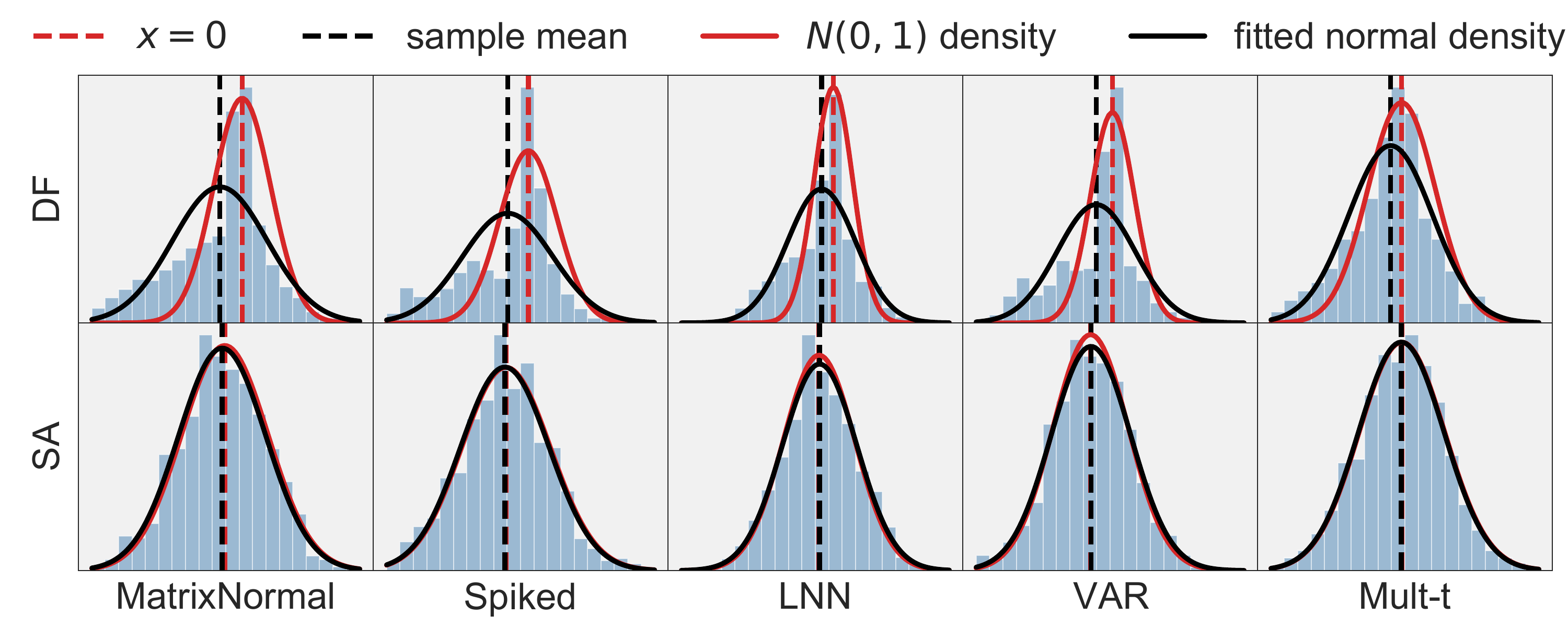}
    \vspace{-1mm}
    \caption{Histograms of empirical distribution of $(\tauh^{-1/2}(\bhetahi-\sti))_{i=1}^p$ comparing Degrees-of-Freedom Debiasing \cite{bellec2019biasing} with our Spectrum-Aware Debiasing, where $\bhetah$ is the debiased Elastic-Net estimator with tuning parameters $\lambda_1=1, \lambda_2=0.1$. The first row uses the Degrees-of-Freedom Debiasing from \cite{bellec2019biasing} with $\mathbf{M}=\mathbf{I}_p$ (denoted DF). The second row uses our Spectrum-Aware Debiasing (denoted SA) as in \Cref{algodebias}. Entries of the signal $\st$ are i.i.d. draws from $ 0.24 \cdot N(-20,1)+0.06\cdot N(10,1)+0.7\cdot \delta_0$ where $\delta_0$ is Dirac-delta function at $0$. Thereafter, the signal is fixed and we generate the responses using $\y=\X \st+\epbm$ where $\varepsilon_i \stackrel{i.i.d.}{\sim} N(0,1)$. The solid black curve indicates a normal density fitted to the blue histograms whereas the dotted black line indicates the empirical mean corresponding to the histogram. See the corresponding QQ plot in \Cref{figQQA} from Appendix. All designs are re-scaled so that the average of eigenvalues of $\X^\top \X$ is 1. The design matrices are of shape $n=500, p=1000$.}
    \label{fig1}
\end{figure}

Degrees-of-Freedom Debiasing introduced a novel perspective. However, it relied on some strict assumptions, namely independent and identically distributed (i.i.d.) data with Gaussian covariates that follow $\mathbf{X}_i \sim N(\bm{0},\boldsymbol{\Sigma})$.
Furthermore,  Degrees-of-Freedom Debiasing  used $\mathbf{M}=\boldsymbol{\Sigma}^{-1}$; thus, even when the i.i.d. assumption holds, implementing this estimator required knowledge of the true covariance matrix $\boldsymbol{\Sigma}$ or an accurate estimate.
In summary, Degrees-of-Freedom Debiasing suffers three key limitations: (i) it is  restricted to Gaussian type distributions (see \cite{han2023universality} for extension to sub-Gaussians), thus failing to capture heavy-tailed or 
asymmetrically distributed covariates; (ii) it is ineffective in scenarios with heterogeneity or dependency among samples, and (iii) it faces challenges in choosing \(\mathbf{M}\) when precise estimates of \(\boldsymbol{\Sigma}\) are unavailable. These limitations restrict the broader applicability of Degrees-of-Freedom Debiasing  to real-world settings  that violate these strict assumptions.

We exemplify this issue in \Cref{fig1}, where we consider the following design distributions: (i) $\mathsf{MatrixNormal}$: $\X$ drawn from a matrix normal distribution with row and column correlations, i.e.~$\X\sim N(\rm{0},\mathbf{\Sigma}^{\mathrm{(col)}}\otimes \mathbf{\Sigma}^{\mathrm{(row)}}),$ where $\mathbf{\Sigma}^{\mathrm{(col)}}_{ij}=0.5^{|i-j|}$ and $\bm{\Sigma}^{\mathrm{(row)}}$ follows an inverse-Wishart distribution with identity  scale and degrees-of-freedom $1.1p$, which is chosen to be close to $p$ to encourage heavy-tails in the covariates; (ii) $\mathsf{Spiked}$: $\X$ contains latent structure, i.e. $\X=\alpha \cdot  \mathbf{V}\mathbf{W}^\top +n^{-1}N(\rm{0}, \mathbf{I}_n \otimes \mathbf{I}_p)$ where $\alpha=10$ and $\mathbf{V} \in \mathbb{R}^{n \times m}, \mathbf{W} \in \mathbb{R}^{p \times m}$ are drawn randomly from Haar matrices of dimensions $n,p$, and then we retain $ m=50$ columns; (iii) $\mathsf{LNN}$: $\X$ formed by product of multiple random matrices (see \cite{hanin2020products} for connections to linear neural networks), i.e. $\X=\X_1\cdot \X_2 \cdot \X_3 \cdot \X_4 $ where $\X_i$'s have i.i.d. entries from $N(\rm{0},1)$; (iv) $\mathsf{VAR}$: rows of $\X$ drawn from a vector time series with the $i$-th row given by $\X_{i,\bullet}=\sum_{k=1}^{\tau\vee i}\alpha_k \X_{i-k,\bullet}+\epbm_i$ where $\tau=3,\alpha=\qty(0.4, 0.08, 0.04)$ and $\epbm_i \sim N(\mathbf{0}, \mathbf{\Sigma})$ with $\mathbf{\Sigma}$ drawn from an inverse-Wishart distribution with the same parameters as in (i); 
(v) $\mathsf{Mult}$-$\mathsf{t}$: rows of $\X$ drawn independently from a multivariate t-distribution with identity scale and degrees-of-freedom $3$. Figure \ref{fig1} plot histograms of the empirical distribution of $\bhetah-\bbeta^\star$ scaled by an estimate $\tauh$ of its standard deviation. The topmost panel uses the Degrees-of-Freedom Debiasing formula for $\bhetah$. We observe that the histograms in this panel deviate substantially from the overlaid standard Gaussian density. Degrees-of-Freedom Debiasing thus fails in these challenging settings. 

To underscore the difficulties posed by these examples, note that cases (i)-(iv) involve non-i.i.d.~designs and (i),(iv),(v) involve heavy-tailed covariates. As discussed later, the failure observed is primarily attributable to these structural deviations rather than finite-sample effects.

In this paper, we propose a new debiasing formula that addresses the shortcomings of previous techniques and enables accurate debiasing in the aforementioned settings. To develop our method, we leverage the insight that a debiasing procedure effective for a wide range of scenarios must thoughtfully utilize the spectral characteristics of the data. To accomplish this, we explore an alternative path for modeling the randomness in the design. Instead of assuming that the rows of the design are i.i.d. Gaussian vectors, we require that the singular value decomposition of $\X$ satisfies certain natural structure that allows dependence among samples and potentially heavy-tailed distributions. Specifically, we assume that $\X$ is right-rotationally invariant (Definition \ref{def:Rotinv}).

Right-rotationally invariant designs have been widely studied in signal processing, information theory, statistical physics, and high-dimensional statistics \cite{takeda2006analysis,ma2017orthogonal,rangan2019vector,takeuchi2019rigorous,dudeja2020analysis,10.1214/21-AOS2101,gerbelot2020asymptotic,takeuchi2020convolutional,takeuchi2021bayes,venkataramanan2022estimation,gerbelot2022asymptotic,liu2022memory,li2023random,xu2023capacity,pmlr-v258-luo25b}, and they serve as useful prototypes for fundamental high-dimensional phenomena in compressed sensing. { Roughly speaking, if the right singular vectors $\Obm$ of a design matrix $\X$ are Haar-distributed, then $\X$ lies in the class of right-rotationally invariant designs, regardless of distribution of its eigenvalues. This generality lets us handle design distributions not covered by degree-of-freedom–based debiasing methods, including designs (i)–(v) in \Cref{fig1}. Since the right-rotational invariance assumption preserves the spectral information of $\X^{\top}\X$, we also expect methods developed under this assumption to exhibit improved robustness when applied to real-data designs. This is illustrated in \Cref{figPCRA}, where we evaluate our PCR–Spectrum–Aware Debiasing method based on right-rotational invariance assumption on six real datasets spanning image data, financial data, socio-economic data and so forth. Furthermore, recent advances indicate that a wide variety of covariate distributions fall within the same universality class as right-rotationally invariant designs, provided the eigenvectors of the sample covariance are sufficiently “generic,” even if not exactly Haar \cite{donoho2009observed,dudeja2022spectral,wang2022universality}. { In Appendix, Section \ref{bigsecuni}, we extend our results to this broader “spectral universality class” \cite{dudeja2022spectral,wang2022universality}, which encompasses right-rotationally invariant designs, i.i.d. designs and their linear transforms as well as other challenging design distributions.} A more detailed discussion of the technical challenges associated with right-rotational invariance, along with a review of relevant prior work, is provided in Appendix, \Cref{RIDint}.  }

We discover that for right-rotationally invariant designs,  the accurate debiasing formula is given by
\begin{equation}\label{SAestimator}
    \bhetah=\hatbt+\adj^{-1} \X^{\top}(\y-\X \hat{\boldsymbol{\beta}}),
\end{equation}
where $\adj$ solves the equation 
\begin{equation}\label{eq:adjspectrum}
\frac{1}{p} \mathlarger{\mathlarger{\sum}}_{i=1}^p \frac{1}{\left(d_i^2-\adj \right)\left(\frac{1}{p} \sum_{j=1}^p \qty(\adj+h^{\prime \prime}\left(\hjatbtj\right))^{-1} \right)+1}=1.
\end{equation}
Here, $\{d_i^2\}_{1 \leq i \leq p}$ represents the eigenvalues of the sample covariance matrix $\X^{\top}\X$, and $h^{\prime \prime}$ denotes the second derivative of the penalty function $h$ used in calculating the regularized estimator $\bhetah$. At points of non-differentiability {(e.g. $x=0$ for Lasso and Elastic Net)}, we extend $h''$ by $+\infty$ (cf. \Cref{asspenalty}). The solution $\adj$ of \eqref{eq:adjspectrum} is unique for any $p\ge 1$ under mild assumptions (cf. \Cref{uniquewell}).  We refer to $\adj$ as the  ``Spectrum-Aware adjustment'' and the debiasing approach in \eqref{SAestimator} as ``Spectrum-Aware Debiasing'' since $\adj$ depends on the eigenvalues of $\X^\top \X$. Figure \ref{fig1} illustrates the efficacy of our method. The second panel shows the empirical distribution of  $\bhetah-\bbeta^\star$,  scaled by an appropriate estimate of its standard deviation, when $\bhetah$ is given by our Spectrum-Aware formula \eqref{SAestimator}. Note the remarkable agreement with the overlaid standard Gaussian density. { We emphasize that this debiasing formula relies on a fundamentally different structured dependency assumption than Degrees-of-Freedom Debiasing. The type of dependency it captures is  incomparable to that of anisotropic Gaussians.\footnote{for the analogue of anisotropic Gaussian-type dependence in the context of right rotationally invariant designs, see Appendix \ref{section:CONJE}.} Nevertheless, our method can capture quite diverse dependency structures, as demonstrated in Figure \ref{fig1}. Crucially, it operates without requiring an estimate of the population feature covariance matrix.}


Despite the strengths of Spectrum-Aware Debiasing, we observe that it falls short when $\X$ contains outlier eigenvalues and/or the signal aligns with some eigenvectors of $\X$. To address these issues, we introduce an enhanced procedure that integrates classical Principal Components Regression (PCR) ideas with Spectrum-Aware Debiasing. In this approach, we employ PCR to handle the outlier eigenvalues while using a combination of PCR and Spectrum-Aware Debiasing to estimate the parts of the signal that do not align with an eigenvector. We observe that this hybrid PCR-Spectrum-Aware approach works exceptionally well in challenging settings where  these issues are present.  

We next summarize our main contributions below.
 
\begin{itemize}
\item[(i)] We establish that our proposed debiasing formula is well-defined, that is, \eqref{eq:adjspectrum} admits a unique solution (Proposition \ref{COR}). Then we establish that $\bhetah - \st$, with this choice of $\adj$, converges to a mean-zero Gaussian with some variance $\tau_{*}$ in a Wasserstein-2 sense (Theorem \ref{NEIGMAIN}; Wasserstein-2 convergence notion introduced in Definition \ref{def:Wass}). Under an exchangeability assumption on $\bbeta^\star$, we strengthen this result to convergence guarantees on finite-dimensional marginals of $\bhetah-\bbeta^{\star}$ (Corollary \ref{sgoods}).
\item[(ii)] We develop a consistent estimator for $\tau_*$ (Theorem \ref{NEIGMAIN}) by developing new algorithmic insights and new proof techniques that can be of independent interest in the context of vector approximate message passing algorithms \cite{rangan2019vector,schniter2016vector,fletcher2018plug} (details in Section \ref{neig}).
 \item[(iii)] To establish the aforementioned points, we imposed two strong assumptions: (a) the signal $\st$ is independent of $\X$ and cannot align with any subspace spanned by a small number of eigenvectors of $\X^{\top}\X$; (b) $\X^{\top}\X$ does not contain outlier eigenvalues. To mitigate these, we develop a PCR-Spectrum-Aware Debiasing approach (Section \ref{section:pcar}) that applies when these assumptions are violated. We prove asymptotic normality for this approach in Theorem \ref{PCRTHM}.
 \item[(iv)] We demonstrate the utility of our debiasing formula in the context of hypothesis testing and confidence interval construction with explicit guarantees on quantities such as the false positive rate, false coverage proportion, etc. (Sections \ref{sec:infvanilla} and  \ref{section:inference}). 
\item[(v)] As a by-product, our PCR-Spectrum-Aware approach introduces the first methodology for debiasing the classical PCR estimator (Theorem \ref{PCRTHM}), which would otherwise exhibit a shrinkage bias due to omission of low-variance principal components. We view this as a contribution in and of itself to the PCR literature since inference followed by PCR is under-explored despite the widespread usage of PCR. 

\item[(vi)] As a further byproduct, we introduce a hypothesis test to identify alignment between principal components of the design matrix and the unknown regression coefficient $\st$. This may be of independent interest in the context of statistical methods developed based on approximate message passing/leave-one-out/convex Gaussian min-max theorems.

\item[(vii)] { On the technical front, we rigorously characterize the risk of regularized estimators under right-rotationally invariant designs (cf. \Cref{thm:empmain}), and extend these results to a broader spectral universality class (cf. \Cref{thm:empmainuniv}). We prove existence and uniqueness of the solution associated with our fixed-point equations under appropriate conditions (cf. \Cref{fixexist}). We establish the Cauchy convergence of VAMP iterates (cf. \Cref{Corc}). We further extend our results to the challenging case of the Lasso under suitable sparsity conditions (cf. \Cref{Smdfkf})---this requires substantial arguments beyond those for strongly convex penalties (cf. \Cref{sec:Lasso}).} { We note that analogs of the leave-one-out approach \cite{mezard1987spin,talagrand2003spin,bean2013optimal,el2018impact,sur2019likelihood,sur2019modern,chen2021spectral,jiang2022new} and Stein's method \cite{stein1981estimation,chatterjee2010spin,bellec2019biasing,anastasiou2023stein}, both of which form fundamental proof techniques for Gaussian designs, are nonexistent or under-developed for rotationally invariant designs. Therefore, our approach adopts an algorithmic proof strategy inspired by prior work from the senior authors and others in the Gaussian case.}

\item[(viii)] Finally, we demonstrate the applicability of our Spectrum-Aware approach across a wide variety of covariate distributions, ranging from settings with heightened levels of correlation or heterogeneity among the rows or a combination thereof (Figure \ref{figPCRA}, top-left experiment), to diverse real data designs (Figure \ref{figPCRA}, bottom-left experiment).
We observe that PCR-Spectrum-Aware Debiasing demonstrates superior performance across the board. 
\end{itemize}

In the remaining Introduction, we walk the readers through some important discussion points, before we delve into our main results. In Section \ref{section:intuitridge}, we provide some intuition for our Spectrum-Aware construction using the example of the ridge estimator, since it admits a closed form and is simple to study. In \Cref{twoissues}, we describe how the debiasing methods tend to fail when the design $\X$ contains outlier eigenvalues and/or the signal aligns with some eigenvectors of $\X$. In Section \ref{sec:debiasedPCR}, we discuss a novel PCR-Spectrum-Aware Debiasing approach which addresses the aforementioned two issues and an associated hypothesis test for alignment between signal and principal components.

\subsection{Intuition via ridge estimator}\label{section:intuitridge}

To motivate Spectrum-Aware Debiasing, let us focus on the simple instance of a ridge estimator that admits the closed-form
\begin{equation}\label{betaridge}
    \hatbt=\left(\X^{\top} \X+\lambda_2 \mathbf{I}_p\right)^{-1} \X^{\top}\y, \quad \lambda_2>0.
\end{equation}
Recall that we seek a debiased estimator of the form $\bhetah=\hatbt+\adj^{-1}\X^\top (\y-\X\hatbt)$. Suppose we plug in \eqref{betaridge}, leaving $\adj$ unspecified for the moment. If we denote the singular value decomposition of  $\X$ to be $\mathbf{Q}^{\top}\mathbf{D}\mathbf{O}$, we obtain that
\begin{equation}\label{matrixcenterD}
\E [\bhetah\mid \X, \st]=\underbrace{\left[\left(1+\frac{\lambda_2}{\adj}\right) \sum_{i=1}^p\left(\frac{d_i^2}{d_i^2+\lambda_2}\right) \mathbf{o}_i \mathbf{o}_i^{\top}\right]}_{=:\Vbf} \st,
\end{equation}
where $\mathbf{o}_i^{\top}\in \R^{p}$ denotes the $i$-th row of $\Obm$ and recall that  $d_i^2$'s denote the eigenvalues of $\X^\top\X$. 

For $\bhetah$ to be unbiased, it appears necessary to choose $\adj$ so that it centers $\mathbf{V}$ around the identity matrix $\mathbf{I}_p$. We thus choose $\adj$ to be solution to the equation
\begin{equation}\label{ideq}
\left(1+\frac{\lambda_2}{\adj}\right) \frac{1}{p} \sum_{i=1}^p \frac{d_i^2}{d_i^2+\lambda_2}=1.
\end{equation}
This choice guarantees that the average of the eigenvalues of $\Vbf$ equals 1. Solving for $\adj$, we obtain
\begin{equation}\label{eq:choiceadj}
\adj=\left(\qty(\frac{1}{p} \sum_{i=1}^p \frac{\lambda_2 d_i^2}{d_i^2+\lambda_2})^{-1}-\frac{1}{\lambda_2}\right)^{-1}.
\end{equation}
This is precisely our Spectrum-Aware adjustment formula for the ridge estimator! However, it is not hard to see that centering $\mathbf{V}$ does not guarantee debiasing in general: for instance, $\bhetah$ would have an inflation bias if $\st$ completely aligns with the top eigenvector $\mathbf{o}_1$. To ensure suitable debiasing, one requires $\X$ and $\st$ to satisfy additional structure. To this end, if we further assume that $\Obm$ is random, independent of $\st$, and satisfies
$
\mathbb{E}\left(\mathbf{o}_i \mathbf{o}_i^{\top}\right)=\frac{1}{p}\cdot \mathbf{I}_p.
$
we would obtain, after choosing $\adj$ following \eqref{eq:choiceadj}, that
\begin{equation}\label{matrixcenter}
\begin{aligned}
    \mathbb{E}\left[\bhetah\mid \st\right]&=\mathbb{E} \underbrace{\left[\left(1+\frac{\lambda_2}{\adj}\right) \sum_{i=1}^p\left(\frac{d_i^2}{d_i^2+\lambda_2}\right) \mathbf{o}_i \mathbf{o}_i^{\top}\right]}_{=:\Vbf} \st \stackrel{(\star)}{=}\st,
\end{aligned}
\end{equation}
This motivates us to impose the following assumption on $\Obm$.
\begin{Assumption*}
  $\Obm$ is drawn uniformly at random from the set of all orthogonal matrices of dimension $p$, independent of $\st$ (this is the orthogonal group of dimension $p$ that we denote as $\O(p)$), in other words, $\Obm$ is drawn from the Haar measure on $\O(p)$.
\end{Assumption*}

We operate under this assumption since it ensures $(\star)$ holds and our Spectrum-Aware adjustment turns out to be the correct debiasing strategy in this setting. 
Meanwhile, the degrees-of-freedom adjustment \cite{bellec2019biasing} yields the correction factor
\begin{equation*}
\adg=1-n^{-1} \operatorname{Tr} \bigg(\X\left(\X^{\top} \X+\lambda_2 \mathbf{I}_p\right)^{-1} \X^{\top}\bigg)=1-\frac{1}{n} \sum_{i=1}^p \frac{d_i^2}{d_i^2+\lambda_2}.
\end{equation*}
Notably, $\adj$ and $\adg$ may be quite different. Unlike $\adj$, $\adg$ may not center the spectrum of $\mathbf{V}$, and does not yield $\E (\bhetah\mid \st)=\st$ in general. However, it is important to note that they coincide asymptotically and $\adg$ would provide accurate debiasing if one assumes that the empirical distribution of $\left(d_i^2\right)_{i=1}^p$ converges weakly to the Marchenko-Pastur law (cf. \Cref{appendix:adjadgcc} from Appendix), a property that many design matrices do not satisfy. In other words, Degrees-of-Freedom Debiasing is sub-optimal in the sense that it implicitly makes the assumption that the spectrum of $\X^\top \X$ converges to the Marchenko-Pastur law, rather than using the actual spectrum. We provide examples of designs where Degrees-of-Freedom Debiasing fails in Figure \ref{fig1}. In contrast, $\adj$ is applicable under much broader settings as it accounts for the \textit{actual spectrum} of $\X^\top \X$. \Cref{fig1} shows the clear strengths of our approach over Degrees-of-Freedom Debiasing.

\subsection{Practical issues and PCR-Spectrum-Aware Debiasing} 
\label{twoissues}

Our discussion in \Cref{section:intuitridge} precludes two crucial settings that could occur in practice. 
Continuing our discussion on ridge regression, recall that $\E [\bhetah\mid \st]=\E[\Vbf \st]$ for $\Vbf$ defined in \eqref{matrixcenterD}, and we chose $\adj$ to center the spectrum of $\Vbf$ at 1 so that $\E[\Vbf] = \mathbf{I}_p$ under our assumptions. Thus our choice of  $\adj$ leads to the following, 
\begin{equation}\label{huererere}
    \Vbf \approx \mathbf{I}_p+\mathsf{unbiased \;component}.
\end{equation}
This ensures that $\bhetah$ remains centered around $\st$. However, to achieve this, we implicitly assumed that $\st$ does not align with any of the $\mathbf{o}_i$'s. Potential issues may arise when this assumption is violated.   For instance, if $\st$ perfectly aligns with the top eigenvector $\mathbf{o}_1$, we would obtain
$$\E [\bhetah\mid\X,\st]=\left(\frac{1}{p} \sum_{i=1}^p \frac{ d_i^2}{d_i^2+\lambda_2}\right)^{-1}\frac{d_1^2}{d_1^2+\lambda_2}\st.$$
This results in an inflation bias since 
$
\frac{d_1^2}{d_1^2+\lambda_2}>\frac{1}{p} \sum_{i=1}^p \frac{ d_i^2}{d_i^2+\lambda_2}.
$
Similar problems arise if $\st$ aligns with other eigenvectors, and the resulting bias could lead to inflation or shrinkage depending on the set of aligned eigenvectors. We refer to this as the \textit{alignment issue}. Another common issue arises when the top few eigenvalues of the sample covariance matrix $\X^{\top}\X$
are significantly separated from the bulk of the spectrum.  In this case, after centering the spectrum of $\Vbf$, the variance of the ``$\mathsf{unbiased \;component}$"  in \eqref{huererere} will be large, making the debiasing procedure unstable. We refer to these eigenvalues as \textit{outlier eigenvalues}.

In practice, these issues often arise simultaneously due to a small number of dominant principal components (PCs) that align with the signal. These PCs tend to distort desirable statistical properties that underlie Spectrum-Aware Debiasing. To address this, we propose a PCR-Spectrum-Aware Debiasing framework that integrates ideas from  Principal Components Regression (PCR) with Spectrum-Aware Debiasing. In this enhanced method, we employ PCR to handle the outlier eigenvalues and the aligned eigenvectors, and then use Spectrum-Aware Debiasing on a transformed version of the original data to correct for shrinkage bias incurred from discarding low-variance PCs. We observe that this hybrid PCR-Spectrum-Aware approach works exceptionally well in challenging settings where alignment and outlier eigenvalue issues may both occur. In Figure \ref{figPCRA}, we demonstrate the efficacy of our PCR-Spectrum-Aware approach in situations with extremely strong correlations, heterogeneities, and heavy tails in the design matrix.

\subsection{Notable outcomes: Alignment Testing and Debiased PCR} \label{sec:debiasedPCR}
Our theory for Spectrum-Aware Debiasing has two significant by-products. In modern high-dimensional inference, calculating the precise asymptotic risk of regularized estimators has emerged as a prominent research area. Technical tools such as approximate message passing \cite{donoho2009message,bayati2011dynamics,bayati2011lasso,zdeborova2016statistical,javanmard2013state,sur2019modern,barbier2019optimal,feng2022unifying}, the convex Gaussian min-max theorem \cite{thrampoulidis2014gaussian,stojnic2013framework}, random matrix theory \cite{Dicker,dobriban2018high,hastie2022surprises,cheng2024dimension,adlam2020understanding,li2024understanding}, and the cavity or leave-one-out method \cite{mezard1987spin,talagrand2003spin,el2013robust,el2018impact, bean2013optimal,sur2019likelihood,sur2019modern,chen2021spectral,jiang2022new} have proven invaluable for this purpose. These tools have facilitated the discovery of novel high-dimensional phenomena that other mathematical techniques simply fail to capture \cite{donoho2016high,el2013robust,bean2013optimal,el2018impact, sur2019likelihood,sur2019modern,candes2020phase,zhao2022asymptotic,liang2022precise,10.1093/imaiai/iaad042,jiang2022new,zhou2022non}. Consequently, they have inspired new high-dimensional estimators that outperform traditional ones by a margin \cite{sur2019modern,celentano2023challenges}. 
Despite such remarkable progress, these technical tools suffer a crucial limitation. They typically assume that the design matrices are random and independent of the true signal, implying that the PCs are random vectors in generic position relative to the true signal. To the best of our knowledge, a principled test to validate this assumption has so far eluded the literature. In this paper, we introduce the first formal hypothesis test for PC-signal alignment, utilizing our PCR-Spectrum-Aware approach (see \Cref{testCor} and the subsequent discussion). We hope this serves as a foundation for more systematic investigations into this issue, thereby enhancing the applicability of statistical methods developed based on  approximate message passing algorithms/leave-one-out/convex Gaussian min-max theorems.

As a second outcome, our work contributes to an extensive and growing body of work on PCR methodologies \cite{jolliffe1982note,  hubert2003robust,bair2006prediction, howley2006effect,fan2016projected, agarwal2019robustness,silin2022canonical,bing2021prediction}. Similar to the ridge and Lasso estimators, the traditional PCR estimator exhibits shrinkage bias due to the discarding of low-variance PCs \cite{farebrother1978class, frank1993statistical,george1996multiple,bickel2006regularization,druilhet2008shrinkage, jolliffe2016principal}. To the best of our knowledge, no previous work has investigated how this bias can be eliminated in high dimensions and its implications for inference. We develop the  first approach for debiasing the classical PCR estimator, complete with formal high-dimensional guarantees. We will next formally introduce Spectrum-Aware Debiasing and discuss its properties.

\subsection{Organization} We organize the rest of the paper as follows. 
In Section \ref{sec:assumption}, we introduce our assumptions and preliminaries. In Sections \ref{subsectionmrdeb} and \ref{section:pcar}, we introduce our Spectrum-Aware and PCR-Spectrum-Aware methods with formal guarantees. Finally in Section \ref{sec:conc}, we conclude with potential directions for future work.

\section{Assumptions and Preliminaries}\label{sec:assumption}
In this section, we introduce our assumptions and preliminaries that we require for the sequel.
\subsection{Design matrix, signal and noise}
We first formally define right-rotationally invariant designs. 
\begin{Definition}[Right-rotationally invariant designs]\label{def:Rotinv}
     Consider the singular value decomposition $\X=\Qbm^{\top} \Dbm \Obm$ where $\Qbm \in \mathbb{R}^{n \times n}$ and $\Obm \in \mathbb{R}^{p \times p}$ are orthogonal and $\Dbm \neq 0 \in \mathbb{R}^{n \times p}$ is diagonal. We say a design matrix $\X \in \R^{n\times p}$ is right-rotationally invariant if $\Qbm,\Dbm$ are deterministic, and $\Obm$ is uniformly distributed on the orthogonal group.
\end{Definition}

We work in a high-dimensional regime where $p$ and $n(p)$ both diverge and $n(p)/p \to \delta \in (0,+\infty)$. Known as proportional asymptotics, this regime has gained increasing popularity in recent times owing to the fact that asymptotic results derived under this assumption demonstrate remarkable finite sample performance (cf.~extensive experiments in \cite{sur2019likelihood,sur2019modern,candes2020phase,zhao2022asymptotic,liang2022precise,jiang2022new} and the references cited therein). In this setting, we consider a sequence of problem instances $ \qty{\y(p), \X(p), \st(p), \epbm(p)}_{p\ge 1}$ such that $\y(p), \epbm (p)\in \R^{n(p)}, \X(p) \in \R^{n(p) \times p}, \st(p) \in \R^p$ and $\y (p)=\X(p) \st(p)+\epbm(p)$. In the sequel, we drop the dependence on $p$ whenever it is clear from context. 

For a vector $\boldsymbol{v} \in \mathbb{R}^p$, we call its empirical distribution to be the probability distribution that puts equal mass $1/p$ to each coordinate of the vector. Some of our convergence results will be in terms of empirical distributions of sequences of random vectors. Specifically, we will use the notion of Wasserstein-2 convergence frequently so we introduce this next.

\begin{Definition}[Convergence of empirical distribution under Wasserstein-2 distance]\label{def:Wass}
For a matrix $\left(\vbf_{1}, \ldots, \vbf_{k}\right)=$ $\left(v_{i, 1}, \ldots, v_{i, k}\right)_{i=1}^{n} \in \mathbb{R}^{n \times k}$ and a random vector $\left(\mathsf{V}_{1}, \ldots, \mathsf{V}_{k}\right)$, we write
$$ 
\left(\vbf_{1}, \ldots, \vbf_{k}\right) \stackrel{W_2}{\rightarrow}\left(\mathsf{V}_{1}, \ldots, \mathsf{V}_{k}\right)
$$
to mean that the empirical distribution of the columns 
 of
$\left(\vbf_{1}, \ldots, \vbf_{k}\right)$ converge to $(\Vs_1,\ldots,\Vs_k)$ in
Wasserstein-$2$ distance. This means that
for any continuous function
$f: \mathbb{R}^{k} \rightarrow \mathbb{R}$ satisfying
	\begin{equation}\label{eq:wasslip}
		\left|f\left(v_{1}, \ldots, v_{k}\right)\right| \leq C\left(1+\left\|\left(v_{1}, \ldots, v_{k}\right)\right\|^{2}\right)
	\end{equation}
for some $C>0$, we have $$\lim_{n \to \infty}
\frac{1}{n} \sum_{i=1}^{n} f\left(v_{i, 1}, \ldots, v_{i,
k}\right)=\mathbb{E}\left[f\left(\mathsf{V}_{1}, \ldots,
\mathsf{V}_{k}\right)\right],$$
where $\mathbb{E}\left[\left\|\left(\Vs_1, \ldots, \Vs_k\right)\right\|^2\right]<\infty$. See 
in \Cref{section:wass} from Appendix for a review of the properties of the Wasserstein-2 convergence.
\end{Definition}

\begin{Assumption}[Measurement matrix]\label{AssumpD}
 We assume that $\X \in \R^{n\times p}$ is right-rotationally invariant (\Cref{def:Rotinv}) and independent of $\epbm$. For the eigenvalues, we assume that as $n,p \rightarrow \infty$,
\begin{equation}\label{Dconve}
    \mathbf{d}:=\Dbm^\top \bm{1}_{n \times 1} \stackrel{W_2}{\to} \D,
\end{equation}
where $\D^2$ has non-zero mean with compact support\footnote{Throughout, we define support of a random variable $X$ as the smallest closed set $A$ such that $\mathbb{P}(X\in A)=1$.} $\operatorname{supp}(\D^2) \subseteq [0,\infty)$. We denote $d_-:=\min(x:x \in \operatorname{supp}(\D^2))$. Furthermore, we assume that as $p\to \infty$,
\begin{equation}\label{eqtwe}
    d_+:=\limsup_{p\to \infty}\max_{i\in [p]} d_i^2< +\infty .
\end{equation}
\end{Assumption}

\begin{Remark}\label{remarkRA}
    The constraint \eqref{eqtwe} states that $\X^\top \X$ has bounded operator norm. It has important practical implications. It  prevents the occurrence of outlier eigenvalues, where a few prominent eigenvalues of $\X^\top \X$  deviate significantly from the main bulk of the spectrum. 
\end{Remark}

We work with Assumption \ref{AssumpD} for part of the sequel, in particular, Section \ref{subsectionmrdeb}. But later in Section \ref{section:pcar}, we relax restriction \eqref{eqtwe}.

Since our debiasing procedure relies on the spectrum of $\X^{\top}\X$, analyzing its properties requires a thorough understanding of the properties of  $\D$ (from \eqref{Dconve}), the limit of the empirical spectral distribution of $\X^{\top}\X$. Often these properties can be expressed using two important quantities---the Cauchy and the R-transform. We define these next. For technical reasons, we will define these transforms corresponding to the law of $-\D^2$.
\begin{Definition}[Cauchy- and R-transform]
Under Assumption \ref{AssumpD}, let $G:(-d_-,\infty) \to (0,\infty)$
and $R:(0,G(-d_-)) \to (-\infty,0)$ be the Cauchy- and
R-transforms of the law of $-\D^2$, defined as 
\begin{equation}\label{eq:CauchyR}
G(z)=\E\left[\frac{1}{z+\D^2}\right], \qquad R(z)=G^{-1}(z)-\frac{1}{z},
\end{equation}
where $G^{-1}(\cdot)$ is the inverse function of $G(\cdot)$. See properties and well-definedness of these in \Cref{lem:cauchy} from Appendix. We set $G(-d_-)=\lim_{z \to -d_-} G(z)$.
\end{Definition}

We next move to discussing our assumptions on the signal.

\begin{Assumption}[Signal and noise]\label{AssumpPrior}
We assume throughout that $\epbm\sim N(0, \sigma^2 \cdot \mathbf{I}_p)$ for potentially \textit{unknown} noise level $\sigma^2>0$. We require that $\st$ is either deterministic or independent of $\Obm,\epbm$. In the former case, we assume that $\st \stackrel{W_2}{\to} \Xstar$ where $\Xstar$ is a random variable with finite variance. In the latter case, we assume the same convergence holds almost surely. 
\end{Assumption}

\begin{Remark}\label{remarkRB}
    The independence condition between $\st$ and $\Obm$, along with the condition that $\Obm$ is uniformly drawn from the orthogonal group enforces that $\st$ cannot align with a small number of these eigenvectors. Once again, we require these assumptions in Section \ref{subsectionmrdeb} but we relax these later in Section \ref{section:pcar}. 
\end{Remark}

{
\begin{Remark}
    The assumption on the signal $\st \stackrel{W_2}{\to} \Xstar$ may be relaxed using recent non-asymptotic AMP/VAMP theories \cite{cademartori2024non,li2024non,li2022non}. We leave this to future works. 
\end{Remark}
}

\begin{Remark}
   We believe the assumption on the noise can be relaxed in many settings. For instance, if we assume $\Qbm$ (\Cref{def:Rotinv}) to be uniformly distributed on the orthogonal group independent of $\Obm$ and $\st$, one may work with the relaxed assumption that  $\epbm \toW \mathsf{E}$ for any random variable $\mathsf{E}$ with mean 0 and variance $\sigma^2$. This encompasses many noise distributions beyond Gaussians. Even without such an assumption on $\Qbm$, allowing for sub-Gaussian noise distributions should be feasible invoking universality results. However, in this paper, we prefer to focus on fundamentally breaking the i.i.d. Gaussian assumptions on $\X$ in prior works. In this light, we work with the simpler Gaussian assumption on the noise.
\end{Remark}

In the next segment, we describe the penalty functions that we work with. 

\subsection{Penalty function}\label{asspenalty}
As observed in the vast majority of literature on high-dimensional regularized regression, the proximal map of the penalty function plays a crucial role in understanding properties of $\boldsymbol{\hat{\beta}}$. We introduce this function next.

Let the proximal map associated to $h$ be
$$
\forall v>0, x,y \in \mathbb{R}, \quad \operatorname{Prox}_{v h}(x) \equiv \underset{y \in \mathbb{R}}{\arg \min }\left\{h(y)+\frac{1}{2 v}(y-{x})^2\right\}.
$$

\begin{Assumption}[Penalty function]\label{Assumph}
We assume that $h: \mathbb{R} \mapsto[0,+\infty)$ is non-constant, proper and closed convex function. Furthermore, we assume that $h(x)$ is twice continuously differentiable 
except for a finite set $\Dc$ of points, and that $h^{\prime \prime}(x)$ and $\operatorname{Prox}_{v h}^{\prime}(x)$ have been extended at their respective undefined points using \Cref{Extend} below. 
\end{Assumption}

Note that convexity of $h$ in \Cref{Assumph} implies that for some $\co\ge 0, \forall x,y\in \R, t\in [0,1]$, 
\begin{equation}\label{eq:strong}
h(t\cdot x+(1-t)\cdot y) \leq t\cdot h(x)+(1-t)\cdot h(y)-\frac{1}{2} \co \cdot t(1-t)\cdot (x-y)^2.
\end{equation}
Here, $h$ is said to be strongly convex if $c_0>0$. 

\begin{Lemma}[Extension at non-differentiable points]\label{Extend}
 Fix any $v>0$. Under \Cref{Assumph}, $x\mapsto \operatorname{Prox}_{v h}(x)$ is continuously differentiable at all but a finite set $\mathcal{C}$ of points. Extending functions $x\mapsto h^{\prime \prime}(x)$ and $x\mapsto \operatorname{Prox}_{v h}^{\prime}(x)$ on $\Dc$ and $\mathcal{C}$ by $+\infty$ and $0$ respectively, we have that for all $x\in \R$,
\begin{equation}\label{eq:Jacprox}
    \operatorname{Prox}_{v h}^{\prime}(x)=\frac{1}{1+v h^{\prime \prime}\left(\operatorname{Prox}_{v h}(x)\right)}\in \qty[0,\frac{1}{1+v\co}], \quad h^{\prime\prime}(x) \in [\co, +\infty].
\end{equation}
After the extension, for any $w>0$, $x\mapsto \frac{1}{w+h^{\prime \prime}\left(\operatorname{Prox}_{v h}(x)\right)}$ is piecewise continuous with finitely many discontinuity points on which it takes value $0$. 
\end{Lemma}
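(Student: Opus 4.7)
The plan is to build everything from standard 1D convex-analytic properties of $F:=\operatorname{Prox}_{vh}$ together with a clean identification of $\mathcal{C}$ via the preimage of $\Dc$. Since $h:\R\to[0,\infty)$ is proper, closed, and $\co$-strongly convex (with $\co\ge 0$ allowed), the minimizer defining $F(x)$ is unique, $F$ is continuous and non-decreasing with Lipschitz constant $(1+v\co)^{-1}$, and by the subdifferential optimality condition $F(x)=y_0$ if and only if $x\in y_0+v\,\partial h(y_0)$.

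I would first handle the regular regime. For any $x_0$ with $F(x_0)\notin \Dc$, $h$ is $C^2$ on a neighborhood of $F(x_0)$, so the optimality condition locally reads $x=y+vh'(y)$. The map $y\mapsto y+vh'(y)$ is $C^1$ with strictly positive derivative $1+vh''(y)\ge 1+v\co$, so the inverse function theorem supplies a $C^1$ local inverse that must coincide with $F$. This delivers continuous differentiability of $F$ near $x_0$ together with the identity $F'(x)=(1+vh''(F(x)))^{-1}$.

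To pin down $\mathcal{C}$, I would observe that $F^{-1}(\{y_0\})=y_0+v\,\partial h(y_0)$ is a bounded closed interval (possibly a single point) for every $y_0\in \Dc$, since $h$ being real-valued on $\R$ forces $\partial h(y_0)$ to be a bounded closed interval. Hence $S:=F^{-1}(\Dc)$ is a finite union of closed intervals and its boundary $\mathcal{C}:=\partial S$ has cardinality at most $2|\Dc|$. On $S^{\circ}$, $F$ is locally constant so $F'\equiv 0$; under the convention $h''|_\Dc\equiv+\infty$, this matches $(1+vh''(F(x)))^{-1}=0$. At any $x\in \mathcal{C}$, continuity of $F$ and closedness of $\Dc$ force $F(x)\in \Dc$, so the convention $F'(x):=0$ is again consistent with the extended formula.

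The bounds and the piecewise-continuity claim then drop out with little work: strong convexity gives $h''\ge \co$ on $\R\setminus \Dc$, preserved trivially by the extension, so $F'=(1+vh''(F))^{-1}\in[0,(1+v\co)^{-1}]$ pointwise. For the map $x\mapsto(w+h''(F(x)))^{-1}$, each connected component of the open set $\R\setminus \mathcal{C}$ lies entirely in $S^{\circ}$ (where the map is identically $0$) or entirely in $\R\setminus S$ (where it is the composition of the continuous $F$ with $y\mapsto(w+h''(y))^{-1}$, continuous on $\R\setminus \Dc$); jumps can occur only across the finite set $\mathcal{C}$, and since $F(\mathcal{C})\subseteq \Dc$ the function takes value $0$ at every discontinuity. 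The only point requiring actual care — no deep obstacle — is keeping the two ad hoc extensions (by $+\infty$ on $\Dc$ and by $0$ on $\mathcal{C}$) mutually consistent with the derivative identity; the transfer of finiteness from $|\Dc|$ to $|\mathcal{C}|$ is a one-line consequence of the subdifferential description of $F^{-1}$ in one dimension, whereas the analogous statement in higher dimensions would demand substantially more machinery.
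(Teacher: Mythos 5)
Your proposal is correct and reaches all of the lemma's conclusions, but it takes a genuinely different and noticeably simpler route than the paper. The structural skeleton is the same in both: use the implicit/inverse function theorem on $\{x: \operatorname{Prox}_{vh}(x)\notin\Dc\}$, use local constancy of $\operatorname{Prox}_{vh}$ on the interiors of the preimage intervals, and isolate a finite exceptional set. The difference is how $\mathcal{C}$ is chosen and justified. The paper defines $\mathcal{C}$ to be exactly the non-$C^1$ points, which forces a case-by-case classification of singleton preimages $F^{-1}(y_0)=\{x_0\}$: not differentiable, differentiable with zero derivative, or (the problematic case) differentiable with nonzero derivative. Most of the paper's proof is devoted to ruling out the last possibility, via a $\Delta/5$ three-point argument establishing continuity of $\operatorname{Prox}_{vh}'$ at $x_0$, followed by the inverse function theorem and the conclusion that $h$ would then be $C^2$ near $y_0$, contradicting $y_0\in\Dc$. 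You bypass all of this by declaring $\mathcal{C}:=\partial F^{-1}(\Dc)$, which makes every singleton preimage land in $\mathcal{C}$ unconditionally, and then checking that both sides of the extended formula equal $0$ there because $F(\mathcal{C})\subseteq\Dc$. This is a cleaner argument and it does establish the stated formula, the bounds, and the piecewise-continuity claim. Your identification $F^{-1}(\{y_0\})=y_0+v\,\partial h(y_0)$ is also tidier than the paper's derivation of the interval structure from continuity plus monotonicity, and it immediately gives bounded intervals (the paper needlessly allows $\pm\infty$ endpoints).

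One thing worth flagging, though it does not invalidate your argument: by skipping the impossibility step, you do not verify that "extending $\operatorname{Prox}_{vh}'$ by $0$ on $\mathcal{C}$" is a genuine extension rather than a modification. In principle, if $F$ were differentiable at some singleton-preimage point $x_0\in\mathcal{C}$ with nonzero derivative, your convention $\operatorname{Prox}_{vh}'(x_0):=0$ would silently overwrite a well-defined nonzero value. The paper's argument shows that this scenario cannot occur, so the paper's extension agrees with the actual derivative wherever that derivative exists. With your larger $\mathcal{C}$ this point is immaterial for the displayed identity and for every downstream use (all of which integrate $\operatorname{Prox}_{vh}'$, so a finite set of overwrites cannot matter), but it is the content the paper's $\Delta/5$ and inverse-function-theorem steps are buying, and your writeup should acknowledge that you are foregoing it rather than letting "consistent with the extended formula" suggest nothing is lost.
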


We defer the proof to \Cref{appendix:prox} in Appendix. We considered performing this extension since our debiasing formula involves the second derivative of $h(\cdot)$. The extension allows us to handle cases where the second derivative may not exist everywhere. As an example, we compute the extension for the elastic net penalty and demonstrate the form our debiasing formula takes after plugging in this extended version of $h(\cdot).$

\begin{Example}[Elastic Net penalty] \label{Eg:EN}
    Consider the elastic-net penalty
\begin{equation}\label{elaspen}
    h(x)=\lambda_1|x|+\frac{\lambda_2}{2} x^2, \lambda_1\ge 0, \lambda_2\ge 0.
\end{equation}
This is twice continuously differentiable except at $x=0$ (i.e. $\Dc=\qty{0}$). Fix any $v>0$. Its $\operatorname{Prox}_{v h}(x)=\frac{1}{1+\lambda_2 v} \operatorname{ST}_{\lambda_1 v}\left(x\right)$ is continuously differentiable except at $x=$ $\pm \lambda_1 v$. Here, $\mathrm{ST}_{\lambda v}(x):=\operatorname{sgn}(x)(|x|-\lambda v)_{+}$ is the soft-thresholding function. Per Lemma \ref{Extend}, the extended $h^{\prime \prime}, \operatorname{Prox}_{vh}'$ are $$h^{\prime \prime}(x)=\left\{\begin{array}{c}+\infty, \text { if } x=0 \\ \lambda_2, \text { otherwise }\end{array}\right., \quad \operatorname{Prox}_{vh}'(x)=\frac{1}{1+\lambda_2 v} \mathbb{I}\left(|x|>\lambda_1 v\right)$$ respectively, so that \eqref{eq:Jacprox} holds for all $x\in \R $. Note also that for any $w>0, x\mapsto \frac{1}{1+wh^{\prime \prime}\left(\operatorname{Prox}_{v h}(x)\right)}=\frac{1}{1+\lambda_2 w} \mathbb{I}(|x|>\lambda_1 v)$ is piecewise continuous and takes value 0 on both of its discontinuity points. It follows that our adjustment \eqref{eq:adjspectrum} can be written as
\begin{equation}\label{gammasolveaELAST}
\frac{1}{p} \mathlarger{\mathlarger{\sum}}_{i=1}^p \frac{1}{\left(d_i^2\adj^{-1}-1 \right)\left(\frac{\hat{s}}{p} \qty(1+\adj^{-1}\lambda_2)^{-1} \right)+1}=1,
\end{equation}
where $\hat{s}=\left|\left\{j: \hjatbtj \neq 0\right\}\right|$. 

As a sanity check, 
if one sets $\lambda_2=0$ and solves the population version of the above equation
\begin{equation}\label{gammasolveaELASTLas}
\E \frac{1}{\left(\D^2 \adj^{-1}-1 \right)\cdot \frac{\hat{s}}{p}+1}=1
\end{equation}
with $\D^2$ drawn from the Marchenko-Pastur law, then one recovers the well-known degrees-of-freedom adjustment for the Lasso: $\adj=1-\hat{s}/n.$
\end{Example}

The following assumption is analogous to \cite[Assumption 3.1]{bellec2019biasing} for the Gaussian design: we require either $h$ to be strongly convex or $\X^\top \X$ to be non-singular with smallest eigenvalues bounded away from $0$. 
\begin{Assumption}\label{Assumpgp}
    Either $\co>0$ or $d_-:=\lim_{p\to\infty} \min_{i\in p}(d_i^2)\ge c_1$ for some constant $c_1>0$. 
\end{Assumption}

{ However, we show that for the Lasso with $h(x)=\lambda_1 |x|,\lambda_1>0$ (where $\co=0$), we may be able to drop the requirement that $d_->0$ if the penalty strength $\lambda_1>0$ is sufficiently large. We defer the discussion to \Cref{secmainLasso}. }

\subsection{Fixed-point equation}
Our general approach to study the regularized estimator $\boldsymbol{\hat{\beta}}$ is by introducing a more tractable surrogate $\boldsymbol{\hat{\beta}}^t$. As detailed in the Appendix, \Cref{section:asympchar}, we construct this surrogate using an iterative algorithmic scheme known as Vector Approximate Message Passing algorithm (VAMP) \cite{rangan2019vector}. Thus to study the surrogate, one needs to study the VAMP algorithm carefully. One can describe the properties of this algorithm using a system of fixed point equations in four variables. We use 
$\gamma_*, \eta_*, \taustar, \tau_{**}, \in(0,+\infty)$ to denote these variables, and define the system here:

\begin{subequations}\label{fp}
\begin{align}
& \frac{\gamma_*}{\eta_*}=\E \operatorname{Prox}_{\gamma_{*}^{-1} h}^{\prime}\left(\Xstar+\sqrt{\taustar} \Zs\right), \label{RCa}\\
& \tau_{**}=\frac{\eta_*^2}{\left(\eta_*-\gamma_*\right)^2}\left[\E\left(\operatorname{Prox}_{\gamma_*^{-1} h}\left(\Xstar+\sqrt{\taustar} \Zs\right)-\Xstar\right)^2-\left(\frac{\gamma_*}{\eta_*}\right)^2 \taustar\right], \label{RCb}\\
& \gamma_*=-R\left(\eta_*^{-1}\right) 
\label{RCc},\\
& \taustar=\left(\frac{\eta_*}{\gamma_*}\right)^2\left[\E\left[\frac{\sigma^2 \D^2+\tau_{**}\left(\eta_*-\gamma_*\right)^2}{\left(\D^2+\eta_*-\gamma_*\right)^2}\right]-\left(\frac{\eta_*-\gamma_*}{\eta_*}\right)^2 \tau_{**}\right], \label{RCd}
\end{align}
\end{subequations}
where $\Zs\sim N(0,1)$ is independent of $\Xstar$. We remind the reader that $x\mapsto \operatorname{Prox}_{\gamma_*^{-1} h}^{\prime}(x)$ is well-defined on $\R$ by the extension described in \Cref{Extend}. 

The following assumption ensures that at least one solution exists. 
\begin{Assumption}[Existence of fixed points] \label{Assumpfix}
    There exists a solution $\gamma_*, \eta_*, \taustar, \tau_{**}\in (0,+\infty)$ and $\eta_*>\gamma_*$ such that \eqref{fp} holds. 
\end{Assumption}

{ We now provide sufficient conditions under which \Cref{Assumpfix} holds. While the system of fixed-point equations \eqref{fp} plays a central role in the theory of regularized high-dimensional linear regression with right-rotationally invariant designs \cite{gerbelot2020asymptotic,gerbelot2022asymptotic}, there have been no rigorous mathematical results establishing the existence of solutions. The following proposition demonstrates that \Cref{Assumpfix} holds for a class of strongly convex penalties satisfying \Cref{Assumpproxclass}.

\begin{Assumption}\label{Assumpproxclass}
We assume that the proximal operator $\operatorname{Prox}_{v h}(x)$ satisfies the following  properties: 
\begin{itemize}
\item[(i)] \textbf{Monotonicity.}
For any $b\in \R, \alpha>0,v>0$, 
\begin{equation} \label{monoroa}
\frac{\partial}{\partial v} \mathbb{E} \operatorname{Prox}_{v h}^{\prime}\left(b+\frac{v}{\alpha} \mathsf{Z}\right)\le 0
\end{equation}
where $\mathsf{Z}\sim N(0,1)$. 
\item [(ii)] \textbf{Asymptotic linearity. }There exists continuous functions $r(v)$ defined on $v\in (0,+\infty)$ such that for each fixed $v>0$,
\begin{equation}\label{asumpline}
\lim _{x\rightarrow -\infty}\operatorname{Prox}^{\prime}_{v h}(x) = \lim _{x\rightarrow +\infty}\operatorname{Prox}^{\prime}_{v h}(x) =r(v)>0. 
\end{equation}
\end{itemize}
\end{Assumption}

\begin{Remark}
    Ridge, Lasso, Elastic Net (cf. ~\Cref{Eg:EN}) and Huber Norm penalty (cf. \cite{zadorozhnyi2016huber}), defined as, for tuning parameters $u,\delta>0, \lambda_1\ge 0, \lambda_2\ge 0$,
$$h(x)=\lambda_1g(x)+\frac{\lambda_2}{2} x^2, \quad g(x)=
\begin{cases}
\dfrac{u}{2}\,x^{2}, & |x|\le\delta,\\[6pt]
u\delta\bigl(|x|-\tfrac{\delta}{2}\bigr), & |x|>\delta,
\end{cases}$$
all satisfy \Cref{Assumpproxclass}. See \Cref{smdff}. 
\end{Remark}

The proof of \Cref{fixexist} under strong convexity is in \Cref{existENex} while the Lasso case is shown in \Cref{fixexistLasso}. For the Lasso case, \Cref{fixexist} imposes a mild condition that $\D^2$ places nonzero mass at zero or it satisfies $d_->0$. This rules out the edge case where $\D^2$ has no mass at zero but the eigenvalues of $\X^\top \X$ are not bounded away from zero. When $\X$ has i.i.d. sub-Gaussian entries and $n/p\to\varsigma$, $\D^2$ follows the Marchenko–Pastur law; the condition then excludes the edge case $\varsigma= 1$.

\begin{Proposition}\label{fixexist}
    Let $\mathsf{D}^2$ be the random variable defined in \Cref{AssumpD}, $h$ is strongly convex, satisfying \Cref{Assumph} with $c_0>0$, and that proximal operator $\operatorname{Prox}_{v h}(x)$ satisfies \Cref{Assumpproxclass}. Then \Cref{Assumpfix} holds. 
    
    Furthermore, under a mild condition on \(\D^2\) — specifically, if it either has non-zero probability mass at zero or satisfies \(d_- > 0\) (cf. \Cref{AssumpDLasso}) — Assumption \ref{Assumpfix} also holds for the Lasso penalty, i.e., when \(h\) is as in \Cref{Eg:EN} with \(\lambda_1 > 0\) and \(\lambda_2 = 0\).
\end{Proposition}
}
\begin{Remark}[Existence implies uniqueness]
    Under Assumptions \ref{AssumpD}--\ref{Assumpgp}, the existence of a solution implies uniqueness,  as we show in \Cref{justifuniqe} from Appendix. 
\end{Remark}

\begin{Assumption}[Feasibility of noise-level estimation]\label{Assumpfix2}
When the noise-level $\sigma^2$ is unknown, we require that $\gamma_*,\eta_*$ defined in \eqref{fp} and $\D^2$ defined in \Cref{AssumpD}  satisfy 
\begin{equation}\label{Asfix2}
    \delta\cdot \frac{\mathbb{E}\D^2 \cdot\left(1-\left(\frac{\eta_*-\gamma_*}{\D^2+\left(\eta_*-\gamma_*\right)}\right)^2\right)}{\mathbb{E}\D^2 \cdot \mathbb{E}\left(1-\left(\frac{\eta_*-\gamma_*}{\D^2+\left(\eta_*-\gamma_*\right)}\right)^2\right)} \neq 1.
\end{equation}
\end{Assumption}

\begin{Remark}
\Cref{Assumpfix2} serves as a technical condition to rule out degenerate scenarios where estimating  $\sigma^2$ is impossible. For example, this condition is not satisfied when $n=p$ and $\X=\mathbf{I}_p$: in this case, our sole observation is $\y=\st+\epbm$ and it is indeed impossible to estimate $\sigma^2$. We provide a consistent estimator for the left-hand side of \eqref{Asfix2} in \eqref{werwerlf}, facilitating the verification of \Cref{Assumpfix2}. 
\end{Remark}

\section{Spectrum-Aware Debiasing}\label{subsectionmrdeb}
Recall that our debiasing formula involved $\adj$ obtained by solving \eqref{eq:adjspectrum}. To ensure our estimator is well-defined, we need to establish that this equation has a unique solution. In this section, we address this issue, establish asymptotic normality of our debiased estimator (suitably centered and scaled), and present a consistent estimator for its asymptotic variance. 

\subsection{Well-definedness of our debiasing formula}\label{uniquewell} 
To show that \eqref{eq:adjspectrum} admits a unique solution, we define the function $g_p:(0,+\infty)\mapsto \R$ as
\begin{equation}\label{defgp}
g_p(\gamma)=\frac{1}{p} \mathlarger{\mathlarger{\sum}}_{i=1}^p \frac{1}{\left(d_i^2-\gamma\right)\left(\frac{1}{p} \sum_{j=1}^p \frac{1}{\gamma+h^{\prime \prime}\left(\hjatbtj\right)}\right)+1}.
\end{equation}
Here $h''(\cdot)$ refers to the extended version we defined using Lemma \ref{Extend} { where one should plug in $h^{\prime \prime}(x)=+\infty$ if $h$ is not twice continuously differentiable at $x$.  }

The following Proposition is restated from \Cref{CORweak} in Appendix. 
\begin{Proposition} \label{COR}
    Fix $p\ge 1$ and suppose that \Cref{Assumph} holds. Then, the function $\gamma\mapsto g_p(\gamma)$ is well-defined, strictly increasing for any $\gamma>0$, and 
    \begin{equation}\label{gammasolvea}
        g_p(\gamma)=1
    \end{equation}
    admits a unique solution in $(0,+\infty)$ if and only if there exists some $i\in [p]$ such that $h^{\prime\prime}(\hiatbti)\neq +\infty$ and at least one of the following holds: (i) $\left\|h^{\prime \prime}(\hatbt)\right\|_0=p$; (ii)  $\X^\top  \X$ is non-singular; (iii) $\norm{d}_0+\norm{h^{\prime\prime}(\hatbt)}_0>p$.
\end{Proposition}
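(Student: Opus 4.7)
The plan is to verify three properties of $g_p$ in turn: well-definedness on $(0,\infty)$, strict monotonicity, and existence of a unique root of $g_p(\gamma) = 1$. I will work with the shorthand
$$A(\gamma) := \frac{1}{p}\sum_{j=1}^p \frac{1}{\gamma + h''(\hjatbtj)}, \qquad B(\gamma) := \frac{1}{p}\sum_{j=1}^p \frac{1}{(\gamma + h''(\hjatbtj))^2},$$
using the $1/(+\infty) = 0$ convention from \Cref{Extend}, and write $f_i(\gamma) := 1 + (d_i^2 - \gamma)A(\gamma)$ so that $g_p = p^{-1}\sum_i f_i^{-1}$. The key inequality is $A(\gamma) \le 1/\gamma$, with equality iff every $h''(\hjatbtj) = 0$. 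When the inequality is strict, $f_i(\gamma) \ge 1 - \gamma A(\gamma) + d_i^2 A(\gamma) > 0$ for all $i$; when equality holds, $f_i(\gamma) = d_i^2/\gamma$, positive unless $d_i^2 = 0$. Each of (i)--(iii) excludes the degenerate conjunction ``all $h''(\hjatbtj) = 0$ and some $d_i^2 = 0$'': (i) forbids any zero $h''(\hjatbtj)$; (ii) forbids any zero $d_i^2$; (iii) via pigeonhole on $\|d\|_0 + \|h''(\hatbt)\|_0 > p$ produces some index with both $d_i^2 > 0$ and $h''(\hjatbtj) > 0$, so in particular not all $h''$'s vanish. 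This settles well-definedness.

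For strict monotonicity I differentiate. Using $A'(\gamma) = -B(\gamma)$ and the cancellation $A - \gamma B = p^{-1}\sum_j h''(\hjatbtj)/(\gamma + h''(\hjatbtj))^2$, I arrive at the clean expression
$$f_i'(\gamma) = -A(\gamma) - (d_i^2 - \gamma)B(\gamma) = -\frac{1}{p}\sum_{j=1}^p \frac{h''(\hjatbtj) + d_i^2}{(\gamma + h''(\hjatbtj))^2} \le 0.$$
Hence $g_p'(\gamma) = p^{-1}\sum_i -f_i'/f_i^2 \ge 0$. The first condition guarantees that at least one summand above has a finite denominator, and that summand is strictly positive whenever $d_i^2 + h''(\hjatbtj) > 0$: under (i), some $h''(\hjatbtj) \in (0,\infty)$ so every $i$ qualifies; under (ii), $d_i^2 > 0$ for every $i$; under (iii), pigeonhole yields at least one $i^*$ with $d_{i^*}^2 > 0$. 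Therefore $g_p'(\gamma) > 0$ on $(0,\infty)$.

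For existence and uniqueness I analyze the boundary values. As $\gamma \to \infty$, using $\gamma A(\gamma) \to s/p$ with $s := |\{j : h''(\hjatbtj) < \infty\}| \ge 1$ (first condition), $g_p(\gamma) \to p/(p - s) > 1$ when $s < p$, and $g_p(\gamma) \to \infty$ when $s = p$ (by a next-order expansion $A(\gamma) = 1/\gamma - (p^{-1}\sum h'')\gamma^{-2} + o(\gamma^{-2})$ giving $f_i(\gamma) = O(1/\gamma)$). Either way the $\infty$-limit exceeds $1$. For the $0^+$ limit, let $z := |\{j : h''(\hjatbtj) = 0\}|$. If $z = 0$ (the case (i)), $A(0^+)$ is finite positive and $g_p(0^+) = p^{-1}\sum_i (1 + d_i^2 A(0^+))^{-1} < 1$ whenever some $d_i^2 > 0$. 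If $1 \le z < p$, then $A(\gamma) \sim z/(p\gamma)$ and the limit simplifies to
$$g_p(0^+) \;=\; \frac{p - \|d\|_0}{p - z} \;=\; \frac{p - \|d\|_0}{\|h''(\hatbt)\|_0},$$
which is $< 1$ precisely when $\|d\|_0 + \|h''(\hatbt)\|_0 > p$, i.e., (iii). The remaining case $z = p$ is only consistent with (ii), under which $g_p(\gamma) = \gamma \cdot p^{-1}\sum_i d_i^{-2}$ is a strictly increasing linear function with a unique root. The intermediate value theorem together with strict monotonicity now yields the unique $\gamma \in (0,\infty)$ solving $g_p(\gamma) = 1$. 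The converse (only-if) direction is immediate by reversal: failure of the first condition gives $A \equiv 0$ and $g_p \equiv 1$; if the first condition holds but none of (i)--(iii), the same formulas give $g_p(0^+) \ge 1$, and strict monotonicity then excludes any root in $(0,\infty)$.

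The main obstacle lies in Step 3: handling the dichotomy in the $\gamma \to 0^+$ limit depending on whether any $h''(\hjatbtj)$ vanishes, and matching the three conditions precisely to the sharp boundary inequality $g_p(0^+) < 1$. Steps 1 and 2 are essentially algebraic once the compact form of $f_i'$ is derived.
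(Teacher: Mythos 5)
Your proof is correct and follows essentially the same approach as the paper's proof of \Cref{existslams}: decompose the sum according to whether each $h''(\hjatbtj)$ is $0$, finite positive, or $+\infty$ and whether $d_i^2$ vanishes, show strict monotonicity via the derivative formula $-f_i'(\gamma)=\tfrac{1}{p}\sum_j (h''(\hjatbtj)+d_i^2)/(\gamma+h''(\hjatbtj))^2$, and pin down $g_p(0^+)$ and $g_p(+\infty)$ to invoke the intermediate value theorem. The $A,B,f_i$ shorthand is a notational compression of the paper's expanded calculation, not a different argument. One small caveat in your converse: when the first condition holds but none of (i)--(iii), the subcase ``all $h''(\hjatbtj)=0$ and some $d_i^2=0$'' makes $g_p$ itself undefined, so the $g_p(0^+)$ formula does not apply; the conclusion still follows trivially since well-definedness is one of the three conjuncts, but this branch should be noted explicitly rather than subsumed under ``the same formulas give $g_p(0^+)\geq 1$.''
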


{
\begin{Remark}
The assumptions of \Cref{COR} hold under \Cref{AssumpD}---\ref{Assumpfix} for all $p$ sufficiently large. See the proof of \Cref{ptconv}. Furthermore, if $h$ is the Lasso penalty, \Cref{Assumpfix} maybe dropped and the assumptions of \Cref{COR} hold under the assumptions of \Cref{prop:sdsLasso}. See the proof of \Cref{prop:sdsLasso}.
\end{Remark}

\begin{Remark}
We emphasize that the appearance of $h^{\prime \prime}$ in \eqref{defgp} does not preclude interesting cases such as the Lasso or Elastic Net where the penalty is non-differentiable only on a finite set.  As in \Cref{Extend}, one may simply replace $h^{\prime \prime}(x_0)$ to be $+\infty$ and hence the corresponding summand term $\frac{1}{\gamma+h^{\prime \prime} (x_0)}$ with 0 if $h$ is not differentiable at $x_0$. That said, using Lemma \ref{Extend}, we could also express \eqref{eq:adjspectrum}  in terms of  $\operatorname{Prox}_{v h}^{\prime}(x)$ for a suitable constant $v$ (instead of $h''$). The latter formulation is more common in the previous debiasing literature \cite{bellec2022biasing}. But in the way we have set things up, these formulations are equivalent. 


\end{Remark}
}

\begin{Remark}\label{remarkuniquefind}
    To find the unique solution of $g_p(\gamma)=1$, we recommend using Newton's method initialized at $\gamma=\frac{1}{p}\sum_{i=1}^p d_i^2$. In rare cases where Newton's method fails to converge, we suggest using a bisection-based method, such as the Brent's method, to solve \eqref{eq:adjspectrum} on the interval $\left[0, \max_{i\in [p]} d_i^2\right]$, where convergence is guaranteed  (by Jensen's inequality, the solution must be upper bounded by $\max_{i\in [p]} d_i^2$). For numerical stability, we suggest re-scaling the design matrix $\X$ such that average of its eigenvalues equals 1, i.e.  $\X_{\mathsf{rescaled}} \gets \qty(\frac{1}{p}\sum_{i=1}^p d_i^2)^{-1/2}\cdot \X$. 
\end{Remark}

\subsection{The procedure} In this section, we introduce our Spectrum-Aware Debiasing procedure (\Cref{algodebias}). 

\begin{Definition}[Spectrum-Aware Debiasing]\label{algodebias}
Given observed data $(\X,\y)$ and a penalty function $h$, our procedure for Spectrum-Aware Debiasing  computes the regularized estimator $\hatbt$ and eigenvalues $(d_i^2)_{i=1}^p$ of the sample covariance matrix $\X^\top \X$. Subsequently, it solves for $\adj$ from
\begin{equation}
\frac{1}{p} \mathlarger{\mathlarger{\sum}}_{i=1}^p \frac{1}{\left(d_i^2-\adj \right)\left(\frac{1}{p} \sum_{j=1}^p \qty(\adj+h^{\prime \prime}\left(\hjatbtj\right))^{-1} \right)+1}=1.
\end{equation}
where $h''(\cdot)$ refers to the extended version we defined using Lemma \ref{Extend} (see also \Cref{COR} and \Cref{remarkuniquefind}). Finally, we generate the debiased estimator as follows
\begin{equation}
    \bhetah=\hatbt+\adj^{-1} \X^{\top}(\y-\X \hat{\boldsymbol{\beta}}).
\end{equation}
\end{Definition}

\subsection{Asymptotic normality}
\Cref{NEIGMAIN} below states that the empirical distribution of $(\tauh^{-1/2}(\bhetahi-\sti))_{i=1}^p$ converges to a standard Gaussian. The proof and discussion of technical novelty is deferred to \Cref{section:asympchar}.  
\begin{Theorem}[Asymptotic normality of $\bhetah$]\label{NEIGMAIN}
Suppose that \Cref{AssumpD}---\ref{Assumpfix2} hold. Then, we have that almost surely as $p\to \infty$,
\begin{equation*}
        \tauh^{-1/2}(\bhetah-\st)\stackrel{W_2}{\to} N(0,1).
\end{equation*}
\end{Theorem}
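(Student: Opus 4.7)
The plan is to analyze $\bhetah-\st$ through a Vector Approximate Message Passing (VAMP) surrogate and its state-evolution description. Using $\y=\X\st+\epbm$, the debiased estimator admits the algebraic decomposition
\begin{equation*}
\bhetah-\st \;=\; (\mathbf{I}_p-\adj^{-1}\X^\top\X)(\hatbt-\st) \;+\; \adj^{-1}\X^\top\epbm,
\end{equation*}
in which the second summand is conditionally Gaussian given $\X$ and the first summand carries the non-Gaussian shrinkage. The goal is to show that with $\adj$ solving the Spectrum-Aware equation, the two pieces combine into a centered Gaussian of variance $\taustar$. To make this rigorous, I would introduce a VAMP iterate $\hatbt^t$ tuned so that its state evolution is driven by the fixed-point system \eqref{fp}; standard VAMP theory then yields, for each fixed $t$, joint Wasserstein-$2$ convergence of $(\hatbt^t,\st)$ to $(\operatorname{Prox}_{\gamma_*^{-1}h}(\Xstar+\sqrt{\taustar}\Zs),\Xstar)$, and analogous Gaussian convergence for $\X^\top(\y-\X\hatbt^t)/\adj$.

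The core of the proof is a two-limit argument: first $p\to\infty$ at fixed $t$, then $t\to\infty$. To transfer the state-evolution description from $\hatbt^t$ to the true estimator $\hatbt$, I would prove the \emph{Cauchy property} of VAMP, namely $\|\hatbt^t-\hatbt^{t'}\|^2/p\to 0$ uniformly as $t,t'\to\infty$, almost surely. Combined with convexity of $\mathcal{L}$ and the KKT conditions defining $\hatbt$, this identifies the $t\to\infty$ limit of $\hatbt^t$ with $\hatbt$ and lifts the VAMP state-evolution description to the regularized estimator itself. Plugging back into the decomposition above and using the SVD $\X=\Qbm^\top\Dbm\Obm$ together with Haar-invariance of $\Obm$, one can evaluate the action of $\mathbf{I}_p-\adj^{-1}\X^\top\X$ on $\hatbt-\st$ against the limiting law of $(\hatbt,\st)$. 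The empirical equation \eqref{eq:adjspectrum} defining $\adj$ is precisely the condition that forces $\adj\to\gamma_*$ almost surely and cancels the non-Gaussian part of the first summand to leading order, matching the R-transform relation \eqref{RCc}. A parallel argument constructs $\tauh$ as an empirical functional of $\hatbt$ and $(d_i^2)_{i=1}^p$ that tracks the right-hand side of \eqref{RCd} and converges a.s.\ to $\taustar$; \Cref{Assumpfix2} rules out degenerate noise-level identifiability so that $\tauh$ is well-defined. A Wasserstein continuous-mapping argument followed by Slutsky then delivers $\tauh^{-1/2}(\bhetah-\st)\stackrel{W_2}{\to}N(0,1)$.

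The main obstacle is the Cauchy property itself: existing VAMP analyses certify each iterate individually, but do not show that successive iterates get arbitrarily close in the large-$p$ limit. Proving this for the generic spectrum $\D^2$ permitted by \Cref{AssumpD}, with a nonlinear proximal denoiser and without Marchenko-Pastur structure, requires analyzing an \emph{extended} deterministic state-evolution recursion for the pair $(\hatbt^t,\hatbt^{t'})$ and showing it is contracting toward the diagonal. I expect a Lyapunov functional built from the strong convexity constant $\co$ or from the minimum-eigenvalue condition in \Cref{Assumpgp}, transported across the denoiser-linear alternation using the R-transform identity \eqref{RCc}. A secondary difficulty is the non-smoothness of $h$: the extension of $h''$ by $+\infty$ on $\Dc$ makes the integrand of $g_p$ discontinuous at finitely many points, so the empirical sums defining $\adj$ and $\tauh$ must still be shown to pass to their Wasserstein limits. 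The piecewise-continuity assertion in \Cref{Extend} is tailored for this and reduces the verification to a finite-set exclusion argument; once it and the Cauchy property are in place, the remainder is bookkeeping.
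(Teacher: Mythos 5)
Your overall architecture matches the paper's: a VAMP surrogate, state evolution, a two-limit argument ($p\to\infty$ then $t\to\infty$), and a Cauchy property to transfer distributional control from iterates to $\hatbt$ and thence to $\bhetah$. The paper indeed deduces \Cref{NEIGMAIN} from exactly the two ingredients you identify (\Cref{thm:empmain} plus \Cref{neig}).

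However, you have misidentified the mechanism behind the key lemma, the Cauchy property, and the mechanism you propose would fail in the generality the theorem requires. You suggest building the contraction from ``the strong convexity constant $\co$ or the minimum-eigenvalue condition in \Cref{Assumpgp}.'' That is the route of \cite{gerbelot2020asymptotic,gerbelot2022asymptotic}, which the paper explicitly criticizes in \Cref{rem:cedric}: contraction obtained this way only holds when an auxiliary $\ell_2$ penalty is large, and then has to be removed by an analytic continuation argument with multiple unjustified exchanges of limits. The paper's \Cref{Assumpgp} allows either $\co>0$ \emph{or} a nonsingular design, and $\co$ may be tiny or zero --- so a Lyapunov contraction keyed to $\co$ cannot work. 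Strong convexity enters the paper at a completely different (and easier) step, \Cref{prop:sds}, where a sub-gradient/KKT argument shows the VAMP iterates track the true minimizer $\hatbt$; it plays no role in showing the iterates are Cauchy among themselves.

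The actual driver of the Cauchy property (\Cref{prop:convsmallbeta}) is a scalar contraction argument for the cross-covariance map $\delta\mapsto g(\delta):=\E[F(\Ps'_\delta,\Xstar)F(\Ps''_\delta,\Xstar)]$, where $F$ is the two-argument VAMP denoiser. One shows $g$ is non-negative, increasing, and convex with fixed point $\delta_*=\tau_{**}$, so the entire question reduces to proving $g'(\delta_*)<1$. The paper obtains this by combining two facts: (i) the firm non-expansiveness of the proximal map implies $\E[(\operatorname{Prox}'_{\gamma_*^{-1}h})^2]<\E[\operatorname{Prox}'_{\gamma_*^{-1}h}]=\gamma_*/\eta_*$ strictly (\Cref{noid} rules out the degenerate cases $\operatorname{Prox}'\equiv 0$ or $\equiv 1$), and (ii) the R-transform identity
\[
-\frac{z\,R'(z)}{R(z)}<1 \qquad \text{for all } z\in(0,G(-d_-)),
\]
which is \Cref{lem:cauchy}~(d) and is a structural property of $-\D^2$ requiring no assumption on $\co$ or on the penalty. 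Chasing \eqref{combo1}--\eqref{combb} reduces $g'(\delta_*)<1$ exactly to this inequality. Your sketch mentions ``the R-transform identity \eqref{RCc}'' but only as a bookkeeping tool; the critical ingredient is the inequality above, not the fixed-point identity. Until you replace the proposed Lyapunov-from-$\co$ argument with this interplay between spectrum and denoiser, the centerpiece of the proof is missing.

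One smaller remark: your decomposition $\bhetah-\st=(\mathbf{I}_p-\adj^{-1}\X^\top\X)(\hatbt-\st)+\adj^{-1}\X^\top\epbm$ is algebraically correct but not what the paper does; the paper instead works with the population surrogate $\rstar=\hatbt+\gamma_*^{-1}\X^\top(\y-\X\hatbt)$, proves $\rstar-\st\stackrel{W_2}{\to}\sqrt{\taustar}\Zs$ via \eqref{waka1}, and then shows $\frac{1}{p}\|\bhetah-\rstar\|^2\to 0$ from $\adj\to\gamma_*$. Your decomposition would require you to directly control the action of $\mathbf{I}_p-\adj^{-1}\X^\top\X$ on $\hatbt-\st$ jointly with the noise term, which is heavier; working through $\rstar$ sidesteps this.
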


{ \begin{Remark}
    We prove that the asymptotic normality result in \Cref{NEIGMAIN} continue to hold under a broader spectral universality class defined in \Cref{uniclass}. This result is stated in \Cref{universethm}. 
\end{Remark}}

Next, we describe a consistent estimator for the asymptotic variance $\taustar$. We require some intermediate quantities that depend on the observed data and the choice of the penalty. We define these next. Later in \Cref{section:asympchar}, we will provide intuition as to why these intermediate quantities are important and how we construct the variance estimator. 

\begin{Definition}[Scalar statistics]
    Let $\adj(\X,\y,h) \in (0,+\infty)$ be the unique solution to \eqref{eq:adjspectrum}. We define the following scalar statistics
\begin{equation}\label{DEFEFD}
    \begin{aligned}
&\hat{\eta}_* (\X,\y,h) \gets \left(\frac{1}{p} \bigsum_{j=1}^p \frac{1}{\adj+h^{\prime \prime}\left(\hjatbtj\right)}\right)^{-1},\\
& \hat{\tau}_{**}(\X,\y,h) \gets \frac{\left\| \qty(\mathbf{I}_n+ \frac{1}{\hat{\eta}_*-\adj} \X\X^\top)\qty(\y-\X\hatbt)\right\|^2-n \hat{\sigma}^2}{ \sum_{i=1}^p d_i^2},\\
& \tauh(\X,\y,h) \gets \frac{1}{p}\bigsum_{i=1}^p \frac{\hat{\eta}_*^2 d_i^2 \hat{\sigma}^2+\qty(d_i^2-\adj+2\hat{\eta}_*)\qty(\adj-d_i^2) \qty(\hat{\eta}_*-\adj)^2 \hat{\tau}_{**}}{\qty(d_i^2-\adj+\hat{\eta}_*)^2\qty(\adj)^2},
\end{aligned}
\end{equation}
where $\hat{\sigma}^2$ is an estimator for the noise level $\sigma^2$ (see \Cref{nofkdka} below). Note that the quantities in \eqref{DEFEFD} are well-defined for any $p$ (i.e. no zero-valued denominators) if there exists some $i\in [p]$ such that $h^{\prime \prime} (\hat{\beta}_i)\neq +\infty$ and there exists some $j\in [p]$ such that $h^{\prime \prime} (\hat{\beta}_j)\neq 0$. Going forward, we suppress the dependence on $\X,\y,h$ for convenience.
\end{Definition}

\begin{Remark}\label{nofkdka}
    The computation of $\tauh$ and $\hat{\tau}_{**}$ in \eqref{DEFEFD} requires an estimator $\hat{\sigma}^2$ for the noise level $\sigma^2$ when it is not already known. We provide a consistent estimator in \eqref{wemr} that we use in all our numerical experiments.
\end{Remark}



We illustrate \Cref{NEIGMAIN} in \Cref{fig1} under five different right-rotationally-invariant designs (cf. \Cref{remark:RO} in Appendix) with non-trivial correlation structures, and compare with Degrees-of-Freedom Debiasing with $\mathbf{M}=\mathbf{I}_p$.  The corresponding QQ-plot can be found in \Cref{figQQA} in Appendix. We observe that our method outperforms Degrees-of-Freedom Debiasing by a margin. 

We next develop a different result that characterizes the asymptotic behavior of finite-dimensional marginals of $\bhetah$. Corollary \ref{sgoods} below establishes this under an additional exchangeability assumption on $\st$. To state the corollary, we recall to readers the standard definition of exchangeability for a sequence of random variables.  

\begin{Definition}[Exchangeability]\label{exchangedef}
     We call a sequence of random variables $\left(\mathsf{V}_i\right)_{i=1}^p$ exchangeable if for any permutation $\pi$ of the indices $1,...,p$, the joint distribution of the permuted sequence $\left(\mathsf{V}_{\pi(i)}\right)_{i=1}^p$ is the same as the original sequence. 
\end{Definition}

\Cref{sgoods} below is a consequence of \Cref{NEIGMAIN}. We defer its proof to \Cref{appendix:single} in Appendix. A numerical demonstration is included in \Cref{margex} from Appendix.
\begin{Corollary}\label{sgoods}
Fix any finite index set $\mathcal{I} \subset [p]$. Suppose that \Cref{AssumpD}---\ref{Assumpfix2} hold, and $\left(\st\right)_{j=1}^{p}$ is exchangeable independent of $\X, \epbm$. Then as $p\to \infty$, we have
\begin{equation}\label{werbaoz}
    \frac{\bhetah_{\mathcal{I}}-\st_{\mathcal{I}}}{\sqrt{\hat{\tau}_{*}}} \Rightarrow N(\rm{0},\mathbf{I}_{|\mathcal{I}|})
\end{equation}
where $\Rightarrow$ denotes weak convergence. 
\end{Corollary}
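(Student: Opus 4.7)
The plan is to deduce joint weak convergence of finite-dimensional marginals of $\mathbf{w}:=\tauh^{-1/2}(\bhetah-\st)$ from the empirical $W_2$ convergence in Theorem \ref{NEIGMAIN}, exploiting the fact that under the hypotheses of the corollary the coordinates of $\mathbf{w}$ themselves form an exchangeable sequence. The argument splits into three parts: (i) establish exchangeability of $\mathbf{w}$, (ii) upgrade the $W_2$ limit of the one-dimensional empirical measure of $\mathbf{w}$ to a limit on product empirical measures, and (iii) combine these via an exchangeable-average identity.

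For (i), fix a permutation matrix $P_\pi$ and consider replacing $(\st,\X)$ by $(P_\pi\st,\X P_\pi^{\top})$ while keeping $\epbm$ fixed. The observation vector $\y=\X\st+\epbm$ is unchanged. Because the penalty in \eqref{deflasso} is coordinate-separable, the regularized estimator computed from $(\X P_\pi^{\top},\y)$ equals $P_\pi\hatbt$; the multiset of eigenvalues of $\X^{\top}\X$ and the multiset $\{h''(\hat{\beta}_j)\}_{j=1}^{p}$ are untouched, so the adjustment $\adj$ in \eqref{eq:adjspectrum} and the scalar estimators $\hat{\eta}_*,\hat{\tau}_{**},\hat{\sigma}^2,\tauh$ in \eqref{DEFEFD} are all permutation-invariant. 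A direct substitution then yields that the new debiased estimator is $P_\pi\bhetah$ while the new variance estimator still equals $\tauh$. On the other hand, right-rotational invariance of $\X$ (Definition \ref{def:Rotinv}) gives $\X P_\pi^{\top}\stackrel{d}{=}\X$, and together with independence of $\st$ from $(\X,\epbm)$ and exchangeability of $\st$ this implies $(\st,\X,\epbm)\stackrel{d}{=}(P_\pi\st,\X P_\pi^{\top},\epbm)$. Pushing this equality through the construction gives $P_\pi\mathbf{w}\stackrel{d}{=}\mathbf{w}$ for every $\pi$, as required.

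For (ii), let $\mu_p:=p^{-1}\sum_{i=1}^{p}\delta_{w_i}$. Theorem \ref{NEIGMAIN} yields $\mu_p\to\mathcal{N}(0,1)$ in $W_2$ almost surely, which by a standard argument promotes to $\mu_p^{\otimes k}\to\mathcal{N}(0,I_k)$ in $W_2$ on $\mathbb{R}^k$ almost surely. In particular, for every continuous $f:\mathbb{R}^k\to\mathbb{R}$ satisfying the quadratic growth condition \eqref{eq:wasslip},
$$\frac{1}{p^k}\sum_{(j_1,\ldots,j_k)\in[p]^k}f(w_{j_1},\ldots,w_{j_k})\;\longrightarrow\;\E\bigl[f(Z_1,\ldots,Z_k)\bigr]\quad\text{a.s.,}$$
where $Z_1,\ldots,Z_k$ are i.i.d.\ $\mathcal{N}(0,1)$. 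Removing tuples with a repeated index changes the sum by $O(p^{-1}\|f\|_\infty)$, so the same limit holds when the average is restricted to the $(p)_k:=p(p-1)\cdots(p-k+1)$ ordered tuples of distinct indices.

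Finally, for (iii), fix $\mathcal{I}=\{i_1,\ldots,i_k\}$ and a bounded continuous $f$. By the exchangeability from step (i),
$$\E\bigl[f(\mathbf{w}_\mathcal{I})\bigr]=\E\Biggl[\frac{1}{(p)_k}\sum_{\substack{(j_1,\ldots,j_k)\in[p]^k\\ \text{distinct}}}f(w_{j_1},\ldots,w_{j_k})\Biggr],$$
and by step (ii) the bracketed average converges almost surely to $\E[f(Z_1,\ldots,Z_k)]$. Since the integrand is bounded by $\|f\|_\infty$, dominated convergence yields $\E[f(\mathbf{w}_\mathcal{I})]\to\E[f(Z_1,\ldots,Z_k)]$, which is the weak convergence claimed in \eqref{werbaoz}. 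The main obstacle is step (i): one must carefully trace every quantity appearing in the definitions of $\adj$ and of $\tauh$ to verify coordinate-permutation symmetry, and in particular invoke right-rotational invariance to justify the distributional equality $\X P_\pi^{\top}\stackrel{d}{=}\X$ used to couple the two experiments. Once this symmetry is secured, steps (ii) and (iii) are essentially standard.
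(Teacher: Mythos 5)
Your proof is correct and follows essentially the same strategy as the paper's: exploit permutation symmetry (of the model and of every quantity in the debiasing pipeline) to establish that the coordinates of the normalized error are exchangeable, rewrite the finite-dimensional marginal expectation as an average over ordered tuples, invoke the empirical Wasserstein limit to evaluate that average, and finish with dominated convergence. The paper packages the symmetry via a uniformly random permutation $\bUm$ and works with the population pair $(\rstar,\taustar)$ before invoking consistency and Slutsky, whereas you use fixed permutations $P_\pi$ to establish exchangeability of $\mathbf w$ directly and work with the data-driven $(\bhetah,\tauh)$ throughout; both are equivalent and your Portmanteau-style use of bounded continuous test functions even lets you sidestep the paper's appeal to \Cref{prop:asW} for indicator functions.
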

Corollary \ref{sgoods} is naturally useful for constructing confidence intervals for finite-dimensional marginals of $\st$ with associated false coverage proportion guarantees. 

\subsection{Inference}\label{sec:infvanilla}
\begin{figure}
    \centering
    \includegraphics[width=0.71\linewidth]{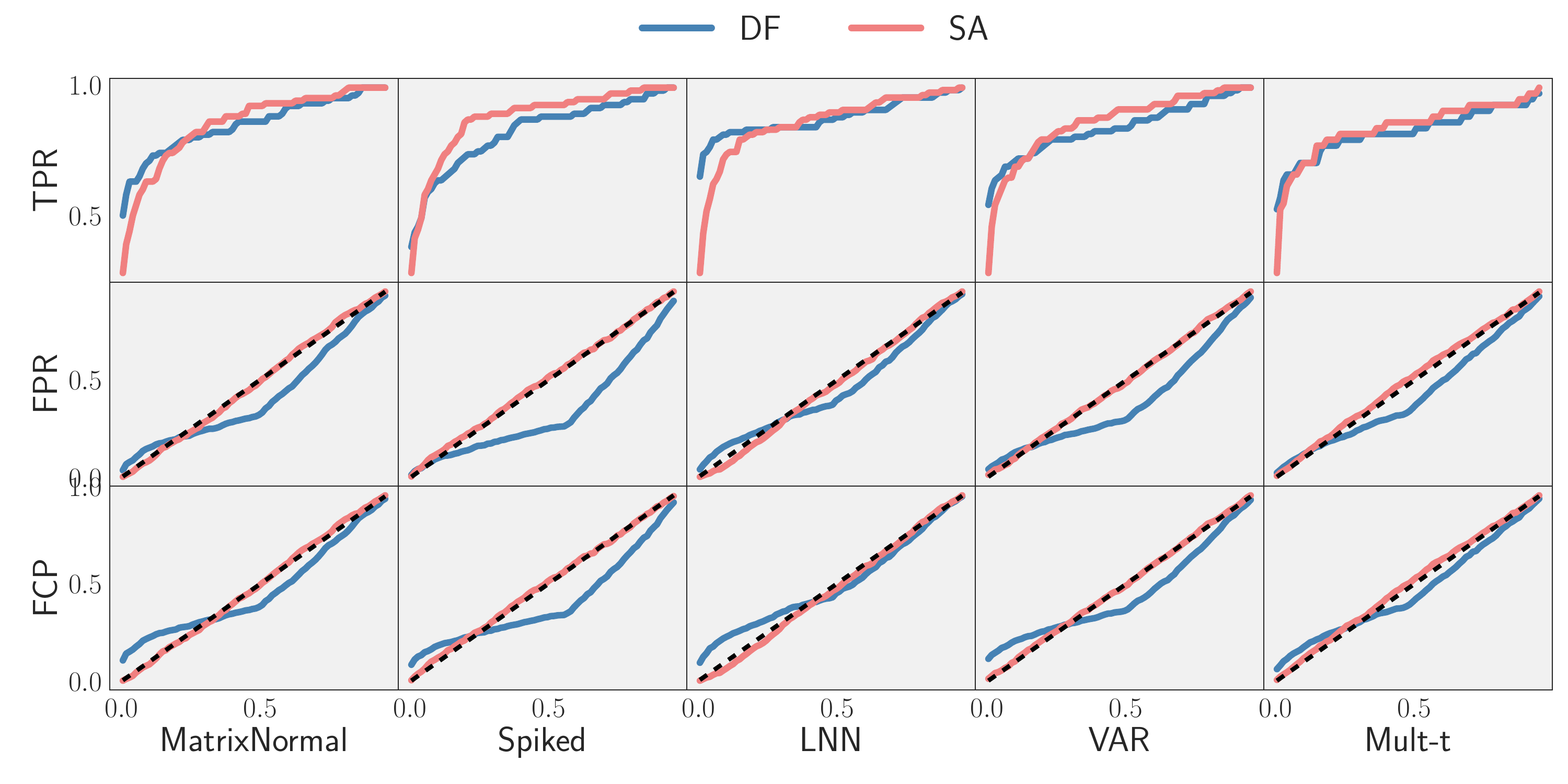}
    \vspace{-0.1cm}
    \caption{The above plots the TPR and FPR of the hypothesis testing procedure defined in \eqref{pvaldef} with significance level $\alpha$ and FCP of the constructed confidence intervals \eqref{defCI} with $b=\Phi^{-1}(1-\alpha/2), a=\Phi^{-1}(\alpha/2)$ as $\alpha$ on the x-axis varies from $0$ to $1$, for both degrees-of-freedom ($\mathsf{DF}$, blue) adjustment and Spectrum-Aware ($\mathsf{SA}$, red) adjustment. The setting here is the same as in \Cref{fig1}. }
    \label{fig2}
\end{figure}
In this section, we discuss applications of our Spectrum-Aware Debiasing approach to hypothesis testing and construction of confidence intervals. Consider the null hypotheses $H_{i, 0}: \sti=0$ for all $i \in[p]$. We define $\mathrm{p}$-values $P_i$ and decision rule $T_i$ ($T_i=1$ means rejecting $H_{0,i}$) for the test $H_{0, i}$ based on the definitions
\begin{equation}\label{pvaldef}
P_i \qty(\bhetahi, \tauh)=2\left(1-\Phi\left(\left|\frac{\bhetahi}{\sqrt{\hat{\tau}_{*}}}\right|\right)\right), \quad T_i(\bhetahi, \tauh)=\left\{\begin{array}{cc}
1, & \text { if } P_i\qty(\bhetahi,\tauh) \leq \alpha \\
0, & \text { if } P_i \qty(\bhetahi,\tauh)>\alpha
\end{array},\right.
\end{equation}
where $\Phi$ denotes the standard Gaussian CDF and $\alpha \in[0,1]$ is the significance level. We define the false positive rate (FPR) and true positive rate (TPR) below
$$
\mathsf{FPR}(p):=\frac{\sum_{j=1}^p \mathbb{I}\left(P_j \leq \alpha, \beta_j^{\star}=0\right)}{\sum_{j=1}^p \mathbb{I}\left(\beta_j^{\star}=0\right)}, \quad \mathsf{TPR}(p):=\frac{\sum_{j=1}^p \mathbb{I}\left(P_j \leq \alpha,\left|\beta_j^{\star}\right|>0\right)}{\sum_{j=1}^p \mathbb{I}\left(\beta_j^{\star}>0\right)}
$$
when their respective denominators are non-zero. Fix $\alpha \in[0,1]$. We can construct confidence intervals 
\begin{equation}\label{defCI}
    \mathsf{CI}_i(\bhetahi, \tauh)=\left(\bhetahi+a \sqrt{\hat{\tau}_{*}}, \bhetahi+b \sqrt{\hat{\tau}_{*}}\right), \qquad \forall i \in[p]
\end{equation}
for any $a, b \in \mathbb{R}$ such that $\Phi(b)-\Phi(a)=1-\alpha$. One can define the associated false coverage proportion (FCP)
$$
\mathsf{FCP}(p):=\frac{1}{p} \sum_{i=1}^p \mathbb{I}\left(\sti \notin \mathsf{CI}_i\right).
$$
for any $p\ge 1$. \Cref{NEIGMAIN} directly yield guarantees on the FPR, TPR and FCP as shown in  \Cref{weeren} below. We defer the proof to \Cref{appendix:testing} in Appendix.

\begin{Corollary}\label{weeren}
Suppose that \Cref{AssumpD}---\ref{Assumpfix} hold. We have the following.
\begin{itemize}
\item [(a)] Suppose that $\mathbb{P}\left(\Xstar =0\right)>0$ and there exists some $\mu_0 \in(0,+\infty)$ such that $$\mathbb{P}\left(\abs{\Xstar} \in\left(\mu_0,+\infty\right) \cup\{0\}\right)=1.$$ Then for any fixed $i$ such that $\sti=0$, we have $\lim _{p \rightarrow \infty} \mathbb{P}\left(T_{i}=1\right)=\alpha,$ and the false positive rate satisfies that almost surely $\lim _{p\to \infty} \mathsf{FPR}(p)=\alpha.$ Refer also to \Cref{TPRlimit} from Appendix  for the exact asymptotic limit of TPR. 
\item [(b)] The false coverage proportion satisfies that almost surely $\lim _{p\to \infty} \mathsf{FCP}(p)=\alpha$. 
\end{itemize}
\end{Corollary}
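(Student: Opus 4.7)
Both claims reduce to Theorem \ref{NEIGMAIN} by writing the quantities of interest as empirical averages of (approximate) indicator functions of $(\tauh^{-1/2}(\bhetah-\st),\st)$. The key additional input I rely on is the joint Wasserstein-$2$ convergence
\[
\bigl(\tauh^{-1/2}(\bhetah-\st),\,\st\bigr)\toW (Z,\Xstar),\qquad Z\sim N(0,1)\ \text{independent of}\ \Xstar,
\]
which is intrinsic to the VAMP-based proof of Theorem \ref{NEIGMAIN}. The separation condition $\mathbb{P}(\abs{\Xstar}\in(\mu_0,\infty)\cup\{0\})=1$ in part (a) is what lets me handle the atom $\{\beta^\star_i=0\}$ robustly.

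\textbf{Part (b), FCP.} Rewriting $\sti\notin\mathsf{CI}_i$ as $(\bhetahi-\sti)/\sqrt{\tauh}\notin(-b,-a)$,
\[
\mathsf{FCP}(p)=\frac{1}{p}\sum_{j=1}^p\mathbb{I}\!\left((\hat{\beta}^u_j-\beta^\star_j)/\sqrt{\tauh}\notin(-b,-a)\right).
\]
I will sandwich this indicator between continuous Lipschitz functions $f^\pm_\delta$ that agree with $\mathbb{I}(\cdot\notin(-b,-a))$ outside a $\delta$-neighborhood of $\{-b,-a\}$. Applying Theorem \ref{NEIGMAIN} to each $f^\pm_\delta$ produces a.s.\ limits $\mathbb{E}\,f^\pm_\delta(Z)$, and continuity of $\Phi$ at $-a$ and $-b$ forces both limits to tend to $\mathbb{P}(Z\notin(-b,-a))=1-(\Phi(b)-\Phi(a))=\alpha$ as $\delta\downarrow0$.

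\textbf{Part (a), averaged FPR.} Since $\beta^\star_j=0$ implies $\hat{\beta}^u_j=\hat{\beta}^u_j-\beta^\star_j$,
\[
\mathsf{FPR}(p)=\frac{p^{-1}\sum_j\mathbb{I}\!\left(\abs{(\hat{\beta}^u_j-\beta^\star_j)/\sqrt{\tauh}}\geq\Phi^{-1}(1-\alpha/2)\right)\mathbb{I}(\beta^\star_j=0)}{p^{-1}\sum_j\mathbb{I}(\beta^\star_j=0)}.
\]
The denominator converges a.s.\ to $\mathbb{P}(\Xstar=0)>0$: apply the Portmanteau theorem to $\st\toW\Xstar$ on intervals $(-\epsilon,\epsilon)$ for any $\epsilon\in(0,\mu_0)$ avoiding atoms of $\Xstar$, then invoke the $\mu_0$-gap in $\operatorname{supp}(\Xstar)$. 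For the numerator, I feed the joint convergence into sandwich approximants of the two indicators---a smooth bump for $\mathbb{I}(\abs{z}\geq\Phi^{-1}(1-\alpha/2))$ and a triangular bump supported in $(-\epsilon,\epsilon)$ with $\epsilon<\mu_0$ for $\mathbb{I}(x=0)$---and factorize the limit by $Z\perp\Xstar$ to obtain $\mathbb{P}(\abs{Z}\geq\Phi^{-1}(1-\alpha/2))\cdot\mathbb{P}(\Xstar=0)=\alpha\cdot\mathbb{P}(\Xstar=0)$. Dividing yields $\mathsf{FPR}(p)\to\alpha$ a.s.

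\textbf{Fixed-$i$ case and main obstacle.} The claim $\lim_{p\to\infty}\mathbb{P}(T_i=1)=\alpha$ at a fixed coordinate with $\beta^\star_i=0$ is the hard part, because Wasserstein-$2$ empirical convergence does not by itself imply convergence of the law of the $i$-th coordinate. My plan is to mimic the strategy used to prove Corollary \ref{sgoods}: the VAMP iterate underlying the proof of Theorem \ref{NEIGMAIN} has, by rotational invariance of $\Obm$, a conditionally Gaussian law at each fixed coordinate (conditional on $\beta^\star_i$) with variance tending to $\taustar$; transporting this fixed-coordinate Gaussianity through the Cauchy/contraction argument that links the VAMP surrogate to $\bhetah$ (developed earlier in the paper) yields $(\hat{\beta}^u_i-\beta^\star_i)/\sqrt{\tauh}\Rightarrow N(0,1)$. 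The probability $\mathbb{P}(T_i=1)$ then converges to $\mathbb{P}(\abs{Z}\geq\Phi^{-1}(1-\alpha/2))=\alpha$.
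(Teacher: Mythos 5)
Your treatment of part (b) and of the averaged FPR limit in part (a) is correct and essentially the same as the paper's: both rest on Theorem \ref{NEIGMAIN} together with a piecewise-continuity argument to pass limits through the indicator functions, and both exploit the $\mu_0$-gap so that $\mathbb{I}(\beta^\star_j=0)$ can be replaced by $\mathbb{I}(|\beta^\star_j|\le \mu_0/2)$, an indicator whose discontinuity set has probability zero under the law of $\Xstar$. The paper invokes Proposition \ref{prop:asW} directly rather than building sandwich approximants, but these are the same idea.

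Where the proposal has a genuine gap is the fixed-coordinate claim $\lim_{p\to\infty}\mathbb{P}(T_i=1)=\alpha$. You correctly identify that empirical Wasserstein-$2$ convergence alone does not control a single coordinate, but the plan you sketch to fill the gap does not go through. You propose to ``transport'' a per-coordinate Gaussianity of the VAMP iterate through the Cauchy/contraction argument that links $\mathbf{r}_{1t}$ to $\rstar$. That argument (Propositions \ref{prop:convsmallbeta}, \ref{Corc}, \ref{prop:sds}) yields only $\frac{1}{p}\|\mathbf{r}_{1t}-\rstar\|^2\to 0$, an average control that says nothing about any fixed coordinate $i$ — a single coordinate error could be $O(1)$ while the average tends to zero. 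Moreover, Corollary \ref{sgoods}, which you cite as the template, holds under an extra exchangeability assumption on $\st$ that Corollary \ref{weeren} does not impose, so its proof cannot simply be ``mimicked'' here. The paper instead closes the fixed-coordinate claim by a much lighter route: the columns of $\X$ are exchangeable under right-rotational invariance, and any permutation that fixes the nonzero support of $\st$ leaves the joint law of $(\hatbt,\X,\st,\epbm)$ unchanged (the penalty is separable and coordinate-symmetric), so $\mathbb{P}(T_j=1)$ is constant over $j$ with $\beta^\star_j=0$. This gives $\mathbb{E}[\mathsf{FPR}(p)]=\mathbb{P}(T_i=1)$ exactly, and then the already-established almost-sure limit $\mathsf{FPR}(p)\to\alpha$ together with $0\le\mathsf{FPR}(p)\le 1$ and dominated convergence yields $\mathbb{P}(T_i=1)\to\alpha$. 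You should replace the transport-through-contraction plan with this exchangeability-plus-DCT argument.
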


We demonstrate \Cref{weeren} in \Cref{fig2}. We note that the FPR and FCP values obtained from Degrees-of-Freedom Debiasing diverge from the intended $\alpha$ values, showing a clear misalignment with the 45-degree line. In contrast, the Spectrum-Aware Debiasing method aligns rather well with the specified $\alpha$ values, and this occurs without much compromise on the TPR level.

\section{PCR-Spectrum-Aware Debiasing}\label{section:pcar}

\subsection{Outliers and PC alignment}\label{challengedfs}
The assumptions made in our previous section exclude important scenarios where the  design may contain outlier eigenvalues or the signal may align with an eigenvector of the sample covariance matrix. To address this challenge, we propose an enhanced Spectrum-Aware procedure which relaxes Assumptions \ref{AssumpD} and \ref{AssumpPrior} to \Cref{AssumpPriorAL} below. To this end, denote $\mathcal{N}:=\left\{i \in [p]: d_i^2>0\right\}, N:=|\mathcal{N}|$. We let $\Js$ be a user-chosen, finite index set $\Js \subseteq \mathcal{N}$ that should ideally contain outlier eigenvalues and alignment eigenvectors (See \Cref{woremakrs}). We denote its size as $J:=\abs{\Js}$. 
\begin{Assumption}\label{AssumpPriorAL}
We assume that $\Js$ is of finite size\footnote{Finite size means that $J_1$ does not grow with $n,p$.} and for some real-valued vectors $\alphr \in \R^{J},\zetr\in \R^{p}$,
    \begin{equation}\label{stwerif}
        \st=\stal+\zetr, \qquad  \stal=\sum_{i=1}^{J} \alphstari \cdot \mathbf{o}_{\Js(i)}.
    \end{equation}
where we used $\Js(i)$ to denote the $i$-th index in $\Js$. Both $\alphstar$ and $\zetr$ are unknown, and they can be either deterministic or random independent of $\Obm,\epbm$.  If $\zetr$ is deterministic, we assume that 
$\zetr \stackrel{W_2}{\to} \Zetr$ as $n,p\to \infty$, where $\Zetr$ is a random variable with finite variance. If $\zetr$ is random, we assume the same convergence holds almost surely. Furthermore, we assume that \Cref{AssumpD} holds except that, instead of \eqref{Dconve} and \eqref{eqtwe}, we only require eigenvalues outside of $\Js$ to be bounded and converge in empirical measure, $$\mathbf{d}_{\Js^c} \stackrel{W_2}{\to} \D, \qquad \limsup_{p\to \infty} \max_{i\in [p]\setminus \Js } d_i^2<+\infty$$ where $\mathbf{d}_{\Js^c}$ denotes a sub-vector of $\mathbf{d}=\mathbf{D}^\top \bm{1}_{n\times 1}$ with entries indexed by $\Js$ removed. Finally, we require that $\limsup_{p\to \infty} \max_{i\in \Js } d_i^{-2}/p\to 0.$
\end{Assumption}
Under Assumption \ref{AssumpPriorAL}, $\stal$ is the \textit{alignment component} that aligns to $\Js(i)$-th Principal Component (PC) $\mathbf{o}_{\Js(i)}$ if the corresponding $\alphstari$ is non-zero, while $\zetr$ is the \textit{complement component} that is independent of the design. Note that when $\Js=\emptyset$, \Cref{AssumpPriorAL} reduces to Assumptions \ref{AssumpD} and \ref{AssumpPrior} precisely. Finally, we note that the condition $\limsup_{p\to \infty} \max_{i\in \Js } d_i^{-2}/p\to 0$ is mild: it simply requires that the smallest eigenvalues contained in $\Js$ does not converge to 0 at a faster than $O(1/p)$ rate. 
\begin{Remark}
    \Cref{AssumpPriorAL} does not impose any constraints on $\alphstar\in \R^{J}$. For example, it is permitted that $\alphstar=0$ or that $p^{-1}\norm{\alphstar}^2$ diverges as $p\to \infty$. Note that \Cref{AssumpPriorAL} also permits $\zetr=0$ but $p^{-1}\norm{\zetr}^2$ cannot diverge. 
\end{Remark}

\begin{Remark}\label{woremakrs}
$\Js$ needs to be a finite index set that contains indices of both outlier eigenvalues and alignment eigenvectors. The outlier eigenvalues can be determined by observing the spectrum of $\X^\top \X$. The alignment set is generally not observed. We thus proposed an alignment test in \Cref{Altest} for detecting the alignments. However, we remark that eigenvectors that are both dominant and align with the signal tend to distort the debiasing procedure most severely. So often just including top few PCs in $\Js$ can significantly improve inference. 
\end{Remark}

We develop a debiasing approach that recovers both components of $\bbeta^{\star}$ from \eqref{stwerif}. Our approach uses classical PCR to consistently estimate the aligned component $\stal$ and uses Spectrum-Aware Debiasing to produce a debiased estimator of $\zetr$. 

\subsection{The PCR algorithms}
\begin{figure}[t] 
  \centering
  \includegraphics[width=0.49\columnwidth]{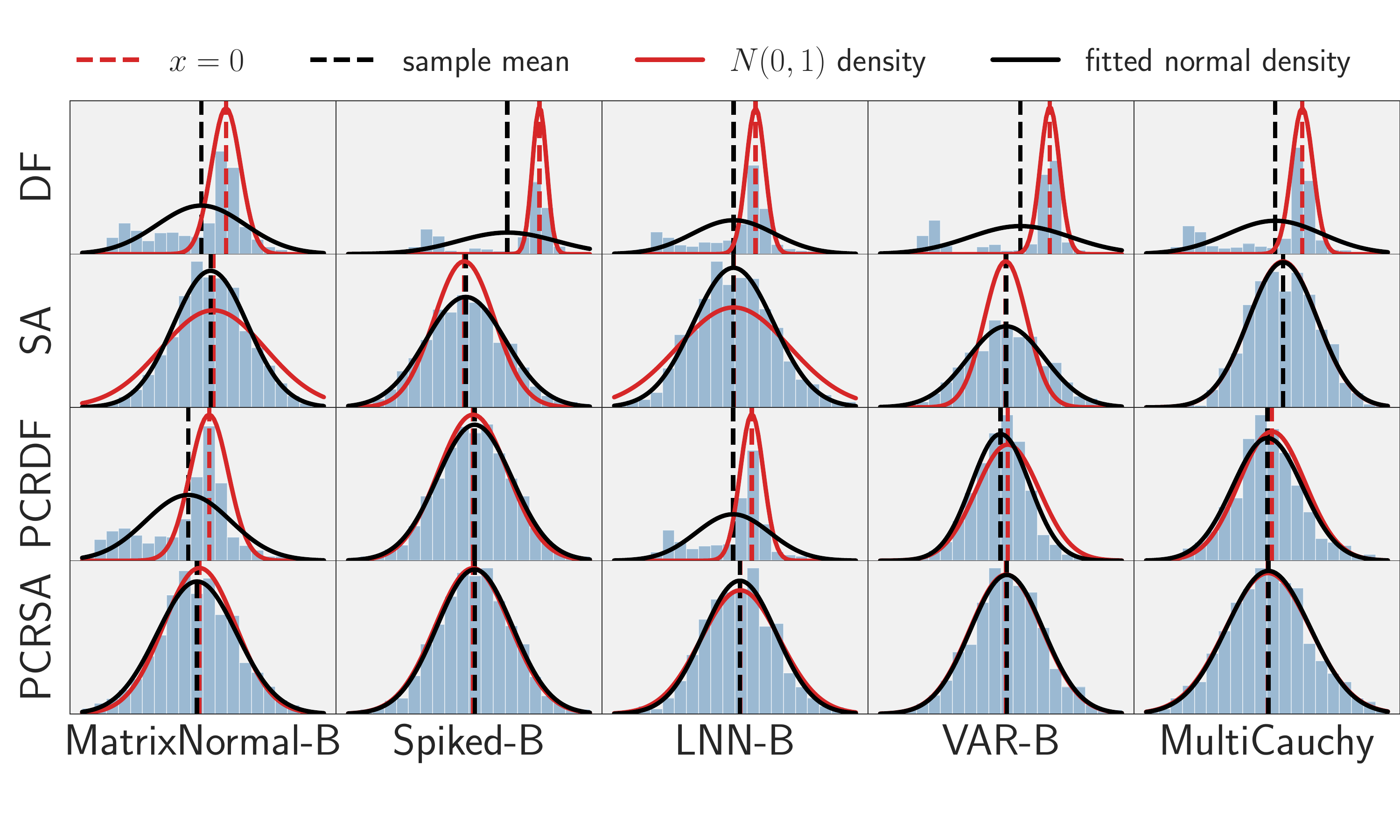} 
  \includegraphics[width=0.49\columnwidth]{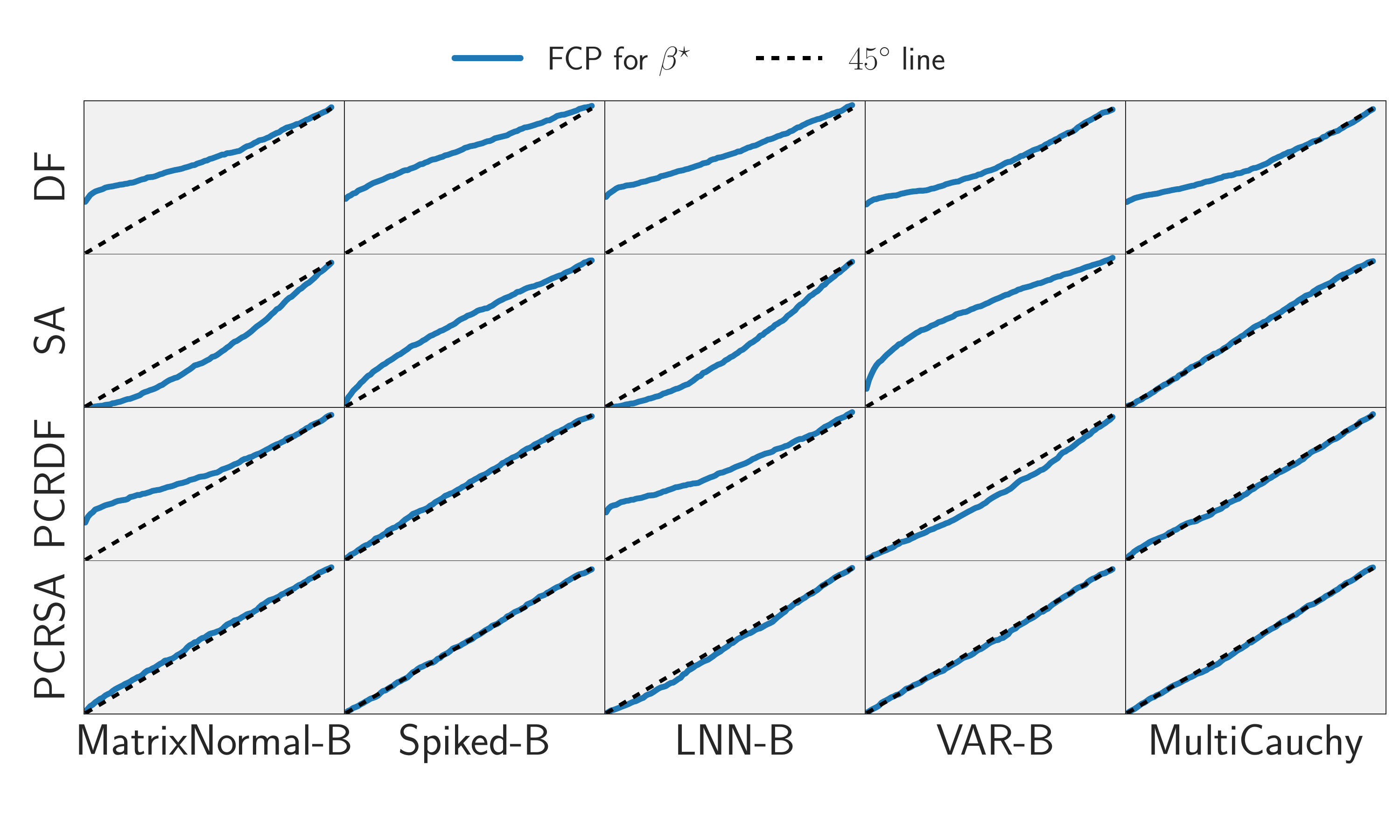} 
  
  \includegraphics[width=0.49\columnwidth]{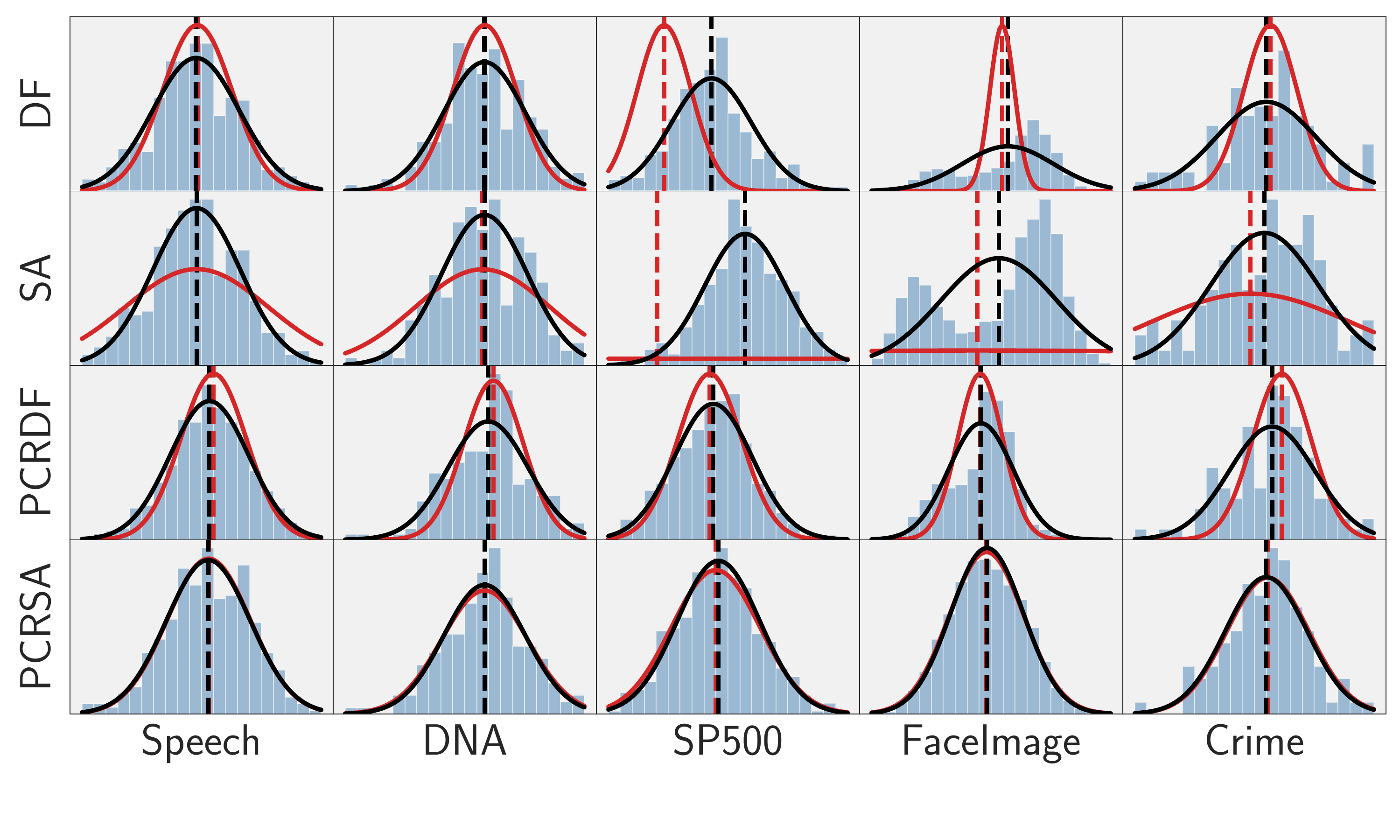} 
  \includegraphics[width=0.49\columnwidth]{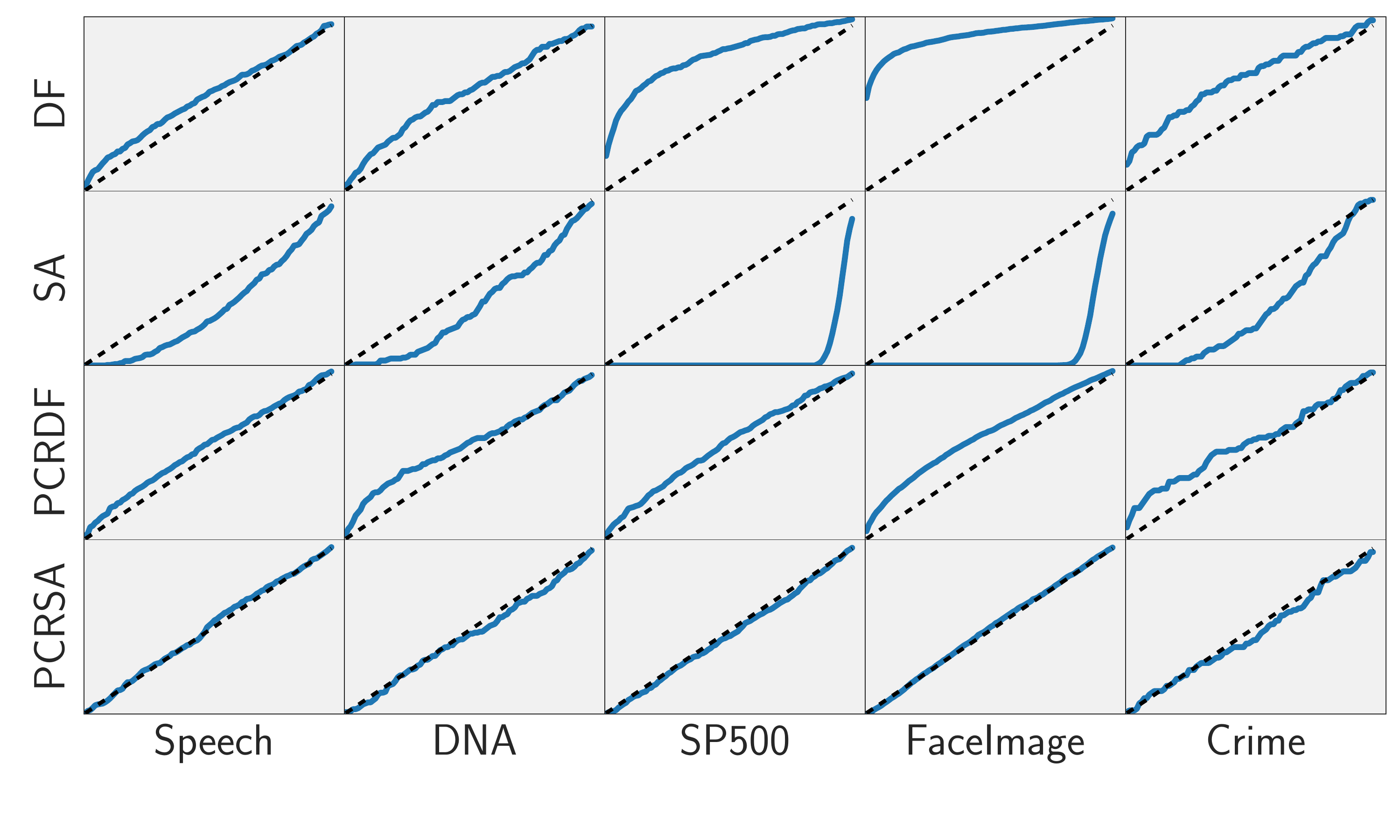} 
  \caption{\textbf{top-left}: Same setting as \Cref{fig1} except for specific changes to the design distribution parameters that lead to more challenging scenarios (see details of $\mathsf{MatrixNormal}$-$\mathsf{B}$,...,$\mathsf{MultiCauchy}$ in \Cref{GroupS} from Appendix). Rows 1--4 correspond to: (i) DF: Degrees-of-Freedom Debiasing as described in \cite{bellec2019biasing}; (ii) SA: Spectrum-Aware Debiasing as described in \Cref{algodebias}; (iii): PCRDF: PCR-Degrees-of-Freedom Debiasing, that is, the procedure obtained from \Cref{PCRSADef}  on substituting Degrees-of-Freedom Debiasing for Spectrum-Aware Debiasing in the complement PCR step; (iv) PCRSA: PCR-Spectrum-Aware Debiasing as described in \Cref{PCRSADef}. The true signals $\st$ for these designs are generated such that they align with the 2nd, 4th, and 6th PCs. Concretely, we generate the signal as follows: $\st=\stal+\zetr$ where the components of $\zetri$ are i.i.d.~draws satisfying $\zetri \sim 0.24\cdot N(-20,1)+0.06\cdot N(10,1)+0.7\cdot \delta_0$ and $\stal=\sum_{i=1}^{J} \alphstari \cdot \mathbf{o}_{\Js(i)}$ with $\alphstari=5\cdot \sqrt{p}, i \in \{2,4,6\}$ and $0$ otherwise. We set $\Js$ to be the top 20 PCs for all designs, except MultiCauchy where we use the top 100 PCs. Penalty $h$ used in complement PCR step (described in \Cref{pcrcompe}) is identical to that used in \Cref{fig1}. See the corresponding QQ plot in \Cref{figQQB} from Appendix. \textbf{bottom-left}: Same setting as top-left except that the designs are taken from real datasets (real data descriptions are in \Cref{GroupD} from Appendix). The dataset sizes are as follows. $\mathsf{Speech}$: $200\times 400$, $\mathsf{DNA}$: $100\times 180$, $\mathsf{SP500}$: $300\times 496$ and $\mathsf{FaceImage}$: $1348\times 2914$ and $\mathsf{Crime}$: $50 \times 99$.  All designs are centered and standardized (across rows) and then rescaled so that the average of eigenvalues of sample covariance matrix is $1$. The signal is generated in the same way as in the top-left. See the corresponding QQ plot in \Cref{figQQD} from Appendix. We set $\Js$ to be the top 10 PCs for all designs, except FaceImage where we once again use the top 100 PCs.
  \textbf{top-right}: Under the setting of top-left, we plot the false coverage proportion (FCP) of the confidence intervals for $(\sti)_{i=1}^p$, as we vary the targeted FCP level on the x-axis $\alpha$ from $0$ to $1$. The $y$-axis also ranges from 0 to 1. \textbf{bottom-right}: analogous FCP plots under the setting of bottom-left.}
  \label{figPCRA}
\end{figure}

\subsection{PCR procedures}\label{assB}
Given the index set $\Js$, we describe PCR procedures that separately estimate the alignment component $\stal$ and the complement component $\zetr$. 

\subsubsection{Classical PCR}
The alignment component $\stal$ can be readily recovered using the traditional PCR method. The method computes the following:
\begin{equation}\label{pcrdef}
\pcr(\Js):=\left(\tilde{\mathbf{X}}_\Js^{\top} \tilde{\mathbf{X}}_\Js\right)^{-1} \tilde{\mathbf{X}}_\Js^{\top}\y\in \mathbb{R}^J,
\end{equation}
where $\tilde{\mathbf{X}}_\Js:=\X \Obm_\Js^{\top}\in \R^{n \times J}$ represents the basis-transformed design matrix and $\Obm_{\Js}\in \R^{J\times p}$ comprises rows of $\Obm$ indexed by $\Js$. The alignment PCR estimator is then given by $\pcrt:=\Obm_{\Js}^\top \pcr (\Js) \in \R^p$.
\Cref{PCRTHM} (a) shows that $\pcrt$ is a consistent estimator of $\stal$. 
This is the traditional PCR estimator, but it suffers from a shrinkage bias since it only recovers $\stal$.  To obtain an asymptotically unbiased estimator for $\st$, it is essential to debias $\pcrt$. We accomplish this in the following section.

\subsubsection{Complement PCR}\label{pcrcompe}
We leverage our Spectrum-Aware Debiasing theory to devise a modified PCR procedure that provides an accurate 
estimate of the complement component $\zetr$. {We collect the indices in $\Js^c$ corresponding to \textit{positive} eigenvalues not used by alignment PCR ($\Jsb$ differs from $\Js^c$ as it excludes zero eigenvalues)}
$$\Jsb:= \{i\in [p]: d_i^2>0, i\notin \Js\}.$$ 
Here, $|\Jsb|=N-J$ where $N=\mathrm{rank}(\X)$. As a first step, we calculate a PCR estimator using the PCs indexed by $\Jsb$. That is, we calculate
${\pcr}^\bot=\pcr(\Jsb)$ using the definition in  \eqref{pcrdef}. Next, we construct a  new dataset as follows
\begin{equation}\label{xnewYnew}
    \begin{aligned}
        &\Xnew := \qty(\Dbm_{\Jsb}^\top \Dbm_{\Jsb})^{1/2} \Obm_{\Jsb}, \;\; \;\; \ynew := \qty(\Dbm_{\Jsb}^\top \Dbm_{\Jsb})^{1/2} \pcr^\bot,
    \end{aligned}
\end{equation}
where $\Dbm_{\Jsb} \in \R^{n \times (N-J)}$, $ \Obm_{\Jsb}\in \R^{(N-J)\times p}$  respectively consist of the columns of $\Dbm$ and the rows of $\Obm$   indexed by $\Jsb$. We employ Spectrum-Aware Debiasing on this new dataset. The resulting estimator, which we call complement PCR, is $\pcrc= \bhetah(\Xnew, \ynew, h)$, which is calculated from \eqref{SAestimator} and \eqref{eq:adjspectrum} with respect to the new dataset $(\Xnew, \ynew)$. We establish in \Cref{PCRTHM} (b) that $\pcrc$ is approximately Gaussian centered at the complement signal component $\zetr$, with variance $\tauc=\tauh(\Xnew, \ynew, h)$ obtained using \eqref{DEFEFD} on the new dataset.

\subsubsection{PCR-Spectrum-Aware Debiasing or Debiased PCR} \label{PCRSADef}
Combining our estimators from the previous sections, we obtain a debiased estimator for the full signal 
$\st$ given by 
$\pcrdb:= \pcrt+\pcrc$. Since this estimator utilizes ideas from the classical PCR as well as our Spectrum-Aware Debiasing approaches, we name it   PCR-Spectrum-Aware Debiasing. 
If the index set $\Js$ includes all outlier PCs and PCs aligned with $\st$, the procedure successfully removes shrinkage bias of the classical PCR estimator. It achieves this by ``repurposing" discarded PCs to construct the complement component estimator  $\pcrc$. \Cref{algodebiaspcr}  from Appendix presents the entire procedure in detail.

\subsection{Asymptotic normality}\label{section:asympcrcf}
We now state the asymptotic properties of the debiased PCR procedure. The proof of the theorem below is deferred to \Cref{appendix:pcrdeb} in Appendix.  

\begin{Theorem}\label{PCRTHM}
Suppose Assumptions \ref{Assumph}---\ref{AssumpPriorAL} hold. Then, almost surely as $p\to \infty$, we have the following: (a) Alignment PCR: $\frac{1}{p}\norm{\pcrt(\Js)-\stal}^2\to 0$; (b) Complement PCR: $\tauc^{-1/2}\qty(\pcrc(\Jsb)-\zetr)\stackrel{W_2}{\to} N(0,1)$; (c) Debiased PCR: $\tauc^{-1/2}\qty(\pcrdb-\st)\stackrel{W_2}{\to} N(0,1).$


    
\end{Theorem}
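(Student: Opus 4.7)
The plan is to establish parts (a), (b), (c) in sequence, with part (b) being the technical core and parts (a), (c) reducing to simpler arguments once (b) is in hand.

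\textbf{Part (a).} I would leverage the SVD $\X=\Qbm^{\top}\Dbm\Obm$. Since $\tilde{\X}_{\Js}=\X\Obm_{\Js}^{\top}=\Qbm^{\top}\Dbm\Obm\Obm_{\Js}^{\top}$, and $\Obm\Obm_{\Js}^{\top}$ selects the $J$ columns of $\Obm$ at positions $\Js(i)$, a direct computation gives $\tilde{\X}_{\Js}^{\top}\tilde{\X}_{\Js}=\mathrm{diag}(d_{\Js(i)}^{2})$, which is invertible since $\Js\subseteq\mathcal{N}$. Substituting $\y=\tilde{\X}_{\Js}\alphstar+\X\zetr+\epbm$ into the definition of $\pcr(\Js)$, the $i$-th entry of $\pcr(\Js)-\alphstar$ equals $(\Obm\zetr)_{\Js(i)}+d_{\Js(i)}^{-1}(\Qbm\epbm)_{\Js(i)}$. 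Both contributions are stochastically bounded---the first by Haar concentration of $\Obm\zetr$ on the sphere (using $\zetr$ independent of $\Obm$ and $p^{-1}\|\zetr\|^{2}\to\mathbb{E}(\Zetr)^{2}$), and the second by Gaussianity of $\Qbm\epbm$ combined with the lower bound on $d_{\Js(i)}$. Since $\Obm_{\Js}$ has orthonormal rows, $\|\pcrt-\stal\|^{2}=\|\pcr(\Js)-\alphstar\|^{2}=O(J)=O(1)$, and dividing by $p$ yields the claim.

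\textbf{Part (b).} The plan is to reduce the complement PCR problem to a standard instance of Spectrum-Aware Debiasing on $(\Xnew,\ynew)$ and invoke \Cref{NEIGMAIN}. The same computation as in (a) shows $\tilde{\X}_{\Jsb}^{\top}\X\stal=0$, so $\ynew=\Xnew\zetr+\epnew$ with $\epnew=(\Qbm\epbm)_{\Jsb}$. Since $\epbm\sim N(0,\sigma^{2}\mathbf{I}_{n})$ is independent of $\Obm$ and $\Qbm$ is orthogonal, $\epnew\sim N(0,\sigma^{2}\mathbf{I}_{N-J})$ and is independent of $\Obm_{\Jsb}$. The main obstacle is to show that $\Xnew=\mathrm{diag}(d_{\Jsb(i)})\Obm_{\Jsb}$ fits within the right-rotationally invariant framework. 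The strategy is to condition on $V=\mathrm{span}(\mathbf{o}_{\Js(i)}: i\le J)$: by the Haar property of $\Obm$, conditional on $V$ the remaining rows of $\Obm$ are uniform on the Stiefel manifold of $V^{\perp}$ and can be assembled into a genuine Haar matrix on $V^{\perp}\cong\R^{p-J}$ (for instance, by appending the rows indexed by $[p]\setminus(\Js\cup\Jsb)$ together with any orthonormal completion). This recasts $\Xnew$, after restriction to $V^{\perp}$, as a right-rotationally invariant design in dimension $p-J$ with signal given by the projection of $\zetr$ onto $V^{\perp}$. Independence of this projected signal from the new Haar factor is inherited from independence of $\zetr$ and $\Obm$. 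One then checks that the new limiting spectral distribution (obtained by truncating the finitely many atoms indexed by $\Js$) still satisfies \Cref{AssumpD}, that the fixed-point system \eqref{fp} for the new problem admits a unique solution (\Cref{Assumpfix}--\ref{Assumpfix2}), and that the remaining assumptions transfer verbatim. \Cref{NEIGMAIN} then delivers $\tauc^{-1/2}(\pcrc-\zetr)\toW N(0,1)$.

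\textbf{Part (c).} I decompose $\pcrdb-\st=(\pcrt-\stal)+(\pcrc-\zetr)$. By part (a), $p^{-1}\|\pcrt-\stal\|^{2}\to 0$ almost surely, so the empirical distribution of $\tauc^{-1/2}(\pcrt-\stal)$ converges to $\delta_{0}$ in $W_{2}$, provided $\tauc$ is bounded away from zero (which follows from the fixed-point structure and \Cref{Assumpfix}). Since $W_{2}$ convergence is stable under addition of a term whose empirical second moment vanishes, combining with part (b) yields $\tauc^{-1/2}(\pcrdb-\st)\toW N(0,1)$. The hardest step is clearly part (b): one must articulate precisely why $\Obm_{\Jsb}$, after conditioning on $\Obm_{\Js}$, generates a genuine right-rotationally invariant design on the reduced space, and then verify that \Cref{Assumpfix} transfers to the truncated spectrum---this is penalty- and spectrum-dependent and may require a monotonicity argument or specialization to the Elastic Net via \Cref{elastfpfp}. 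Once these structural issues are resolved, the remaining steps are essentially bookkeeping.
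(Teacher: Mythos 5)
Your parts (a) and (c) follow essentially the same route as the paper, and (a)'s reliance on the orthonormality of $\Obm_\Js$ to bound $\|\pcrt-\stal\|^2$ by $\|\pcr(\Js)-\alphstar\|^2$ is exactly right (though one should note the paper upgrades ``stochastically bounded'' to an almost-sure statement via fourth-moment bounds and Borel--Cantelli). The substantive divergence is in part (b), where your approach is both more complicated than necessary and contains a gap.

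Your plan conditions on $V=\mathrm{span}(\mathbf{o}_{\Js(i)})$, recasts $\Xnew$ restricted to $V^\perp$ as a right-rotationally invariant design in dimension $p-J$ with signal $\mathbf{P}_{V^\perp}\zetr$, and then invokes \Cref{NEIGMAIN}. But \Cref{NEIGMAIN} applied to that reduced problem would yield asymptotic normality of $\pcrc$ around $\mathbf{P}_{V^\perp}\zetr$, not around $\zetr$. The discrepancy $\mathbf{P}_\Js\zetr$ has to be shown negligible by a separate argument analogous to part (a), and you do not address this. There is also the issue that the debiasing output $\pcrc$ lives in $\R^p$, not $\R^{p-J}$, so the dimension-reduction framing requires additional bookkeeping to relate the two.

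The paper sidesteps all of this with a cleaner observation that your proposal misses: $\Xnew=(\Dbm_{\Jsb}^\top\Dbm_{\Jsb})^{1/2}\Obm_{\Jsb}$ admits the SVD $\Xnew=\Qnew^\top\Dnew\Obm$ with $\Qnew=\mathbf{I}_{N-J}$, $\Dnew=[(\Dbm_{\Jsb}^\top\Dbm_{\Jsb})^{1/2},\mathbf{0}]$, and \emph{the same full Haar matrix} $\Obm$ (after a deterministic column permutation, which preserves Haar). Hence $\Xnew$ is right-rotationally invariant in the original dimension $p$ with signal $\zetr$, no conditioning or dimension reduction needed, and \Cref{NEIGMAIN} applies directly. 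This also dissolves your worry about whether \Cref{Assumpfix} ``transfers'': since $J$ is finite and does not grow with $n,p$, the spectral limit of $\Dnew^\top\mathbf{1}$ is the same $\D$ as before, so the fixed-point system \eqref{fp} for the new problem is identical to the original and \Cref{Assumpfix} holds automatically; no monotonicity argument or Elastic-Net specialization is required.
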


\begin{Remark}
    Given exchangeability of entries of $\zetr$, we may obtain results analogous to \Cref{sgoods} for finite or single coordinate inference. We defer the results to \Cref{appendix:finitecoorpcr} in Appendix.
\end{Remark}

{ \begin{Remark}
    We prove that a variant of \Cref{PCRTHM} continues to hold under a broader spectral universality class defined in \Cref{uniclass}. This result is stated in \Cref{univPCRthm}. 
\end{Remark}}

We demonstrate \Cref{PCRTHM} using two sets of design matrices. Our first set (top panel of \Cref{figPCRA}) represents more challenging variants of the settings from \Cref{fig1}. These designs contain high correlation, heterogeneity, or both. They also contain outlier eigenvalues and the signal $\st$ aligns with a few top eigenvectors. Specifically, the top panel presents the following right-rotationally invariant designs: (i) $\mathsf{MatrixNormal}$-$\mathsf{B}$: stronger row- and column-wise correlations than $\mathsf{MatrixNormal}$; (ii) $\mathsf{Spiked}$-$\mathsf{B}$: larger and fewer spikes than $\mathsf{Spiked}$; (iii) $\mathsf{LNN}$-$\mathsf{B}$: matrix product with larger exponents and stronger correlations than $\mathsf{LNN}$; (iv) $\mathsf{VAR}$-$\mathsf{B}$: stronger row dependencies than $\mathsf{VAR}$; (v) $\mathsf{MultiCauchy}$: heavier tails than $\mathsf{Multi}$-$t$. Detailed description of these design distributions are given in \Cref{GroupS} from Appendix. Our second set of experiments (bottom panel of \Cref{figPCRA}) uses real data designs from five domains: speech audio \cite{goldstein2016comparative} , DNA \cite{noordewier1990training}, stock returns (S\&P 500) \cite{SP500}, face images \cite{LFWTech}, and crime metrics \cite{misc_communities_and_crime_183}. Further details about these design matrices are included in \Cref{GroupD} from Appendix.

\subsection{Inference}\label{section:inference}
Theorem \ref{PCRTHM} motivates an inference procedure similar to \Cref{sec:infvanilla}. For a specified level $\alpha \in [0,1]$, the confidence intervals  
\begin{equation} \label{defCIPCR}
    \mathsf{CI}_i \qty(\pcrdbi, \tauh)=\left(\pcrdbi+a \sqrt{\tauh}, \pcrdbi+b \sqrt{\tauh}\right)
\end{equation}
admit the false coverage proportion guarantee $\mathsf{FCP}(p):=\frac{1}{p} \sum_{i=1}^p \mathbb{I}\left(\sti \notin \mathsf{CI}_i\right) \rightarrow \alpha$, when $a,b$ satisfy $\Phi(b) - \Phi(a) = 1-\alpha$. 
The right column of \Cref{figPCRA} displays the FCP of these confidence intervals in the settings discussed following Theorem \ref{PCRTHM}. 
PCR-Spectrum-Aware Debiasing achieves an FCP that aligns exceptionally well with the intended $\alpha$ values across these challenging settings, outperforming other methods. 


\subsection{Alignment testing} \label{Altest} A  fundamental challenge that modern data analysis presents relates to alignment of a part of the signal with eigenvectors of the sample covariance matrix. 
Such alignment distorts the performance of inference procedures unless they explicitly account for it.  
As a by-product,  our Spectrum-Aware Debiasing theory provides a formal test for alignment, in other words, for testing $H_{i,0}^{\alphstar}:\alphstari=0$ vs $H_{i,1}^{\alphstar}:\alphstari \neq 0$, where $\alphstari$ is given by \eqref{stwerif}. 
Below, $\alphstar$ refers to the vector with $i$-th entry $\alphstari$. \Cref{testCor} below is proved in \Cref{apptestcor} from Appendix.
\begin{Corollary}\label{testCor}
Suppose that the assumptions in \Cref{PCRTHM} hold. Then as $p\to \infty$, $$\hat{\bm{\Gamma}}^{-1/2}\qty(\pcr-\alphstar)\Rightarrow N(\bm{0},\mathbf{I}_J),$$ where $\pcr$ is given by \eqref{pcrdef}, $\hat{\bm{\Gamma}}=\hat{\sigma}^2\cdot \qty(\Dbm^\top_\Js \Dbm_\Js)^{-1}+\magzetr \cdot \mathbf{I}_{J}$ with $\Dbm_\Js\in \R^{n\times J}$ representing columns of $\Dbm$ indexed by $\Js$, $\hat{\sigma}^2$ a consistent estimator for the noise variance $\sigma^2$ given in \eqref{wemr} and $\magzetr:=p^{-1}{\norm{\pcrc}^2}-\tauhpcr$. 
\end{Corollary}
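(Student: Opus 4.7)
The strategy is to decompose $\pcr - \alphstar$ into two asymptotically independent pieces — a Gaussian piece driven by the noise $\epbm$ and a Haar-projection piece driven by $\zetr$ — compute the distribution of each, and then verify that $\hat{\bm{\Gamma}}$ consistently estimates the sum of their covariances. Substituting the SVD $\X = \Qbm^\top \Dbm \Obm$ and using that orthogonality of $\Obm$ makes $\Obm\Obm_\Js^\top$ the selection matrix for the columns indexed by $\Js$ gives $\tilde{\X}_\Js = \Qbm^\top \Dbm_\Js$ and $\tilde{\X}_\Js^\top \tilde{\X}_\Js = \Dbm_\Js^\top \Dbm_\Js$. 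Plugging $\y = \X\st + \epbm$ with $\st = \stal + \zetr$ and $\stal = \sum_{i=1}^J \alphstari \mathbf{o}_{\Js(i)}$ into \eqref{pcrdef} and exploiting the diagonal structure of $\Dbm_\Js^\top \Dbm$ (which forces $\qty((\Dbm_\Js^\top \Dbm_\Js)^{-1}\Dbm_\Js^\top \Dbm \Obm \st)_i = \mathbf{o}_{\Js(i)}^\top \st = \alphstari + \mathbf{o}_{\Js(i)}^\top \zetr$) yields the clean decomposition
\begin{equation*}
    \pcr - \alphstar = \bm{\xi}_1 + \bm{\xi}_2, \qquad \bm{\xi}_1 := (\Dbm_\Js^\top \Dbm_\Js)^{-1}\Dbm_\Js^\top \Qbm \epbm, \qquad (\bm{\xi}_2)_i := \mathbf{o}_{\Js(i)}^\top \zetr.
\end{equation*}

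Since $\epbm \sim N(\bm{0}, \sigma^2 \mathbf{I}_n)$ is independent of $\X$ and $\Qbm$ is orthogonal, $\bm{\xi}_1 \mid \X \sim N(\bm{0}, \sigma^2 (\Dbm_\Js^\top \Dbm_\Js)^{-1})$ holds \emph{exactly}. For $\bm{\xi}_2$, \Cref{AssumpPriorAL} says $\Obm$ is Haar-distributed and independent of $\zetr$, with $p^{-1}\|\zetr\|^2 \to \omega^\star := \E[\Zetr^2]$; conditioning on $\zetr$ and invoking the classical finite-dimensional CLT for projections of a fixed vector onto Haar-random orthonormal rows then gives $\bm{\xi}_2 \Rightarrow N(\bm{0}, \omega^\star \mathbf{I}_J)$. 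Independence $\bm{\xi}_1 \perp \bm{\xi}_2$ follows from $\epbm \perp (\Obm, \zetr)$. Setting $\bm{\Gamma}_p^\star := \sigma^2 (\Dbm_\Js^\top \Dbm_\Js)^{-1} + \omega^\star \mathbf{I}_J$, we conclude $(\bm{\Gamma}_p^\star)^{-1/2}(\pcr - \alphstar) \Rightarrow N(\bm{0}, \mathbf{I}_J)$.

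Slutsky then reduces the target statement to showing $\hat{\sigma}^2 \to \sigma^2$ and $\magzetr \to \omega^\star$ almost surely. The former is built in by construction (see \eqref{wemr} and \Cref{nofkdka}). For the latter, the VAMP state-evolution underpinning \Cref{PCRTHM}(b) actually produces the joint $W_2$-convergence $(\pcrc, \zetr) \toW (\Zetr + \sqrt{\tauhpcr}\,\Zs, \Zetr)$ with $\Zs \sim N(0,1)$ independent of $\Zetr$. Applying this to the quadratic test function $u \mapsto u^2$ yields $p^{-1}\|\pcrc\|^2 \to \E(\Zetr + \sqrt{\tauhpcr}\,\Zs)^2 = \omega^\star + \tauhpcr$, whence $\magzetr = p^{-1}\|\pcrc\|^2 - \tauhpcr \to \omega^\star$. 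Consequently $\hat{\bm{\Gamma}} - \bm{\Gamma}_p^\star \to \bm{0}$, and $\hat{\bm{\Gamma}}^{-1/2}(\pcr - \alphstar) \Rightarrow N(\bm{0}, \mathbf{I}_J)$ as claimed.

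The delicate ingredient is the joint $W_2$-convergence of $(\pcrc, \zetr)$ invoked above — \Cref{PCRTHM}(b) as stated only advertises the marginal convergence of $\pcrc - \zetr$. Extracting the joint statement, and in particular the asymptotic independence of the Gaussian fluctuation of $\pcrc$ from the signal $\Zetr$, requires unpacking the VAMP state-evolution underlying \Cref{PCRTHM}. While such joint convergence is a standard by-product of AMP/VAMP analyses, carrying it out carefully — and checking that the $\Dbm_{\Jsb}$-rescaling performed on the auxiliary dataset in \eqref{xnewYnew} does not disrupt it — is the technical crux of the proof.
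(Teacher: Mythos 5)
Your proposal is correct and follows essentially the same route as the paper's own proof: the same decomposition of $\pcr-\alphstar$ into the exact Gaussian term $(\Dbm_\Js^\top\Dbm_\Js)^{-1}\Dbm_\Js^\top\Qbm\epbm$ and the Haar-projection term $\Obm_\Js\zetr$, the same Haar CLT for the latter, the same independence argument, the same Slutsky reduction, and the same consistency argument $\magzetr \to \E(\Zetr)^2$ via $\pcrc \stackrel{W_2}{\to} \Zetr+\sqrt{\taustar}\,\Zs$. Regarding your final caveat: the joint $W_2$-convergence of $(\pcrc,\zetr)$ that you flag as the technical crux is in fact supplied by \Cref{thm:empmain} (which gives the joint limit of $(\hatbt,\rstar,\st)$) combined with \Cref{neig}; applying this to the transformed data $(\Xnew,\ynew)$ with true signal $\zetr$ delivers exactly the joint statement you need, so the gap you worry about is already covered by the machinery you cite.
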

Corollary \ref{testCor} motivates the p-values $P_i :=2-2\cdot \Phi\qty(\abs{\pcri/s_i}),s_i:=\sqrt{\hat{\sigma}^2\cdot d^{-2}_{\Js(i)}+\magzetr}$. Since the quantities $\qty(P_i)_{i=1}^J$ are asymptotically independent, the Benjamini-Hochberg procedure \cite{benjamini1995controlling} can be used to control the False Discovery Rate (FDR), which is the expected ratio of PCs falsely identified as aligned with $\st$ out of all PCs identified as aligned with $\st$. 

 We demonstrate the efficacy of our alignment test in \Cref{figtest}. Panel (i) displays Benjamini-Hochberg adjusted p-values for testing alignment in the setting of the real-data designs considered in \Cref{figPCRA}, bottom-row. Panel (iii) shows the true alignment angles between the underlying signal and the top six PCs. Our test accurately identifies alignment where present. In this setting, alignment detection is relatively easy, as the true alignment angles between the top PCs and signal, where present, are all significantly smaller than $90^\circ$ ($\angle (\st, \mathbf{o}_i) \lesssim 75^\circ$). We illustrate in \Cref{figtest}, panels (ii) and (iv) how our alignment test performs when alignments become less pronounced and therefore harder to detect. Our method remains effective in rejecting all strong alignments present ($\angle (\st, \mathbf{o}_i) \lesssim 75^\circ$). While it is less decisive in rejecting weak alignments ($75^\circ \lesssim\angle (\st, \mathbf{o}_i) \lesssim 85^\circ$), the overall detected alignment pattern, as reflected in the small p-values shown in panel (ii), closely matches the true alignment pattern displayed in panel (iv)\footnote{Note that in the setting of \Cref{figtest}, we artificially aligned the signal with the 2nd, 4th and 6th PCs. However, the bottom row of \Cref{figtest} suggests that the signal $\st$ also aligns with the 1st PC for the SP500, FaceImage and Crime designs. This additional alignment was not introduced deliberately; however, it exists due to the following reason.  Recall the signal decomposition from \Cref{stwerif} given by $\st=\stal+\zetr$. In our setting here, $\stal$ is a linear combination of the 2nd, 4th, 6th PCs, while we generated $\zetr$ such that its entries have non-zero mean ($\bm{1}_p^\top \zetr\neq 0$). Coincidentally, the top PC of the SP500, FaceImage and Crime designs aligns with \(\bm{1}_p\) due to correlation among the covariates. Thus, the intended alignment-complement decomposition of the signal is  mis-specified, and the model is able to correctly identify the alignment of $\zetr$ with $\bm{1}_p$.}. We conducted similar experiments for the simulated designs in the top row of \Cref{figPCRA} in \Cref{dnf} from Appendix. 

\begin{figure}[t]
\centering

\begin{minipage}[b]{0.21\columnwidth}\centering
\includegraphics[width=\linewidth]{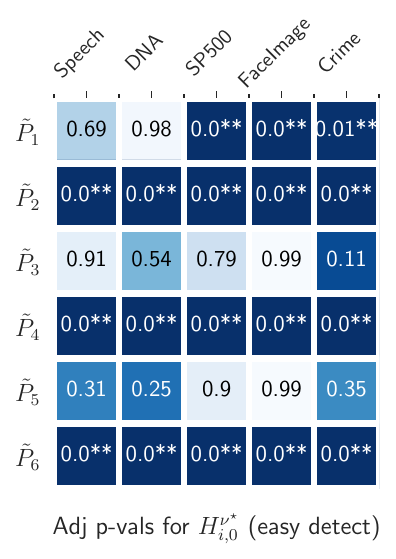}\\[-0.4em]
\textit{(i)}
\end{minipage}\hspace{0.01cm}
\begin{minipage}[b]{0.26\columnwidth}\centering
\includegraphics[width=\linewidth]{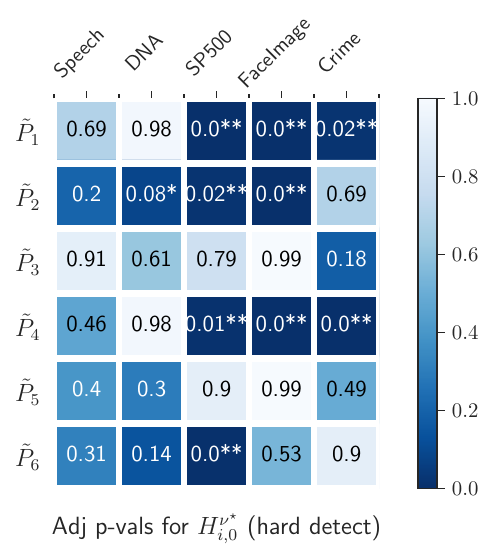}\\[-0.4em]
\textit{(ii)}
\end{minipage}\hspace{0.3cm}
\begin{minipage}[b]{0.21\columnwidth}\centering
\includegraphics[width=\linewidth]{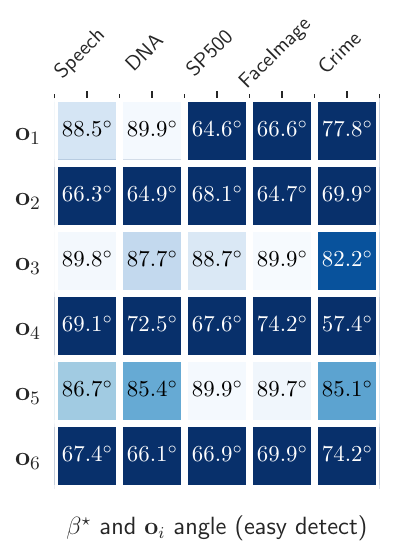}\\[-0.4em]
\textit{(iii)}
\end{minipage}\hspace{0.01cm}
\begin{minipage}[b]{0.26\columnwidth}\centering
\includegraphics[width=\linewidth]{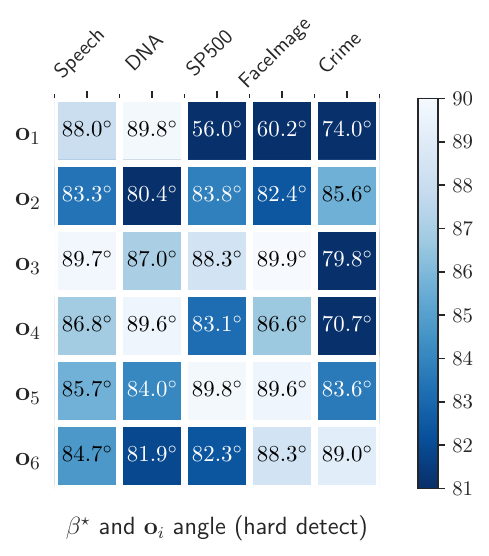}\\[-0.4em]
\textit{(iv)}
\end{minipage}

\vspace{-3mm}
\caption{Panels \textbf{(i)} and \textbf{(iii)} use the \Cref{figPCRA} (bottom-left) setting with $v_i^\star=5\sqrt{p}$ for $i\in\{2,4,6\}$; \textbf{(ii)} and \textbf{(iv)} weaken the alignment to $v_i^\star=\sqrt{p}$. Panels \textbf{(i)}–\textbf{(ii)} report Benjamini–Hochberg–adjusted $p$-values for $H^{\alpha^\star}_{i,0}$ via Corollary~\ref{testCor} (**: FDR 0.05, *: 0.1). Panels \textbf{(iii)}–\textbf{(iv)} show the corresponding true alignment angles $\angle(\mathbf{o}_i,\st)$.}
\label{figtest}
\end{figure}


\section{Discussion}\label{sec:conc}
We conclude our paper with a discussion of two main points. 
First, we clarify that although our method can handle various dependencies in the design through the right-rotational invariance assumption, it does not extend to anisotropic Gaussian designs where the rows of $\X$ are sampled from $N(\mathbf{0},\mathbf{\Sigma})$ with an arbitrary covariance matrix $\mathbf{\Sigma}$ (unless $\mathbf{\Sigma}$ is right-rotationally invariant). Moreover, contrasting with \cite{bellec2019biasing}, our Spectrum-Aware adjustment \eqref{eq:adjspectrum} does not apply directly to non-separable penalties, e.g. SLOPE, group Lasso, etc. We note that the current framework can be expanded to address both these issues. In \Cref{section:CONJE} from Appendix, we suggest a debiased estimator for ``ellipsoidal designs'' $\X=\Qbm^\top \Dbm \Obm \mathbf{\Sigma}^{1/2}$ and non-separable convex penalties. We also conjecture its asymptotic normality using the non-separable VAMP formalism \cite{fletcher2018plug}. We leave a detailed study of this extensive class of estimators to future works. 

We discuss another potential direction of extension, that of relaxing the exchangeability assumption in \Cref{sgoods} and \Cref{CORPCRTHM} from Appendix that  establish inference guarantees on finite-dimensional marginals. One may raise a related question, that of constructing confidence intervals for $\mathbf{a}^{\top}\st$ for a given choice of $\mathbf{a}$. 
Under Gaussian design assumptions, such guarantees were obtained using the leave-one-out method as in \cite[Section 4.6]{celentano2020lasso} or Stein's method as in \cite{bellec2019biasing} without requiring the exchangeability assumption (at the cost of  other assumptions on $\st$ and/or $\boldsymbol{\Sigma}$). Unfortunately, these arguments no longer apply under right-rotational invariant designs owing to the presence of a global dependence structure. 
Thus, establishing such guarantees without exchangeability can serve as an exciting direction for future research. 

\begin{acks}[Acknowledgments]
P.S. was funded partially by NSF DMS-2113426. The authors would like to thank Florent Krzakala and Cedric Gerbelot for clarification on the contributions in \cite{gerbelot2020asymptotic,gerbelot2022asymptotic}, and Boris Hanin for references on linear neural networks. 
\end{acks}

\bibliographystyle{imsart-number} 
\bibliography{ref}

\begin{thebibliography}{126}

\bibitem{acharya2019hadamard}
\begin{binproceedings}[author]
\bauthor{\bsnm{Acharya},~\bfnm{Jayadev}\binits{J.}},
  \bauthor{\bsnm{Sun},~\bfnm{Ziteng}\binits{Z.}} \AND
  \bauthor{\bsnm{Zhang},~\bfnm{Huanyu}\binits{H.}}
(\byear{2019}).
\btitle{Hadamard Response: Estimating Distributions Privately, Efficiently, and
  with Little Communication}.
In \bbooktitle{Proceedings of the 22nd International Conference on Artificial
  Intelligence and Statistics}.
\bseries{Proceedings of Machine Learning Research}
\bvolume{89}
\bpages{1120--1129}.
\end{binproceedings}
\endbibitem

\bibitem{adlam2020understanding}
\begin{barticle}[author]
\bauthor{\bsnm{Adlam},~\bfnm{Ben}\binits{B.}} \AND
  \bauthor{\bsnm{Pennington},~\bfnm{Jeffrey}\binits{J.}}
(\byear{2020}).
\btitle{Understanding double descent requires a fine-grained bias-variance
  decomposition}.
\bjournal{Advances in neural information processing systems}
\bvolume{33}
\bpages{11022--11032}.
\end{barticle}
\endbibitem

\bibitem{agarwal2019robustness}
\begin{bmisc}[author]
\bauthor{\bsnm{Agarwal},~\bfnm{Anish}\binits{A.}},
  \bauthor{\bsnm{Shah},~\bfnm{Devavrat}\binits{D.}},
  \bauthor{\bsnm{Shen},~\bfnm{Dennis}\binits{D.}} \AND
  \bauthor{\bsnm{Song},~\bfnm{Dogyoon}\binits{D.}}
(\byear{2021}).
\btitle{On Robustness of Principal Component Regression}.
\end{bmisc}
\endbibitem

\bibitem{anastasiou2023stein}
\begin{barticle}[author]
\bauthor{\bsnm{Anastasiou},~\bfnm{Andreas}\binits{A.}},
  \bauthor{\bsnm{Barp},~\bfnm{Alessandro}\binits{A.}},
  \bauthor{\bsnm{Briol},~\bfnm{Fran{\c{c}}ois-Xavier}\binits{F.-X.}},
  \bauthor{\bsnm{Ebner},~\bfnm{Bruno}\binits{B.}},
  \bauthor{\bsnm{Gaunt},~\bfnm{Robert~E}\binits{R.~E.}},
  \bauthor{\bsnm{Ghaderinezhad},~\bfnm{Fatemeh}\binits{F.}},
  \bauthor{\bsnm{Gorham},~\bfnm{Jackson}\binits{J.}},
  \bauthor{\bsnm{Gretton},~\bfnm{Arthur}\binits{A.}},
  \bauthor{\bsnm{Ley},~\bfnm{Christophe}\binits{C.}},
  \bauthor{\bsnm{Liu},~\bfnm{Qiang}\binits{Q.}} \betal{et~al.}
(\byear{2023}).
\btitle{Stein’s method meets computational statistics: A review of some
  recent developments}.
\bjournal{Statistical Science}
\bvolume{38}
\bpages{120--139}.
\end{barticle}
\endbibitem

\bibitem{bai2010spectral}
\begin{bbook}[author]
\bauthor{\bsnm{Bai},~\bfnm{Zhidong}\binits{Z.}} \AND
  \bauthor{\bsnm{Silverstein},~\bfnm{Jack~W}\binits{J.~W.}}
(\byear{2010}).
\btitle{Spectral analysis of large dimensional random matrices}
\bvolume{20}.
\bpublisher{Springer}.
\end{bbook}
\endbibitem

\bibitem{bair2006prediction}
\begin{barticle}[author]
\bauthor{\bsnm{Bair},~\bfnm{Eric}\binits{E.}},
  \bauthor{\bsnm{Hastie},~\bfnm{Trevor}\binits{T.}},
  \bauthor{\bsnm{Paul},~\bfnm{Debashis}\binits{D.}} \AND
  \bauthor{\bsnm{Tibshirani},~\bfnm{Robert}\binits{R.}}
(\byear{2006}).
\btitle{Prediction by supervised principal components}.
\bjournal{Journal of the American Statistical Association}
\bvolume{101}
\bpages{119--137}.
\end{barticle}
\endbibitem

\bibitem{barbier2019optimal}
\begin{barticle}[author]
\bauthor{\bsnm{Barbier},~\bfnm{Jean}\binits{J.}},
  \bauthor{\bsnm{Krzakala},~\bfnm{Florent}\binits{F.}},
  \bauthor{\bsnm{Macris},~\bfnm{Nicolas}\binits{N.}},
  \bauthor{\bsnm{Miolane},~\bfnm{L{\'e}o}\binits{L.}} \AND
  \bauthor{\bsnm{Zdeborov{\'a}},~\bfnm{Lenka}\binits{L.}}
(\byear{2019}).
\btitle{Optimal errors and phase transitions in high-dimensional generalized
  linear models}.
\bjournal{Proceedings of the National Academy of Sciences}
\bvolume{116}
\bpages{5451--5460}.
\end{barticle}
\endbibitem

\bibitem{barbier2018mutual}
\begin{binproceedings}[author]
\bauthor{\bsnm{Barbier},~\bfnm{Jean}\binits{J.}},
  \bauthor{\bsnm{Macris},~\bfnm{Nicolas}\binits{N.}},
  \bauthor{\bsnm{Maillard},~\bfnm{Antoine}\binits{A.}} \AND
  \bauthor{\bsnm{Krzakala},~\bfnm{Florent}\binits{F.}}
(\byear{2018}).
\btitle{The mutual information in random linear estimation beyond iid
  matrices}.
In \bbooktitle{2018 IEEE International Symposium on Information Theory (ISIT)}
\bpages{1390--1394}.
\bpublisher{IEEE}.
\end{binproceedings}
\endbibitem

\bibitem{bayati2011dynamics}
\begin{barticle}[author]
\bauthor{\bsnm{Bayati},~\bfnm{Mohsen}\binits{M.}} \AND
  \bauthor{\bsnm{Montanari},~\bfnm{Andrea}\binits{A.}}
(\byear{2011}).
\btitle{The dynamics of message passing on dense graphs, with applications to
  compressed sensing}.
\bjournal{IEEE Transactions on Information Theory}
\bvolume{57}
\bpages{764--785}.
\end{barticle}
\endbibitem

\bibitem{bayati2011lasso}
\begin{barticle}[author]
\bauthor{\bsnm{Bayati},~\bfnm{Mohsen}\binits{M.}} \AND
  \bauthor{\bsnm{Montanari},~\bfnm{Andrea}\binits{A.}}
(\byear{2011}).
\btitle{The LASSO risk for Gaussian matrices}.
\bjournal{IEEE Transactions on Information Theory}
\bvolume{58}
\bpages{1997--2017}.
\end{barticle}
\endbibitem

\bibitem{bean2013optimal}
\begin{barticle}[author]
\bauthor{\bsnm{Bean},~\bfnm{Derek}\binits{D.}},
  \bauthor{\bsnm{Bickel},~\bfnm{Peter~J}\binits{P.~J.}},
  \bauthor{\bsnm{El~Karoui},~\bfnm{Noureddine}\binits{N.}} \AND
  \bauthor{\bsnm{Yu},~\bfnm{Bin}\binits{B.}}
(\byear{2013}).
\btitle{Optimal M-estimation in high-dimensional regression}.
\bjournal{Proceedings of the National Academy of Sciences}
\bvolume{110}
\bpages{14563--14568}.
\end{barticle}
\endbibitem

\bibitem{bellec2022biasing}
\begin{barticle}[author]
\bauthor{\bsnm{Bellec},~\bfnm{Pierre~C}\binits{P.~C.}} \AND
  \bauthor{\bsnm{Zhang},~\bfnm{Cun-Hui}\binits{C.-H.}}
(\byear{2022}).
\btitle{De-biasing the lasso with degrees-of-freedom adjustment}.
\bjournal{Bernoulli}
\bvolume{28}
\bpages{713--743}.
\end{barticle}
\endbibitem

\bibitem{bellec2019biasing}
\begin{barticle}[author]
\bauthor{\bsnm{Bellec},~\bfnm{Pierre~C}\binits{P.~C.}} \AND
  \bauthor{\bsnm{Zhang},~\bfnm{Cun-Hui}\binits{C.-H.}}
(\byear{2023}).
\btitle{Debiasing convex regularized estimators and interval estimation in
  linear models}.
\bjournal{The Annals of Statistics}
\bvolume{51}
\bpages{391--436}.
\end{barticle}
\endbibitem

\bibitem{benjamini1995controlling}
\begin{barticle}[author]
\bauthor{\bsnm{Benjamini},~\bfnm{Yoav}\binits{Y.}} \AND
  \bauthor{\bsnm{Hochberg},~\bfnm{Yosef}\binits{Y.}}
(\byear{1995}).
\btitle{Controlling the false discovery rate: a practical and powerful approach
  to multiple testing}.
\bjournal{Journal of the Royal statistical society: series B (Methodological)}
\bvolume{57}
\bpages{289--300}.
\end{barticle}
\endbibitem

\bibitem{bickel2006regularization}
\begin{barticle}[author]
\bauthor{\bsnm{Bickel},~\bfnm{Peter~J}\binits{P.~J.}},
  \bauthor{\bsnm{Li},~\bfnm{Bo}\binits{B.}},
  \bauthor{\bsnm{Tsybakov},~\bfnm{Alexandre~B}\binits{A.~B.}},
  \bauthor{\bparticle{van~de} \bsnm{Geer},~\bfnm{Sara~A}\binits{S.~A.}},
  \bauthor{\bsnm{Yu},~\bfnm{Bin}\binits{B.}},
  \bauthor{\bsnm{Vald{\'e}s},~\bfnm{Te{\'o}filo}\binits{T.}},
  \bauthor{\bsnm{Rivero},~\bfnm{Carlos}\binits{C.}},
  \bauthor{\bsnm{Fan},~\bfnm{Jianqing}\binits{J.}} \AND
  \bauthor{\bparticle{van~der} \bsnm{Vaart},~\bfnm{Aad}\binits{A.}}
(\byear{2006}).
\btitle{Regularization in statistics}.
\bjournal{Test}
\bvolume{15}
\bpages{271--344}.
\end{barticle}
\endbibitem

\bibitem{bing2021prediction}
\begin{barticle}[author]
\bauthor{\bsnm{Bing},~\bfnm{Xin}\binits{X.}},
  \bauthor{\bsnm{Bunea},~\bfnm{Florentina}\binits{F.}},
  \bauthor{\bsnm{Strimas-Mackey},~\bfnm{Seth}\binits{S.}} \AND
  \bauthor{\bsnm{Wegkamp},~\bfnm{Marten}\binits{M.}}
(\byear{2021}).
\btitle{Prediction under latent factor regression: Adaptive pcr, interpolating
  predictors and beyond}.
\bjournal{Journal of Machine Learning Research}
\bvolume{22}
\bpages{1--50}.
\end{barticle}
\endbibitem

\bibitem{cademartori2024non}
\begin{barticle}[author]
\bauthor{\bsnm{Cademartori},~\bfnm{Collin}\binits{C.}} \AND
  \bauthor{\bsnm{Rush},~\bfnm{Cynthia}\binits{C.}}
(\byear{2024}).
\btitle{A non-asymptotic analysis of generalized vector approximate message
  passing algorithms with rotationally invariant designs}.
\bjournal{IEEE Transactions on Information Theory}.
\end{barticle}
\endbibitem

\bibitem{CaiGuoMini}
\begin{barticle}[author]
\bauthor{\bsnm{Cai},~\bfnm{T.~Tony}\binits{T.~T.}} \AND
  \bauthor{\bsnm{Guo},~\bfnm{Zijian}\binits{Z.}}
(\byear{2017}).
\btitle{{Confidence intervals for high-dimensional linear regression: Minimax
  rates and adaptivity}}.
\bjournal{The Annals of Statistics}
\bvolume{45}
\bpages{615 -- 646}.
\bdoi{10.1214/16-AOS1461}
\end{barticle}
\endbibitem

\bibitem{candes2020phase}
\begin{barticle}[author]
\bauthor{\bsnm{Cand{\`e}s},~\bfnm{Emmanuel~J}\binits{E.~J.}} \AND
  \bauthor{\bsnm{Sur},~\bfnm{Pragya}\binits{P.}}
(\byear{2020}).
\btitle{The phase transition for the existence of the maximum likelihood
  estimate in high-dimensional logistic regression}.
\bjournal{The Annals of Statistics}
\bvolume{48}
\bpages{27--42}.
\end{barticle}
\endbibitem

\bibitem{celentano2020lasso}
\begin{barticle}[author]
\bauthor{\bsnm{Celentano},~\bfnm{Michael}\binits{M.}},
  \bauthor{\bsnm{Montanari},~\bfnm{Andrea}\binits{A.}} \AND
  \bauthor{\bsnm{Wei},~\bfnm{Yuting}\binits{Y.}}
(\byear{2020}).
\btitle{The lasso with general gaussian designs with applications to hypothesis
  testing}.
\bjournal{arXiv preprint arXiv:2007.13716}.
\end{barticle}
\endbibitem

\bibitem{celentano2023challenges}
\begin{bmisc}[author]
\bauthor{\bsnm{Celentano},~\bfnm{Michael}\binits{M.}} \AND
  \bauthor{\bsnm{Wainwright},~\bfnm{Martin~J.}\binits{M.~J.}}
(\byear{2023}).
\btitle{Challenges of the inconsistency regime: Novel debiasing methods for
  missing data models}.
\end{bmisc}
\endbibitem

\bibitem{ChatterjeeLahiri2013AdaptiveLassoRates}
\begin{barticle}[author]
\bauthor{\bsnm{Chatterjee},~\bfnm{A.}\binits{A.}} \AND
  \bauthor{\bsnm{Lahiri},~\bfnm{S.~N.}\binits{S.~N.}}
(\byear{2013}).
\btitle{Rates of Convergence of the Adaptive {LASSO} Estimators to the Oracle
  Distribution and Higher Order Refinements by the Bootstrap}.
\bjournal{The Annals of Statistics}
\bvolume{41}
\bpages{1232--1259}.
\bdoi{10.1214/13-AOS1106}
\end{barticle}
\endbibitem

\bibitem{chatterjee2010spin}
\begin{barticle}[author]
\bauthor{\bsnm{Chatterjee},~\bfnm{Sourav}\binits{S.}}
(\byear{2010}).
\btitle{Spin glasses and Stein’s method}.
\bjournal{Probability theory and related fields}
\bvolume{148}
\bpages{567--600}.
\end{barticle}
\endbibitem

\bibitem{chen2021spectral}
\begin{barticle}[author]
\bauthor{\bsnm{Chen},~\bfnm{Yuxin}\binits{Y.}},
  \bauthor{\bsnm{Chi},~\bfnm{Yuejie}\binits{Y.}},
  \bauthor{\bsnm{Fan},~\bfnm{Jianqing}\binits{J.}} \AND
  \bauthor{\bsnm{Ma},~\bfnm{Cong}\binits{C.}}
(\byear{2021}).
\btitle{Spectral methods for data science: A statistical perspective}.
\bjournal{Foundations and Trends in Machine Learning}
\bvolume{14}
\bpages{566--806}.
\end{barticle}
\endbibitem

\bibitem{cheng2024dimension}
\begin{barticle}[author]
\bauthor{\bsnm{Cheng},~\bfnm{Chen}\binits{C.}} \AND
  \bauthor{\bsnm{Montanari},~\bfnm{Andrea}\binits{A.}}
(\byear{2024}).
\btitle{Dimension Free Ridge Regression}.
\bjournal{Annals of Statistics}
\bvolume{52}
\bpages{2879--2912}.
\bdoi{10.1214/24-AOS2449}
\end{barticle}
\endbibitem

\bibitem{Dicker}
\begin{barticle}[author]
\bauthor{\bsnm{Dicker},~\bfnm{Lee~H.}\binits{L.~H.}}
(\byear{2016}).
\btitle{{Ridge regression and asymptotic minimax estimation over spheres of
  growing dimension}}.
\bjournal{Bernoulli}
\bvolume{22}
\bpages{1 -- 37}.
\bdoi{10.3150/14-BEJ609}
\end{barticle}
\endbibitem

\bibitem{dobriban2018high}
\begin{barticle}[author]
\bauthor{\bsnm{Dobriban},~\bfnm{Edgar}\binits{E.}} \AND
  \bauthor{\bsnm{Wager},~\bfnm{Stefan}\binits{S.}}
(\byear{2018}).
\btitle{High-dimensional asymptotics of prediction: Ridge regression and
  classification}.
\bjournal{The Annals of Statistics}
\bvolume{46}
\bpages{247--279}.
\end{barticle}
\endbibitem

\bibitem{donoho2016high}
\begin{barticle}[author]
\bauthor{\bsnm{Donoho},~\bfnm{David}\binits{D.}} \AND
  \bauthor{\bsnm{Montanari},~\bfnm{Andrea}\binits{A.}}
(\byear{2016}).
\btitle{High dimensional robust m-estimation: Asymptotic variance via
  approximate message passing}.
\bjournal{Probability Theory and Related Fields}
\bvolume{166}
\bpages{935--969}.
\end{barticle}
\endbibitem

\bibitem{donoho2009observed}
\begin{barticle}[author]
\bauthor{\bsnm{Donoho},~\bfnm{David}\binits{D.}} \AND
  \bauthor{\bsnm{Tanner},~\bfnm{Jared}\binits{J.}}
(\byear{2009}).
\btitle{Observed universality of phase transitions in high-dimensional
  geometry, with implications for modern data analysis and signal processing}.
\bjournal{Philosophical Transactions of the Royal Society A: Mathematical,
  Physical and Engineering Sciences}
\bvolume{367}
\bpages{4273--4293}.
\end{barticle}
\endbibitem

\bibitem{donoho2009message}
\begin{barticle}[author]
\bauthor{\bsnm{Donoho},~\bfnm{David~L}\binits{D.~L.}},
  \bauthor{\bsnm{Maleki},~\bfnm{Arian}\binits{A.}} \AND
  \bauthor{\bsnm{Montanari},~\bfnm{Andrea}\binits{A.}}
(\byear{2009}).
\btitle{Message-passing algorithms for compressed sensing}.
\bjournal{Proceedings of the National Academy of Sciences}
\bvolume{106}
\bpages{18914--18919}.
\end{barticle}
\endbibitem

\bibitem{druilhet2008shrinkage}
\begin{barticle}[author]
\bauthor{\bsnm{Druilhet},~\bfnm{Pierre}\binits{P.}} \AND
  \bauthor{\bsnm{Mom},~\bfnm{Alain}\binits{A.}}
(\byear{2008}).
\btitle{Shrinkage structure in biased regression}.
\bjournal{Journal of multivariate analysis}
\bvolume{99}
\bpages{232--244}.
\end{barticle}
\endbibitem

\bibitem{dudeja2020analysis}
\begin{barticle}[author]
\bauthor{\bsnm{Dudeja},~\bfnm{Rishabh}\binits{R.}},
  \bauthor{\bsnm{Bakhshizadeh},~\bfnm{Milad}\binits{M.}},
  \bauthor{\bsnm{Ma},~\bfnm{Junjie}\binits{J.}} \AND
  \bauthor{\bsnm{Maleki},~\bfnm{Arian}\binits{A.}}
(\byear{2020}).
\btitle{Analysis of spectral methods for phase retrieval with random orthogonal
  matrices}.
\bjournal{IEEE Transactions on Information Theory}
\bvolume{66}
\bpages{5182--5203}.
\end{barticle}
\endbibitem

\bibitem{dudeja2023universal}
\begin{barticle}[author]
\bauthor{\bsnm{Dudeja},~\bfnm{Rishabh}\binits{R.}},
  \bauthor{\bsnm{Lu},~\bfnm{Yue~M.}\binits{Y.~M.}} \AND
  \bauthor{\bsnm{Sen},~\bfnm{Subhabrata}\binits{S.}}
(\byear{2023}).
\btitle{Universality of approximate message passing with semirandom matrices}.
\bjournal{Ann. Probab.}
\bvolume{51}
\bpages{1616--1683}.
\bdoi{10.1214/23-AOP1628}
\end{barticle}
\endbibitem

\bibitem{dudeja2022spectral}
\begin{bmisc}[author]
\bauthor{\bsnm{Dudeja},~\bfnm{Rishabh}\binits{R.}},
  \bauthor{\bsnm{Sen},~\bfnm{Subhabrata}\binits{S.}} \AND
  \bauthor{\bsnm{Lu},~\bfnm{Yue~M.}\binits{Y.~M.}}
(\byear{2023}).
\btitle{Spectral Universality of Regularized Linear Regression with Nearly
  Deterministic Sensing Matrices}.
\end{bmisc}
\endbibitem

\bibitem{el2018impact}
\begin{barticle}[author]
\bauthor{\bsnm{El~Karoui},~\bfnm{Noureddine}\binits{N.}}
(\byear{2018}).
\btitle{On the impact of predictor geometry on the performance on
  high-dimensional ridge-regularized generalized robust regression estimators}.
\bjournal{Probability Theory and Related Fields}
\bvolume{170}
\bpages{95--175}.
\end{barticle}
\endbibitem

\bibitem{el2013robust}
\begin{barticle}[author]
\bauthor{\bsnm{El~Karoui},~\bfnm{Noureddine}\binits{N.}},
  \bauthor{\bsnm{Bean},~\bfnm{Derek}\binits{D.}},
  \bauthor{\bsnm{Bickel},~\bfnm{Peter~J}\binits{P.~J.}},
  \bauthor{\bsnm{Lim},~\bfnm{Chinghway}\binits{C.}} \AND
  \bauthor{\bsnm{Yu},~\bfnm{Bin}\binits{B.}}
(\byear{2013}).
\btitle{On robust regression with high-dimensional predictors}.
\bjournal{Proceedings of the National Academy of Sciences}
\bvolume{110}
\bpages{14557--14562}.
\end{barticle}
\endbibitem

\bibitem{fan2016projected}
\begin{barticle}[author]
\bauthor{\bsnm{Fan},~\bfnm{Jianqing}\binits{J.}},
  \bauthor{\bsnm{Liao},~\bfnm{Yuan}\binits{Y.}} \AND
  \bauthor{\bsnm{Wang},~\bfnm{Weichen}\binits{W.}}
(\byear{2016}).
\btitle{Projected principal component analysis in factor models}.
\bjournal{Annals of statistics}
\bvolume{44}
\bpages{219}.
\end{barticle}
\endbibitem

\bibitem{10.1214/21-AOS2101}
\begin{barticle}[author]
\bauthor{\bsnm{Fan},~\bfnm{Zhou}\binits{Z.}}
(\byear{2022}).
\btitle{{Approximate Message Passing algorithms for rotationally invariant
  matrices}}.
\bjournal{The Annals of Statistics}
\bvolume{50}
\bpages{197 -- 224}.
\bdoi{10.1214/21-AOS2101}
\end{barticle}
\endbibitem

\bibitem{fan2022approximate}
\begin{barticle}[author]
\bauthor{\bsnm{Fan},~\bfnm{Zhou}\binits{Z.}}
(\byear{2022}).
\btitle{Approximate message passing algorithms for rotationally invariant
  matrices}.
\bjournal{The Annals of Statistics}
\bvolume{50}
\bpages{197--224}.
\end{barticle}
\endbibitem

\bibitem{fan2022tap}
\begin{barticle}[author]
\bauthor{\bsnm{Fan},~\bfnm{Zhou}\binits{Z.}},
  \bauthor{\bsnm{Li},~\bfnm{Yufan}\binits{Y.}} \AND
  \bauthor{\bsnm{Sen},~\bfnm{Subhabrata}\binits{S.}}
(\byear{2022}).
\btitle{TAP equations for orthogonally invariant spin glasses at high
  temperature}.
\bjournal{arXiv preprint arXiv:2202.09325}.
\end{barticle}
\endbibitem

\bibitem{fan2021replica}
\begin{barticle}[author]
\bauthor{\bsnm{Fan},~\bfnm{Zhou}\binits{Z.}} \AND
  \bauthor{\bsnm{Wu},~\bfnm{Yihong}\binits{Y.}}
(\byear{2021}).
\btitle{The replica-symmetric free energy for Ising spin glasses with
  orthogonally invariant couplings}.
\bjournal{arXiv preprint arXiv:2105.02797}.
\end{barticle}
\endbibitem

\bibitem{farebrother1978class}
\begin{barticle}[author]
\bauthor{\bsnm{Farebrother},~\bfnm{RW}\binits{R.}}
(\byear{1978}).
\btitle{A class of shrinkage estimators}.
\bjournal{Journal of the Royal Statistical Society Series B: Statistical
  Methodology}
\bvolume{40}
\bpages{47--49}.
\end{barticle}
\endbibitem

\bibitem{feng2022unifying}
\begin{barticle}[author]
\bauthor{\bsnm{Feng},~\bfnm{Oliver~Y}\binits{O.~Y.}},
  \bauthor{\bsnm{Venkataramanan},~\bfnm{Ramji}\binits{R.}},
  \bauthor{\bsnm{Rush},~\bfnm{Cynthia}\binits{C.}} \AND
  \bauthor{\bsnm{Samworth},~\bfnm{Richard~J}\binits{R.~J.}}
(\byear{2022}).
\btitle{A unifying tutorial on approximate message passing}.
\bjournal{Foundations and Trends in Machine Learning}
\bvolume{15}
\bpages{335--536}.
\end{barticle}
\endbibitem

\bibitem{fletcher2018plug}
\begin{barticle}[author]
\bauthor{\bsnm{Fletcher},~\bfnm{Alyson~K}\binits{A.~K.}},
  \bauthor{\bsnm{Pandit},~\bfnm{Parthe}\binits{P.}},
  \bauthor{\bsnm{Rangan},~\bfnm{Sundeep}\binits{S.}},
  \bauthor{\bsnm{Sarkar},~\bfnm{Subrata}\binits{S.}} \AND
  \bauthor{\bsnm{Schniter},~\bfnm{Philip}\binits{P.}}
(\byear{2018}).
\btitle{Plug-in estimation in high-dimensional linear inverse problems: A
  rigorous analysis}.
\bjournal{Advances in Neural Information Processing Systems}
\bvolume{31}.
\end{barticle}
\endbibitem

\bibitem{foucart2013cs}
\begin{bbook}[author]
\bauthor{\bsnm{Foucart},~\bfnm{Simon}\binits{S.}} \AND
  \bauthor{\bsnm{Rauhut},~\bfnm{Holger}\binits{H.}}
(\byear{2013}).
\btitle{A Mathematical Introduction to Compressive Sensing}.
\bseries{Applied and Numerical Harmonic Analysis}.
\bpublisher{Birkh{\"a}user}.
\bdoi{10.1007/978-0-8176-4948-7}
\end{bbook}
\endbibitem

\bibitem{frank1993statistical}
\begin{barticle}[author]
\bauthor{\bsnm{Frank},~\bfnm{LLdiko~E}\binits{L.~E.}} \AND
  \bauthor{\bsnm{Friedman},~\bfnm{Jerome~H}\binits{J.~H.}}
(\byear{1993}).
\btitle{A statistical view of some chemometrics regression tools}.
\bjournal{Technometrics}
\bvolume{35}
\bpages{109--135}.
\end{barticle}
\endbibitem

\bibitem{george1996multiple}
\begin{barticle}[author]
\bauthor{\bsnm{George},~\bfnm{Edward~I}\binits{E.~I.}} \AND
  \bauthor{\bsnm{Oman},~\bfnm{Samuel~D}\binits{S.~D.}}
(\byear{1996}).
\btitle{Multiple-Shrinkage Principal Component Regression}.
\bjournal{Journal of the Royal Statistical Society: Series D (The
  Statistician)}
\bvolume{45}
\bpages{111--124}.
\end{barticle}
\endbibitem

\bibitem{gerbelot2020asymptotic}
\begin{binproceedings}[author]
\bauthor{\bsnm{Gerbelot},~\bfnm{C\'{e}dric}\binits{C.}},
  \bauthor{\bsnm{Abbara},~\bfnm{Alia}\binits{A.}} \AND
  \bauthor{\bsnm{Krzakala},~\bfnm{Florent}\binits{F.}}
(\byear{2020}).
\btitle{Asymptotic Errors for High-Dimensional Convex Penalized Linear
  Regression beyond Gaussian Matrices}.
In \bbooktitle{Proceedings of Thirty Third Conference on Learning Theory}
(\beditor{\bfnm{Jacob}\binits{J.}~\bsnm{Abernethy}} \AND
  \beditor{\bfnm{Shivani}\binits{S.}~\bsnm{Agarwal}}, eds.).
\bseries{Proceedings of Machine Learning Research}
\bvolume{125}
\bpages{1682--1713}.
\bpublisher{PMLR}.
\end{binproceedings}
\endbibitem

\bibitem{gerbelot2022asymptotic}
\begin{barticle}[author]
\bauthor{\bsnm{Gerbelot},~\bfnm{Cedric}\binits{C.}},
  \bauthor{\bsnm{Abbara},~\bfnm{Alia}\binits{A.}} \AND
  \bauthor{\bsnm{Krzakala},~\bfnm{Florent}\binits{F.}}
(\byear{2022}).
\btitle{Asymptotic errors for teacher-student convex generalized linear models
  (or: How to prove Kabashima’s replica formula)}.
\bjournal{IEEE Transactions on Information Theory}
\bvolume{69}
\bpages{1824--1852}.
\end{barticle}
\endbibitem

\bibitem{goldstein2016comparative}
\begin{barticle}[author]
\bauthor{\bsnm{Goldstein},~\bfnm{Markus}\binits{M.}} \AND
  \bauthor{\bsnm{Uchida},~\bfnm{Seiichi}\binits{S.}}
(\byear{2016}).
\btitle{A comparative evaluation of unsupervised anomaly detection algorithms
  for multivariate data}.
\bjournal{PloS one}
\bvolume{11}
\bpages{e0152173}.
\end{barticle}
\endbibitem

\bibitem{halko2011finding}
\begin{barticle}[author]
\bauthor{\bsnm{Halko},~\bfnm{Nathan}\binits{N.}},
  \bauthor{\bsnm{Martinsson},~\bfnm{Per-Gunnar}\binits{P.-G.}} \AND
  \bauthor{\bsnm{Tropp},~\bfnm{Joel~A}\binits{J.~A.}}
(\byear{2011}).
\btitle{Finding structure with randomness: Probabilistic algorithms for
  constructing approximate matrix decompositions}.
\bjournal{SIAM review}
\bvolume{53}
\bpages{217--288}.
\end{barticle}
\endbibitem

\bibitem{halko2011randomized}
\begin{barticle}[author]
\bauthor{\bsnm{Halko},~\bfnm{Nathan}\binits{N.}},
  \bauthor{\bsnm{Martinsson},~\bfnm{Per-Gunnar}\binits{P.-G.}} \AND
  \bauthor{\bsnm{Tropp},~\bfnm{Joel~A.}\binits{J.~A.}}
(\byear{2011}).
\btitle{Finding Structure with Randomness: Probabilistic Algorithms for
  Constructing Approximate Matrix Decompositions}.
\bjournal{SIAM Review}
\bvolume{53}
\bpages{217--288}.
\bdoi{10.1137/090771806}
\end{barticle}
\endbibitem

\bibitem{han2023universality}
\begin{barticle}[author]
\bauthor{\bsnm{Han},~\bfnm{Qiyang}\binits{Q.}} \AND
  \bauthor{\bsnm{Shen},~\bfnm{Yandi}\binits{Y.}}
(\byear{2023}).
\btitle{Universality of regularized regression estimators in high dimensions}.
\bjournal{The Annals of Statistics}
\bvolume{51}
\bpages{1799--1823}.
\end{barticle}
\endbibitem

\bibitem{hanin2020products}
\begin{barticle}[author]
\bauthor{\bsnm{Hanin},~\bfnm{Boris}\binits{B.}} \AND
  \bauthor{\bsnm{Nica},~\bfnm{Mihai}\binits{M.}}
(\byear{2020}).
\btitle{Products of many large random matrices and gradients in deep neural
  networks}.
\bjournal{Communications in Mathematical Physics}
\bvolume{376}
\bpages{287--322}.
\end{barticle}
\endbibitem

\bibitem{hastie2022surprises}
\begin{barticle}[author]
\bauthor{\bsnm{Hastie},~\bfnm{Trevor}\binits{T.}},
  \bauthor{\bsnm{Montanari},~\bfnm{Andrea}\binits{A.}},
  \bauthor{\bsnm{Rosset},~\bfnm{Saharon}\binits{S.}} \AND
  \bauthor{\bsnm{Tibshirani},~\bfnm{Ryan~J}\binits{R.~J.}}
(\byear{2022}).
\btitle{Surprises in high-dimensional ridgeless least squares interpolation}.
\bjournal{The Annals of Statistics}
\bvolume{50}
\bpages{949--986}.
\end{barticle}
\endbibitem

\bibitem{howley2006effect}
\begin{binproceedings}[author]
\bauthor{\bsnm{Howley},~\bfnm{Tom}\binits{T.}},
  \bauthor{\bsnm{Madden},~\bfnm{Michael~G}\binits{M.~G.}},
  \bauthor{\bsnm{O’Connell},~\bfnm{Marie-Louise}\binits{M.-L.}} \AND
  \bauthor{\bsnm{Ryder},~\bfnm{Alan~G}\binits{A.~G.}}
(\byear{2006}).
\btitle{The effect of principal component analysis on machine learning accuracy
  with high dimensional spectral data}.
In \bbooktitle{Applications and Innovations in Intelligent Systems XIII:
  Proceedings of AI-2005, the Twenty-fifth SGAI International Conference on
  Innovative Techniques and Applications of Artificial Intelligence, Cambridge,
  UK, December 2005}
\bpages{209--222}.
\bpublisher{Springer}.
\end{binproceedings}
\endbibitem

\bibitem{LFWTech}
\begin{btechreport}[author]
\bauthor{\bsnm{Huang},~\bfnm{Gary~B.}\binits{G.~B.}},
  \bauthor{\bsnm{Ramesh},~\bfnm{Manu}\binits{M.}},
  \bauthor{\bsnm{Berg},~\bfnm{Tamara}\binits{T.}} \AND
  \bauthor{\bsnm{Learned-Miller},~\bfnm{Erik}\binits{E.}}
(\byear{2007}).
\btitle{Labeled Faces in the Wild: A Database for Studying Face Recognition in
  Unconstrained Environments}
\btype{Technical Report} No. \bnumber{07-49},
\bpublisher{University of Massachusetts, Amherst}.
\end{btechreport}
\endbibitem

\bibitem{hubert2003robust}
\begin{barticle}[author]
\bauthor{\bsnm{Hubert},~\bfnm{Mia}\binits{M.}} \AND
  \bauthor{\bsnm{Verboven},~\bfnm{Sabine}\binits{S.}}
(\byear{2003}).
\btitle{A robust PCR method for high-dimensional regressors}.
\bjournal{Journal of Chemometrics: A Journal of the Chemometrics Society}
\bvolume{17}
\bpages{438--452}.
\end{barticle}
\endbibitem

\bibitem{ibrahim2018vector}
\begin{barticle}[author]
\bauthor{\bsnm{Ibrahim},~\bfnm{Noor~Salwani}\binits{N.~S.}} \AND
  \bauthor{\bsnm{Ramli},~\bfnm{Dzati~Athiar}\binits{D.~A.}}
(\byear{2018}).
\btitle{I-vector extraction for speaker recognition based on dimensionality
  reduction}.
\bjournal{Procedia Computer Science}
\bvolume{126}
\bpages{1534--1540}.
\end{barticle}
\endbibitem

\bibitem{javanmard2013state}
\begin{barticle}[author]
\bauthor{\bsnm{Javanmard},~\bfnm{Adel}\binits{A.}} \AND
  \bauthor{\bsnm{Montanari},~\bfnm{Andrea}\binits{A.}}
(\byear{2013}).
\btitle{State evolution for general approximate message passing algorithms,
  with applications to spatial coupling}.
\bjournal{Information and Inference: A Journal of the IMA}
\bvolume{2}
\bpages{115--144}.
\end{barticle}
\endbibitem

\bibitem{javanmard2014hypothesis}
\begin{barticle}[author]
\bauthor{\bsnm{Javanmard},~\bfnm{Adel}\binits{A.}} \AND
  \bauthor{\bsnm{Montanari},~\bfnm{Andrea}\binits{A.}}
(\byear{2014}).
\btitle{Hypothesis testing in high-dimensional regression under the gaussian
  random design model: Asymptotic theory}.
\bjournal{IEEE Transactions on Information Theory}
\bvolume{60}
\bpages{6522--6554}.
\end{barticle}
\endbibitem

\bibitem{javanmard2014confidence}
\begin{barticle}[author]
\bauthor{\bsnm{Javanmard},~\bfnm{Adel}\binits{A.}} \AND
  \bauthor{\bsnm{Montanari},~\bfnm{Andrea}\binits{A.}}
(\byear{2014}).
\btitle{Confidence intervals and hypothesis testing for high-dimensional
  regression}.
\bjournal{The Journal of Machine Learning Research}
\bvolume{15}
\bpages{2869--2909}.
\end{barticle}
\endbibitem

\bibitem{javanmard2018debiasing}
\begin{barticle}[author]
\bauthor{\bsnm{Javanmard},~\bfnm{Adel}\binits{A.}} \AND
  \bauthor{\bsnm{Montanari},~\bfnm{Andrea}\binits{A.}}
(\byear{2018}).
\btitle{{Debiasing the lasso: Optimal sample size for Gaussian designs}}.
\bjournal{The Annals of Statistics}
\bvolume{46}
\bpages{2593 -- 2622}.
\bdoi{10.1214/17-AOS1630}
\end{barticle}
\endbibitem

\bibitem{jiang2022new}
\begin{bmisc}[author]
\bauthor{\bsnm{Jiang},~\bfnm{Kuanhao}\binits{K.}},
  \bauthor{\bsnm{Mukherjee},~\bfnm{Rajarshi}\binits{R.}},
  \bauthor{\bsnm{Sen},~\bfnm{Subhabrata}\binits{S.}} \AND
  \bauthor{\bsnm{Sur},~\bfnm{Pragya}\binits{P.}}
(\byear{2022}).
\btitle{A New Central Limit Theorem for the Augmented IPW Estimator: Variance
  Inflation, Cross-Fit Covariance and Beyond}.
\end{bmisc}
\endbibitem

\bibitem{jolliffe1982note}
\begin{barticle}[author]
\bauthor{\bsnm{Jolliffe},~\bfnm{Ian~T}\binits{I.~T.}}
(\byear{1982}).
\btitle{A note on the use of principal components in regression}.
\bjournal{Journal of the Royal Statistical Society Series C: Applied
  Statistics}
\bvolume{31}
\bpages{300--303}.
\end{barticle}
\endbibitem

\bibitem{jolliffe2016principal}
\begin{barticle}[author]
\bauthor{\bsnm{Jolliffe},~\bfnm{Ian~T}\binits{I.~T.}} \AND
  \bauthor{\bsnm{Cadima},~\bfnm{Jorge}\binits{J.}}
(\byear{2016}).
\btitle{Principal component analysis: a review and recent developments}.
\bjournal{Philosophical transactions of the royal society A: Mathematical,
  Physical and Engineering Sciences}
\bvolume{374}
\bpages{20150202}.
\end{barticle}
\endbibitem

\bibitem{li2022non}
\begin{barticle}[author]
\bauthor{\bsnm{Li},~\bfnm{Gen}\binits{G.}} \AND
  \bauthor{\bsnm{Wei},~\bfnm{Yuting}\binits{Y.}}
(\byear{2022}).
\btitle{A non-asymptotic framework for approximate message passing in spiked
  models}.
\bjournal{arXiv preprint arXiv:2208.03313}.
\end{barticle}
\endbibitem

\bibitem{li2024non}
\begin{barticle}[author]
\bauthor{\bsnm{Li},~\bfnm{Gen}\binits{G.}} \AND
  \bauthor{\bsnm{Wei},~\bfnm{Yuting}\binits{Y.}}
(\byear{2024}).
\btitle{A non-asymptotic distributional theory of approximate message passing
  for sparse and robust regression}.
\bjournal{arXiv preprint arXiv:2401.03923}.
\end{barticle}
\endbibitem

\bibitem{li2023random}
\begin{barticle}[author]
\bauthor{\bsnm{Li},~\bfnm{Yufan}\binits{Y.}},
  \bauthor{\bsnm{Fan},~\bfnm{Zhou}\binits{Z.}},
  \bauthor{\bsnm{Sen},~\bfnm{Subhabrata}\binits{S.}} \AND
  \bauthor{\bsnm{Wu},~\bfnm{Yihong}\binits{Y.}}
(\byear{2023}).
\btitle{Random linear estimation with rotationally-invariant designs:
  Asymptotics at high temperature}.
\bjournal{IEEE Transactions on Information Theory}
\bvolume{70}
\bpages{2118--2154}.
\end{barticle}
\endbibitem

\bibitem{li2024understanding}
\begin{barticle}[author]
\bauthor{\bsnm{Li},~\bfnm{Yufan}\binits{Y.}},
  \bauthor{\bsnm{Sen},~\bfnm{Subhabrata}\binits{S.}} \AND
  \bauthor{\bsnm{Adlam},~\bfnm{Ben}\binits{B.}}
(\byear{2024}).
\btitle{Understanding Optimal Feature Transfer via a Fine-Grained Bias-Variance
  Analysis}.
\bjournal{arXiv preprint arXiv:2404.12481}.
\end{barticle}
\endbibitem

\bibitem{10.1093/imaiai/iaad042}
\begin{barticle}[author]
\bauthor{\bsnm{Liang},~\bfnm{Tengyuan}\binits{T.}},
  \bauthor{\bsnm{Sen},~\bfnm{Subhabrata}\binits{S.}} \AND
  \bauthor{\bsnm{Sur},~\bfnm{Pragya}\binits{P.}}
(\byear{2023}).
\btitle{{High-dimensional asymptotics of Langevin dynamics in spiked matrix
  models}}.
\bjournal{Information and Inference: A Journal of the IMA}
\bvolume{12}
\bpages{2720-2752}.
\bdoi{10.1093/imaiai/iaad042}
\end{barticle}
\endbibitem

\bibitem{liang2022precise}
\begin{barticle}[author]
\bauthor{\bsnm{Liang},~\bfnm{Tengyuan}\binits{T.}} \AND
  \bauthor{\bsnm{Sur},~\bfnm{Pragya}\binits{P.}}
(\byear{2022}).
\btitle{A precise high-dimensional asymptotic theory for boosting and
  minimum-l1-norm interpolated classifiers}.
\bjournal{The Annals of Statistics}
\bvolume{50}
\bpages{1669--1695}.
\end{barticle}
\endbibitem

\bibitem{LiuYu2013LassoMLS}
\begin{barticle}[author]
\bauthor{\bsnm{Liu},~\bfnm{Hanzhong}\binits{H.}} \AND
  \bauthor{\bsnm{Yu},~\bfnm{Bin}\binits{B.}}
(\byear{2013}).
\btitle{Asymptotic Properties of {Lasso+mLS} and {Lasso+Ridge} in Sparse
  High-Dimensional Linear Regression}.
\bjournal{Electronic Journal of Statistics}
\bvolume{7}
\bpages{3124--3169}.
\bdoi{10.1214/14-EJS875}
\end{barticle}
\endbibitem

\bibitem{liu2022memory}
\begin{barticle}[author]
\bauthor{\bsnm{Liu},~\bfnm{Lei}\binits{L.}},
  \bauthor{\bsnm{Huang},~\bfnm{Shunqi}\binits{S.}} \AND
  \bauthor{\bsnm{Kurkoski},~\bfnm{Brian~M}\binits{B.~M.}}
(\byear{2022}).
\btitle{Memory AMP}.
\bjournal{IEEE Transactions on Information Theory}
\bvolume{68}
\bpages{8015--8039}.
\end{barticle}
\endbibitem

\bibitem{SP500}
\begin{bmisc}[author]
\bauthor{\bsnm{{S\&P Dow Jones Indices LLC}}}
(\byear{2024}).
\btitle{{S\&P 500 [SP500]}}.
\bhowpublished{Retrieved from FRED, Federal Reserve Bank of St. Louis}.
\bnote{May 3, 2024}.
\end{bmisc}
\endbibitem

\bibitem{lockhart2014significance}
\begin{barticle}[author]
\bauthor{\bsnm{Lockhart},~\bfnm{Richard}\binits{R.}},
  \bauthor{\bsnm{Taylor},~\bfnm{Jonathan}\binits{J.}},
  \bauthor{\bsnm{Tibshirani},~\bfnm{Ryan~J}\binits{R.~J.}} \AND
  \bauthor{\bsnm{Tibshirani},~\bfnm{Robert}\binits{R.}}
(\byear{2014}).
\btitle{A significance test for the lasso}.
\bjournal{Annals of statistics}
\bvolume{42}
\bpages{413}.
\end{barticle}
\endbibitem

\bibitem{pmlr-v258-luo25b}
\begin{binproceedings}[author]
\bauthor{\bsnm{Luo},~\bfnm{Kevin}\binits{K.}},
  \bauthor{\bsnm{Li},~\bfnm{Yufan}\binits{Y.}} \AND
  \bauthor{\bsnm{Sur},~\bfnm{Pragya}\binits{P.}}
(\byear{2025}).
\btitle{ROTI-GCV: Generalized Cross-Validation for right-ROTationally Invariant
  Data}.
In \bbooktitle{Proceedings of The 28th International Conference on Artificial
  Intelligence and Statistics}
(\beditor{\bfnm{Yingzhen}\binits{Y.}~\bsnm{Li}},
  \beditor{\bfnm{Stephan}\binits{S.}~\bsnm{Mandt}},
  \beditor{\bfnm{Shipra}\binits{S.}~\bsnm{Agrawal}} \AND
  \beditor{\bfnm{Emtiyaz}\binits{E.}~\bsnm{Khan}}, eds.).
\bseries{Proceedings of Machine Learning Research}
\bvolume{258}
\bpages{1603--1611}.
\bpublisher{PMLR}.
\end{binproceedings}
\endbibitem

\bibitem{ma2017orthogonal}
\begin{barticle}[author]
\bauthor{\bsnm{Ma},~\bfnm{Junjie}\binits{J.}} \AND
  \bauthor{\bsnm{Ping},~\bfnm{Li}\binits{L.}}
(\byear{2017}).
\btitle{Orthogonal amp}.
\bjournal{IEEE Access}
\bvolume{5}
\bpages{2020--2033}.
\end{barticle}
\endbibitem

\bibitem{meckes2014concentration}
\begin{barticle}[author]
\bauthor{\bsnm{Meckes},~\bfnm{Elizabeth}\binits{E.}}
(\byear{2014}).
\btitle{Concentration of measure and the compact classical matrix groups}.
\bjournal{Lecture Notes, IAS Program for Women and Mathematics}.
\end{barticle}
\endbibitem

\bibitem{mezard1987spin}
\begin{bbook}[author]
\bauthor{\bsnm{M{\'e}zard},~\bfnm{Marc}\binits{M.}},
  \bauthor{\bsnm{Parisi},~\bfnm{Giorgio}\binits{G.}} \AND
  \bauthor{\bsnm{Virasoro},~\bfnm{Miguel~Angel}\binits{M.~A.}}
(\byear{1987}).
\btitle{Spin glass theory and beyond: An Introduction to the Replica Method and
  Its Applications}
\bvolume{9}.
\bpublisher{World Scientific Publishing Company}.
\end{bbook}
\endbibitem

\bibitem{montanari2022short}
\begin{barticle}[author]
\bauthor{\bsnm{Montanari},~\bfnm{Andrea}\binits{A.}} \AND
  \bauthor{\bsnm{Sen},~\bfnm{Subhabrata}\binits{S.}}
(\byear{2022}).
\btitle{A short tutorial on mean-field spin glass techniques for
  non-physicists}.
\bjournal{arXiv preprint arXiv:2204.02909}.
\end{barticle}
\endbibitem

\bibitem{moshksar2021absolute}
\begin{barticle}[author]
\bauthor{\bsnm{Moshksar},~\bfnm{Kamyar}\binits{K.}}
(\byear{2021}).
\btitle{On the absolute constant in hanson-wright inequality}.
\bjournal{arXiv preprint arXiv:2111.00557}.
\end{barticle}
\endbibitem

\bibitem{nguyen2009fast}
\begin{binproceedings}[author]
\bauthor{\bsnm{Nguyen},~\bfnm{Nam~H}\binits{N.~H.}},
  \bauthor{\bsnm{Do},~\bfnm{Thong~T}\binits{T.~T.}} \AND
  \bauthor{\bsnm{Tran},~\bfnm{Trac~D}\binits{T.~D.}}
(\byear{2009}).
\btitle{A fast and efficient algorithm for low-rank approximation of a matrix}.
In \bbooktitle{Proceedings of the forty-first annual ACM symposium on Theory of
  computing}
\bpages{215--224}.
\end{binproceedings}
\endbibitem

\bibitem{nica2006lectures}
\begin{bbook}[author]
\bauthor{\bsnm{Nica},~\bfnm{Alexandru}\binits{A.}} \AND
  \bauthor{\bsnm{Speicher},~\bfnm{Roland}\binits{R.}}
(\byear{2006}).
\btitle{Lectures on the combinatorics of free probability}
\bvolume{13}.
\bpublisher{Cambridge University Press}.
\end{bbook}
\endbibitem

\bibitem{noordewier1990training}
\begin{barticle}[author]
\bauthor{\bsnm{Noordewier},~\bfnm{Michiel}\binits{M.}},
  \bauthor{\bsnm{Towell},~\bfnm{Geoffrey}\binits{G.}} \AND
  \bauthor{\bsnm{Shavlik},~\bfnm{Jude}\binits{J.}}
(\byear{1990}).
\btitle{Training knowledge-based neural networks to recognize genes in DNA
  sequences}.
\bjournal{Advances in neural information processing systems}
\bvolume{3}.
\end{barticle}
\endbibitem

\bibitem{opper2001tractable}
\begin{barticle}[author]
\bauthor{\bsnm{Opper},~\bfnm{Manfred}\binits{M.}} \AND
  \bauthor{\bsnm{Winther},~\bfnm{Ole}\binits{O.}}
(\byear{2001}).
\btitle{Tractable approximations for probabilistic models: The adaptive
  Thouless-Anderson-Palmer mean field approach}.
\bjournal{Physical Review Letters}
\bvolume{86}
\bpages{3695}.
\end{barticle}
\endbibitem

\bibitem{pandit2020inference}
\begin{barticle}[author]
\bauthor{\bsnm{Pandit},~\bfnm{Parthe}\binits{P.}},
  \bauthor{\bsnm{Sahraee-Ardakan},~\bfnm{Mojtaba}\binits{M.}},
  \bauthor{\bsnm{Rangan},~\bfnm{Sundeep}\binits{S.}},
  \bauthor{\bsnm{Schniter},~\bfnm{Philip}\binits{P.}} \AND
  \bauthor{\bsnm{Fletcher},~\bfnm{Alyson~K}\binits{A.~K.}}
(\byear{2020}).
\btitle{Inference with deep generative priors in high dimensions}.
\bjournal{IEEE Journal on Selected Areas in Information Theory}
\bvolume{1}
\bpages{336--347}.
\end{barticle}
\endbibitem

\bibitem{rangan2019vector}
\begin{barticle}[author]
\bauthor{\bsnm{Rangan},~\bfnm{Sundeep}\binits{S.}},
  \bauthor{\bsnm{Schniter},~\bfnm{Philip}\binits{P.}} \AND
  \bauthor{\bsnm{Fletcher},~\bfnm{Alyson~K}\binits{A.~K.}}
(\byear{2019}).
\btitle{Vector approximate message passing}.
\bjournal{IEEE Transactions on Information Theory}
\bvolume{65}
\bpages{6664--6684}.
\end{barticle}
\endbibitem

\bibitem{rauhut2010structured}
\begin{bincollection}[author]
\bauthor{\bsnm{Rauhut},~\bfnm{Holger}\binits{H.}}
(\byear{2010}).
\btitle{Compressive Sensing and Structured Random Matrices}.
In \bbooktitle{Theoretical Foundations and Numerical Methods for Sparse
  Recovery}.
\bseries{Radon Series on Computational and Applied Mathematics}
\bvolume{9}
\bpages{1--92}.
\bpublisher{de Gruyter}.
\end{bincollection}
\endbibitem

\bibitem{misc_communities_and_crime_183}
\begin{bmisc}[author]
\bauthor{\bsnm{Redmond},~\bfnm{Michael}\binits{M.}}
(\byear{2009}).
\btitle{{Communities and Crime}}.
\bhowpublished{UCI Machine Learning Repository}.
\bnote{{DOI}: https://doi.org/10.24432/C53W3X}.
\end{bmisc}
\endbibitem

\bibitem{rush2015capacity}
\begin{binproceedings}[author]
\bauthor{\bsnm{Rush},~\bfnm{Cynthia}\binits{C.}},
  \bauthor{\bsnm{Greig},~\bfnm{Adam}\binits{A.}} \AND
  \bauthor{\bsnm{Venkataramanan},~\bfnm{Ramji}\binits{R.}}
(\byear{2015}).
\btitle{Capacity-achieving sparse regression codes via approximate message
  passing decoding}.
In \bbooktitle{2015 IEEE International Symposium on Information Theory (ISIT)}
\bpages{2016--2020}.
\bpublisher{IEEE}.
\end{binproceedings}
\endbibitem

\bibitem{schniter2016vector}
\begin{binproceedings}[author]
\bauthor{\bsnm{Schniter},~\bfnm{Philip}\binits{P.}},
  \bauthor{\bsnm{Rangan},~\bfnm{Sundeep}\binits{S.}} \AND
  \bauthor{\bsnm{Fletcher},~\bfnm{Alyson~K}\binits{A.~K.}}
(\byear{2016}).
\btitle{Vector approximate message passing for the generalized linear model}.
In \bbooktitle{2016 50th Asilomar conference on signals, systems and computers}
\bpages{1525--1529}.
\bpublisher{IEEE}.
\end{binproceedings}
\endbibitem

\bibitem{silin2022canonical}
\begin{barticle}[author]
\bauthor{\bsnm{Silin},~\bfnm{Igor}\binits{I.}} \AND
  \bauthor{\bsnm{Fan},~\bfnm{Jianqing}\binits{J.}}
(\byear{2022}).
\btitle{Canonical thresholding for nonsparse high-dimensional linear
  regression}.
\bjournal{The Annals of Statistics}
\bvolume{50}
\bpages{460--486}.
\end{barticle}
\endbibitem

\bibitem{stein1981estimation}
\begin{barticle}[author]
\bauthor{\bsnm{Stein},~\bfnm{Charles~M}\binits{C.~M.}}
(\byear{1981}).
\btitle{Estimation of the mean of a multivariate normal distribution}.
\bjournal{The annals of Statistics}
\bpages{1135--1151}.
\end{barticle}
\endbibitem

\bibitem{stojnic2013framework}
\begin{bmisc}[author]
\bauthor{\bsnm{Stojnic},~\bfnm{Mihailo}\binits{M.}}
(\byear{2013}).
\btitle{A framework to characterize performance of LASSO algorithms}.
\end{bmisc}
\endbibitem

\bibitem{strohmer2003grassmannian}
\begin{barticle}[author]
\bauthor{\bsnm{Strohmer},~\bfnm{Thomas}\binits{T.}} \AND
  \bauthor{\bsnm{Heath},~\bfnm{Robert~W.}\binits{R.~W.}}
(\byear{2003}).
\btitle{Grassmannian Frames with Applications to Coding and Communication}.
\bjournal{Applied and Computational Harmonic Analysis}
\bvolume{14}
\bpages{257--275}.
\bdoi{10.1016/S1063-5203(03)00023-X}
\end{barticle}
\endbibitem

\bibitem{sur2019modern}
\begin{barticle}[author]
\bauthor{\bsnm{Sur},~\bfnm{Pragya}\binits{P.}} \AND
  \bauthor{\bsnm{Cand{\`e}s},~\bfnm{Emmanuel~J}\binits{E.~J.}}
(\byear{2019}).
\btitle{A modern maximum-likelihood theory for high-dimensional logistic
  regression}.
\bjournal{Proceedings of the National Academy of Sciences}
\bvolume{116}
\bpages{14516--14525}.
\end{barticle}
\endbibitem

\bibitem{sur2019likelihood}
\begin{barticle}[author]
\bauthor{\bsnm{Sur},~\bfnm{Pragya}\binits{P.}},
  \bauthor{\bsnm{Chen},~\bfnm{Yuxin}\binits{Y.}} \AND
  \bauthor{\bsnm{Cand{\`e}s},~\bfnm{Emmanuel~J}\binits{E.~J.}}
(\byear{2019}).
\btitle{The likelihood ratio test in high-dimensional logistic regression is
  asymptotically a rescaled chi-square}.
\bjournal{Probability theory and related fields}
\bvolume{175}
\bpages{487--558}.
\end{barticle}
\endbibitem

\bibitem{takahashi2018statistical}
\begin{barticle}[author]
\bauthor{\bsnm{Takahashi},~\bfnm{Takashi}\binits{T.}} \AND
  \bauthor{\bsnm{Kabashima},~\bfnm{Yoshiyuki}\binits{Y.}}
(\byear{2018}).
\btitle{A statistical mechanics approach to de-biasing and uncertainty
  estimation in LASSO for random measurements}.
\bjournal{Journal of Statistical Mechanics: Theory and Experiment}
\bvolume{2018}
\bpages{073405}.
\end{barticle}
\endbibitem

\bibitem{takeda2006analysis}
\begin{barticle}[author]
\bauthor{\bsnm{Takeda},~\bfnm{Koujin}\binits{K.}},
  \bauthor{\bsnm{Uda},~\bfnm{Shinsuke}\binits{S.}} \AND
  \bauthor{\bsnm{Kabashima},~\bfnm{Yoshiyuki}\binits{Y.}}
(\byear{2006}).
\btitle{Analysis of CDMA systems that are characterized by eigenvalue
  spectrum}.
\bjournal{Europhysics Letters}
\bvolume{76}
\bpages{1193}.
\end{barticle}
\endbibitem

\bibitem{takeuchi2019rigorous}
\begin{barticle}[author]
\bauthor{\bsnm{Takeuchi},~\bfnm{Keigo}\binits{K.}}
(\byear{2019}).
\btitle{Rigorous dynamics of expectation-propagation-based signal recovery from
  unitarily invariant measurements}.
\bjournal{IEEE Transactions on Information Theory}
\bvolume{66}
\bpages{368--386}.
\end{barticle}
\endbibitem

\bibitem{takeuchi2020convolutional}
\begin{barticle}[author]
\bauthor{\bsnm{Takeuchi},~\bfnm{Keigo}\binits{K.}}
(\byear{2020}).
\btitle{Convolutional approximate message-passing}.
\bjournal{IEEE Signal Processing Letters}
\bvolume{27}
\bpages{416--420}.
\end{barticle}
\endbibitem

\bibitem{takeuchi2021bayes}
\begin{barticle}[author]
\bauthor{\bsnm{Takeuchi},~\bfnm{Keigo}\binits{K.}}
(\byear{2021}).
\btitle{Bayes-optimal convolutional AMP}.
\bjournal{IEEE Transactions on Information Theory}
\bvolume{67}
\bpages{4405--4428}.
\end{barticle}
\endbibitem

\bibitem{talagrand2003spin}
\begin{bbook}[author]
\bauthor{\bsnm{Talagrand},~\bfnm{Michel}\binits{M.}}
(\byear{2003}).
\btitle{Spin glasses: a challenge for mathematicians: cavity and mean field
  models}
\bvolume{46}.
\bpublisher{Springer Science \& Business Media}.
\end{bbook}
\endbibitem

\bibitem{talagrand2010mean}
\begin{bbook}[author]
\bauthor{\bsnm{Talagrand},~\bfnm{Michel}\binits{M.}}
(\byear{2010}).
\btitle{Mean field models for spin glasses: Volume I: Basic examples}
\bvolume{54}.
\bpublisher{Springer Science \& Business Media}.
\end{bbook}
\endbibitem

\bibitem{thrampoulidis2014gaussian}
\begin{bmisc}[author]
\bauthor{\bsnm{Thrampoulidis},~\bfnm{Christos}\binits{C.}},
  \bauthor{\bsnm{Oymak},~\bfnm{Samet}\binits{S.}} \AND
  \bauthor{\bsnm{Hassibi},~\bfnm{Babak}\binits{B.}}
(\byear{2015}).
\btitle{The Gaussian min-max theorem in the Presence of Convexity}.
\end{bmisc}
\endbibitem

\bibitem{tropp2011srht}
\begin{barticle}[author]
\bauthor{\bsnm{Tropp},~\bfnm{Joel~A.}\binits{J.~A.}}
(\byear{2011}).
\btitle{Improved Analysis of the Subsampled Randomized Hadamard Transform}.
\bjournal{Advances in Adaptive Data Analysis}
\bvolume{3}
\bpages{115--126}.
\bdoi{10.1142/S1793536911000787}
\end{barticle}
\endbibitem

\bibitem{tulino2013support}
\begin{barticle}[author]
\bauthor{\bsnm{Tulino},~\bfnm{Antonia~M}\binits{A.~M.}},
  \bauthor{\bsnm{Caire},~\bfnm{Giuseppe}\binits{G.}},
  \bauthor{\bsnm{Verd{\'u}},~\bfnm{Sergio}\binits{S.}} \AND
  \bauthor{\bsnm{Shamai},~\bfnm{Shlomo}\binits{S.}}
(\byear{2013}).
\btitle{Support recovery with sparsely sampled free random matrices}.
\bjournal{IEEE Transactions on Information Theory}
\bvolume{59}
\bpages{4243--4271}.
\end{barticle}
\endbibitem

\bibitem{van2014asymptotically}
\begin{barticle}[author]
\bauthor{\bparticle{van~de} \bsnm{Geer},~\bfnm{Sara}\binits{S.}},
  \bauthor{\bsnm{B{\"u}hlmann},~\bfnm{Peter}\binits{P.}},
  \bauthor{\bsnm{Ritov},~\bfnm{Ya’acov}\binits{Y.}} \AND
  \bauthor{\bsnm{Dezeure},~\bfnm{Ruben}\binits{R.}}
(\byear{2014}).
\btitle{{On asymptotically optimal confidence regions and tests for
  high-dimensional models}}.
\bjournal{The Annals of Statistics}
\bvolume{42}
\bpages{1166 -- 1202}.
\bdoi{10.1214/14-AOS1221}
\end{barticle}
\endbibitem

\bibitem{van2000asymptotic}
\begin{bbook}[author]
\bauthor{\bparticle{Van~der} \bsnm{Vaart},~\bfnm{Aad~W}\binits{A.~W.}}
(\byear{2000}).
\btitle{Asymptotic statistics}
\bvolume{3}.
\bpublisher{Cambridge university press}.
\end{bbook}
\endbibitem

\bibitem{venkataramanan2022estimation}
\begin{binproceedings}[author]
\bauthor{\bsnm{Venkataramanan},~\bfnm{Ramji}\binits{R.}},
  \bauthor{\bsnm{K{\"o}gler},~\bfnm{Kevin}\binits{K.}} \AND
  \bauthor{\bsnm{Mondelli},~\bfnm{Marco}\binits{M.}}
(\byear{2022}).
\btitle{Estimation in rotationally invariant generalized linear models via
  approximate message passing}.
In \bbooktitle{International Conference on Machine Learning}
\bpages{22120--22144}.
\bpublisher{PMLR}.
\end{binproceedings}
\endbibitem

\bibitem{wang2022universality}
\begin{bmisc}[author]
\bauthor{\bsnm{Wang},~\bfnm{Tianhao}\binits{T.}},
  \bauthor{\bsnm{Zhong},~\bfnm{Xinyi}\binits{X.}} \AND
  \bauthor{\bsnm{Fan},~\bfnm{Zhou}\binits{Z.}}
(\byear{2023}).
\btitle{Universality of Approximate Message Passing algorithms and tensor
  networks}.
\end{bmisc}
\endbibitem

\bibitem{xu2023capacity}
\begin{binproceedings}[author]
\bauthor{\bsnm{Xu},~\bfnm{Yizhou}\binits{Y.}},
  \bauthor{\bsnm{Liu},~\bfnm{YuHao}\binits{Y.}},
  \bauthor{\bsnm{Liang},~\bfnm{ShanSuo}\binits{S.}},
  \bauthor{\bsnm{Wu},~\bfnm{Tingyi}\binits{T.}},
  \bauthor{\bsnm{Bai},~\bfnm{Bo}\binits{B.}},
  \bauthor{\bsnm{Barbier},~\bfnm{Jean}\binits{J.}} \AND
  \bauthor{\bsnm{Hou},~\bfnm{TianQi}\binits{T.}}
(\byear{2023}).
\btitle{Capacity-achieving sparse regression codes via vector approximate
  message passing}.
In \bbooktitle{2023 IEEE International Symposium on Information Theory (ISIT)}
\bpages{785--790}.
\bpublisher{IEEE}.
\end{binproceedings}
\endbibitem

\bibitem{yu2016orthogonal}
\begin{binproceedings}[author]
\bauthor{\bsnm{Yu},~\bfnm{Felix~Xinnan}\binits{F.~X.}},
  \bauthor{\bsnm{Suresh},~\bfnm{Ananda~Theertha}\binits{A.~T.}},
  \bauthor{\bsnm{Choromanski},~\bfnm{Krzysztof~M.}\binits{K.~M.}},
  \bauthor{\bsnm{Holtmann-Rice},~\bfnm{Daniel}\binits{D.}} \AND
  \bauthor{\bsnm{Kumar},~\bfnm{Sanjiv}\binits{S.}}
(\byear{2016}).
\btitle{Orthogonal Random Features}.
In \bbooktitle{Advances in Neural Information Processing Systems}
\bvolume{29}.
\end{binproceedings}
\endbibitem

\bibitem{WikipediaInvWishart}
\begin{bmisc}[author]
\btitle{Wikipedia: Inverse-Wishart distribution}.
\bhowpublished{\url{https://en.wikipedia.org/wiki/Inverse-Wishart_distribution}}.
\bnote{Accessed: 2023-09-01}.
\end{bmisc}
\endbibitem

\bibitem{WikipediamultT}
\begin{bmisc}[author]
\btitle{Wikipedia: Multivariate t-distribution}.
\bhowpublished{\url{https://en.wikipedia.org/wiki/Multivariate_t-distribution}}.
\bnote{Accessed: 2023-09-01}.
\end{bmisc}
\endbibitem

\bibitem{OpenMLSpeech}
\begin{bmisc}[author]
\btitle{OpenML, Dataset ID: 40910}.
\bhowpublished{\url{https://www.openml.org/search?type=data&status=active&id=40910}}.
\bnote{Accessed: 2023-09-01}.
\end{bmisc}
\endbibitem

\bibitem{OpenMLDNA}
\begin{bmisc}[author]
\btitle{OpenML, Dataset ID: 40670}.
\bhowpublished{\url{https://www.openml.org/search?type=data&status=active&id=40670}}.
\bnote{Accessed: 2023-09-01}.
\end{bmisc}
\endbibitem

\bibitem{YahooFin}
\begin{bmisc}[author]
\btitle{Yahoo Finance API}.
\bhowpublished{\url{https://python-yahoofinance.readthedocs.io/en/latest/api.html}}.
\bnote{Accessed: 2023-09-01}.
\end{bmisc}
\endbibitem

\bibitem{sklearn}
\begin{bmisc}[author]
\btitle{Scikit learn Labeled Faces in the Wild dataset}.
\bhowpublished{\url{https://scikit-learn.org/stable/modules/generated/sklearn.datasets.fetch_lfw_people.html\#sklearn.datasets.fetch_lfw_people}}.
\bnote{Accessed: 2023-09-01}.
\end{bmisc}
\endbibitem

\bibitem{zadorozhnyi2016huber}
\begin{binproceedings}[author]
\bauthor{\bsnm{Zadorozhnyi},~\bfnm{Oleksandr}\binits{O.}},
  \bauthor{\bsnm{Benecke},~\bfnm{Gunthard}\binits{G.}},
  \bauthor{\bsnm{Mandt},~\bfnm{Stephan}\binits{S.}},
  \bauthor{\bsnm{Scheffer},~\bfnm{Tobias}\binits{T.}} \AND
  \bauthor{\bsnm{Kloft},~\bfnm{Marius}\binits{M.}}
(\byear{2016}).
\btitle{Huber-norm regularization for linear prediction models}.
In \bbooktitle{Joint European Conference on Machine Learning and Knowledge
  Discovery in Databases}
\bpages{714--730}.
\bpublisher{Springer}.
\end{binproceedings}
\endbibitem

\bibitem{zdeborova2016statistical}
\begin{barticle}[author]
\bauthor{\bsnm{Zdeborov{\'a}},~\bfnm{Lenka}\binits{L.}} \AND
  \bauthor{\bsnm{Krzakala},~\bfnm{Florent}\binits{F.}}
(\byear{2016}).
\btitle{Statistical physics of inference: Thresholds and algorithms}.
\bjournal{Advances in Physics}
\bvolume{65}
\bpages{453--552}.
\end{barticle}
\endbibitem

\bibitem{zhang2014confidence}
\begin{barticle}[author]
\bauthor{\bsnm{Zhang},~\bfnm{Cun-Hui}\binits{C.-H.}} \AND
  \bauthor{\bsnm{Zhang},~\bfnm{Stephanie~S}\binits{S.~S.}}
(\byear{2014}).
\btitle{Confidence intervals for low dimensional parameters in high dimensional
  linear models}.
\bjournal{Journal of the Royal Statistical Society: Series B: Statistical
  Methodology}
\bvolume{76}
\bpages{217--242}.
\end{barticle}
\endbibitem

\bibitem{zhao2022asymptotic}
\begin{barticle}[author]
\bauthor{\bsnm{Zhao},~\bfnm{Qian}\binits{Q.}},
  \bauthor{\bsnm{Sur},~\bfnm{Pragya}\binits{P.}} \AND
  \bauthor{\bsnm{Candes},~\bfnm{Emmanuel~J}\binits{E.~J.}}
(\byear{2022}).
\btitle{The asymptotic distribution of the MLE in high-dimensional logistic
  models: Arbitrary covariance}.
\bjournal{Bernoulli}
\bvolume{28}
\bpages{1835--1861}.
\end{barticle}
\endbibitem

\bibitem{zhou2022non}
\begin{barticle}[author]
\bauthor{\bsnm{Zhou},~\bfnm{Lijia}\binits{L.}},
  \bauthor{\bsnm{Koehler},~\bfnm{Frederic}\binits{F.}},
  \bauthor{\bsnm{Sur},~\bfnm{Pragya}\binits{P.}},
  \bauthor{\bsnm{Sutherland},~\bfnm{Danica~J}\binits{D.~J.}} \AND
  \bauthor{\bsnm{Srebro},~\bfnm{Nati}\binits{N.}}
(\byear{2022}).
\btitle{A non-asymptotic moreau envelope theory for high-dimensional
  generalized linear models}.
\bjournal{Advances in Neural Information Processing Systems}
\bvolume{35}
\bpages{21286--21299}.
\end{barticle}
\endbibitem

\bibitem{DOFOG}
\begin{barticle}[author]
\bauthor{\bsnm{Zou},~\bfnm{Hui}\binits{H.}},
  \bauthor{\bsnm{Hastie},~\bfnm{Trevor}\binits{T.}} \AND
  \bauthor{\bsnm{Tibshirani},~\bfnm{Robert}\binits{R.}}
(\byear{2007}).
\btitle{{On the “degrees of freedom” of the lasso}}.
\bjournal{The Annals of Statistics}
\bvolume{35}
\bpages{2173 -- 2192}.
\bdoi{10.1214/009053607000000127}
\end{barticle}
\endbibitem

\end{thebibliography}

\newpage
\thispagestyle{empty}
\vspace{2em}
\appendix
\begin{center}
\large
\textbf{SUPPLEMENTARY MATERIAL: ``SPECTRUM\textendash{}AWARE DEBIASING: A MODERN INFERENCE FRAMEWORK\\
WITH APPLICATIONS TO PRINCIPAL COMPONENTS REGRESSION''} \\[1.2em]


\large\scshape
By Yufan Li\textsuperscript{1,a}\quad and\quad Pragya Sur\textsuperscript{1,b}\\[1em]

\itshape \small
\textsuperscript{1}Department of Statistics, Harvard University\quad
\textsuperscript{a}\href{mailto:yufan_li@g.harvard.edu}{yufan\_li@g.harvard.edu};
\textsuperscript{b}\href{mailto:pragya@fas.harvard.edu}{pragya@fas.harvard.edu}
\end{center}
\vspace{1em}
\section{Preliminary}
\subsection{Empirical Wasserstein-2 convergence}\label{section:wass}
We will use below the following fact. See \cite[Appendix E]{fan2022approximate} and references within for its justification. 
\begin{Proposition}\label{wasfact}
To verify $(\vbf_1,\ldots, \vbf_k) \stackrel{W_2}{\rightarrow} (\Vs_1,\ldots, \Vs_k)$, it suffices to check that $$\lim_{n \to \infty}
\frac{1}{n} \sum_{i=1}^{n} f\left(v_{i, 1}, \ldots, v_{i,
k}\right)=\mathbb{E}\left[f\left(\mathsf{V}_{1}, \ldots,
\mathsf{V}_{k}\right)\right]$$ holds for every function $f: \mathbb{R}^k \rightarrow \mathbb{R}$ satisfying, for some constant $C>0$, the pseudo-Lipschitz condition
$
\left|f(\vbf)-f\left(\vbf^{\prime}\right)\right| \leq C\left(1+\|\vbf\|_2+\left\|\vbf^{\prime}\right\|_2 \right)\left\|\vbf-\vbf^{\prime}\right\|_2.
$  Meanwhile, this condition implies \eqref{eq:wasslip}. 
\end{Proposition}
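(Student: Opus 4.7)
The plan is to split the proposition into its two parts: (i) the claim that testing only against pseudo-Lipschitz functions is enough to certify the full $W_2$ convergence in \Cref{def:Wass}, and (ii) the claim that every pseudo-Lipschitz function satisfies the quadratic-growth bound \eqref{eq:wasslip}. Since the result is essentially classical (it appears in \cite{fan2022approximate} and elsewhere), I will keep the proposal short and lean on standard Wasserstein-distance characterizations from the literature.

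For claim (ii), I would simply set $\vbf' = \bm{0}$ in the pseudo-Lipschitz inequality, which gives
\begin{equation*}
|f(\vbf)| \;\le\; |f(\bm{0})| + C(1 + \|\vbf\|_2)\|\vbf\|_2.
\end{equation*}
Applying the AM-GM inequality $\|\vbf\|_2 \le (1 + \|\vbf\|_2^2)/2$ and absorbing $|f(\bm{0})|$ together with constants into a possibly larger $C'$ immediately yields $|f(\vbf)| \le C'(1 + \|\vbf\|_2^2)$, which is \eqref{eq:wasslip} up to renaming the constant.

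For claim (i), the key reduction is the classical equivalence (see e.g. Villani, Theorem 6.9) that, for probability measures $\mu_n,\mu$ on $\mathbb{R}^k$ with finite second moment, $\mu_n \to \mu$ in $W_2$ is equivalent to $\mu_n \to \mu$ weakly together with $\int \|\vbf\|_2^2\,d\mu_n \to \int \|\vbf\|_2^2\,d\mu$. Letting $\hat\mu_n$ be the empirical distribution of the rows of $(\vbf_1,\dots,\vbf_k)$ and $\mu$ the law of $(\Vs_1,\dots,\Vs_k)$, I would first observe that every bounded Lipschitz function is pseudo-Lipschitz (trivially, its Lipschitz constant dominates $C(1+\|\vbf\|_2+\|\vbf'\|_2)$ when one chooses $C$ large enough, or more simply, bounded Lipschitz functions can be written as pseudo-Lipschitz with the same Lipschitz constant), so that the hypothesis already implies $\int f\,d\hat\mu_n \to \int f\,d\mu$ for every bounded Lipschitz $f$, hence weak convergence by Portmanteau. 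Next, since the map $\vbf \mapsto \|\vbf\|_2^2$ is itself pseudo-Lipschitz---because $|\,\|\vbf\|_2^2 - \|\vbf'\|_2^2\,| = (\|\vbf\|_2+\|\vbf'\|_2)\,|\,\|\vbf\|_2 - \|\vbf'\|_2\,| \le (\|\vbf\|_2+\|\vbf'\|_2)\|\vbf-\vbf'\|_2$---applying the hypothesis to this function gives convergence of second moments. Combining these two ingredients with the equivalence cited above yields $\hat\mu_n \to \mu$ in $W_2$. Finally, the standard fact that $W_2$ convergence implies $\int f\,d\hat\mu_n \to \int f\,d\mu$ for every continuous $f$ satisfying the quadratic-growth bound \eqref{eq:wasslip} (uniform integrability of $\|\vbf\|_2^2$ under $\hat\mu_n$ upgrades weak convergence to convergence of integrals of such $f$) closes the loop with \Cref{def:Wass}.

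The only conceptual subtlety is the passage from weak convergence to convergence of integrals of functions with quadratic growth; this is exactly where convergence of second moments is needed, and it is precisely what Step~2 delivers via the pseudo-Lipschitz test function $\vbf \mapsto \|\vbf\|_2^2$. No step is technically deep, so I expect no real obstacle; the work is entirely in assembling standard Wasserstein-distance facts in the right order, which is why the paper defers the justification to \cite{fan2022approximate}.
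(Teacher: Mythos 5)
The paper does not actually prove \Cref{wasfact}; it defers to \cite[Appendix E]{fan2022approximate}, so there is no in-paper argument to compare against. Your reconstruction is correct and is the standard one: (ii) follows by setting $\vbf'=\bm{0}$ and absorbing constants; for (i), Lipschitz functions and $\vbf\mapsto\|\vbf\|_2^2$ are both pseudo-Lipschitz, so the hypothesis yields weak convergence of the empirical measure (Portmanteau via bounded Lipschitz test functions) together with convergence of second moments, which by the Villani characterization is equivalent to $W_2$ convergence and hence to convergence of integrals against every continuous function of quadratic growth, i.e.\ \Cref{def:Wass}. One minor phrasing slip: you write that a bounded Lipschitz function's ``Lipschitz constant dominates $C(1+\|\vbf\|_2+\|\vbf'\|_2)$,'' which has the inequality pointing the wrong way; what you want (and state correctly a moment later) is simply that $\|\vbf-\vbf'\|_2\le(1+\|\vbf\|_2+\|\vbf'\|_2)\|\vbf-\vbf'\|_2$, so an $L$-Lipschitz $f$ is pseudo-Lipschitz with $C=L$.
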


The following results are from \cite[Appendix E]{fan2022approximate}. 
\begin{Proposition}\label{prop:iidW}
Suppose $\Vbf \in \R^{n \times t}$ has i.i.d.\ rows equal in law to $\Vs \in \R^t$,
which has finite mixed moments of all orders. Then $\Vbf \toW \Vs$ almost surely as $n
\to \infty$. Furthermore, if $\mathbf{E} \in \R^{n \times k}$ is deterministic with $\mathbf{E} \toW \Es$, then $(\Vbf,\mathbf{E}) \toW (\Vs,\Es)$ almost surely
where $\Vs$ is independent of $\Es$.
\end{Proposition}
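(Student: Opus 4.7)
The plan is to reduce both claims, via Proposition \ref{wasfact}, to showing $n^{-1}\sum_{i=1}^n f(\cdot)\to \E[f(\cdot)]$ almost surely for every pseudo-Lipschitz test function. Since almost sure convergence must hold simultaneously across this uncountable family, I would work through the equivalent characterization that $W_2$ convergence of empirical measures holds if and only if (a) the empirical measure converges weakly and (b) the empirical second moments converge. This shifts the burden onto two channels that are independent of the particular test function: weak convergence can be tested against a countable separating subfamily of bounded continuous functions, while second moment convergence is a single strong law.

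For the first claim, weak convergence of the empirical measure of i.i.d.\ rows is Varadarajan's theorem, and $n^{-1}\sum_i \|\Vbf_i\|^2 \to \E\|\Vs\|^2$ follows from the classical SLLN applied to the integrable i.i.d.\ scalars $\|\Vbf_i\|^2$ (integrable because $\Vs$ has finite mixed moments). Combining the two yields $\Vbf \toW \Vs$ almost surely.

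For the second claim, the main decomposition is as follows. Fix a pseudo-Lipschitz $f:\R^{t+k}\to \R$ and set $g(\mathbf{e}) := \E[f(\Vs,\mathbf{e})]$. A short computation using the pseudo-Lipschitz bound on $f$ and $\E\|\Vs\|<\infty$ shows that $g$ itself is pseudo-Lipschitz, since the factor $\|\Vs\|$ appearing in the bound is absorbed into the constant after taking expectation. Writing
\begin{equation*}
\frac{1}{n}\sum_{i=1}^n f(\Vbf_i,\mathbf{E}_i)
=\underbrace{\frac{1}{n}\sum_{i=1}^n \bigl[f(\Vbf_i,\mathbf{E}_i)-g(\mathbf{E}_i)\bigr]}_{=:A_n}
+\underbrace{\frac{1}{n}\sum_{i=1}^n g(\mathbf{E}_i)}_{=:B_n},
\end{equation*}
I would argue that $B_n \to \E[g(\Es)] = \E[f(\Vs,\Es)]$ almost surely directly from the hypothesis $\mathbf{E} \toW \Es$ applied to the pseudo-Lipschitz $g$; note that the order of expectations makes $\Vs$ independent of $\Es$ in the limit. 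For $A_n$, the summands are independent (the $\Vbf_i$ are i.i.d.\ and $\mathbf{E}$ is deterministic) with zero mean and variances $O(1 + \|\mathbf{E}_i\|^2)$, and since $n^{-1}\sum \|\mathbf{E}_i\|^2 \to \E\|\Es\|^2$, Chebyshev yields $A_n \to 0$ in probability. I would upgrade this to almost sure convergence by restricting to the subsequence $n_k=k^2$ (along which Chebyshev tails are summable and Borel-Cantelli applies) and bounding the oscillations on $[n_k,n_{k+1}]$ via a Kolmogorov-type maximal inequality for independent sums.

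The main obstacle---and the reason to route everything through weak-plus-second-moment convergence---is obtaining simultaneous almost sure convergence across the uncountable class of pseudo-Lipschitz test functions. For joint weak convergence, I would apply the above decomposition to bounded continuous $\phi$, noting that $\bar\phi(\mathbf{e}) := \E\phi(\Vs,\mathbf{e})$ is again bounded continuous by dominated convergence and that the concentration step can now use Hoeffding (tails summable in $n$) so that Borel-Cantelli gives the almost sure conclusion for each fixed $\phi$ directly; restricting to a countable separating family then yields joint weak convergence on one full-probability event. For the joint second moment, I would apply the SLLN to $\|\Vbf_i\|^2$ and the hypothesis on $\mathbf{E}$ to $\|\mathbf{E}_i\|^2$, and intersect the two full-probability events. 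This delivers $(\Vbf,\mathbf{E}) \toW \Vs\otimes\Es$ uniformly over pseudo-Lipschitz test functions, which is the desired conclusion.
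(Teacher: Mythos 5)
The paper does not prove this proposition itself --- it cites \cite[Appendix E]{fan2022approximate} --- so there is no in-paper argument to compare against. Your plan is the standard one and it is sound: reduce $W_2$ convergence of empirical measures to weak convergence plus convergence of second moments, settle weak convergence by intersecting full-probability events over a countable separating family of bounded continuous test functions (Varadarajan for the marginal of $\Vbf$; the decomposition $\phi(\Vbf_i,\mathbf{E}_i)=[\phi(\Vbf_i,\mathbf{E}_i)-\bar\phi(\mathbf{E}_i)]+\bar\phi(\mathbf{E}_i)$ with Hoeffding/Borel--Cantelli for the pair), and settle the second moments via the SLLN applied to $\|\Vbf_i\|^2$ together with the $W_2$ hypothesis on $\mathbf{E}$. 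This also delivers the claimed independence in the limit, since $\bar\phi(\mathbf{e})=\E\phi(\Vs,\mathbf{e})$ integrates out $\Vs$ before $\Es$ enters.

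One slip in your ``first pass'' deserves a flag. For a pseudo-Lipschitz $f$ with $|f(\vbf)|\le C(1+\|\vbf\|^2)$, the variance of the summand $f(\Vbf_i,\mathbf{E}_i)-g(\mathbf{E}_i)$ is controlled by $\E[f(\Vbf_i,\mathbf{E}_i)^2]=O(1+\|\mathbf{E}_i\|^4)$, not $O(1+\|\mathbf{E}_i\|^2)$ as written; and the hypothesis $\mathbf{E}\toW\Es$ controls $\frac{1}{n}\sum_i\|\mathbf{E}_i\|^2$ but says nothing about $\frac{1}{n}\sum_i\|\mathbf{E}_i\|^4$. So the Chebyshev-along-subsequences argument does not close for unbounded pseudo-Lipschitz $f$. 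This does not damage your proposal, because your decisive second pass restricts the concentration step to bounded $\phi$ (where Hoeffding is unconditional) and treats the unbounded part through the single function $\|\cdot\|^2$; but the first-pass paragraph overstates what Chebyshev delivers and should be cut or corrected so it is not read as an alternative complete argument.
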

\begin{Proposition}\label{prop:contW}
Suppose $\Vbf \in \R^{n \times k}$ satisfies $\Vbf \toW \Vs$ as $n \to \infty$,
and $g:\R^k \to \R^l$ is continuous with $\|g(\vbf)\| \leq C(1+\|\vbf\|)^\pfrak$ for some $C>0$ and
$\pfrak \geq 1$. Then $g(\Vbf) \toW g(\Vs)$ where $g(\cdot)$ is applied
row-wise to $\Vbf$. 
\end{Proposition}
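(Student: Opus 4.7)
The plan is to invoke the sufficient criterion in \Cref{wasfact}: it suffices to verify that $\tfrac{1}{n}\sum_{i=1}^n \tilde f(g(\vbf_i)) \to \E[\tilde f(g(\Vs))]$ for every pseudo-Lipschitz $\tilde f:\R^l\to \R$. Given such a $\tilde f$, I would form the composite $F := \tilde f\circ g$, which is continuous because both $\tilde f$ and $g$ are. The pseudo-Lipschitz inequality implies $|\tilde f(\mathbf{u})| \le C_0(1+\|\mathbf{u}\|)^2$, so combining with the hypothesis $\|g(v)\|\le C(1+\|v\|)^{\pfrak}$ yields the polynomial growth bound $|F(v)| \le C_1(1+\|v\|)^{2\pfrak}$ for some constant $C_1$ depending on $\tilde f,g,\pfrak$.

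Second, I would upgrade the $\toW$ hypothesis — which a priori controls only test functions of at most quadratic growth — to a convergence statement for the (possibly higher-degree) polynomial test function $F$, via truncation. Concretely, pick a continuous cutoff $\chi_R:\R_{+}\to [0,1]$ with $\chi_R \equiv 1$ on $[0,R]$ and $\chi_R \equiv 0$ on $[R+1,\infty)$, and set $F_R(v) := F(v)\chi_R(\|v\|)$. Each $F_R$ is continuous and bounded, hence of quadratic growth, so applying the hypothesis $\Vbf \toW \Vs$ to the test function $F_R$ gives $\tfrac{1}{n}\sum_i F_R(\vbf_i) \to \E[F_R(\Vs)]$ for every fixed $R$; dominated convergence then sends $\E[F_R(\Vs)] \to \E[F(\Vs)]$ as $R\to\infty$ (using that $\Vs$ has finite $(2\pfrak)$-moment in the intended applications).

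What would remain is a uniform-in-$n$ control of the residual
\[
\Bigl|\tfrac{1}{n}\sum_{i=1}^n F(\vbf_i) - \tfrac{1}{n}\sum_{i=1}^n F_R(\vbf_i)\Bigr| \le \tfrac{C_1}{n}\sum_{i=1}^n (1+\|\vbf_i\|)^{2\pfrak}\,\mathbb{I}\qty(\|\vbf_i\|>R),
\]
which reduces to uniform integrability of the empirical distribution of $\vbf_i$ against the weight $(1+\|v\|)^{2\pfrak}$. Granted this uniform integrability, $R$ can be chosen large uniformly in $n$ to make the residual arbitrarily small, and combining with the two earlier limits delivers $\tfrac{1}{n}\sum_i F(\vbf_i) \to \E[F(\Vs)]$; \Cref{wasfact} then closes the argument.

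\textbf{Main obstacle.} The crux is precisely this tail control. It is not automatic from the bare quadratic-growth definition of $\toW$; however, in the AMP/empirical-Wasserstein framework used throughout the paper the convergence comes bundled with convergence of all higher moments — consistently with the ``finite mixed moments of all orders'' hypothesis entering \Cref{prop:iidW} — and this delivers the required uniform integrability of $(1+\|\vbf_i\|)^{2\pfrak}$. Establishing this tightness step rigorously is where I would defer to the detailed accounting in \cite[Appendix E]{fan2022approximate}; modulo that ingredient, the rest of the proof is a routine composition + cutoff + dominated-convergence chain.
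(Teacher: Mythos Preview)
The paper does not give its own proof of this proposition; it simply records it among ``the following results are from \cite[Appendix E]{fan2022approximate}.'' So there is no in-paper argument to compare against --- you are, in effect, sketching what that appendix does.

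Your outline is sound, and you have put your finger on the one real subtlety. For $\pfrak=1$ the argument is immediate: if $\tilde f$ is pseudo-Lipschitz of order~2 and $g$ has at most linear growth, then $F=\tilde f\circ g$ is continuous with quadratic growth, so the very definition of $\Vbf\toW\Vs$ (Definition~\ref{def:Wass}) already gives $\tfrac{1}{n}\sum_i F(\vbf_i)\to\E[F(\Vs)]$; no truncation or tail estimate is needed. For $\pfrak>1$, however, your caveat is not just a technicality --- it is a genuine gap in the statement as literally written: $W_2$ convergence alone does \emph{not} control moments of order $2\pfrak$, and one can build counterexamples (a single heavy coordinate of size $n^{1/2-\epsilon}$) where $\Vbf\toW 0$ in $W_2$ but $g(\Vbf)$ with $g(v)=v^2$ fails to converge in $W_2$. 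The resolution, as you say, is that in the AMP setting of \cite{fan2022approximate} the empirical convergence is established jointly with convergence of all polynomial moments, which supplies the uniform integrability you need.

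It is worth noting that every invocation of \Cref{prop:contW} in this paper takes $g$ Lipschitz (the function $F$ of \eqref{Fdef}, the proximal map, etc.), so in practice only the unproblematic case $\pfrak=1$ is used and your first paragraph already suffices.
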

\begin{Proposition}\label{prop:combW}
Suppose $\Vbf \in \R^{n \times k}$, $\mathbf{W} \in \R^{n \times l}$, and $\mathbf{M}_n,\mathbf{M} \in \R^{k
\times l}$ satisfy $\Vbf \toW \Vs$, $\mathbf{W}  \toW 0$, and $\mathbf{M}_n \to \mathbf{M}$ entrywise as $n \to
\infty$. Then $\Vbf \mathbf{M}_n+\mathbf{W}  \toW \Vs^\top \cdot \mathbf{M}$. 
\end{Proposition}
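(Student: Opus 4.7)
\textbf{Proof plan for Proposition \ref{prop:combW}.} The plan is to reduce the statement to the characterization of Wasserstein-$2$ convergence via pseudo-Lipschitz test functions (Proposition \ref{wasfact}), then split the contribution of $\mathbf{W}$ and of the perturbation $\mathbf{M}_n-\mathbf{M}$ off as error terms that vanish in the $n^{-1}\sum_{i=1}^n$ sense, leaving the main term $\Vbf\mathbf{M}$ to which Proposition \ref{prop:contW} applies directly with $g(\vbf)=\mathbf{M}^\top\vbf$. Throughout, I denote the $i$-th row of a matrix $\mathbf{A}$ by $\mathbf{a}_i^\top$ (viewed as a column vector).

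\textbf{Step 1: Reduction to pseudo-Lipschitz test functions.} By Proposition \ref{wasfact}, to show $\Vbf\mathbf{M}_n+\mathbf{W}\toW \Vs^\top\cdot\mathbf{M}$ it suffices to verify, for every $f:\R^l\to\R$ satisfying $|f(\mathbf{u})-f(\mathbf{u}')|\le C(1+\|\mathbf{u}\|+\|\mathbf{u}'\|)\|\mathbf{u}-\mathbf{u}'\|$, that
\begin{equation*}
\frac{1}{n}\sum_{i=1}^n f\bigl(\mathbf{M}_n^\top \vbf_i+\mathbf{w}_i\bigr)\longrightarrow \E\,f\bigl(\mathbf{M}^\top\Vs\bigr).
\end{equation*}
I would also record, once and for all, the moment fact that $\Vbf\toW\Vs$ implies $\tfrac{1}{n}\sum_{i=1}^n \|\vbf_i\|^2\to \E\|\Vs\|^2<\infty$, since $\vbf\mapsto\|\vbf\|^2$ is a valid test function in the sense of \eqref{eq:wasslip} (alternatively: it is a consequence of Wasserstein-2 convergence). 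In particular, $\tfrac{1}{n}\sum_i\|\vbf_i\|^2$ stays bounded.

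\textbf{Step 2: Handle the main term via Proposition \ref{prop:contW}.} The map $g:\R^k\to\R^l$, $g(\vbf)=\mathbf{M}^\top \vbf$, is linear, hence continuous, and satisfies $\|g(\vbf)\|\le \|\mathbf{M}\|_{\mathrm{op}}(1+\|\vbf\|)$. Applying Proposition \ref{prop:contW} row-wise yields
\begin{equation*}
\Vbf\mathbf{M}\toW \mathbf{M}^\top\Vs,\qquad\text{i.e.}\qquad \frac{1}{n}\sum_{i=1}^n f(\mathbf{M}^\top\vbf_i)\to \E\,f(\mathbf{M}^\top\Vs).
\end{equation*}

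\textbf{Step 3: Control the perturbation by pseudo-Lipschitz plus Cauchy–Schwarz.} Let $\Delta_i:=(\mathbf{M}_n-\mathbf{M})^\top\vbf_i+\mathbf{w}_i$ denote the discrepancy between the perturbed and limiting summands. The pseudo-Lipschitz bound gives
\begin{equation*}
\Bigl|f(\mathbf{M}_n^\top\vbf_i+\mathbf{w}_i)-f(\mathbf{M}^\top\vbf_i)\Bigr|\le C\bigl(1+\|\mathbf{M}_n^\top\vbf_i+\mathbf{w}_i\|+\|\mathbf{M}^\top\vbf_i\|\bigr)\,\|\Delta_i\|.
\end{equation*}
Averaging and applying Cauchy–Schwarz twice, it suffices to show that $\tfrac{1}{n}\sum_i \|\Delta_i\|^2\to 0$ and that $\tfrac{1}{n}\sum_i(1+\|\mathbf{M}_n^\top\vbf_i+\mathbf{w}_i\|+\|\mathbf{M}^\top\vbf_i\|)^2$ stays bounded. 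For the latter, Step 1 combined with $\mathbf{M}_n\to\mathbf{M}$ entrywise (hence $\|\mathbf{M}_n\|_{\mathrm{op}}$ bounded) and $\mathbf{W}\toW 0$ (so $\tfrac{1}{n}\|\mathbf{W}\|_F^2\to 0$) gives the uniform bound. For $\tfrac{1}{n}\sum_i\|\Delta_i\|^2$, I expand
\begin{equation*}
\frac{1}{n}\sum_{i=1}^n\|\Delta_i\|^2\le \frac{2}{n}\sum_{i=1}^n\|(\mathbf{M}_n-\mathbf{M})^\top\vbf_i\|^2+\frac{2}{n}\|\mathbf{W}\|_F^2\le 2\|\mathbf{M}_n-\mathbf{M}\|_{\mathrm{op}}^2\cdot\frac{1}{n}\sum_{i=1}^n\|\vbf_i\|^2+\frac{2}{n}\|\mathbf{W}\|_F^2,
\end{equation*}
which tends to $0$ because $\mathbf{M}_n\to\mathbf{M}$ entrywise on a fixed-size matrix implies $\|\mathbf{M}_n-\mathbf{M}\|_{\mathrm{op}}\to 0$, the second-moment sum is bounded by Step 1, and $\tfrac{1}{n}\|\mathbf{W}\|_F^2\to 0$ follows from $\mathbf{W}\toW 0$.

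\textbf{Main obstacle.} The only delicate point is packaging the pseudo-Lipschitz error so the three hypotheses interact cleanly: the term $\tfrac{1}{n}\sum_i\|(\mathbf{M}_n-\mathbf{M})^\top\vbf_i\|^2$ blends the deterministic convergence $\mathbf{M}_n\to\mathbf{M}$ with the empirical second-moment control from $\Vbf\toW\Vs$, while the $\mathbf{w}_i$ contribution must be controlled separately using only $\mathbf{W}\toW 0$ (not any joint convergence with $\Vbf$, which is not assumed). The Cauchy–Schwarz split in Step 3 keeps these two sources of error strictly decoupled and is what makes the argument go through without additional joint hypotheses on $(\Vbf,\mathbf{W})$.
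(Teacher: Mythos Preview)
Your argument is correct. The paper does not actually prove this proposition: it is imported verbatim from \cite[Appendix~E]{fan2022approximate} along with the surrounding Wasserstein-convergence lemmas, so there is no ``paper's own proof'' to compare against. Your route---reduce to pseudo-Lipschitz test functions via Proposition~\ref{wasfact}, handle the main term $\Vbf\mathbf{M}$ through Proposition~\ref{prop:contW} applied to the linear map $g(\vbf)=\mathbf{M}^\top\vbf$, and control the residual $(\mathbf{M}_n-\mathbf{M})^\top\vbf_i+\mathbf{w}_i$ by Cauchy--Schwarz using the bounded empirical second moment of $\Vbf$, the vanishing $\tfrac{1}{n}\|\mathbf{W}\|_F^2$, and $\|\mathbf{M}_n-\mathbf{M}\|_{\mathrm{op}}\to 0$---is exactly the standard proof and is what one finds in the cited reference. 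Your emphasis that no joint convergence of $(\Vbf,\mathbf{W})$ is assumed, and that the Cauchy--Schwarz decoupling is what makes this harmless, is the right diagnosis of the only subtlety.
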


\begin{Proposition}\label{prop:asW}
Fix $\pfrak \geq 1$ and $k \geq 0$. Suppose $\Vbf \in \mathbb{R}^{n \times k}$ satisfies ${\Vbf} \stackrel{W_2}{\rightarrow} \Vs$, and $f: \mathbb{R}^k \rightarrow \mathbb{R}$ is a function satisfying \eqref{eq:wasslip} that is continuous everywhere except on a set having probability 0 under the law of $\Vs$. Then
$
\frac{1}{n} \sum_{i=1}^n f({\Vbf})_i \rightarrow \mathbb{E}[f(\Vs)].
$
\end{Proposition}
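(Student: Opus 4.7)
The plan is to deduce the result from the continuous mapping theorem (extended to almost-everywhere continuous functions) via a truncation argument, using uniform integrability as the bridge between $W_2$ convergence and convergence against $\mu$-a.e.\ continuous integrands with quadratic growth.

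Let $\mu_n$ denote the empirical distribution on $\R^k$ of the rows of $\Vbf$ and $\mu$ the law of $\Vs$. First I would extract two consequences of the hypothesis $\Vbf\toW \Vs$. Applying Definition~\ref{def:Wass} to any bounded continuous test function (which trivially satisfies \eqref{eq:wasslip}) yields weak convergence $\mu_n \Rightarrow \mu$. Applying it to the continuous test function $x\mapsto \|x\|^2$, which itself satisfies \eqref{eq:wasslip}, yields $\int \|x\|^2 d\mu_n \to \int \|x\|^2 d\mu < \infty$. Together these imply that $\{\|x\|^2\}$ is uniformly integrable with respect to $\{\mu_n\}$; equivalently, $\lim_{M\to\infty}\sup_n \int \|x\|^2 \,\mathbb{I}(\|x\|>M)\, d\mu_n = 0$.

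Next I introduce a continuous cutoff $\phi_M:[0,\infty)\to[0,1]$ with $\phi_M(r)=1$ for $r\le M$, $\phi_M(r)=0$ for $r\ge M+1$, and linear on $[M,M+1]$. Set $g_M(x)=f(x)\,\phi_M(\|x\|)$. Then $g_M$ is bounded with $|g_M|\le C(1+(M+1)^2)$, and its set of discontinuities is contained in the discontinuity set of $f$, which has $\mu$-measure zero. Hence the extended portmanteau/continuous mapping theorem (for bounded, $\mu$-a.e.\ continuous integrands under weak convergence) gives $\int g_M\, d\mu_n \to \int g_M\, d\mu$ as $n\to\infty$, for each fixed $M$.

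To control the tail, note
\begin{equation*}
\left|\int f(1-\phi_M(\|\cdot\|))\, d\nu\right| \le C\int (1+\|x\|^2)\,\mathbb{I}(\|x\|>M)\, d\nu, \qquad \nu\in\{\mu_n,\mu\}.
\end{equation*}
By the uniform integrability established above and by $\E\|\Vs\|^2<\infty$, both tails tend to $0$ as $M\to\infty$, uniformly in $n$ for the former. Combining with the triangle inequality
\begin{equation*}
\left|\int f\, d\mu_n - \int f\, d\mu\right| \le \left|\int g_M\, d\mu_n - \int g_M\, d\mu\right| + \left|\int f(1-\phi_M)\, d\mu_n\right| + \left|\int f(1-\phi_M)\, d\mu\right|,
\end{equation*}
first sending $n\to\infty$ (kills the first term) and then $M\to\infty$ (kills the remaining two) concludes the proof.

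The only genuinely delicate step is verifying that one may invoke the continuous mapping theorem for bounded functions that are only $\mu$-a.e.\ continuous; this is standard and follows from the portmanteau theorem applied to the open/closed sets $\{g_M < t\}$ using that the boundary $\{g_M = t\}$ has $\mu$-measure zero outside the $\mu$-null discontinuity set. Everything else is a clean truncation argument powered by the uniform integrability that $W_2$ convergence automatically provides.
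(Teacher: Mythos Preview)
Your argument is correct: the truncation-plus-uniform-integrability route is the standard way to extend Wasserstein-2 convergence to test functions that are only $\mu$-a.e.\ continuous with quadratic growth, and each step you outline goes through. One small cosmetic point: in the tail bound you may simply absorb the constant into the quadratic term by noting $(1+\|x\|^2)\le 2\|x\|^2$ on $\{\|x\|>M\}$ for $M\ge 1$, so uniform integrability of $\|x\|^2$ alone suffices.

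As for comparison with the paper: the paper does not give its own proof of this proposition. It records the statement among a list of facts imported verbatim from \cite[Appendix~E]{fan2022approximate} and uses it as a black box. Your self-contained argument therefore supplies what the paper merely cites, and it follows exactly the line one would expect the cited reference to take.
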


\begin{Proposition}\label{prop:orthoW}
Fix $l \geq 0$, let $\Obm \sim \Haar(\O(n-l))$, and let $\vbf \in \R^{n-l}$ 
and $\bm{\Pi} \in \R^{n \times (n-l)}$ be deterministic, where $\bm{\Pi}$ has orthonormal
columns and $n^{-1}\| \vbf \|^2 \to \sigma^2$ as $n \to \infty$. Then
$\bm{\Pi} \Obm \vbf \toW \Zs \sim N(0,\sigma^2)$ almost surely.
Furthermore, if $ \mathbf{E} \in \R^{n \times k}$ is deterministic with $ \mathbf{E} \toW \Es$,
then $(\bm{\Pi} \Obm \vbf, \mathbf{E} ) \toW (\Zs,\Es)$ almost surely
where $\Zs$ is independent of $\Es$.
\end{Proposition}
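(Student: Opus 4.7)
The plan is to reduce the uniform-on-sphere problem to a Gaussian one via the representation $\Obm\vbf \stackrel{d}{=} \|\vbf\|\,\mathbf{g}/\|\mathbf{g}\|$ for $\mathbf{g}\sim N(0,\mathbf{I}_{n-l})$, so that $\bm{\Pi}\Obm\vbf \stackrel{d}{=}(\|\vbf\|/\|\mathbf{g}\|)\cdot \bm{\Pi}\mathbf{g}$. Since $\|\mathbf{g}\|^2/(n-l)\to 1$ almost surely and $\|\vbf\|^2/n \to \sigma^2$, the scalar prefactor converges a.s.\ to $\sigma$, and by \Cref{prop:combW} it suffices to prove the joint Gaussian version: $(\bm{\Pi}\mathbf{g},\mathbf{E}) \toW (\Zs',\Es)$ with $\Zs'\sim N(0,1)$ independent of $\Es$. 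I would establish this via the standard equivalence between $W_2$ convergence and the combination of weak convergence with convergence of second moments (\Cref{wasfact}).

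The key structural input about $\bm{\Pi}$ is that its row norms $\|\bm{\pi}_i\|^2$, where $\bm{\pi}_i^\top$ denotes the $i$-th row, lie in $[0,1]$ (because $\bm{\Pi}\bm{\Pi}^\top$ is an orthogonal projection of rank $n-l$) and satisfy $\sum_i\|\bm{\pi}_i\|^2 = n-l$. Hence $\frac{1}{n}\sum_i(1-\|\bm{\pi}_i\|^2) = l/n \to 0$, so the empirical distribution of $\|\bm{\pi}_i\|^2$ converges to $\delta_1$; in particular $\frac{1}{n}\sum_i \bigl|\,\|\bm{\pi}_i\|-1\bigr| \to 0$. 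For the second-moment part, $\frac{1}{n}\sum_i (\bm{\Pi}\mathbf{g})_i^2 = \|\mathbf{g}\|^2/n \to 1$ a.s., $\frac{1}{n}\sum_i E_i^2 \to \mathbb{E}[\Es^2]$ by $\mathbf{E}\toW \Es$, and the cross term $\frac{1}{n}\sum_i (\bm{\Pi}\mathbf{g})_i E_i = \frac{1}{n}(\bm{\Pi}^\top\mathbf{E})^\top\mathbf{g}$ is mean-zero Gaussian with variance $\|\bm{\Pi}^\top\mathbf{E}\|^2/n^2 \le \|\mathbf{E}\|^2/n^2 = O(1/n)$, hence $\to 0$ a.s.\ by Borel--Cantelli, which matches $\mathbb{E}[\Zs'\Es]=0$ under independence.

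For the weak convergence ingredient, fix any bounded Lipschitz $f$ with constant $L$; the map $\mathbf{g}\mapsto \frac{1}{n}\sum_i f\bigl((\bm{\Pi}\mathbf{g})_i, \mathbf{E}_i\bigr)$ is then Lipschitz in $\mathbf{g}$ with constant at most $L/\sqrt{n}$, since $\|\bm{\Pi}\|_{\mathrm{op}}=1$. Gaussian concentration yields
\begin{equation*}
\mathbb{P}\!\left(\left|\tfrac{1}{n}\textstyle\sum_i f\bigl((\bm{\Pi}\mathbf{g})_i,\mathbf{E}_i\bigr) - \mathbb{E}[\,\cdot\,]\right| > t\right) \le 2\exp\bigl(-c n t^2\bigr),
\end{equation*}
summable in $n$, so Borel--Cantelli delivers a.s.\ convergence to the expectation $\frac{1}{n}\sum_i\mathbb{E}[f(\|\bm{\pi}_i\| Z, \mathbf{E}_i)]$ for $Z\sim N(0,1)$. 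Row-norm concentration combined with the Lipschitz property of $f$ (Cauchy--Schwarz against the bounded $\frac{1}{n}\sum_i |E_i|^2$) lets me replace $\|\bm{\pi}_i\|$ by $1$ up to $o(1)$, reducing the quantity to $\frac{1}{n}\sum_i\mathbb{E}[f(Z,\mathbf{E}_i)]$, which converges to $\mathbb{E}[f(Z,\Es)]$ by $\mathbf{E}\toW\Es$ applied to the continuous quadratic-growth function $e\mapsto \mathbb{E}[f(Z,e)]$. The factorized form inside the outer expectation delivers independence of $\Zs'$ from $\Es$ in the limit.

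The step I expect to require the most care is the passage from bounded Lipschitz test functions to the pseudo-Lipschitz test functions that characterize $W_2$ convergence in the sense of \eqref{eq:wasslip}; direct Gaussian concentration breaks down because the Lipschitz constant in $\mathbf{g}$ is no longer uniform. My way around this is precisely the ``weak plus second moment'' route: the weak limit of the empirical distribution is pinned down by bounded Lipschitz tests, and the already-established convergence of the full joint second-moment matrix supplies the uniform $L^2$ integrability that upgrades weak convergence to $W_2$ convergence. A secondary but routine concern is transferring the Gaussian $W_2$ limit back to the sphere version through the random scalar $\|\vbf\|/\|\mathbf{g}\|$; this is handled by applying \Cref{prop:combW} with the a.s.\ entrywise limit $\|\vbf\|/\|\mathbf{g}\| \to \sigma$.
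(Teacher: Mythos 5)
The paper itself does not prove \Cref{prop:orthoW}; it attributes the result to \cite[Appendix E]{fan2022approximate}, so there is no internal proof to compare against. Assessed on its own, your proof is essentially correct and follows the standard route for this kind of statement: represent the Haar direction as a normalized Gaussian, use $\bm{\Pi}^\top\bm{\Pi}=\mathbf{I}_{n-l}$ so that the row norms of $\bm{\Pi}$ concentrate on $1$, establish weak convergence via Gaussian Lipschitz concentration and Borel--Cantelli, and supply the second-moment convergence that upgrades weak convergence to $W_2$ convergence.

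Two places deserve a more explicit treatment. First, the equality in distribution $\Obm\vbf\stackrel{d}{=}\|\vbf\|\mathbf{g}/\|\mathbf{g}\|$ holds for each fixed $n$, but almost-sure convergence of the $\mathbf{g}$-sequence does not by itself transfer to an arbitrary sequence of $\Obm$'s equal in distribution at each $n$: the joint law across $n$ matters. What licenses the transfer is precisely that all your tail bounds (Gaussian Lipschitz concentration for the bounded-Lipschitz statistics, $\chi^2$ concentration for $\|\mathbf{g}\|^2$, Gaussian tails for the cross term) are functions of the marginal law at each $n$ and are summable, so the Borel--Cantelli first lemma applies on the sphere side directly. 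You should state this rather than leaving it implicit in the phrase ``by the representation.'' Second, your invocation of \Cref{prop:combW} with the random scalar $\|\vbf\|/\|\mathbf{g}\|$ is a mild extension of the stated version (which takes $\mathbf{M}_n$ deterministic); it goes through on the a.s.\ event where the scalar converges to $\sigma$, but this should be flagged as a variant rather than a verbatim application. A small cosmetic point: the error from replacing $\|\bm{\pi}_i\|$ by $1$ in $\frac{1}{n}\sum_i\E[f(\|\bm{\pi}_i\|Z,\mathbf{E}_i)]$ is bounded directly by $\frac{L\,\E|Z|}{n}\sum_i\bigl|\|\bm{\pi}_i\|-1\bigr|\to 0$ via the Lipschitz property of $f$ in its first argument; no Cauchy--Schwarz against $\frac{1}{n}\sum_i|E_i|^2$ is needed there. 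Finally, for the weak convergence step you implicitly pass from a single bounded Lipschitz $f$ to the empirical measure converging weakly on an a.s.\ event; this is standard but requires restricting to a countable convergence-determining class before intersecting the a.s.\ events, and the write-up should say so.
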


\subsection{Proximal map}\label{appendix:prox}
We collect a few useful properties of proximal map. 
\begin{Proposition}\label{prop:proxp}
    Under \Cref{Assumph}, we have that for any $v>0$,
\begin{itemize}
\item[(a)] for any $x,y \in \R$, $y=\operatorname{Prox}_{vh}(x) \iff x-y \in v\partial h(y)$ where $\partial h$ is the subdifferential of $h$;
\item[(b)] Proximal map is firmly non-expansive: for any $x,y \in \R$, $\qty|\operatorname{Prox}_{vh}(x)-\operatorname{Prox}_{vh}(y)|^2\le (x-y)(\operatorname{Prox}_{vh}(x)-\operatorname{Prox}_{vh}(y))$. This implies that $x\mapsto \operatorname{Prox}_{vh}(x)$ is 1-Lipschitz continuous.
\item[(c)] We have the following inequality \begin{equation*}
\left|\operatorname{Prox}_{v_1 h}(x)-\operatorname{Prox}_{v_2 h}(x)\right| \leq \frac{\left|v_1-v_2\right|}{\left(1+v_1 c_0\right) v_2}\left|x-\operatorname{Prox}_{v_2 h}(x)\right|.
\end{equation*}
\item[(d)] For any $v>0$, it cannot be true that $\operatorname{Prox}_{vh}^{\prime}(x) =1 $ almost everywhere.
\end{itemize}
\end{Proposition}

\begin{proof}[Proof of \Cref{prop:proxp}]
    We skip proofs of (a) and (b) which are well-known properties of proximal operator. We now prove (c). Let \(y_1=\operatorname{Prox}_{v_1h}(x)\) and \(y_2=\operatorname{Prox}_{v_2h}(x)\). By the definition of the proximal mapping, there exist subgradients \(u_1\in\partial h(y_1)\) and \(u_2\in\partial h(y_2)\) such that 
\[
x-y_1=v_1\,u_1 \quad \text{and} \quad x-y_2=v_2\,u_2.
\]
Subtracting these two relations yields
\[
y_2-y_1=v_1(u_1-u_2)+(v_1-v_2)u_2.
\]
Since we are in the scalar setting, we can multiply both sides by \(y_1-y_2\) (note that \((y_2-y_1)(y_1-y_2)=-|y_1-y_2|^2\)) to obtain
\[
-|y_1-y_2|^2=v_1(u_1-u_2)(y_1-y_2)+(v_1-v_2)u_2(y_1-y_2).
\]
The convexity assumption on \(h\) with parameter \(c_0\le 0\) implies that for any \(u_1\in\partial h(y_1)\) and \(u_2\in\partial h(y_2)\) we have
\[
(u_1-u_2)(y_1-y_2)\ge c_0\,(y_1-y_2)^2.
\]
Substituting this inequality into the previous display yields
\[
-|y_1-y_2|^2\ge v_1\,c_0\,(y_1-y_2)^2+(v_1-v_2)u_2(y_1-y_2).
\]
Rearranging terms and taking absolute values, we deduce
\[
(1+v_1c_0)|y_1-y_2|^2\le |v_1-v_2|\;|u_2|\;|y_1-y_2|.
\]
Assuming \(y_1\neq y_2\) so that we can cancel a factor of \(|y_1-y_2|\), it follows that
\[
|y_1-y_2|\le \frac{|v_1-v_2|}{1+v_1c_0}\,|u_2|.
\]
Finally, recalling that the optimality condition for \(y_2\) gives \(x-y_2=v_2\,u_2\), we have \(|u_2|=\frac{|x-y_2|}{v_2}\). Substituting this expression into the inequality above yields
\[
|y_1-y_2|\le \frac{|v_1-v_2|}{(1+v_1c_0)v_2}\,|x-y_2|,
\]
which is the desired result. 

To see (d), note that since \Cref{Assumph} requires $h(x)$ to be non-constant, it suffices to show the following: If $\operatorname{Prox}^{\prime}_{vh}(x)=1$ almost everywhere $v>0$, then $h(x)$ is constant. Define 
\[
T(x) = \operatorname{Prox}_{v h}(x).
\]
Since \(T'(x)=1\) almost everywhere, integrating over an interval shows that
\[
T(b)-T(a)=b-a\quad \text{for all } a,b\in\mathbb{R}.
\]
Thus, \(T\) is an affine function of the form
\[
T(x)=x+c,
\]
for some constant \(c\in\mathbb{R}\).

Now, by the definition of the proximal operator, for every \(x\in\mathbb{R}\) the optimality condition (in terms of subgradients) implies that
\[
0\in \partial h\bigl(T(x)\bigr) + \frac{1}{v}\bigl(T(x)-x\bigr).
\]
Substituting \(T(x)=x+c\) gives
\[
0\in \partial h(x+c) + \frac{c}{v},\quad \text{for all } x\in\mathbb{R}.
\]
Letting \(y=x+c\), we deduce that for every \(y\in\mathbb{R}\)
\[
-\frac{c}{v} \in \partial h(y).
\]

By our assumption, $h$ is twice continuously differentiable except for a finite set of points. Integrating, we obtain
\[
h(y) = -\frac{c}{v}y + b,
\]
for some constant \(b\in\mathbb{R}\). However, \(h\) is assumed to be nonnegative. Hence, we must have \(c=0\).

\end{proof}

\begin{proof}[Proof of \Cref{Extend}]
Under \Cref{Assumph}, for any $v>0$, $x\mapsto \operatorname{Prox}_{v h}(x)$ is continuous, monotone increasing in $x$, and continuously differentiable at any $x$ such that $\operatorname{Prox}_{v h}(x) \notin \Dc$ and
\begin{equation}
    \operatorname{Prox}_{v h}^{\prime}(x)=\frac{1}{1+v h^{\prime \prime}\left(\operatorname{Prox}_{v h}(x)\right)}.
\end{equation}
This follows from the assumption that $h(x)$ is twice continuously differentiable on $\Dc^c$ and the implicit differentiation calculation shown in \cite[Appendix B1]{gerbelot2020asymptotic}. For $x\in \{x: \operatorname{Prox}_{v h}(x) \in \Dc\}$, $\operatorname{Prox}_{vh}(x)$ is differentiable and has derivative equal to 0 except for a finite set of points. To see this, note that preimage $\operatorname{Prox}^{-1}_{v h}(\y)$ for $y\in \Dc$ is either a singleton set or a closed interval of the form $[x_1, x_2]$ for $x_1\in \R\cup\{-\infty\},x_2\in \R\cup\{+\infty\}$ and $x_1<x_2$, using continuity and monotonicity of $x\mapsto \operatorname{Prox}_{vh}(x)$. This implies that $ \{x: \operatorname{Prox}_{v h}(x) \in \Dc\}$ is a union of finite number of singleton sets and a finite number of closed intervals. Furthermore, $ \operatorname{Prox}_{v h}(x)$ is constant on each of the closed intervals. It follows that $ \operatorname{Prox}_{v h}(x)$ is differentiable and has derivative equal to $0$ on the interiors of the closed intervals, and that $\mathcal{C}$ is union of some of the singleton sets and all of the finite-valued endpoints of the closed intervals. 

We extend functions $h^{\prime \prime}(x)$ and $\operatorname{Prox}_{v h}^{\prime}(x)$ on $\Dc$ and $\mathcal{C}$ respectively in the following way: (i) For $y_0 \in \Dc$ such that $\operatorname{Prox}^{-1}_{v h}(y_0)$ is a closed interval with endpoints  $x_1\in \R\cup\{-\infty\},x_2\in \R\cup\{+\infty\}$ and $x_1<x_2$, we set $h^{\prime \prime }(y_0) \gets +\infty$ and $\operatorname{Prox}^{\prime}_{vh}(x)\gets 0$ for all $x\in [x_1, x_2]$ (ii) For $y_0\in \Dc$ such that $\operatorname{Prox}^{-1}_{v h}(y_0)$ is a singleton set and its sole element $x_0$ is contained in $\mathcal{C}$, we set $h^{\prime\prime}(y_0)\gets +\infty, \operatorname{Prox}_{vh}^{\prime}(x_0)\gets 0$; (iii) For $y_0\in \Dc$ such that $\operatorname{Prox}^{-1}_{v h}(y_0)$ is a singleton set $\{x_0\}$ and that $x \mapsto \operatorname{Prox}_{v h}(x)$ is differentiable at $x_0$ with 0 derivative, we set $h(y_0)\gets +\infty$. 

We show that it is impossible to have some $y_0 \in \Dc$ such that $\operatorname{Prox}_{v h}^{-1}\left(y_0\right)$ is a singleton set $\left\{x_0\right\}$ and that $x \mapsto \operatorname{Prox}_{v h}(x)$ is differentiable at $x_0$ with non-zero derivative. This means that all $\y \in \Dc$ belongs to cases (i), (ii) and (iii) above. Suppose to the contrary. We know from the above discussion that there exists some $\mathfrak{e}>0$ such that $\operatorname{Prox}_{v h}^{\prime}(x)$ is continuous on $\left(x_0, x_0+\mathfrak{e}\right)$ and $\left(x_0-\mathfrak{e}, x_0\right)$. We claim that $x \mapsto \operatorname{Prox}_{v h}^{\prime}(x)$ is continuous at $x_0$. To see this, note that for any $\Delta>0$, we can find $\epbm \in(0, \mathfrak{e})$ such that
\begin{itemize}
    \item there exists some $x_{+} \in\left(x_0, x_0+\epsilon \right)$ such that for any $x \in\left(x_0, x_0+\epsilon \right)$, 
    
    $$\left | \operatorname{Prox}_{v h}^{\prime}(x)-\operatorname{Prox}_{v h}^{\prime}\left(x_{+}\right)\right|<\frac{\Delta}{5}, \quad \left| \frac{\operatorname{Prox}_{v h}\left(x_0\right)-\operatorname{Prox}_{v h}\left(x_{+}\right)}{x_0-x_{+}}-\operatorname{Prox}_{v h}^{\prime}\left(x_{+}\right) \right |<\frac{\Delta}{5}$$
    
    \item there exists some $x_{-} \in\left(x_0-\epsilon, x_0\right)$ such that for any $x \in\left(x_0-\epsilon, x_0\right)$, 
    
    $$\left | \operatorname{Prox}_{v h}^{\prime}(x)-\operatorname{Prox}_{v h}^{\prime}\left(x_{-}\right)\right|<\frac{\Delta}{5},\quad \left| \frac{\operatorname{Prox}_{v h}\left(x_0\right)-\operatorname{Prox}_{v h}\left(x_{-}\right)}{x_0-x_{-}}-\operatorname{Prox}_{v h}^{\prime}\left(x_{-}\right) \right|<\frac{\Delta}{5}$$
    
    \item for any $x \in\left(x_0-\epsilon, x_0\right) \cup\left(x_0, x_0+\epsilon \right)$,$$\left|\operatorname{Prox}_{v h}^{\prime}\left(x_0\right)-\frac{\operatorname{Prox}_{v h}\left(x_0\right)-\operatorname{Prox}_{v h}(x)}{x_0-x}\right|<\frac{\Delta}{5}.$$
\end{itemize}
Then for any $x \in\left(x_0-\epsilon, x_0+\epsilon\right)$, we have $\left|\operatorname{Prox}_{v h}^{\prime}\left(x_0\right)-\operatorname{Prox}_{v h}^{\prime}(x)\right|<\Delta$ by triangle inequality. This proves the claim. Now, since $x \mapsto \operatorname{Prox}_{v h}(x)$ is continuously differentiable on $\left(x_0-\mathfrak{e}, x_0+\mathfrak{e}\right)$ and $\operatorname{Prox}_{v h}^{\prime}\left(x_0\right) \neq 0$, inverse function theorem implies that $y \mapsto \operatorname{Prox}_{v h}^{-1}(y)$ is a well defined, real-valued function and it is continuous differentiable on some open interval $U$ containing $y_0$. This implies that $h$ is differentiable at any $y \in U$ and that $y\mapsto \operatorname{Prox}_{v h}^{-1}(y)=y+v h^{\prime}(y)$ is continuously differentiable. But this would imply that $h$ is twice continuously differentiable on $U$ which contradicts the assumption that $y_0 \in \Dc$.

Note that we have assigned $+\infty$ to $h^{\prime\prime}$ on $\Dc$ and $0$ to $\operatorname{Prox}^{\prime}_{v h}$ on $\mathcal{C}$. Piecewise continuity of $x\mapsto \frac{1}{w+h^{\prime \prime}\left(\operatorname{Prox}_{v h}(x)\right)}$ for any $w>0$ follows from the discussion above. 
\end{proof}

\subsection{Properties of R- and Cauchy transform}
The following shows that the Cauchy- and R-transforms of $-\D^2$ are
well-defined by (\ref{eq:CauchyR}), and reviews their properties.
\begin{Lemma}\label{lem:cauchy}
Let $G(\cdot)$ and $R(\cdot)$ be the Cauchy- and R-transforms of $-\D^2$
under Assumption \ref{AssumpD}.
\begin{itemize}
\item[(a)] The function $G: (-d_{-}, \infty) \to \R$ is positive and
strictly decreasing. Setting $G\left(-d_-\right) := \lim _{z \to -d_-} G(z) \in (0,\infty]$, $G$ admits a functional inverse $G^{-1}:(0,G(-d_-)) \to (-d_-,\infty)$.
\item[(b)] The function $R:\left(0,G\left(-d_-\right)\right) \to
\mathbb{R}$ is negative and strictly increasing.
\item[(c)] For any $z \in\left(0, G\left(-d_{-}\right)\right), R^{\prime}(z)=-\left(\mathbb{E} \frac{1}{\left(\D^2+R(z)+\frac{1}{z}\right)^2}\right)^{-1}+\frac{1}{z^2}$.
\item[(d)] For any $z \in\left(0, G\left(-d_{-}\right)\right),-\frac{z R^{\prime}(z)}{R(z)}\in (0,1)$.
\item[(e)] For any $z \in\left(0, G\left(-d_{-}\right)\right),z^2R'(z)\in (0,1)$.
\item[(f)] For all sufficiently small $z\in (0,G(-d_-))$, R-transform admits convergent series expansion given by 
\begin{equation}\label{expandR}
R(z)=\sum_{k \geq 1} \kappa_k z^{k-1}
\end{equation}
where $\left\{\kappa_k\right\}_{k \geq 1}$ are the free cumulants of the law of $-\D^2$ and $\kappa_1=-\E \D^2$ and $\kappa_2=\V(\D^2)$. 
\end{itemize}
\end{Lemma}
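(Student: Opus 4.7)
\textbf{Plan for proof of Lemma \ref{lem:cauchy}.}

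For part (a), I will observe that $z + \D^2 > 0$ almost surely for $z \in (-d_-, \infty)$, so $G(z) > 0$. Differentiating under the expectation (justified by dominated convergence since $\D^2$ has compact support and $z > -d_-$) yields $G'(z) = -\mathbb{E}[1/(z+\D^2)^2] < 0$, giving strict monotonicity. The limit $G(-d_-) := \lim_{z \downarrow -d_-} G(z) \in (0,+\infty]$ exists by monotone convergence, and $\lim_{z\to\infty} G(z) = 0$, so $G$ is a continuous bijection from $(-d_-,\infty)$ onto $(0, G(-d_-))$, and the inverse $G^{-1}$ is well-defined.

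For parts (b) and (c), I will compute $R'(z)$ via implicit differentiation. Setting $w = G^{-1}(z)$, so that $\mathbb{E}[1/(w+\D^2)] = z$, differentiating yields $w'(z) = -1/\mathbb{E}[1/(w+\D^2)^2]$; adding $1/z^2$ gives part (c). For $R'(z) > 0$, note that
\begin{equation*}
R'(z) = \frac{1}{z^2} - \frac{1}{\mathbb{E}[1/(w+\D^2)^2]} = \frac{\mathbb{E}[1/(w+\D^2)^2] - (\mathbb{E}[1/(w+\D^2)])^2}{z^2 \, \mathbb{E}[1/(w+\D^2)^2]},
\end{equation*}
and the numerator equals $\mathrm{Var}(1/(w+\D^2)) > 0$ by Cauchy--Schwarz and the non-degeneracy of $\D^2$ (which I will justify from the non-zero mean plus support assumption, or note that $R \equiv -\mathbb{E}\D^2$ is constant in the degenerate case and ``strictly'' should be interpreted in the non-degenerate regime used elsewhere in the paper). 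For $R(z) < 0$, I will argue: with $w = G^{-1}(z)$, if $w \leq 0$ then $R(z) = w - 1/z < 0$ trivially; if $w > 0$, then $\mathbb{E}[1/(w+\D^2)] < 1/w$ since $\D^2 \geq 0$ and $\mathbb{E}\D^2 > 0$, i.e., $z < 1/w$, giving $R(z) = w - 1/z < 0$.

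For part (d), I will rewrite the target as $(zR(z))' < 0$ and compute
\begin{equation*}
(zR(z))' = R(z) + z R'(z) = w - \frac{z}{\mathbb{E}[1/(w+\D^2)^2]} - \frac{1}{z} + \frac{z}{z^2} = w - \frac{z}{\mathbb{E}[1/(w+\D^2)^2]}.
\end{equation*}
So the claim is equivalent to $w \cdot \mathbb{E}[1/(w+\D^2)^2] < \mathbb{E}[1/(w+\D^2)]$, which after combining the two expectations rearranges cleanly to $\mathbb{E}[\D^2/(w+\D^2)^2] > 0$. This holds since $\D^2 \geq 0$ and $\mathbb{P}(\D^2>0)>0$ by the non-zero mean assumption. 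Positivity of $-zR'(z)/R(z)$ follows from $R(z)<0$ and $R'(z)>0$. Part (e) is immediate from the Cauchy--Schwarz identity displayed above: $z^2 R'(z) = 1 - z^2/\mathbb{E}[1/(w+\D^2)^2] \in (0,1)$.

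For part (f), I will invoke the standard fact from free probability that for a compactly supported law the R-transform is analytic in a neighborhood of $0$ and admits the free cumulant expansion $R(z)=\sum_{k\geq 1}\kappa_k z^{k-1}$. To identify $\kappa_1$ and $\kappa_2$, I will expand $G$ around $w=\infty$: $G(w) = w^{-1}\sum_{k\geq 0}(-1)^k \mathbb{E}[\D^{2k}]w^{-k}$, invert the series to get $G^{-1}(z) = 1/z - \mathbb{E}\D^2 + \mathrm{Var}(\D^2)\cdot z + O(z^2)$ as $z \to 0^+$, and subtract $1/z$ to obtain $\kappa_1 = -\mathbb{E}\D^2$ and $\kappa_2 = \mathrm{Var}(\D^2)$, consistent with the free cumulants of $-\D^2$ (noting $\mathrm{Var}(-\D^2)=\mathrm{Var}(\D^2)$). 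The main obstacle here is the clean series inversion, which I will handle via Lagrange inversion applied to $w \mapsto 1/G(w)$.

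The main technical subtlety lies in part (d): the key insight is reducing the bound to monotonicity of $zR(z)$, after which the expression collapses via the algebraic identity $1/(w+\D^2) - w/(w+\D^2)^2 = \D^2/(w+\D^2)^2$ to a manifestly positive quantity. Parts (a)--(c) and (e) are routine once the implicit differentiation is set up.
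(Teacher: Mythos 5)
Your proof is correct and follows essentially the same route as the paper: the core computation via implicit differentiation of $z = \E[1/(w+\D^2)]$ to obtain part (c), the reduction of parts (d) and (e) to the elementary positivity $\E[\D^2/(w+\D^2)^2] > 0$ respectively the strict Cauchy--Schwarz inequality $(\E Y)^2 < \E[Y^2]$ for $Y = 1/(w+\D^2)$, and an appeal to standard free-probability facts for part (f). Your reframing of the upper bound in (d) as $(zR(z))' < 0$ is a cosmetic variant of the paper's direct chain of equivalences (and avoids the small sign slip in the paper's displayed chain), and your variance identity $R'(z) = \operatorname{Var}(Y)/(z^2\E Y^2)$ gives a tidier justification for (e) than the paper's bare ``trivially follows from (c).'' Note that, like the paper, the strict inequalities in (b), (d), (e) require $\D^2$ to be non-constant; you flagged this correctly --- the paper leaves it implicit, and it does hold in the paper's setting since when $\delta < 1$ the limit $\D^2$ has an atom at $0$ plus nonzero mean, and when $\delta \geq 1$ it is a bulk spectrum rather than a delta.
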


\begin{proof}
See \cite[Lemma G.6]{li2023random} for (a) and (b), To see (c), for any $z \in\left(0, G\left(-d_{-}\right)\right)$, differentiating $R(z)=G^{-1}(z)-z^{-1}$ yields
$$
-z R^{\prime}(z)=z\left(\mathbb{E} \frac{1}{\left(\D^2+G^{-1}(z)\right)^2}\right)^{-1}-\frac{1}{z}
$$
To see (d), 
$$
\begin{aligned}
& -\frac{z R^{\prime}(z)}{R(z)}=\frac{z\left(\mathbb{E} \frac{1}{\left(\D^2+G^{-1}(z)\right)^2}\right)^{-1}-\frac{1}{z}}{G^{-1}(z)-\frac{1}{z}}<1 \\
& \Leftrightarrow z\left(\mathbb{E} \frac{1}{\left(\D^2+G^{-1}(z)\right)^2}\right)^{-1}>G^{-1}(z) \\
& \Leftrightarrow \mathbb{E} \frac{G^{-1}(z)}{\left(\D^2+G^{-1}(z)\right)^2}<z=\mathbb{E} \frac{1}{\D^2+G^{-1}(z)} \\
& \Leftrightarrow \mathbb{E} \frac{-\D^2}{\left(\D^2+G^{-1}(z)\right)^2}<0
\end{aligned}
$$
where we used in the second line that $R(z)=G^{-1}(z)-1/z<0$ from (b). Note that the last line is true since $\D^2 \neq 0$ with positive probability. (e) trivially follows from (c). (f) follows from \cite[Notation 12.6, Proposition 13.15]{nica2006lectures}.
\end{proof}

\subsection{DF adjustment coincide with Spectrum-Aware adjustment under Marchenko-Pastur law}\label{appendix:adjadgcc}

\begin{Lemma}\label{adjadgconv}
    If the empirical distribution of the eigenvalues of $\X^\top \X$ weakly converges Marchenko-Pastur law, then $\abs{\adj-\adg}\to 0.$
\end{Lemma}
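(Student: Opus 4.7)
The plan is to show that both $\adj$ and $\adg$ converge to a common deterministic limit $\gamma_\infty$ under the Marchenko--Pastur hypothesis, so that the difference vanishes in the limit. First, I would identify the limit of $\adj$. Combining Wasserstein-$2$ convergence of the spectral measure $(d_i^2)\toW \D^2$ with \Cref{thm:empmain} applied to $\hatbt$ (which yields $\frac{1}{p}\sum_j(\gamma+h''(\hat\beta_j))^{-1}\to A(\gamma)$ for an explicit deterministic function $A$, via \Cref{prop:asW} and the extension of $h''$ from \Cref{Extend}), I pass to the limit in the defining equation \eqref{eq:adjspectrum}. Any subsequential limit $\gamma_\infty$ of $\adj$ then satisfies
$$\E\!\left[\frac{1}{1+(\D^2-\gamma_\infty)A(\gamma_\infty)}\right]=1,$$
which rearranges to $G\bigl(A(\gamma_\infty)^{-1}-\gamma_\infty\bigr)=A(\gamma_\infty)$ and, by the definition of the R-transform, to the compact relation $\gamma_\infty=-R(A(\gamma_\infty))$. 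Uniqueness of the limit is inherited from the monotonicity of $g_p$ established in \Cref{COR}.

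Next I plug in the closed-form MP R-transform. It is classical that for MP with aspect ratio $c=1/\delta$ and unit mean, the R-transform of $-\D^2$ is $R(z)=-1/(1+z/\delta)$; substituting yields $\gamma_\infty\bigl(1+A(\gamma_\infty)/\delta\bigr)=1$, equivalently $1-\gamma_\infty=\gamma_\infty A(\gamma_\infty)/\delta$. I then verify that $\adg$ has the same limit. Writing $\adg=1-n^{-1}\Tr\bigl(\X(\X^\top\X+\diag(h''(\hatbt)))^{-1}\X^\top\bigr)$ via the KKT-based Jacobian formula, the canonical separable cases follow directly. For the Lasso, $\adg=1-\hat s/n\to 1-s_\infty/\delta$, and since $A(\gamma_\infty)=s_\infty/\gamma_\infty$ under the Lasso extension of $h''$, the displayed relation forces $\gamma_\infty=1-s_\infty/\delta$, matching $\adg_\infty$. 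For the Ridge, a direct eigenvalue computation gives $\adg\to\delta^{-1}(1-\lambda_2 G(\lambda_2))$, which matches $\gamma_\infty=1/G(\lambda_2)-\lambda_2$ obtained from $A(\gamma_\infty)=1/(\gamma_\infty+\lambda_2)$ via the MP quadratic $c\lambda_2 G(\lambda_2)^2+(1-c+\lambda_2)G(\lambda_2)-1=0$; the Elastic Net follows by a routine interpolation of these two.

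The main obstacle is extending the last step to general penalties, where $n^{-1}\Tr(\X(\X^\top\X+H)^{-1}\X^\top)$ with $H=\diag(h''(\hatbt))$ is not determined by the spectral measure of $\X^\top\X$ alone. Since $\Obm$ is Haar distributed and the limiting empirical distribution of $H$ is accessible through \Cref{thm:empmain}, the natural tool is the asymptotic freeness of $\D^2$ and $\Obm H\Obm^\top$ together with the subordination formula for free additive convolution. Carrying out this computation and matching the resulting trace limit to the expression $\gamma_\infty A(\gamma_\infty)/\delta$ supplied by the MP R-transform identity is the delicate part of the argument; once done, uniqueness from the first step gives $\adj\to\gamma_\infty=\lim\adg$, yielding $|\adj-\adg|\to 0$.
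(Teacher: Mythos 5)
Your ridge-case computation is essentially the paper's argument, reached by a slightly different route: instead of passing to the limit in the explicit formula \eqref{eq:choiceadj} as the paper does, you pass to the limit in the defining equation \eqref{eq:adjspectrum} and appeal to $\gamma_\infty = -R(A(\gamma_\infty))$ before unwinding it via the MP R-transform to $\gamma_\infty = 1/G(\lambda_2) - \lambda_2$; both roads lead to the same quadratic in the Cauchy transform of the MP law, which is the content of \eqref{webao} in the paper's proof. Two small issues: first, your stated limit $\adg \to \delta^{-1}(1-\lambda_2 G(\lambda_2))$ drops the leading constant and should read $\adg \to 1 - \delta^{-1}(1-\lambda_2 G(\lambda_2))$ (equivalently $1 - \delta^{-1}(1 + \lambda_2 G(-\lambda_2))$ in the standard Cauchy-transform convention the paper's proof uses, which differs in sign from the $G$ of \eqref{eq:CauchyR}); second, you silently identify $G$ with two different sign conventions in two places, which should be flagged, since the paper's proof of this lemma switches to the standard $G(z)=\int (z-x)^{-1}\mu(dx)$ rather than the $G$ of \eqref{eq:CauchyR}.

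The larger problem is one of scope. Lemma~\ref{adjadgconv} lives in the ridge subsection: the $\adj$ in question is defined by \eqref{eq:choiceadj} and the $\adg$ by the degrees-of-freedom formula immediately following it, both ridge-specific. Your proposal treats the lemma as a statement about general convex separable penalties and spends most of its effort on that extension, ultimately conceding that the key step --- matching $n^{-1}\Tr\bigl(\X(\X^\top\X + H)^{-1}\X^\top\bigr)$ with $H=\diag(h''(\hatbt))$ against the R-transform prediction via freeness of $\D^2$ and $\Obm H \Obm^\top$ --- is not carried out. That concession means the general claim is not proved; and while the freeness heuristic is plausible, it is not automatic here because $H$ depends on $\hatbt$, which in turn depends on $\Obm$, so one cannot simply invoke freeness of independent Haar rotations. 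For the lemma as actually stated and used in the paper, the general-penalty digression is unnecessary: the ridge computation you sketch (and which the paper carries out) is all that is needed, and you should present it cleanly with the sign corrected rather than embed it as a special case of an incomplete general argument.
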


\begin{proof}[Proof of \Cref{adjadgconv}]
By weak convergence,
$$
\frac{1}{p} \sum_{i=1}^p \frac{-1}{d_i^2+\lambda_2} \rightarrow G\left(-\lambda_2\right)
$$
where $z \mapsto G(z)$ is the Cauchy transform of Marchenko-Pastur law\footnote{Here, $G(z):=$ $\int \frac{1}{z-x} \mu(d x)$ where $\mu(\cdot)$ is measure associated to Marchenko-Pasteur law. }. Then we have that
\begin{equation}\label{qudraticeq}
\adj \rightarrow \lambda_2\left(\frac{1}{1+\lambda_2 G\left(-\lambda_2\right)}-1\right)^{-1}, \quad \adg \rightarrow 1-\delta^{-1}\left(1+\lambda_2 G\left(-\lambda_2\right)\right)
\end{equation}
Observe that the limiting values of $\adj$ and $\adg$ above are equal if and only if the following holds
\begin{equation}\label{webao}
1+\left(\lambda_2+1-\delta^{-1}\right) G\left(-\lambda_2\right)-\delta^{-1} \lambda_2 \qty(G\left(-\lambda_2\right))^2=0.
\end{equation}
Here, \eqref{webao} indeed holds true since $G\left(-\lambda_2\right)$ is one of the root of the quadratic equation \eqref{webao}. This is by referencing the explicit expression of the Cauchy transform of the Marchenko-Pastur law (cf. \cite[Lemma 3.11]{bai2010spectral}). 
\end{proof}

\subsection{VAMP algorithm}\label{vamp}
For $\sigma^2=1$, the VAMP algorithm consists of iteration as follows: for $t \geq 1$,
$$
\begin{aligned}
& \xonet=\operatorname{Prox}_{\gamma_{1, t-1}^{-1}}\left(\ronetm\right), \quad \eta_{1 t}^{-1}=\gamma_{1, t-1}^{-1} \nabla \cdot \operatorname{Prox}_{\gamma_{1, t-1}^{-1} h}\left(\ronetm\right) \\
& \gamma_{2 t}=\eta_{1 t}-\gamma_{1, t-1}, \quad \rtwot=\left(\eta_{1 t} \xonet-\gamma_{1, t-1} \ronetm\right) / \gamma_{2 t} \\
& \xtwot=\left(\X^{\top} \X+\gamma_{2 t} \mathbf{I}_p\right)^{-1}\left(\X^{\top} \y+\gamma_{2 t} \rtwot\right), \quad \eta_{2 t}^{-1}=\frac{1}{p} \operatorname{Tr}\left[\left(\X^{\top} \X+\gamma_{2 t} \mathbf{I}_p \right)^{-1}\right] \\
& \gamma_{1 t}=\eta_{2 t}-\gamma_{2 t}, \quad \ronet=\left(\eta_{2 t} \xtwot-\gamma_{2 t} \rtwot\right) / \gamma_{1 t}
\end{aligned}
$$
The algorithm can be initialized at $r_{10} \in \mathbb{R}^p, \gamma_{10}, \tau_{10} >0$ such that $\left(r_{10}, \st\right) \stackrel{W_2}{\to}\left(\mathsf{R}_{10}, \Xstar \right)$ and $\mathsf{R}_{10}-\Xstar \sim N\left(0, \tau_{10}\right)$. This algorithm is first introduced in \cite{rangan2019vector} and the iterates $\xonet, \xtwot$ are supposed to track $\hatbt$. The performance of this algorithm is characterized by state evolution iterations: for $t\ge 1$,
\begin{equation}
\begin{aligned}
& \bar{\alpha}_{1 t}=\mathbb{E} \operatorname{Prox}_{\gamma_{1,1-1}^{\prime}}^{\prime}\left(\Xstar+N\left(0, \tau_{1, t-1} \right)\right), \quad \bar{\eta}_{1 t}^{-1}=\bar{\gamma}_{1, t-1}^{-1} \bar{\alpha}_{1 t} \\
& \bar{\gamma}_{2 t}=\bar{\eta}_{1 t}-\bar{\gamma}_{1, t-1}, \quad \tau_{2 t}=\frac{1}{\left(1-\bar{\alpha}_{1 t}\right)^2}\left[\mathcal{E}_1\left(\bar{\gamma}_{1, t-1}, \tau_{1, t-1}\right)-\bar{\alpha}_{1 t}^2 \tau_{1, t-1}\right] \\
& \bar{\alpha}_{2 t}=\bar{\gamma}_{2 t} \mathbb{E} \frac{1}{\D^2+\bar{\gamma}_{2 t}}, \quad \bar{\eta}_{2 t}^{-1}=\bar{\gamma}_{2 t}^{-1} \bar{\alpha}_{2 t} \\
& \bar{\gamma}_{1, t}=\bar{\eta}_{2 t}-\bar{\gamma}_{2 t}, \quad \tau_{t}=\frac{1}{\left(1-\bar{\alpha}_{2 t}\right)^2}\left[\mathcal{E}_2\left(\bar{\gamma}_{2 t}, \tau_{2 t}\right)-\bar{\alpha}_{2 t}^2 \tau_{2 t}\right]
\end{aligned}
\end{equation}
where 
$$
\begin{aligned}
& \mathcal{E}_1\left(\gamma_1, \tau\right):=\mathbb{E}\left(\operatorname{Prox}_{\gamma_1^{-1} h}\left(\Xstar+N\left(0, \tau\right)\right)-\Xstar \right)^2, \quad \mathcal{E}_2\left(\gamma_2, \tau_2\right):=\mathbb{E}\left[\frac{\D^2+\tau_2 \gamma_2^2}{\left(\D^2+\gamma_2\right)^2}\right].
\end{aligned}
$$

\section{Fixed point equation}\label{appendix:misce}
\subsection{An auxiliary lemma}
\begin{Lemma}\label{noid}
    Under \Cref{Assumph} and \ref{Assumpfix}, 
    \begin{equation}
        \begin{aligned}
            &\PP \qty(\operatorname{Prox}_{\gamma_{*}^{-1} h}^{\prime}\left(\sqrt{\taustar}\Zs+\Xstar\right)\neq 0)>0, \quad \PP \qty(\operatorname{Prox}_{\gamma_{*}^{-1} h}^{\prime}\left(\sqrt{\taustar}\Zs+\Xstar\right)\neq 1)>0\\
            & \PP \qty(h^{\prime \prime}\left(\operatorname{Prox}_{\gamma_*^{-1} h}\left(\sqrt{\taustar} \Zs+\Xstar\right)\right)\neq +\infty )>0, \quad \PP \qty(h^{\prime \prime}\left(\operatorname{Prox}_{\gamma_*^{-1} h}\left(\sqrt{\taustar} \Zs+\Xstar\right)\right)\neq 0 )>0
        \end{aligned}
    \end{equation}
    where $\Zs\sim N(0,1)$ is independent of $\Xstar$. 
\end{Lemma}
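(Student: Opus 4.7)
The plan is to use \Cref{Extend} to consolidate the four statements into two. The extension formula \eqref{eq:Jacprox} reads $\operatorname{Prox}_{\gamma_*^{-1}h}^{\prime}(x) = (1+\gamma_*^{-1} h^{\prime\prime}(\operatorname{Prox}_{\gamma_*^{-1}h}(x)))^{-1}$ with $h^{\prime\prime}\in[\co,+\infty]$, so $\operatorname{Prox}^{\prime}(x)=0\iff h^{\prime\prime}(\operatorname{Prox}(x))=+\infty$ and $\operatorname{Prox}^{\prime}(x)=1\iff h^{\prime\prime}(\operatorname{Prox}(x))=0$. This pairs (i) with (iii) and (ii) with (iv). When $\co>0$, both (ii) and (iv) hold with probability one since $\operatorname{Prox}^{\prime}\le 1/(1+\gamma_*^{-1}\co)<1$ and $h^{\prime\prime}\ge \co>0$, so the substantive work concerns the case $\co=0$.

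Step 1 (proving (i), hence (iii)): I would invoke \eqref{RCa} together with the pointwise bound $\operatorname{Prox}^{\prime}\ge 0$ from \eqref{eq:Jacprox}. Under \Cref{Assumpfix}, $\gamma_*,\eta_*\in(0,+\infty)$, so $\E[\operatorname{Prox}^{\prime}_{\gamma_*^{-1}h}(\sqrt{\tau_*}\Zs+\Xstar)]=\gamma_*/\eta_*>0$, which is incompatible with $\operatorname{Prox}^{\prime}$ vanishing almost surely. Hence $\PP(\operatorname{Prox}^{\prime}>0)>0$, yielding (i).

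Step 2 (proving (ii), hence (iv), in the remaining case $\co=0$): I would argue by contradiction. Suppose $\operatorname{Prox}^{\prime}_{\gamma_*^{-1}h}(\sqrt{\tau_*}\Zs+\Xstar)=1$ almost surely. Since $\tau_*>0$, the law of $\sqrt{\tau_*}\Zs+\Xstar$ is absolutely continuous with respect to Lebesgue measure (as the convolution of a non-degenerate Gaussian with the law of $\Xstar$), so $\operatorname{Prox}^{\prime}=1$ Lebesgue-a.e. on $\R$. Combined with the $1$-Lipschitz continuity of $\operatorname{Prox}$ from \Cref{prop:proxp}(b), absolute continuity gives $\operatorname{Prox}_{\gamma_*^{-1}h}(x)=x+c$ on $\R$ for some constant $c$. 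Plugging this into the subgradient characterization $x-\operatorname{Prox}(x)\in\gamma_*^{-1}\partial h(\operatorname{Prox}(x))$ from \Cref{prop:proxp}(a) gives $-c\gamma_*\in\partial h(y)$ for every $y\in\R$, forcing $h$ to be affine with slope $-c\gamma_*$. Non-negativity of $h$ on all of $\R$ rules out a non-zero slope, so $c=0$ and $h$ is constant; consequently $\operatorname{Prox}_{\gamma_*^{-1}h}=\mathrm{Id}$. Then \eqref{RCa} forces $\gamma_*=\eta_*$, and $\E(\operatorname{Prox}(\sqrt{\tau_*}\Zs+\Xstar)-\Xstar)^2=\tau_*=(\gamma_*/\eta_*)^2\tau_*$. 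Substituting into \eqref{RCb} makes both the numerator and the denominator on the RHS vanish simultaneously, leaving an indeterminate $0/0$ that cannot be matched by any specific finite positive value of $\tau_{**}$, contradicting \Cref{Assumpfix}.

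The main obstacle will be the last step: rigorously reading \Cref{Assumpfix} so that the degeneration $\eta_*=\gamma_*$ is inadmissible because \eqref{RCb} no longer pins down a unique finite positive $\tau_{**}$. A secondary subtlety is the passage from ``$\operatorname{Prox}^{\prime}=1$ almost surely under the law of $\sqrt{\tau_*}\Zs+\Xstar$'' to ``$\operatorname{Prox}^{\prime}=1$ Lebesgue-a.e.,'' which relies on $\tau_*>0$ so that the Gaussian smoothing delivers a density dominating Lebesgue measure; and the implication ``\,$\operatorname{Prox}$ is identity plus a constant $\Rightarrow h$ constant,'' which uses both $h\ge 0$ and properness to exclude non-trivial affine slopes. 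The arithmetic in \eqref{RCa}--\eqref{RCb} thereafter is routine.
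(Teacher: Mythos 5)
Your proof is correct and, for the first and third claims, follows essentially the same route as the paper: from \eqref{RCa} and \Cref{Assumpfix} one gets $\E\bigl[\operatorname{Prox}'_{\gamma_*^{-1}h}(\sqrt{\taustar}\Zs+\Xstar)\bigr]=\gamma_*/\eta_*\in(0,1)$, which rules out $\operatorname{Prox}'=0$ a.s.\ (and, via \eqref{eq:Jacprox}, $h''=+\infty$ a.s.). For the second and fourth claims, the paper is terse---it merely asserts that $\gamma_*/\eta_*=1$ ``also violates Assumpfix''---whereas you flesh out why, via the chain $\operatorname{Prox}'=1$ a.s.\ $\Rightarrow$ $\operatorname{Prox}=\mathrm{Id}+c$ $\Rightarrow$ $h$ affine $\Rightarrow$ (by $h\ge 0$) $h$ constant $\Rightarrow$ $\operatorname{Prox}=\mathrm{Id}$ $\Rightarrow$ both the numerator and the denominator of \eqref{RCb} vanish. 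This chain is valid, but it is longer than necessary for the contradiction you need: once $\E[\operatorname{Prox}']=1$ gives $\gamma_*=\eta_*$, the factor $(\eta_*-\gamma_*)^{-2}$ in \eqref{RCb} is already undefined, so no finite $\tau_{**}\in(0,\infty)$ can satisfy the system regardless of whether the bracketed term is zero or not. So your computation of the numerator, while informative about the nature of the degeneracy, does not change the outcome. One phrasing to tighten: you first write that the law of $\sqrt{\taustar}\Zs+\Xstar$ ``is absolutely continuous with respect to Lebesgue measure,'' which is the wrong direction for passing from a.s.\ (under that law) to Lebesgue-a.e.; what you actually need, and correctly invoke later, is that the Gaussian convolution yields a density strictly positive on all of $\R$, hence a law that dominates Lebesgue measure.
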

\begin{proof}[Proof of \Cref{noid}]
    Note that $\operatorname{Prox}_{\gamma_{*}^{-1} h}^{\prime}\left(\sqrt{\taustar} \Zs+\Xstar\right)\neq 0$ with positive probability or else $\frac{\gamma_*}{\eta_*}=\mathbb{E} \operatorname{Prox}_{\gamma_{*}^{-1} h}^{\prime}\left(\sqrt{\taustar} \Zs+\Xstar\right)=0$ which violates \Cref{Assumpfix}. Meanwhile, $\operatorname{Prox}_{\gamma_{*}^{-1} h}^{\prime}\left(\sqrt{\taustar} \Zs+\Xstar\right)\neq 1$ with positive probability or else $\operatorname{Prox}_{\gamma_{*}^{-1} h}^{\prime}\left(x\right) =1$ almost everywhere, violating \Cref{prop:proxp}, (d).  
    The inequalities in the second line follows immediately from \eqref{eq:Jacprox} and the first line we have just shown. 
\end{proof}

\subsection{Uniqueness of fixed points given existence}\label{justifuniqe}
Suppose that \Cref{AssumpD}---\ref{Assumpfix} hold. Our proof of \Cref{thm:empmain} and \Cref{neig} does not require $(\gamma_*, \eta_*, \tau_*, \tau_{**})$ to be a unique solution of \eqref{fp}, only that it is one of the solutions. However, if there are two different solutions of \eqref{fp}, it would lead to a contradiction in \Cref{neig}. More concretely, suppose that there exists two different solutions of \eqref{fp}: $x^{(1)}:=\qty (\gamma_*^{(1)}, \eta_*^{(1)}, \tau_*^{(1)}, \tau_{**}^{(1)})$ and $x^{(2)}:=\qty (\gamma_*^{(2)}, \eta_*^{(2)}, \tau_*^{(2)}, \tau_{**}^{(2)})$. By \Cref{neig}, we would have $\qty(\adj, \hat{\eta}_*, \tauh, \hat{\tau}_{**})$ converges almost surely to both $x^{(1)}$ and $x^{(2)}$, hence the contradiction. 

{
\subsection{Existence of fixed points} \label{existENex}

\begin{Proposition}\label{smdff}
    Proximal operators of Elastic Net, Lasso, Ridge penalty
    \begin{equation*}
    h(x)=\lambda_1|x|+\frac{\lambda_2}{2} x^2, \lambda_1\ge 0, \lambda_2\ge 0.
\end{equation*}
and Huber-norm penalty (cf. \cite{zadorozhnyi2016huber})
$$h(x)=\lambda_1g(x)+\frac{\lambda_2}{2} x^2, \lambda_1\ge 0, \lambda_2\ge 0$$
where
    \[
g(x)=
\begin{cases}
\dfrac{u}{2}\,x^{2}, & |x|\le\delta,\\[6pt]
u\delta\bigl(|x|-\tfrac{\delta}{2}\bigr), & |x|>\delta,
\end{cases}
\qquad u,\delta>0.
\]
satisfy the \Cref{Assumpproxclass}.
\end{Proposition}

\begin{proof}[Proof of \Cref{smdff}]
Note that Lasso, Ridge and Elastic Net, we have that
\begin{equation*}
\begin{aligned}
&\E \operatorname{Prox}_{vh}^{\prime}\left(b+\frac{v}{\alpha} \mathsf{Z}\right)
=\frac{1}{1+\lambda_2 v}\mathbb{P}\left(\left|\frac{v}{\alpha} \mathsf{Z}+b\right| \geq v \lambda_1\right)
\\
&\qquad =1-\qty(\Phi \left(-\alpha \left(v^{-1}b-\lambda_1\right)\right)-\Phi\left(-\alpha\left(v^{-1}b+\lambda_1\right)\right))
\end{aligned}
\end{equation*}
Then, \eqref{monoroa} follows from the following: we also have that
\begin{equation*}
\begin{aligned}
&\frac{d}{d v^{-1}} \Phi \left(-\alpha \left(v^{-1}b-\lambda_1\right)\right)-\Phi\left(-\alpha\left(v^{-1}b+\lambda_1\right)\right) \\
& =\left(-\alpha b\right) \phi\left(-\alpha\left(v^{-1}b-\lambda_1\right)\right)-\left(-\alpha b\right) \phi\left(-\alpha\left(v^{-1}b+\lambda_1\right)\right) \\
& =\left(\alpha b\right)\left(\phi\left(\alpha\left(\lambda_1+v^{-1}b\right)\right)-\phi\left(\alpha\left(\lambda_1-v^{-1}b\right)\right)\right)\le 0 .
\end{aligned}
\end{equation*}
Meanwhile, we have that
$$\lim_{x\to \pm \infty} \operatorname{Prox}_{vh}^{\prime}\left(x\right)=\frac{1}{1+\lambda_2 v}$$
which satisfies \eqref{asumpline} for any $\lambda_1, \lambda_2\in [0,+\infty)$. 

For the Huber-norm penalty, For simplicity we restrict to the case $\lambda_2=0$; the general case $\lambda_2>0$ is identical up to an overall factor of $(1+\lambda_2\,v)^{-1}$ multiplying the proximal‐derivative. In this case we have that
\[
h(x)=
\begin{cases}
\dfrac{u}{2}\,x^{2}, & |x|\le\delta,\\[6pt]
u\delta\bigl(|x|-\tfrac{\delta}{2}\bigr), & |x|>\delta,
\end{cases}
\qquad u,\delta>0.
\]
We first show that for every \(v>0\) one has
\[
\frac{\partial}{\partial v}\,
\mathbb{E}\!\bigl[
  \operatorname{Prox}^{\prime}_{v h}\!\bigl(b+\tfrac{v}{\alpha}\mathsf{Z}\bigr)
\bigr]\le 0. 
\]
Set \(\mathsf{X}:=b+\tfrac{v}{\alpha}\mathsf{Z}\).  The scalar proximal
operator of \(v h\) is
\[
\operatorname{Prox}_{v h}(x)=
\begin{cases}
\dfrac{x}{1+v\,u}, & |x|\le\delta+v\,u\,\delta,\\[6pt]
x-\operatorname{sgn}(x)\,v\,u\,\delta, & |x|>\delta+v\,u\,\delta,
\end{cases}
\]
whose derivative equals
\[
\operatorname{Prox}^{\prime}_{v h}(x)=
\begin{cases}
\dfrac{1}{1+v\,u}, & |x|\le\delta+v\,u\,\delta,\\[6pt]
1, & |x|>\delta+v\,u\,\delta.
\end{cases}
\]
Consequently
\begin{equation}\label{eq:Eproxprime}
\begin{aligned}
    \mathbb{E}\bigl[\operatorname{Prox}^{\prime}_{v h}(\mathsf{X})\bigr]
&=\mathbb{P}\!\bigl(|\mathsf{X}|>\delta+v\,u\,\delta\bigr)
+\frac{1}{1+v\,u}\,
  \mathbb{P}\!\bigl(|\mathsf{X}|\le\delta+v\,u\,\delta\bigr)\\
&=1-\frac{v\,u}{1+v\,u}\,
  \mathbb{P}\!\bigl(|\mathsf{X}|\le\delta+v\,u\,\delta\bigr).
\end{aligned}
\end{equation}

Introduce the thresholds
\[
R:=\bigl(-b+\delta\bigr)\frac{\alpha}{v}+\alpha\,u\,\delta,
\qquad
L:=-\bigl(b+\delta\bigr)\frac{\alpha}{v}-\alpha\,u\,\delta,
\]
so that \(\{|\mathsf{X}|\le\delta+v\,u\,\delta\}=\{L\le\mathsf{Z}\le R\}\)
and
\(\mathbb{P}(|\mathsf{X}|\le\delta+v\,u\,\delta)=\Phi(R)-\Phi(L)\),
where \(\Phi,\phi\) are the standard normal c.d.f.\ and pdf.  Define
\[
g(v):=\frac{v\,u}{1+v\,u}\,\bigl[\Phi(R)-\Phi(L)\bigr],
\quad\text{so that}\quad
\mathbb{E}\bigl[\operatorname{Prox}^{\prime}_{v h}(\mathsf{X})\bigr]=1-g(v).
\]

A direct calculation shows
\[
g^{\prime}(v)=
\frac{u}{(1+v\,u)^{2}}
\Bigl\{
  \Phi(R)-\Phi(L)
  -(1+v\,u)\Bigl[
      (-b+\delta)\frac{\alpha}{v}\,\phi(R)
      +(b+\delta)\frac{\alpha}{v}\,\phi(L)
  \Bigr]
\Bigr\}.
\]
By integration by parts on \([L,R]\),
\[
\Phi(R)-\Phi(L)
=
\bigl[\phi(t)\,t\bigr]_{L}^{R}
+\int_{L}^{R}t^{2}\phi(t)\,dt,
\]
and substituting yields
\[
\Phi(R)-\Phi(L)
-(1+v\,u)
\Bigl[
    (-b+\delta)\tfrac{\alpha}{v}\phi(R)
    +(b+\delta)\tfrac{\alpha}{v}\phi(L)
\Bigr]
=
\int_{L}^{R}t^{2}\phi(t)\,dt
+\alpha\,u\,b\,[\phi(R)-\phi(L)].
\]
Both terms on the right are non-negative: the integral is strictly
positive, and since \(R+L=-2b\alpha/v\) we have
\(\phi(R)\ge\phi(L)\) exactly when \(b\ge0\) (and the reverse if \(b<0\)),
so \(b\,[\phi(R)-\phi(L)]\ge0\) in either case.  Thus the brace is
positive, giving \(g^{\prime}(v)>0\).  Finally,
\eqref{eq:Eproxprime} implies
\[
\frac{\partial}{\partial v}\,
\mathbb{E}\bigl[\operatorname{Prox}^{\prime}_{v h}(\mathsf{X})\bigr]
=-g^{\prime}(v)\le0,
\]
completing the proof.

Meanwhile, we have that
$$\lim_{x\to \pm \infty} \operatorname{Prox}_{vh}^{\prime}\left(x\right)=1$$
which satisfies \eqref{asumpline} for any $u,\delta$.

\end{proof}

Below, we prove existence of fixed points for strongly convex penalties. The Lasso case is deferred to \Cref{fixexistLasso} in \Cref{sec:Lassoexist}.

\begin{proof}[Proof of \Cref{fixexist} (strongly convex penalty)]

First, eliminate the variable $\tau_{**}$ from \eqref{fp} via \eqref{RCb} and introduce change of variable $\tau_*=\gamma_*^{-2} \alpha_*^{-2}$ for some new variable $\alpha_*>0$. We then obtain a new system of fixed equation
\begin{subequations}\label{fpstren}
\begin{align}
& \gamma_*^{-1}=\frac{1}{-R\left(\eta_*^{-1}\right)} \label{RSa}\\
& \eta_*^{-1}=\gamma_*^{-1} \mathbb{E} \operatorname{Prox}_{\gamma_*^{-1} h}^{\prime}\left(\Xstar+\frac{\gamma_*^{-1}}{\alpha_*} \mathsf{Z}\right) \label{RSb} \\
& 1=\alpha_*^2 R^{\prime}\left(\eta_*^{-1}\right)\mathbb{E}\left(\operatorname{Prox}_{\gamma_*^{-1} h}\left(\Xstar+\frac{\gamma_*^{-1}}{\alpha_*} \mathsf{Z}\right)-\Xstar\right)^2+\sigma^2\frac{\alpha_*^2}{\gamma_*^{-1}}\left[1+\frac{\eta_*^{-1} R^{\prime}\left(\eta_*^{-1}\right)}{R\left(\eta_*^{-1}\right)}\right] \label{RSc}
\end{align}
\end{subequations}
Note that \Cref{Assumpfix} holds if and only if we can find a solution $\gamma_*^{-1}, \eta_*^{-1}, \alpha_*>0$ for the above. 

Denote 
$$\gamma_{+}^{-1}:=\lim _{z \rightarrow G\left(-d_{-}\right)} \frac{1}{-R(z)}.$$
When $G(-d_{-})<+\infty$, we have 
\begin{equation}\label{cc1}
    \gamma_{+}^{-1}=\frac{1}{\frac{1}{G\left(-d_{-}\right)}+d_{-}} \quad \text{ and } \quad \gamma_{+}^{-1} \in\left(\frac{1}{\E \D^2}, G\left(-d_{-}\right)\right]
\end{equation}
using the definition of $R(z)=G^{-1}(z)-1/z$ and the fact that $z \mapsto \frac{1}{-R(z)}$ is strictly increasing on its domain $\left(0, G\left(-d_{-}\right)\right)$ by \Cref{lem:cauchy}, (b). When  $G(-d_{-})=+\infty$ and $d_->0$, we have that 
\begin{equation}\label{cc2}
    \gamma_+^{-1}=1/{d_-}.
\end{equation}
In the two cases above, or equivalently $G(-d_{-})<+\infty$ or $ d_{-}>0$, we have that
\begin{equation}\label{sdnfnfnn}
    \gamma_+^{-1}<+\infty \quad \text{ and }\quad \gamma_+^{-1} \le G(-d_-).
\end{equation}
When $G(-d_{-})=+\infty$ and $ d_{-}=0$, we have $\gamma_{+}^{-1}=+\infty$, again using definition of $R(z)$. We highlight from the above that
\begin{equation}\label{iff}
    \gamma_+^{-1}=+\infty \text{  if and only if  }  G(-d_{-})=+\infty \text{ and } d_{-}=0.
\end{equation}
We also have that $\lim _{z \rightarrow 0} \frac{1}{-R(z)}=\frac{1}{\E \D^2}$ using \Cref{lem:cauchy}, (f). 

Let us define the functions $f_1:\left[\frac{1}{\E \D^2}, \gamma_{+}^{-1}\right) \mapsto\left[0, G\left(-d_{-}\right)\right)$ as the inverse function of $z \mapsto \frac{1}{-R(z)}$, i.e. $f_1(z)=R^{-1}(-\frac{1}{z})$. Note that $f_1$ is well-defined and strictly increasing on its domain. It also satisfies
\begin{equation}\label{thething}
f_1\left(\frac{1}{\mathbb{E} \D^2}\right)=0,\qquad  \lim _{\gamma^{-1} \rightarrow \gamma_{+}^{-1}} f_1\left(\gamma^{-1}\right)=G\left(-d_{-}\right).
\end{equation}
Let us define function $f_2:(0,+\infty) \times(0,+\infty) \mapsto(0,+\infty)$ such that
\begin{equation*}
f_2\left(\gamma^{-1}, \alpha\right)=\gamma^{-1} \mathbb{E} \operatorname{Prox}_{\gamma^{-1} h}^{\prime}\left(\Xstar+\frac{\gamma^{-1}}{\alpha} \mathsf{Z}\right). 
\end{equation*}
Now we study the equation (in terms of $\gamma^{-1}$ )
\begin{equation}\label{importanteq}
f_1\left(\gamma^{-1}\right)=f_2\left(\gamma^{-1}, \alpha\right).
\end{equation}
Observe that this equation amounts to eliminating $\eta_*^{-1}$ and solving for $\gamma_*^{-1}$ in terms $\alpha_*$ from \eqref{RSa} and \eqref{RSb}. We claim that for any fixed $\alpha>0$, there is at least one solution $\gamma^{-1}(\alpha) \in\left(\frac{1}{\E \D^2}, \gamma_{+}^{-1}\right)$\footnote{When $\gamma_{+}^{-1}=+\infty$, this statement is interpreted as: there is at least one solution $\gamma^{-1}(\alpha) \in\left(\frac{1}{\E \D^2}, +\infty\right)$.}. To see the claim, note that
\begin{equation*}
f_2\left(\frac{1}{\E \D^2}, \alpha\right)= \frac{1}{\E \D^2}\E \operatorname{Prox}_{vh}^{\prime}\qty(\Xstar+\frac{1}{\alpha \E \D^2 } \mathsf{Z})\ge 0\stackrel{(*)}{=}f_1\left(\frac{1}{\E \D^2}\right),
\end{equation*}
where $(*)$ follows from \eqref{thething}. Thus, a sufficient condition for $f_1\left(\gamma^{-1}\right)=f_2\left(\gamma^{-1}, \alpha\right)$ to have a solution on $\left[\frac{1}{\mathbb{E} \D^2}, \gamma_{+}^{-1}\right)$ is 
\begin{equation}\label{ribenweak}
    \lim_{\gamma^{-1}\to \gamma_{+}^{-1} } f_2\left(\gamma^{-1}, \alpha\right)<\lim_{\gamma^{-1} \to \gamma_{+}^{-1}}f_1\left(\gamma^{-1}\right).
\end{equation}
We establish a stronger result for later use,
\begin{equation}\label{riben}
    \lim_{\gamma^{-1}\to \gamma_{+}^{-1} } \sup_{\alpha \in (0,+\infty)}f_2\left(\gamma^{-1}, \alpha\right)<\lim_{\gamma^{-1} \to \gamma_{+}^{-1}}f_1\left(\gamma^{-1}\right).
\end{equation}

When $\gamma_{+}^{-1}<+\infty$, the claim follows from combining the following two facts: (i) by \Cref{Extend},
$$\lim_{\gamma^{-1} \to \gamma_{+}^{-1}}\sup_{\alpha \in (0,+\infty)}f_2\left(\gamma^{-1}, \alpha\right) \le \frac{\gamma^{-1}_+}{1+\gamma^{-1}_+ c_0}<\gamma^{-1}_+$$
and (ii) by \eqref{sdnfnfnn}, 
$$
\lim_{\gamma^{-1} \to \gamma_{+}^{-1}}f_1\left(\gamma^{-1}\right)=G(-d_-)\ge \gamma_+^{-1}.
$$
When $\gamma_{+}^{-1}=+\infty$, the claim follows from combining the following two facts: (i) by \Cref{Extend},
$$\lim_{\gamma^{-1} \to \gamma_{+}^{-1}}\sup_{\alpha \in (0,+\infty)}f_2\left(\gamma^{-1}, \alpha\right) \le \frac{1}{c_0}$$
and (ii) by \eqref{thething}, 
$$ \lim_{\gamma^{-1} \to \gamma_{+}^{-1}}f_1\left(\gamma^{-1}\right)=+\infty.$$

Thus, we have shown that for any $\alpha>0$, we can find a solution $\gamma^{-1}(\alpha)$ and $\eta^{-1}(\alpha)=f_1(\gamma^{-1}(\alpha))=f_2(\gamma^{-1}(\alpha),\alpha)$ that solves \eqref{RSa} and \eqref{RSb}. Furthermore, we can show that the solutions $\gamma^{-1}(\alpha)$ and $\eta^{-1}(\alpha)$ are unique and continuous in $\alpha$. To see this, we may write the \eqref{importanteq} as
\begin{equation}\label{smdf}
    \gamma R^{-1}(-\gamma)=\mathbb{E} \operatorname{Prox}_{\gamma^{-1} h}^{\prime}\left(\Xstar+\frac{\gamma^{-1}}{\alpha} \mathsf{Z}\right) 
\end{equation}
The LHS is a strictly decreasing function in $\gamma$: with $y:=R^{-1}(\gamma )$, 
\begin{equation*}
\frac{d}{d \gamma} \gamma R^{-1}(-\gamma)=R^{-1}(-\gamma)-\frac{\gamma}{R^{\prime}\left(R^{-1}(-\gamma)\right)}=y\left(1-\frac{-R(y)}{y R^{\prime}(y)}\right)<0
\end{equation*}
while the RHS is a non-decreasing function in $\gamma$ following \Cref{Assumpproxclass}, (i). The uniqueness and continuity then follows from implicit function theorem. 

The plan is to plug $\gamma^{-1}(\alpha)$ and $\eta^{-1}(\alpha)$ into the RHS of \eqref{RSc} to obtain the function $v:(0,+\infty)\mapsto (0,+\infty)$
$$\begin{gathered}v(\alpha)=\alpha^2 R^{\prime}\left(\eta^{-1}(\alpha)\right)\left[\mathbb{E}\left(\operatorname{Prox}_{\gamma^{-1}(\alpha) h}\left(\Xstar+\frac{\gamma^{-1}(\alpha)}{\alpha} \mathsf{Z}\right)-\Xstar\right)^2\right] \\ +\sigma^2 \alpha^2 \frac{1}{\gamma^{-1}(\alpha)}\left[1+\frac{\eta^{-1}(\alpha) R^{\prime}\left(\eta^{-1}(\alpha)\right)}{R\left(\eta^{-1}(\alpha)\right)}\right]\end{gathered}$$
and show that the RHS of \eqref{RSc}, i.e. $v(\alpha)$, diverges to $+\infty$ as $\alpha \to +\infty$ and goes to some value less than $1$ as $\alpha \to 0$. 

First consider any positive increasing sequence $\left(\alpha_m\right)_{m=1}^{+\infty}$ such that $\alpha_m \rightarrow+\infty$ as $m \rightarrow \infty$. We have that $$C_1:=\limsup _{m \rightarrow \infty} \gamma^{-1}\left(\alpha_m\right)<\gamma_{+}^{-1}$$ 
which follows from \eqref{riben}. In other words, $C_1$ must be a finite constant bounded away from $\gamma_+^{-1}$ when $\gamma_+^{-1}<+\infty$. 

It follows from this and monotonicity of $f_1$ that
\begin{equation*}
\limsup _{m \rightarrow \infty} \eta^{-1}\left(\alpha_m\right)=\limsup_{m \rightarrow \infty} f_1\left(\gamma^{-1}\left(\alpha_m\right)\right)<G\left(-d_{-}\right)
\end{equation*}
from which we conclude that
\begin{equation*}
C_2:=\liminf _{m \rightarrow \infty} 1+\frac{\eta^{-1}\left(\alpha_m\right) R^{\prime}\left(\eta^{-1}\left(\alpha_m\right)\right)}{R\left(\eta^{-1}\left(\alpha_m\right)\right)}>0
\end{equation*}
This follows from the fact that $\lim _{x \rightarrow 0} 1+\frac{x R^{\prime}(x)}{R(x)}=1$ using \Cref{lem:cauchy}, (f) and continuity of the function $x \mapsto$ $1+\frac{x R^{\prime}(x)}{R(x)}$ on $\left(0, G\left(-d_{-}\right)\right)$. Note that by the above discussion, we have $\liminf _{\alpha \rightarrow+\infty} \frac{v(\alpha)}{\alpha^2} \geq \sigma^2 \frac{C_2}{C_1}$ by lower-bounding second summand in $v(\alpha)$ which then implies that
\begin{equation}\label{liminffixe}
\liminf _{\alpha \rightarrow+\infty} v(\alpha) \rightarrow+\infty.
\end{equation}

Now consider any positive decreasing sequence $\left(\alpha_m\right)_{m=1}^{+\infty}$ such that $\alpha_m \rightarrow 0$ as $m \rightarrow \infty$. We first show that the second summand of $v(\alpha_m)$ vanishes as $\alpha_m \rightarrow 0$. Using \Cref{lem:cauchy}, (d) and $\gamma^{-1}(\alpha_m)\ge \frac{1}{\E \D^2}$, we have that
\begin{equation*}
\limsup _{m \rightarrow+\infty} \frac{1}{\gamma^{-1}\left(\alpha_m\right)}\left[1+\frac{\eta^{-1}\left(\alpha_m\right) R^{\prime}\left(\eta^{-1}\left(\alpha_m\right)\right)}{R\left(\eta^{-1}\left(\alpha_m\right)\right)}\right] \leq \E \D^2
\end{equation*}
which then implies 
\begin{equation}\label{baolde}
\lim _{m \rightarrow+\infty} \frac{\sigma^2 \alpha_m^2}{\gamma^{-1}\left(\alpha_m\right)}\left[1+\frac{\eta^{-1}\left(\alpha_m\right) R^{\prime}\left(\eta^{-1}\left(\alpha_m\right)\right)}{R\left(\eta^{-1}\left(\alpha_m\right)\right)}\right]=0.
\end{equation}
as required. 

We now proceed to show that the first summand of $v(\alpha_m)$ converges to a constant less than 1 as $\alpha_m \rightarrow 0$. We first state two facts: (1) on compact interval $[\frac{1}{\E \D^2}, C]$ for some $C>\frac{1}{\E \D^2}$, the function
$\gamma^{-1} \mapsto \mathbb{E} \operatorname{Prox}_{\gamma^{-1} h}^{\prime}\left(\Xstar+\frac{\gamma^{-1}}{\alpha} \mathsf{Z}\right) $
converges uniformly to the function $\gamma^{-1} \mapsto r(\gamma^{-1})$ as $\alpha \to 0$ and (2) the equation $\gamma R^{-1}(-\gamma)=r(\gamma^{-1})$ has a unique solution $\gamma^{-1}=\gamma_0^{-1}\in [\frac{1}{\E \D^2},\gamma_+^{-1})$. Fact (1) follows from an application of Dini's theorem as well as dominated convergence theorem, where the former uses monotonicity and asymptotic linearity properties from \Cref{Assumpproxclass}, (i) and (ii). To see fact (2), recall that $\gamma\mapsto \gamma R^{-1}(-\gamma)$ is strictly decreasing. Meanwhile, $\gamma \mapsto r(\gamma^{-1})$ is continuous and non-decreasing function on  interval $\qty[\frac{1}{\E \D^2}, C]$ for any $C>\frac{1}{\E \D^2}$. This follows from the uniform convergence in fact (1). We also have that 
$$ R^{-1}(-\E \D^2 )=0\le \frac{1}{\E \D^2} r \qty(\frac{1}{\E \D^2})$$
and
$$\lim_{\gamma^{-1} \to \gamma_+^{-1}} R^{-1}(-\gamma) > \lim_{\gamma^{-1} \to \gamma_+^{-1}} \gamma^{-1} r \qty(\gamma^{-1})$$
where the second line is due to \eqref{riben} and the uniform convergence in fact (1). Fact (2) follows. Combining fact (1) and (2), we have that 
\begin{equation}\label{taica}
    \gamma^{-1}(\alpha_m)\to \gamma^{-1}_0
\end{equation}
for $\gamma^{-1}_0\in [\frac{1}{\E\D^2},\gamma_+^{-1})$. 
This implies that 
$$\eta_0^{-1}:=\lim_{m\to \infty} \eta^{-1}(\alpha_m)=\gamma^{-1}(\alpha_m)\mathbb{E} \operatorname{Prox}_{\gamma^{-1}(\alpha_m) h}^{\prime}\left(\Xstar+\frac{\gamma^{-1}(\alpha_m)}{\alpha_m} \mathsf{Z}\right)\to  \gamma_0^{-1}r(\gamma^{-1}_0).$$
where 
\begin{equation}\label{xxxdf}
    \eta_0^{-1}=f_1(\gamma_0^{-1}) \in [0,G(-d_-)).
\end{equation}

Now, we also have that as $m \to \infty$, almost surely
\begin{equation}\label{sjdfff1}
\left|\alpha_m \operatorname{Prox}_{\gamma^{-1}\left(\alpha_m\right) h}\left(\Xstar+\frac{\gamma^{-1}\left(\alpha_m\right)}{\alpha_m} \mathsf{Z}\right)-\alpha_m  \operatorname{Prox}_{\gamma_0^{-1} h}\left(\Xstar+\frac{\gamma_0^{-1}}{\alpha_m} \mathsf{Z}\right)\right| \rightarrow 0
\end{equation}
which follows from \Cref{prop:proxp} (b), (c), and \eqref{taica}. Meanwhile, we have that as $m \to \infty$, almost surely,
\begin{equation}\label{sjdfff2}
\left|\alpha_m \operatorname{Prox}_{\gamma_0^{-1} h}\left(\Xstar+\frac{\gamma_0^{-1}}{\alpha_m} \mathsf{Z}\right)-r(\gamma_0^{-1}) \gamma_0^{-1}  \mathsf{Z}\right| \rightarrow 0
\end{equation}
following from an application of L'Hôpital's rule which uses \Cref{Assumpproxclass}, (ii). Combining \eqref{sjdfff1}, \eqref{sjdfff2} and dominated convergence theorem, we have that

\begin{equation}\label{DsfKW}
\begin{aligned}
\lim _{m \rightarrow+\infty} \alpha_m^2 R^{\prime}\left(\eta^{-1}\left(\alpha_m\right)\right) \mathbb{E}\left(\operatorname{Prox}_{\gamma^{-1}\left(\alpha_m\right) h}\left(\Xstar+\frac{\gamma^{-1}\left(\alpha_m\right)}{\alpha_m} \mathsf{Z}\right)-\Xstar\right)^2=\eta^{-2}_0 R^{\prime}\left(\eta^{-1}_0\right) \\.
\end{aligned}
\end{equation}
Now note that
\begin{equation*}
\eta^{-2}_0 R^{\prime}\left(\eta^{-1}_0\right)<1
\end{equation*}
using \Cref{lem:cauchy}, (e) and \eqref{xxxdf}. Using this and \eqref{DsfKW}, we may then conclude that
\begin{equation*}
\lim _{m \rightarrow+\infty} \alpha_m^2 R^{\prime}\left(\eta^{-1}\left(\alpha_m\right)\right) \mathbb{E}\left(\operatorname{Prox}_{\gamma^{-1}\left(\alpha_m\right) h}\left(\Xstar+\frac{\gamma^{-1}\left(\alpha_m\right)}{\alpha_m} \mathsf{Z}\right)-\Xstar\right)^2<1
\end{equation*}
which along with \eqref{baolde} implies that
\begin{equation}\label{supvlim}
\limsup_{\alpha \rightarrow 0} v(\alpha)<1.
\end{equation}
Combine \eqref{liminffixe} and \eqref{supvlim}. By continuity of $\alpha \mapsto v(\alpha)$ on $(0,+\infty)$, we know that there exists a solution $\alpha_* \in(0,+\infty)$ to the equation $v\left(\alpha_*\right)=1$. Therefore, a solution of \eqref{fpstren} is $(\gamma^{-1}, \eta^{-1}, \alpha)=\left(\gamma^{-1}\left(\alpha_*\right), \eta^{-1}\left(\alpha_*\right), \alpha_*\right)$ by construction. This concludes the proof.
\end{proof}
}

\section{Proofs for Spectrum-Aware Debiasing} \label{section:asympchar}
Proof of our main result, \Cref{NEIGMAIN}, relies on three main steps: (i) a characterization of the empirical distribution of a population version of $\hat{\bbeta}$, (ii) connecting this population version with our data-driven Spectrum-Aware estimator, (iii) developing a consistent estimator of the asymptotic variance. We next describe our main technical novelties for step (i) in Section \ref{subsec:riskmath}, and that for steps (ii) and (iii) in \Cref{subsec:tauest}.

\subsection{Result A: Distributional characterizations}\label{subsec:riskmath}
\Cref{NEIGMAIN} relies on the characterization of certain properties of $\hatbt$ and the following two quantities:



\begin{equation}\label{defr1r2}
    \rstar:=\hatbt+\frac{1}{\gamma_*} \X^{\top}(\y-\X \hatbt), \quad \rstst:=\hatbt+\frac{1}{\eta_*-\gamma_*} \X^{\top}(\X \hatbt-\y).
\end{equation}
Here, $\rstar$ can be interpreted as the population version of the debiased estimator $\bhetah$ and $\rstst$ as an auxiliary quantity that arises in the intermediate steps in our proof. The following theorem characterizes the empirical distribution of the entries of $\hatbt$ and $\rstar$. We prove it in \Cref{section:DIST} from Appendix.

\begin{Theorem}[Distributional characterizations]\label{thm:empmain}
Under Assumptions \ref{AssumpD}--\ref{Assumpfix}, almost surely as $n,p\to \infty$,
\begin{equation}\label{waka1}
\left(\hatbt, \rstar, \st\right) \stackrel{W_2}{\rightarrow}\left(\operatorname{Prox}_{\gamma_*^{-1} h}\left(\sqrt{\taustar} \Zs+\Xstar \right), \sqrt{\taustar} \Zs+\Xstar, \Xstar \right),
\end{equation}
where $\Zs\sim N(0,1)$ is independent of $\Xstar$. Furthermore, almost surely as $p\to \infty$
\begin{equation}\label{waka2}
    \begin{aligned}
        &\frac{1}{p}\left\|\X \rstst-\y\right\|^2 \to \tau_{**} \cdot \mathbb{E}\D^2+\sigma^2\cdot \delta, \\
        &\frac{1}{p} \norm{\y-\X \hatbt}^2\to \tau_{**} \cdot \E \frac{\D^2(\eta_*-\gamma_*)^2}{(\D^2+\eta_*-\gamma_*)^2}+\sigma^2\cdot \qty(\frac{n-p}{p}+\E \qty(\frac{\eta_*-\gamma_*}{\D^2+\eta_*-\gamma_*})^2).
    \end{aligned}
\end{equation}
\end{Theorem}

We now discuss the proof novelties for \Cref{thm:empmain}. \Cref{section:DIST} from Appendix contains this proof. 

We base our proof on the approximate message passing (AMP) machinery (cf.~\cite{donoho2009message,zdeborova2016statistical,sur2019modern,feng2022unifying,montanari2022short} 
for a non-exhaustive list of references). In this approach, one constructs an AMP algorithm in terms of fixed points ($\eta_*,\gamma_*, \tau_{*}, \tau_{**}$ in our case) and shows that its iterates $\hat{\mathbf{v}}^t$ converge to our objects of interest $\hat{\mathbf{v}}$ ($\hat{\mathbf{v}}$ can be  $\hatbt$ or $\rstar$ in our case) in the following sense: almost surely
\begin{equation}\label{garbc}
\lim _{t \rightarrow \infty} \lim_{p\to \infty} \frac{\left\|\hat{\mathbf{v}}^t-\hat{\mathbf{v}}\right\|^2}{p}=0. 
\end{equation}
AMP theory provides a precise characterization of the following limit involving the algorithmic iterates for any fixed $t$: $\lim_{p \rightarrow \infty } \|\hat{\mathbf{v}}^t-\mathbf{v}_0 \|^2/p,$
where $\mathbf{v}_0$ is usually a suitable function of $\bm{\beta}_{\star}$ around which one expects $\hat{\mathbf{v}}$ should be centered. 
Thus plugging this in \eqref{garbc} yields properties of the object of interest $\hat{\mathbf{v}}$. Within this theory, the framework that characterizes $\lim_{p \rightarrow \infty } \|\hat{\mathbf{v}}^t-\mathbf{v}_0 \|^2/p$ is known as \emph{state evolution} \cite{bayati2011lasso,javanmard2013state}. Despite the existence of this solid machinery, \eqref{garbc}
 requires a case-by-case proof, and for many settings, this presents deep challenges.

We use the above algorithmic proof strategy, but in case of our right-rotationally invariant designs to which the original AMP algorithms fail to apply.
To alleviate this, \cite{rangan2019vector} proposed vector approximate message passing algorithms. We use these algorithms to create our $\hat{\mathbf{v}}^t$'s. Subsequently, proving \eqref{garbc} presents the main challenge. To this end, one is required to show the following Cauchy convergence property of the VAMP iterates: almost surely, $\lim _{(s, t) \rightarrow \infty}\left(\lim _{p \rightarrow \infty} \frac{1}{p}\left \|\hat{\mathbf{v}}^t-\hat{\mathbf{v}}^s\right \|^2\right)=0.$
We prove this using a Banach contraction argument (cf. \eqref{eq:gprimebound} from Appendix). Such an argument saw prior usage in the context of  Bayes optimal learning in \cite{li2023random}. 
However, they studied a ``matched" problem where the signal prior (analogous to $\Xstar$ in our setting) is known to the statistician and she uses this exact prior during the estimation process. Arguments under such matched Bayes optimal problems do not translate to our case, and proving \eqref{eq:gprimebound} presents novel difficulties in our setting. To mitigate this, we leverage a fundamental property of the R-transform, specifically that $-zR'(z)/R(z)<1$ for all $z$, and discover and utilize a crucial interplay of this property with the non-expansiveness of the proximal map (see \Cref{prop:proxp} (b) from Appendix).

\begin{Remark}[Comparison with \cite{gerbelot2020asymptotic,gerbelot2022asymptotic}]\label{rem:cedric} In their seminal works,
\cite{gerbelot2020asymptotic,gerbelot2022asymptotic} initiated the first study of the risk of regularized estimators under right-rotationally invariant designs. They stated a version of Theorem \ref{thm:empmain} with a partially non-rigorous argument.  In their approach, an auxiliary $\ell_2$ penalty of sufficient magnitude is introduced to ensure contraction of AMP iterates. Later, they remove this penalty through an analytical continuation argument. However, this proof suffers two limitations. The first one relates to the non-rigorous applications of the AMP state evolution results. For instance, \cite[Lemma 3]{gerbelot2022asymptotic} shows that for each fixed value of $p$,
$
\lim _{t \rightarrow \infty} \frac{\left \|\hat{x}^t-\hat{x}\right \|^2}{p}=0.
$
However, in \cite[Proof of Lemma 4]{gerbelot2022asymptotic}, the authors claim that this would imply \eqref{garbc} upon exchanging limits with respect to $t$ and $p$. Such an exchange of limits is  non-rigorous since the correctness of AMP state evolution is established for a finite number of iterations ($t<T,T$ fixed) as $p\to \infty$. The limit in $T$ is taken after $p$. The other limitation lies in the analytic continuation approach that requires multiple exchanges of limit operations \cite[Appendix H]{gerbelot2022asymptotic} that seem difficult to justify and incur intractable assumptions \cite[Assumption 1 (c), (e)]{gerbelot2022asymptotic} (in particular, it is unclear how to verify the existence claim in Assumption 1 (c) beyond Gaussian designs). Our alternative approach establishes contraction without the need for a sufficiently large $\ell_2$-regularization component, as in \cite{gerbelot2020asymptotic,gerbelot2022asymptotic}, and thereby avoids the challenges associated with the  analytic continuation argument. 
\end{Remark}

\subsection{Result B: Consistent estimation of fixed points}\label{subsec:tauest}
Note that the population debiased estimator $\rstar$ cannot be used to conduct inference since $\gamma_*$ is unknown. Furthermore, the previous theorem says roughly that $\rstar - \bbeta^{\star}$ behaves as a standard Gaussian with variance $\taustar$, without providing any estimator for $\taustar$. We address these two points here. In particular, we will see that addressing these points ties us to establishing consistent estimators for the solution to the fixed points defined in \eqref{fp}.
The theorem below shows that $(\adj, \hat{\eta}_*, \hat{\tau}_*, \hat{\tau}_{**})$ from \eqref{DEFEFD} serve as consistent estimators of the fixed points $(\gamma_*,\eta_*,\tau_*, \tau_{**})$, and $\bhetah,\hatrstst$ as consistent estimators of $\rstar$ and $\rstst$, where $\hatrstst$ is defined as in \eqref{NEFTF22} below. For the purpose of the discussion below, we note that $\hat{\tau}_{**}$ from \eqref{DEFEFD} can be written as follows.
\begin{equation}\label{NEFTF22}
    \hat{\tau}_{**}(p) :=\frac{\frac{1}{p}\left\|\X \hatrstst-\y\right\|^2-\frac{n}{p}\cdot \sigma^2}{\frac{1}{p} \sum_{i=1}^p d_i^2}; \quad \hatrstst:=\hatbt+\frac{1}{\hat{\eta}_*-\adj} \X^{\top}(\X \hatbt-\y).
\end{equation}
 Furthermore, recall that when the noise level $\sigma^2$ is unknown, one requires an estimator for $\sigma^2$ to calculate $\tauh, \hat{\tau}_{**}$ in \eqref{DEFEFD}.  We define such an estimator below and show that that it estimates $\sigma^2$ consistently. 
\begin{equation}\label{wemr}
    \hat{\sigma}^2(\X,\y,h) \gets \frac{\|\mathbf{y}-\mathbf{X} \widehat{\boldsymbol{\beta}}\|^2-\frac{\left\|\left(\mathbf{I}_n+\frac{1}{\hat{\eta}_*-\mathrm{adj}} \mathbf{X X}^{\top}\right)(\mathbf{y}-\mathbf{X} \widehat{\boldsymbol{\beta}})\right\|^2}{\sum_{i=1}^p d_i^2} \sum_{i=1}^p \frac{\left(\hat{\eta}_*-\adj\right)^2 d_i^2}{\left(d_i^2+\hat{\eta}_*-\adj\right)^2}}{\sum_{i=1}^p \frac{\left(\hat{\eta}_*-\adj\right)^2 \cdot\left(\sum_{j=1}^p d_j^2-n d_i^2\right)}{\left(d_i^2+\hat{\eta}_*-\adj\right)^2 \cdot\left(\sum_{j=1}^p d_j^2\right)}+n-p}.
\end{equation}
Note this is well-defined when
\begin{equation}\label{werwerlf}
    \frac{n}{p}\cdot \frac{\frac{1}{p}\sum_{i=1}^p d_i^2\cdot \qty (1-\qty(\frac{\hat{\eta}_*-\adj}{d_i^2+\hat{\eta}_*-\adj})^2 )}{\frac{1}{p}\sum_{i=1}^p d_i^2\cdot \frac{1}{p}\sum_{i=1}^p \qty (1-\qty(\frac{\hat{\eta}_*-\adj}{d_i^2+\hat{\eta}_*-\adj})^2 )} \neq 1.
\end{equation}
In particular, the LHS of \eqref{werwerlf} consistently estimates the  LHS of \eqref{Asfix2} in \Cref{Assumpfix2}.

\begin{Theorem}[Consistent estimation of fixed points]\label{neig}
Suppose that \Cref{AssumpD}---\ref{Assumpfix2} hold. Then, the estimators in \eqref{DEFEFD} and \eqref{NEFTF22} are well-defined for any $p$ and we have that almost surely as $p\to \infty$,
$$
\begin{aligned}
    & \adj \left(p\right) \rightarrow \gamma_*, \quad  \hat{\eta}_*\left(p\right) \rightarrow \eta_*, \quad \hat{\tau}_{*}\left(p\right) \rightarrow \taustar, \quad  \hat{\tau}_{**}\left(p\right) \rightarrow \tau_{**}, \quad \hat{\sigma}^2\left(p\right) \rightarrow \sigma^2,\\
    & \frac{1}{p}\norm{\bhetah(p)-\rstar}^2\rightarrow 0, \quad \frac{1}{p}\norm{\hatrstst(p)-\rstst}^2\rightarrow 0.
\end{aligned}
$$
We note that if $\sigma^2$ is known and one sets $\hat{\sigma}^2(p)=\sigma^2$, the above holds without requiring \Cref{Assumpfix2}. 
\end{Theorem}
It is not hard to see that  \Cref{thm:empmain} combined with \Cref{neig} proves our main result \Cref{NEIGMAIN}.

We now discuss the proof of \Cref{neig}. See \Cref{section:DEBIA} from Appendix for the proof details.

First, let us present some heuristics for how one might derive 
 the consistent estimators $\qty(\adj, \hat{\eta}_*, \hat{\tau}_*, \hat{\tau}_{**})$. We start from \eqref{RCa}. Using \Cref{Extend}, it can be written as
\begin{equation}\label{veree1}
    \frac{\gamma_*}{\eta_*}=\E \frac{1}{1+\gamma_*^{-1} h^{\prime \prime} \qty(\operatorname{Prox}_{\gamma_*^{-1}h} (\Xstar+\sqrt{\taustar} \Zs) )}.
\end{equation}
Recall that we have established \Cref{thm:empmain} that shows  $p\to \infty$, almost surely, 
$\hatbt \stackrel{W_2}{\to} \operatorname{Prox}_{\gamma_*^{-1} h}\left(\sqrt{\taustar} \Zs+\Xstar \right).$
Combining this and \eqref{veree1}, we expect that
\begin{equation}\label{veree3}
    \frac{1}{\eta_*} \approx \frac{1}{p} \sum_{i=1}^p \frac{\gamma_*^{-1}}{1+\gamma_*^{-1} h^{\prime \prime} (\hiatbti) }.
\end{equation}
Using the definition of R-transform, we can rewrite \eqref{RCc} as $\eta_*^{-1}=\E \frac{1}{\D^2+\eta_*-\gamma_*}$ which, along with \eqref{Dconve}, implies that
$\frac{1}{\eta_*} \approx \frac{1}{p} \sum_{i=1}^p \frac{1}{d_i^2+\eta_*-\gamma_*}.$ Combining this and \eqref{veree3} to eliminate $\eta_*$, we obtain that 
\begin{equation}\label{safdas}
\frac{1}{p} \sum_{i=1}^p \frac{1}{\left(d_i^2-\gamma_* \right)\left(\frac{1}{p} \sum_{j=1}^p \qty(\gamma_*+h^{\prime \prime}\left(\hjatbtj\right))^{-1} \right)+1}\approx 1.
\end{equation}
Setting $\approx$ above to equality, we obtain our exact equation for the Spectrum-Aware adjustment factor, i.e.~ \eqref{gammasolvea}. One thus expects intuitively that $\adj$ consistently estimates $\gamma_*$. To establish the consistency rigorously, we recognize and establish the monotonicity of the LHS of \eqref{safdas} as a function of $\gamma_*$, and study its point-wise limit. We direct the reader to \Cref{well} and \Cref{ptconv} from Appendix for more details. 

Once we have established the consistency of $\adj$ as an estimator for $\gamma_*$, we substitute $\adj$ back into \eqref{veree3} to obtain a consistent estimator $\hat{\eta}_*$ for $\eta_*$. It is important to note that the definition of $\rstst$, as given in \eqref{defr1r2}, only involves the fixed points $\eta_*$ and $\gamma_*$. As a result, we can utilize $\adj$ and $\hat{\eta}_*$ to produce a consistent estimator $\hatrstst$ for $\rstst$. Now note that \eqref{waka2} would give us a system of linear equation
\begin{equation}
    \mqty(\frac{1}{p}\left\|\X \rstst-\y\right\|^2 \\ \frac{1}{p} \norm{\y-\X \hat{\mathbf{\beta}}}^2) \approx \mqty(\mathbb{E}\D^2 &  \delta \\ \E \frac{\D^2(\eta_*-\gamma_*)^2}{(\D^2+\eta_*-\gamma_*)^2} & \quad \frac{n-p}{p}+\E \qty(\frac{\eta_*-\gamma_*}{\D^2+\eta_*-\gamma_*})^2) \mqty(\tau_{**} \\ \sigma^2).
\end{equation}
The estimators $(\hat{\tau}_{**}, \hat{\sigma}^2)$ in \eqref{DEFEFD} for $(\tau_{**}, \sigma^2)$ are solved from the two linear equations above with the 2-by-2 matrix on RHS replaced by its sample version. Note that \eqref{Asfix2} is required to ensure the 2-by-2 matrix is non-singular.  Now with estimators for $\gamma_*, \eta_*, \sigma^2$ and $\tau_{**}$, we can construct the estimator $\tauh$ for $\tau_*$ using \eqref{RCd} and \eqref{Dconve}.

\subsection{Proof result A: Distribution characterization}\label{section:DIST}
In this section, we prove \Cref{thm:empmain} using VAMP algorithm as proof device. We define the version of VAMP algorithm we will use in \Cref{subsectionVAMP}, prove Cauchy convergence of its iterates in \Cref{subsectionCauchc}, and prove \Cref{thm:empmain} in \Cref{subsectionlimi}. To streamline the presentation, proofs of intermediate claims are collected in \Cref{SupportPI}. We also assume without loss of generality that $$\sigma^2=1$$ 
for the remainder of this section. The general case for arbitrary $\sigma^2>0$ follows from a simple rescaling argument.

\subsubsection{The oracle VAMP algorithm}\label{subsectionVAMP}
We review the oracle VAMP algorithm defined in \cite{gerbelot2020asymptotic} and present an extended state evolution result for the algorithm. This algorithm is obtained by initializing the VAMP algorithm introduced in \cite{rangan2019vector} at stationarity $\mathbf{r}_{10}=\st +N(\rm{0},\taustar \mathbf{I}_p), \gamma_{10}^{-1}=\gamma_*^{-1}$. See \Cref{vamp} for a review. Then for $t\ge 1$, we have iterates
\begin{subequations}\label{empvamp}
    \begin{align}
& \xonet=\operatorname{Prox}_{\gamma_*^{-1} h}\left(\ronetm\right) \label{x1t} \\
& \rtwot=\frac{1}{\eta_*-\gamma_*}\left(\eta_* \xonet-\gamma_* \ronetm\right) \label{r2t} \\
& \xtwot=\left(\X^{\top} \X+\left(\eta_*-\gamma_*\right) \mathbf{I}_p\right)^{-1}\left(\X^{\top} \y+\left(\eta_*-\gamma_*\right) \rtwot\right) \label{x2t}\\
& \ronet=\frac{1}{\gamma_*}\left(\eta_* \xtwot-\left(\eta_*-\gamma_*\right) \rtwot\right) \label{r1t}
\end{align}
\end{subequations}
\begin{Remark}
    Note that the above definition assumes existence of fixed point $\eta_*, \gamma_*, \tau_*, \tau_{**} \in (0,+\infty)$, i.e. \Cref{Assumpfix}. We however do not require the fixed point to be unique. Our proof may proceed by defining the oracle VAMP algorithm above with respect to any one of the fixed points. 
\end{Remark}

Let us define functions $F: \R\times \R \to \R$ and $F':\R \times \R \to \R$
\begin{equation}\label{Fdef}
\begin{aligned}
& F(q, x):=\frac{\eta_*}{\eta_*-\gamma_*} \operatorname{Prox}_{\gamma_*^{-1} h}(q+x)-\frac{\gamma_*}{\eta_*-\gamma_*} q-\frac{\eta_*}{\eta_*-\gamma_*} x \\
& F^\prime(q, x):=\frac{\eta_*}{\eta_*-\gamma_*} \operatorname{Prox}^\prime_{\gamma_*^{-1} h}(q+x)-\frac{\gamma_*}{\eta_*-\gamma_*}  
\end{aligned}
\end{equation}
Note that for any fixed $x$, $F^\prime(q,x)$ equals to the derivative of $q \mapsto F(q,x)$ whenever the derivative exists, and at the finitely many points where $q\mapsto F(q,x)$ is not differentiable $F^\prime(q,x)$ equals to $0$ (cf. \Cref{Extend}). We also define some quantities
\begin{equation}\label{Quant}
\begin{aligned}
& \bm{\Lambda}:=\frac{\eta_*\left(\eta_*-\gamma_*\right)}{\gamma_*}\left(\Dbm^{\top} \Dbm+\left(\eta_*-\gamma_*\right) \mathbf{I}_p\right)^{-1}-\left(\frac{\eta_*-\gamma_*}{\gamma_*}\right) \cdot \mathbf{I}_p \\
& \bm{\xi}:=\Qbm \epbm, \quad \ebit :=\frac{\eta_*}{\gamma_*}\left(\Dbm^{\top} \Dbm+\left(\eta_*-\gamma_*\right) \mathbf{I}_p\right)^{-1} \Dbm^{\top} \bm{\xi}, \quad \mathbf{e}:=\Obm^{\top} \ebit 
\end{aligned}
\end{equation}

We note some important properties of these quantities, which are essentially consequence of \Cref{AssumpD} and \eqref{fp}. We defer the proof of \Cref{prop:ppt} to \Cref{appendix:ovamp}.
\begin{Proposition}\label{prop:ppt}
Under \Cref{AssumpD}---\ref{Assumph} and \ref{Assumpfix}, almost surely,
\begin{subequations}\label{dddfef}
\begin{align}
& \lim _{p \rightarrow \infty} \frac{1}{p} \operatorname{Tr}(\bm{\Lambda})=0 , \quad \kappa_*:=\lim _{p \rightarrow \infty} \frac{1}{p} \operatorname{Tr}\left(\bm{\Lambda}^2\right)=\mathbb{E}\left(\frac{\eta_*\left(\eta_*-\gamma_*\right)}{\gamma_*\left(\D^2+\left(\eta_*-\gamma_*\right)\right)}-\frac{\eta_*-\gamma_*}{\gamma_*}\right)^2 \\
& b_*:=\lim _{p \rightarrow \infty} \frac{1}{p}\left\|\ebit \right\|^2=\frac{1}{\gamma_*}-\frac{\kappa_*}{\eta_*-\gamma_*}=\left(\frac{\eta_*}{\gamma_*}\right)^2 \mathbb{E} \frac{\D^2}{\left(\D^2+\eta_*-\gamma_*\right)^2},\quad \taustar=b_*+\kappa_* \tau_{**} \label{eq:somide}\\
& \mathbb{E} F^{\prime}\left(\sqrt{\taustar}\Zs, \Xstar \right)=0, \quad \mathbb{E} F\left(\sqrt{\taustar}\Zs, \Xstar \right)^2= \tau_{**} \label{eq:denoiserpp2}
\end{align}
\end{subequations}
where $\Zs\sim N(0,1)$ is independent of $\Xstar$. Moreover, the function $(q,x)\mapsto F(q,x)$ is Lipschitz continuous on $\R\times \R$. 
\end{Proposition}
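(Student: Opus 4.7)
The proof of \Cref{prop:ppt} is a chain of direct calculations anchored on one clean observation. Combining the fixed-point equation \eqref{RCc} with the identity $R(z)=G^{-1}(z)-z^{-1}$ immediately yields
\begin{equation*}
\mathbb{E}(\D^2+\eta_*-\gamma_*)^{-1}=G(\eta_*-\gamma_*)=\eta_*^{-1},
\end{equation*}
which is the bridge between the R-transform fixed point and the direct moments of $\D^2$ appearing throughout \eqref{dddfef}. Setting $a:=\eta_*-\gamma_*$ and $m_k:=\mathbb{E}(\D^2+a)^{-k}$, this says $m_1=\eta_*^{-1}$.

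I would first dispatch the trace limits. Because $\bm{\Lambda}$ is diagonal in the eigenbasis of $\Dbm^\top\Dbm$ with eigenvalues $\lambda(d_i^2):=\eta_* a/[\gamma_*(d_i^2+a)]-a/\gamma_*$, and $\lambda$ is continuous and bounded on $[0,d_+]$ by \eqref{eqtwe} and positivity of $a$, the Wasserstein-2 convergence in \eqref{Dconve} combined with \Cref{prop:asW} gives $p^{-1}\Tr(\bm{\Lambda})\to\mathbb{E}\lambda(\D^2)=0$ (exactly the $G$-identity above) and $p^{-1}\Tr(\bm{\Lambda}^2)\to\mathbb{E}\lambda(\D^2)^2=\kappa_*$. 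For $p^{-1}\|\ebit\|^2$, since $\Qbm$ is deterministic orthogonal and $\sigma^2=1$, the vector $\bm{\xi}=\Qbm\epbm\sim N(0,\mathbf{I}_n)$ is standard Gaussian; hence $p^{-1}\|\ebit\|^2=(\eta_*/\gamma_*)^2\cdot p^{-1}\sum_i d_i^2(d_i^2+a)^{-2}\xi_i^2$, and a standard SLLN applied to the i.i.d.\ chi-square weights $\xi_i^2$ against the bounded deterministic coefficients yields the a.s.\ limit $(\eta_*/\gamma_*)^2\mathbb{E}\D^2/(\D^2+a)^2$.

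The algebraic identities in \eqref{eq:somide} are then bookkeeping. Expanding, $\kappa_*=(a/\gamma_*)^2(\eta_*^2 m_2-2\eta_* m_1+1)=(a/\gamma_*)^2(\eta_*^2 m_2-1)$ using $\eta_* m_1=1$, and $b_*=(\eta_*/\gamma_*)^2(m_1-am_2)$, after which $\gamma_*^{-1}-\kappa_*/a=b_*$ is a one-line verification. The identity $\tau_*=b_*+\kappa_*\tau_{**}$ follows by splitting $\mathbb{E}\D^2/(\D^2+a)^2=m_1-am_2$ in the right-hand side of \eqref{RCd} and matching the resulting terms against $b_*$ and $\kappa_*\tau_{**}$ respectively.

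For the two statements in \eqref{eq:denoiserpp2}, the first is immediate from \eqref{RCa}: $\mathbb{E} F'(\sqrt{\tau_*}\Zs,\Xstar)=(\eta_*/a)(\gamma_*/\eta_*)-\gamma_*/a=0$. For the second, I would write $F(\sqrt{\tau_*}\Zs,\Xstar)=(\eta_*/a)(\mathsf{P}-\Xstar)-(\gamma_*/a)\sqrt{\tau_*}\Zs$ with $\mathsf{P}:=\operatorname{Prox}_{\gamma_*^{-1}h}(\sqrt{\tau_*}\Zs+\Xstar)$, square, and handle the cross term by Gaussian integration by parts:
\begin{equation*}
\mathbb{E}[\Zs(\mathsf{P}-\Xstar)]=\sqrt{\tau_*}\,\mathbb{E}\operatorname{Prox}'_{\gamma_*^{-1}h}(\sqrt{\tau_*}\Zs+\Xstar)=\sqrt{\tau_*}\,\gamma_*/\eta_*
\end{equation*}
by \eqref{RCa}. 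Simplification collapses the result to $\mathbb{E} F^2=(\eta_*/a)^2[\mathbb{E}(\mathsf{P}-\Xstar)^2-(\gamma_*/\eta_*)^2\tau_*]=\tau_{**}$ by \eqref{RCb}. Lipschitz continuity of $(q,x)\mapsto F(q,x)$ is then immediate from the $1$-Lipschitz property of $\operatorname{Prox}$ in \Cref{prop:proxp}(b). The only delicate point anywhere in the argument is the use of Stein's lemma across the (finitely many) non-smooth points of $\operatorname{Prox}$; this is harmless because the law of $\sqrt{\tau_*}\Zs+\Xstar$ is absolutely continuous so these points are charged with measure zero, and the extended derivative from \Cref{Extend} coincides with the a.e.\ classical (equivalently, weak) derivative of the Lipschitz function $\operatorname{Prox}$.
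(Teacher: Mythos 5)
Your proposal is correct and follows essentially the same route as the paper: verify the trace and norm limits from \eqref{Dconve} and the fixed points, derive the algebraic identities by rewriting \eqref{RCd}, compute $\mathbb{E}F'=0$ from \eqref{RCa}, and obtain $\mathbb{E}F^2=\tau_{**}$ by expanding the square and handling the cross term via Stein's lemma together with \eqref{RCa} and \eqref{RCb}. The only cosmetic difference is your explicit weighted-SLLN treatment of $p^{-1}\|\ebit\|^2$ where the paper invokes its Wasserstein-2 machinery, and your version of the Gaussian integration-by-parts step is in fact more carefully written (it carries the $\sqrt{\taustar}$ factor that the paper's intermediate display drops by a typo).
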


Then, one can show that by eliminating $\xonet, \xtwot$ and introducing a change of variables
\begin{equation}\label{changeofv}
    \ampxt=\rtwot-\st, \quad \ampyt=\ronet-\st-\mathbf{e}, \quad \ampst=\Obm \ampxt
\end{equation}
\eqref{empvamp} is equivalent to the following iterations: with initialization $\mathbf{q}^0\sim N(\rm{0},\taustar \cdot \mathbf{I}_p), \ampxone=F(q_0,\st)$, for $t=1,2,3,\ldots,$
\begin{equation}\label{eq:xsy}
\ampst=\Obm \ampxt, \qquad \ampyt=\Obm^\top \bm{\Lambda} \ampst, \qquad \ampxtpo=F(\ampyt+\mathbf{e},\st).
\end{equation}
The following Proposition will be needed later. Its proof is deferred to \Cref{appendix:ovamp}. 
\begin{Proposition}\label{prop:AMPparamconverge}
Suppose Assumptions \ref{AssumpD}--\ref{AssumpPrior} hold.
Define random variables
\[\Xi\sim N(0,1), \qquad
\mathsf{P}_0\sim N(0,\taustar), \qquad \Es \sim N(0,b_*)
\]
independent of each other and of $\D$, and set
	$$
\mathsf{L}=\frac{\eta_{*}-\gamma_{*}}{\gamma_{*}}\left(\frac{\eta_*}{\D^{2}+\eta_{*}-\gamma_{*}}-1\right),\;
\mathsf{E}_{b}=\frac{\eta_{*}}{\gamma_{*}} \frac{\D
\Xi}{\D^{2}+\eta_{*}-\gamma_{*}}, \;
\Hs=(\Xstar,\D,\D\Xi,\Ls,\Es_b,\Es,\Ps_0).$$
Then $\kappa_*=\E \Ls^2$ and $b_*=\E \Es_b^2$. Furthermore, almost surely as $n,p\to \infty$,
$$\mathbf{H}:= \left(\st, \Dbm^{\top} \bm{1}_{n \times 1}, \Dbm^{\top} \bm{\xi}, \operatorname{diag}(\bm{\Lambda}), \ebit , \mathbf{e}, \mathbf{q}^0\right)\stackrel{W_2}{\to} \Hs.$$ 

\end{Proposition}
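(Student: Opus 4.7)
The plan is to verify the two variance identities by direct algebra and then establish the joint Wasserstein-2 convergence $\mathbf{H} \toW \Hs$ in three stages: marginal convergence of each column; joint convergence of all columns except $\mathbf{e}$ using mutual independence; and finally appending $\mathbf{e} = \Obm^\top \ebit$ via a conditioning argument that leverages \Cref{prop:orthoW}. For the variance identities, expanding
\begin{equation*}
\Ls = \frac{\eta_*(\eta_*-\gamma_*)}{\gamma_*(\D^2+\eta_*-\gamma_*)} - \frac{\eta_*-\gamma_*}{\gamma_*}
\end{equation*}
exactly reproduces the formula for $\kappa_*$ in \eqref{eq:somide}, and similarly $\E\Es_b^2 = (\eta_*/\gamma_*)^2\,\E[\D^2/(\D^2+\eta_*-\gamma_*)^2] = b_*$, matching \eqref{eq:somide}.

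For the marginal convergences, Assumptions \ref{AssumpPrior} and \ref{AssumpD} yield $\st \toW \Xstar$ and $\mathbf{d} := \Dbm^\top \bm{1}_{n\times 1} \toW \D$ directly. Since $\bm{\xi} = \Qbm\epbm$ with $\Qbm$ orthogonal and $\epbm \sim N(\bm{0}, \mathbf{I}_n)$ (using $\sigma^2=1$), $\bm{\xi}$ has i.i.d.\ standard normal entries, so \Cref{prop:iidW} gives $(\mathbf{d}, \bm{\xi}) \toW (\D, \Xi)$ with $\Xi \sim N(0,1)$ independent of $\D$. Applying \Cref{prop:contW} to the continuous maps $(d,\xi)\mapsto d\xi$, $d \mapsto \frac{\eta_*-\gamma_*}{\gamma_*}\bigl(\frac{\eta_*}{d^2+\eta_*-\gamma_*}-1\bigr)$, and $(d,\xi)\mapsto \frac{\eta_*}{\gamma_*}\cdot \frac{d\xi}{d^2+\eta_*-\gamma_*}$ then furnishes $(\mathbf{d}, \Dbm^\top \bm{\xi}, \operatorname{diag}(\bm{\Lambda}), \ebit) \toW (\D, \D\Xi, \Ls, \Es_b)$ jointly, each entry being a continuous function of the corresponding $(d_i, \xi_i)$. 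Since $\st$ is independent of $(\Obm, \epbm, \Dbm, \Qbm)$ (treated as a deterministic array by first conditioning on it when random) and $\mathbf{q}^0 \sim N(\bm{0}, \taustar\mathbf{I}_p)$ is independent of everything else by construction, repeated application of \Cref{prop:iidW} extends this to
\begin{equation*}
(\st, \mathbf{d}, \Dbm^\top \bm{\xi}, \operatorname{diag}(\bm{\Lambda}), \ebit, \mathbf{q}^0) \toW (\Xstar, \D, \D\Xi, \Ls, \Es_b, \Ps_0)
\end{equation*}
with the required independence structure among $\Xstar$, $(\D,\Xi)$, and $\Ps_0$.

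The main obstacle is appending $\mathbf{e} = \Obm^\top \ebit$, since $\ebit$ is random whereas \Cref{prop:orthoW} is stated for deterministic vectors. The resolution is that $\Obm$ is Haar-distributed on $\O(p)$ and independent of $(\Dbm, \Qbm, \epbm, \st, \mathbf{q}^0)$, hence of $\ebit$. Conditioning on everything except $\Obm$ renders $\ebit$ a fixed vector satisfying $p^{-1}\|\ebit\|^2 \to b_*$ almost surely (by \Cref{prop:asW} applied to the marginal $\ebit \toW \Es_b$), while preserving the Haar distribution of $\Obm$. Invoking \Cref{prop:orthoW} conditionally with $\bm{\Pi} = \mathbf{I}_p$, $\vbf = \ebit$, and $\mathbf{E}$ formed from the (now deterministic) remaining columns of $\mathbf{H}$, we obtain $\mathbf{e} \toW \Es \sim N(0, b_*)$ jointly with the other columns, with $\Es$ independent of the other limit coordinates. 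Integrating over the probability-one conditioning event produces the claimed joint convergence $\mathbf{H} \toW \Hs$.
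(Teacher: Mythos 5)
Your approach coincides with the paper's, which likewise derives the convergence from \Cref{prop:iidW}, \Cref{prop:contW}, and \Cref{prop:orthoW} and reads the variance identities off \Cref{prop:ppt}. However, there is one formal gap in your marginal-convergence step: the claim $(\mathbf{d}, \bm{\xi}) \stackrel{W_2}{\to} (\D,\Xi)$ is ill-posed, because $\mathbf{d}=\Dbm^\top \bm{1}_{n\times 1}\in\R^p$ and $\bm{\xi}=\Qbm\epbm\in\R^n$ have different row counts whenever $n\neq p$, so they cannot be stacked into a single $p\times 2$ array whose rows carry an empirical distribution. Your subsequent application of \Cref{prop:contW} to the row-wise map $(d,\xi)\mapsto d\xi$ therefore has no well-defined input. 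The fix—which is precisely the observation the paper records—is to note that $\Dbm^\top\bm{\xi}\in\R^p$ factors as the entrywise product of $\mathbf{d}\in\R^p$ with a vector $\bar{\bm{\xi}}\in\R^p$ of i.i.d.\ $N(0,1)$ entries, in both the case $n\geq p$ (take the first $p$ coordinates of $\bm{\xi}$) and $n<p$ (pad with fresh independent Gaussians, which is harmless because the extra coordinates of $\mathbf{d}$ vanish). Then $(\mathbf{d},\bar{\bm{\xi}})\in\R^{p\times 2}$ is a legitimate array, $(\mathbf{d},\bar{\bm{\xi}})\stackrel{W_2}{\to}(\D,\Xi)$ follows from \Cref{prop:iidW}, and the rest of your argument—including the conditioning step that renders $\ebit$ deterministic so that \Cref{prop:orthoW} applies, and the verification $p^{-1}\|\ebit\|^2\to b_*$ of that proposition's hypothesis via \Cref{prop:asW}—carries through exactly as you wrote it and correctly fills in what the paper leaves as a one-sentence citation.
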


Now we state the state evolution for the VAMP algorithm. Its proof is deferred to \Cref{appendix:ovamp}. 
\begin{Proposition}\label{thm:ampSE}
Suppose \Cref{AssumpD}---\ref{Assumph} and \ref{Assumpfix} hold. Further assume that the function $x\mapsto \operatorname{Prox}_{\gamma_*^{-1} h}^{\prime}(x)$ defined in \Cref{Extend} is non-constant. 
Let $\Hs=(\Xstar,\D,\D\Xi,\Ls,\Es_b,\Es,\Ps_0)$ be as defined in
\Cref{prop:AMPparamconverge}. Set $\Xs_1=F(\Ps_0,\Xstar)$, set
$\Delta_1=\E[\Xs_1^2] \in \R^{1 \times 1}$, and define iteratively
$\Ss_t,\Ys_t,\Xs_{t+1},\Delta_{t+1}$ for $t=1,2,3,\ldots$ such that
\[(\Ss_1,\ldots,\Ss_t) \sim N(\rm{0},\Delta_t),
\qquad (\Ys_1,\ldots,\Ys_t) \sim N(\rm{0},\kappa_* \Delta_t)\]
are Gaussian vectors independent of each other and of $\Hs$, and
\[\Xs_{t+1}=F(\Ys_t+\Es,\Xstar), \qquad
\Delta_{t+1}=\mathbb{E}\left[\left(\mathsf{X}_{1}, \ldots,
\mathsf{X}_{t+1}\right)\left(\mathsf{X}_{1}, \ldots,
\mathsf{X}_{t+1}\right)^{\top}\right] \in \R^{(t+1) \times (t+1)}.\]
Then for each $t \geq 1$, $\Delta_t \succ 0$ strictly,
$\tau_{**}=\E \Xs_t^2$, and $\kappa_* \tau_{**}=\E \Ys_t^2$.

Furthermore, let $\mathbf{X}_{t}=\left(\ampxone, \ldots, \ampxt\right) \in
\mathbb{R}^{p \times t}$, $\mathbf{S}_{t}=\left(\mathbf{s}^{1}, \ldots, \ampst\right) \in
\mathbb{R}^{p \times t}$, and $\mathbf{Y}_{t}=\left(\ampyone, \ldots, \ampyt\right) \in
\mathbb{R}^{p \times t}$ collect the iterates of \eqref{eq:xsy}, starting from
the initialization $\ampxone=F(\mathbf{q}^0,\st)$. Then for any fixed $t \geq 1$,
almost surely as $p,n \rightarrow \infty$,
	$$
	\left(\mathbf{H},\X_t, \mathbf{S}_t,\mathbf{Y}_t\right)
\stackrel{W_2}{\to}\left(\mathsf{H},
\mathsf{X}_{1}, \ldots, \mathsf{X}_t, 
\mathsf{S}_{1}, \ldots, \mathsf{S}_{t}, \mathsf{Y}_{1}, \ldots,
\mathsf{Y}_{t}\right).
	$$ 
\end{Proposition}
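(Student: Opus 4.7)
The plan is to establish the Wasserstein-2 convergence jointly for $(\mathbf{H}, \X_t, \mathbf{S}_t, \mathbf{Y}_t)$ by induction on $t$, together with the strict positivity $\Delta_t \succ 0$. Before doing the induction, observe that the distributional identities $\tau_{**}=\E\Xs_t^2$ and $\kappa_*\tau_{**}=\E\Ys_t^2$ follow directly from \Cref{prop:ppt}: the initialization gives $\Ps_0\sim N(0,\taustar)$, so $\E\Xs_1^2=\E F(\Ps_0,\Xstar)^2=\tau_{**}$ by \eqref{eq:denoiserpp2}; and by \eqref{eq:somide}, $\Ys_t+\Es\sim N(0,\kappa_*\tau_{**}+b_*)=N(0,\taustar)$, so the recursion $\Xs_{t+1}=F(\Ys_t+\Es,\Xstar)$ preserves $\E\Xs_{t+1}^2=\tau_{**}$ and hence $\E\Ys_{t+1}^2=\kappa_*\tau_{**}$. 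This is precisely the ``oracle'' aspect of initializing at stationarity.

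The core of the argument is the inductive step, which uses a Bolthausen-style conditioning argument adapted to the Haar measure. Let $\mathcal{F}_t$ denote the $\sigma$-algebra generated by $\mathbf{H}$ and the iterates $\ampxones,\ldots,\ampxt,\ampsones,\ldots,\ampstmo,\ampyones,\ldots,\ampytmo$. The only randomness entering through $\Obm$ that has not yet been exposed is captured by the linear constraints $\Obm \ampxs = \ampss$ and $\Obm^\top \bm{\Lambda}\ampss = \ampys$ for $s<t$. Conditional on $\mathcal{F}_t$, the remaining distribution of $\Obm$ is Haar on a subspace orthogonal to these constraints (this is the standard invariance of Haar measure on the orthogonal group under linear constraints). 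I would decompose $\ampxt = \ampxt_{\parallel}+\ampxt_{\perp}$, where $\ampxt_\parallel$ lies in the span of $\ampxones,\ldots,\ampxtmo$ and $\ampxt_\perp$ is orthogonal. Applying $\Obm$ to $\ampxt_\parallel$ gives a deterministic linear combination of $\ampsones,\ldots,\ampstmo$; applying it to $\ampxt_\perp$ gives an independent Haar-rotated piece. Invoking \Cref{prop:orthoW} on this orthogonal component, one obtains that $\Ss_t$ is conditionally Gaussian with variance $p^{-1}\|\ampxt_\perp\|^2$, which by the inductive hypothesis converges to the correct entry of $\Delta_t$. An analogous decomposition of $\bm{\Lambda}\ampst$, combined with convergence of the empirical distribution of $\diag(\bm{\Lambda})$ to $\Ls$ with $\E\Ls^2=\kappa_*$ from \Cref{prop:AMPparamconverge}, yields $\Ys_t\sim N(0,\kappa_*\Delta_t^{(tt)})$ conditionally; checking joint Gaussianity with previous $\Ys_s$ requires tracking cross-covariances via the same projection calculation. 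Finally $\ampxtpo = F(\ampyt+\mathbf{e},\st)$ is handled by the Lipschitz continuity of $F$ (\Cref{prop:ppt}) together with \Cref{prop:contW}, propagating Wasserstein-2 convergence forward.

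Two auxiliary items need attention. First, $\Delta_t\succ 0$ must be verified inductively, since the decomposition above relies on $\ampxt$ having a nontrivial perpendicular component $\ampxt_\perp$. I would show this by contradiction: if some nonzero linear combination $\sum_{s\le t}c_s\Xs_s=0$ almost surely, then differentiating in one of the Gaussian arguments $\Ys_{s-1}$ and using the chain rule would give a nontrivial linear relation among the $F'(\Ys_{s-1}+\Es,\Xstar)$, forcing $F'$, and hence $\operatorname{Prox}'_{\gamma_*^{-1}h}$, to be constant---contradicting the hypothesis. This leverages exactly the non-constancy assumption in the statement. Second, the base case $t=1$ needs the joint $(\mathbf{H},\ampxone)\toW (\Hs,\Xs_1)$, which follows from \Cref{prop:AMPparamconverge} (which gives convergence of $\mathbf{H}$ including $\mathbf{q}^0\toW\Ps_0$) and the Lipschitz continuity of $F$.

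The main obstacle is the conditioning step: carrying out the Gram–Schmidt-style projection cleanly enough that the limiting covariance matches $\Delta_t$ exactly, while tracking the simultaneous Haar rotation of both $\ampxt$ (producing $\ampst$) and $\ampst$ (producing $\ampyt$ via $\bm{\Lambda}$). A subtle point is that $\Obm$ appears in two places that are not independent, so one must organize the conditioning to expose $\Obm$ only once per iteration; this is typically accomplished by viewing $\Obm$ as sampled from the conditional Haar measure subject to all prior linear constraints on $\Obm$ and $\Obm^\top\bm{\Lambda}$. The spectral structure of $\bm{\Lambda}$ (diagonal in the basis determined by the SVD of $\X$) together with $\Obm$ Haar-invariant ensures that $\bm{\Lambda}\Obm$ behaves, in the Wasserstein-2 sense, as an independent scaled rotation, which is what makes the contribution of $\bm{\Lambda}$ asymptotically decouple into the scalar variance $\kappa_*$.
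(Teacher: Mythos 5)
Your proposal follows essentially the same route as the paper's proof: an induction on $t$ that (i) exposes the conditional law of $\Obm$ given the previously revealed linear constraints on $\Obm$ and $\Obm^\top\bm{\Lambda}$, (ii) decomposes the new $\ampst,\ampyt$ into a projection onto the constrained span plus an orthogonal piece handled by \Cref{prop:orthoW}, (iii) propagates Wasserstein-2 convergence through the Lipschitz map $F$ via \Cref{prop:contW}, and (iv) proves $\Delta_t\succ0$ by a contradiction exploiting the fresh Gaussian together with the non-constancy of $\operatorname{Prox}'_{\gamma_*^{-1}h}$. The paper makes step (i)--(ii) explicit via the conditional-law formula \eqref{eq:SEconv3} from \cite{fan2022tap} (performing two conditionings per iteration, one to produce $\ampstpo$ and one for $\ampytpo$), rather than a Gram--Schmidt sketch, but this is the same mechanism. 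One small refinement: in step (iv) you should avoid literally ``differentiating'' in the Gaussian argument since $F$ is only piecewise $C^1$ -- the paper's cleaner version is to note that the fresh Gaussian $\Us_t$ has full support, so a.s.\ constancy of $u\mapsto F(u+e_0,x_0)$ in $u$ forces $\operatorname{Prox}_{\gamma_*^{-1}h}$ to be globally affine (by continuity of the proximal map), which directly contradicts non-constancy of $\operatorname{Prox}'_{\gamma_*^{-1}h}$.
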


Noting that each matrix $\Delta_t$ is the upper-left submatrix of
$\Delta_{t+1}$, let us denote the entries of these matrices as
$\Delta_t=(\delta_{rs})_{r,s=1}^t$. We also denote $\delta_*:=\tau_{**}$ and $\sigma_*^2:=\kappa_* \tau_{**}$. 

\begin{Remark} In case where $\operatorname{Prox}_{\gamma_*^{-1} h}^{\prime}(x)$ is constant in $x$ (e.g. ridge penalty), the iterates converges in one iteration and the above result holds for $t\le 1$. 
\end{Remark}

Proof of the following Corollary is deferred to \Cref{appendix:ovamp}.
\begin{Corollary}\label{SEovamp}
    Under \Cref{AssumpD}---\ref{Assumph} and \ref{Assumpfix}, almost surely as $p,n\to \infty$
    \begin{equation}\label{consA}
        \begin{aligned}
            \left(\xonet, \ronet, \st\right) \stackrel{W_2}{\to}\left(\operatorname{Prox}_{\gamma_*^{-1} h}\left(\sqrt{\taustar} \mathsf{Z}+\Xstar\right), \sqrt{\taustar} \mathsf{Z}+\Xstar, \Xstar\right).
        \end{aligned}
    \end{equation}
    Furthermore, almost surely as $p,n\to \infty$,
\begin{equation}\label{consC}
\begin{aligned}
    &\frac{1}{p}\left\|\X \rtwot-\y\right\|^2 \rightarrow \tau_{**} \mathbb{E} \D^2+\delta \\
    & \frac{1}{p} \norm{\y-\X \hat{\mathbf{x}}_{2 t}}^2 \to \tau_{**} \cdot \E \frac{\D^2(\eta_*-\gamma_*)^2}{(\D^2+\eta_*-\gamma_*)^2}+\frac{n-p}{p}+\E \qty(\frac{\eta_*-\gamma_*}{\D^2+\eta_*-\gamma_*})^2.
\end{aligned}
\end{equation}
\end{Corollary}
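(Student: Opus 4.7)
The plan is to derive Corollary \ref{SEovamp} from the state evolution Proposition \ref{thm:ampSE} via the change of variables \eqref{changeofv}. That proposition supplies the joint Wasserstein-2 convergence $(\mathbf{H}, \X_t, \mathbf{S}_t, \mathbf{Y}_t) \toW (\Hs, \Xs_1,\ldots,\Xs_{t+1}, \Ss_1,\ldots,\Ss_t, \Ys_1,\ldots,\Ys_t)$, and all three assertions will follow by rewriting the quantities of interest in terms of this convergent collection and invoking the Wasserstein toolbox (Propositions \ref{prop:contW}--\ref{prop:asW}).

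For the first claim, inverting \eqref{changeofv} gives $\ronet = \st + \mathbf{e} + \ampyt$ and $\xonet = \operatorname{Prox}_{\gamma_*^{-1}h}(\mathbf{r}_{1,t-1})$. State evolution yields $(\st, \mathbf{e}, \ampyt) \toW (\Xstar, \Es, \Ys_t)$ with $\Es \sim N(0, b_*)$ and $\Ys_t \sim N(0, \kappa_*\tau_{**})$ mutually independent and independent of $\Xstar$. The identity $\taustar = b_* + \kappa_*\tau_{**}$ from \eqref{eq:somide} identifies $\Es + \Ys_t$ in law with $\sqrt{\taustar}\,\Zs$ for some $\Zs \sim N(0, 1)$ independent of $\Xstar$, so $\ronet \toW \sqrt{\taustar}\,\Zs + \Xstar$, and analogously for $\mathbf{r}_{1, t-1}$. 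The $1$-Lipschitz property of the proximal map (Proposition \ref{prop:proxp}(b)) combined with Proposition \ref{prop:contW} then delivers the joint convergence \eqref{consA}.

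For the second claim, using $\y = \X\st + \epbm$, $\ampxt = \rtwot - \st$, $\ampst = \Obm\ampxt$, $\bm{\xi} = \Qbm\epbm$, and the SVD $\X = \Qbm^\top\Dbm\Obm$, I get $\X\rtwot - \y = \Qbm^\top(\Dbm\ampst - \bm{\xi})$, so orthogonality of $\Qbm$ gives
\[
\tfrac{1}{p}\|\X\rtwot - \y\|^2 = \tfrac{1}{p}\|\Dbm\ampst\|^2 - \tfrac{2}{p}\langle \Dbm\ampst, \bm{\xi}\rangle + \tfrac{1}{p}\|\bm{\xi}\|^2.
\]
Propositions \ref{prop:AMPparamconverge} and \ref{thm:ampSE} together yield the joint convergence $(\mathbf{d}, \Dbm^\top\bm{\xi}, \ampst) \toW (\D, \D\Xi, \Ss_t)$ with $\Xi$ independent of $(\D, \Ss_t)$. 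Expanding each term coordinate-wise and passing to the limit then gives $\tfrac{1}{p}\|\Dbm\ampst\|^2 \to \E[\D^2]\,\tau_{**}$ (using $\D\perp\Ss_t$ and $\E\Ss_t^2 = \tau_{**}$), a vanishing cross term (since $\E\Xi = 0$), and $\tfrac{1}{p}\|\bm{\xi}\|^2 = \tfrac{1}{p}\|\epbm\|^2 \to \delta$ by the strong law (with $\sigma^2 = 1$).

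For the third claim, rearranging \eqref{x2t} gives $\hat{\mathbf{x}}_{2t} - \rtwot = (\X^\top\X + (\eta_*-\gamma_*)\mathbf{I}_p)^{-1}\X^\top(\y - \X\rtwot)$, and the SVD then yields $\y - \X\hat{\mathbf{x}}_{2t} = \Qbm^\top\mathbf{M}(\bm{\xi} - \Dbm\ampst)$, where $\mathbf{M}\in\R^{n\times n}$ is diagonal with entries $\mathbf{M}_{ii} = (\eta_*-\gamma_*)/(d_i^2 + \eta_*-\gamma_*)$ for $i\le\min(n,p)$ and $\mathbf{M}_{ii} = 1$ for $p < i \le n$ (vacuous when $n\le p$). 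Expanding $\|\mathbf{M}(\bm{\xi}-\Dbm\ampst)\|^2$ into three pieces and repeating the arguments of the second claim with the extra weight $\mathbf{M}_{ii}$ produces $\tfrac{1}{p}\|\mathbf{M}\Dbm\ampst\|^2 \to \tau_{**}\,\E[\D^2(\eta_*-\gamma_*)^2/(\D^2+\eta_*-\gamma_*)^2]$, a vanishing cross term, and $\tfrac{1}{p}\|\mathbf{M}\bm{\xi}\|^2 \to \E\bigl((\eta_*-\gamma_*)/(\D^2+\eta_*-\gamma_*)\bigr)^2 + (n-p)/p$; summing delivers \eqref{consC}. The main obstacle is the dimension bookkeeping in this last limit: when $n < p$, the $p-n$ padding zeros of $\mathbf{d}$ each contribute weight $1$ to $\mathbf{M}_{ii}^2$ inside the limiting expectation (since $d_i = 0$ forces $\mathbf{M}_{ii} = 1$), exactly compensating for the $(p-n)/p$ mass missing from $\tfrac{1}{p}\sum_{i=1}^n \mathbf{M}_{ii}^2$; when $n > p$, the excess $n-p$ diagonal entries of $\mathbf{M}$ contribute weight $1$ directly; both regimes unify into the single expression $\E(\cdots)^2 + (n-p)/p$.
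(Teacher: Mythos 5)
Your argument is correct, and the route you take for the third claim is cleaner than the paper's. The paper expands $\y - \X\xtwot = \epbm - \X(\xtwot - \st)$ and then substitutes the ridge formula for $\xtwot - \st$, which produces three separate convergences (a pure $\|\epbm\|^2$ term, a quadratic form in $\xtwot-\st$, and a cross term) that must be tracked individually. You instead push everything through the ridge-residual projector identity $\y - \X\xtwot = \bigl[\mathbf{I}_n - \X(\X^\top\X + (\eta_*-\gamma_*)\mathbf{I}_p)^{-1}\X^\top\bigr](\y - \X\rtwot) = \Qbm^\top\mathbf{M}(\bm{\xi} - \Dbm\ampst)$, collapsing all the bookkeeping into a single $n\times n$ diagonal matrix $\mathbf{M}$. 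In particular, your explicit accounting of how the padding zeros of $\mathbf{d}$ (when $n<p$) and the unweighted diagonal entries of $\mathbf{M}$ (when $n>p$) each contribute exactly the right mass to produce the $(n-p)/p$ term clarifies a step the paper dispatches by a terse citation to the assumptions. The second claim is identical in both proofs, and the first claim is, in both cases, an invocation of state evolution plus Lipschitz continuity of the proximal map.

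One shared imprecision worth flagging: since $\xonet = \operatorname{Prox}_{\gamma_*^{-1}h}(\ronetm)$ is a function of $\ronetm$ and not of $\ronet$, the state-evolution joint limit of $(\xonet,\ronet,\st)$ is in fact $\qty(\operatorname{Prox}_{\gamma_*^{-1}h}(\Ys_{t-1}+\Es+\Xstar),\ \Ys_t+\Es+\Xstar,\ \Xstar)$, where $\Ys_{t-1}$ and $\Ys_t$ are jointly Gaussian with correlation $\kappa_*\delta_{t-1,t}$ and hence correlated but not identical for fixed finite $t$. The corollary's stated limit, which uses a single $\Zs$ in both the first and second coordinates, is therefore exact only after $t\to\infty$ (where $\delta_{t-1,t}\to\tau_{**}$ by Proposition \ref{prop:convsmallbeta}). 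This does not affect the downstream use in Theorem \ref{thm:empmain}, because that proof takes $t\to\infty$ after $p\to\infty$ anyway, but it is an imprecision in the corollary's formulation that your proof inherits rather than introduces; your own wording (``and analogously for $\mathbf{r}_{1,t-1}$'') shows you were aware the two iterates are distinct, so it is worth noting that the final ``1-Lipschitz plus Proposition \ref{prop:contW}'' step silently conflates them.
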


\subsubsection{Cauchy convergence of VAMP iterates}\label{subsectionCauchc}
The following Proposition is analogous to \cite[Proposition 2.3]{fan2021replica} and \cite[Lemma B.2.]{li2023random} in the context of rotationally invariant spin glass and Bayesian linear regression. However, it requires observing a simple but crucial property of the R-transform (i.e. $-zR'(z)/R(z)<1$ for all $z$ on the domain) and its interplay with the non-expansiveness of the proximal map. We defer the proof to \Cref{appendix:ovamp}.

\begin{Proposition}\label{prop:convsmallbeta}
Under \Cref{AssumpD}---\ref{Assumph} and \ref{Assumpfix},
$$\lim_{\min (s, t) \rightarrow \infty} \delta_{s t}=\delta_*$$
where $\delta_{st}=\E \mathsf{X}_s \mathsf{X}_t$.
\end{Proposition}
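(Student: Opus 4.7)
The plan is to exploit the state evolution description of the iterates from \Cref{thm:ampSE} together with a Gaussian interpolation (Price's theorem) to convert Cauchy convergence into a scalar contraction in the covariance parameter. Concretely, for any $s,t\ge 1$, I will parametrize the joint law of $(\mathsf{Y}_s,\mathsf{Y}_t)$ only through its off-diagonal covariance $c:=\kappa_*\delta_{st}$ (since $\delta_{tt}=\tau_{**}$ is already constant along the diagonal by \Cref{prop:ppt}, \eqref{eq:denoiserpp2}), and study the scalar map
\begin{equation*}
g(c)\;:=\;\mathbb{E}\!\left[F(\mathsf{Y}_s+\mathsf{E},\Xstar)\,F(\mathsf{Y}_t+\mathsf{E},\Xstar)\right],
\qquad c\in[0,\kappa_*\tau_{**}],
\end{equation*}
so that $\delta_{s+1,t+1}=g(\kappa_*\delta_{st})$, while the common diagonal value satisfies $\tau_{**}=g(\kappa_*\tau_{**})$.

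The central step will be to show that $g$ is a contraction, namely that $\kappa_*\,\sup_{c}|g'(c)|<1$. For this, I would apply Price's identity to differentiate under the Gaussian expectation, yielding
\begin{equation*}
g'(c)\;=\;\mathbb{E}\!\left[F'(\mathsf{Y}_s+\mathsf{E},\Xstar)\,F'(\mathsf{Y}_t+\mathsf{E},\Xstar)\right],
\end{equation*}
and then bound this in absolute value by Cauchy--Schwarz and stationarity of the marginals to get $|g'(c)|\le \alpha:=\mathbb{E}\,F'(\sqrt{\taustar}\,\Zs,\Xstar)^2$. Plugging in the expression $F'(q,x)=\tfrac{\eta_*}{\eta_*-\gamma_*}\bigl(\operatorname{Prox}_{\gamma_*^{-1}h}'(q+x)-\gamma_*/\eta_*\bigr)$ derived from \eqref{Fdef}, and using firm non-expansiveness $\operatorname{Prox}'\in[0,1]$ (\Cref{prop:proxp}(b)) together with the fixed-point identity $\mathbb{E}\,\operatorname{Prox}'=\gamma_*/\eta_*$ from \eqref{RCa}, I would estimate
\begin{equation*}
\alpha\;\le\;\Bigl(\tfrac{\eta_*}{\eta_*-\gamma_*}\Bigr)^{2}\!\bigl[\mathbb{E}\,\operatorname{Prox}'-(\gamma_*/\eta_*)^{2}\bigr]\;=\;\frac{\gamma_*}{\eta_*-\gamma_*}.
\end{equation*}

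The main obstacle, and the place where the R-transform structure enters essentially, is establishing the complementary bound $\kappa_*<(\eta_*-\gamma_*)/\gamma_*$. Here I would combine the explicit formula $\kappa_*=\tfrac{(\eta_*-\gamma_*)^{2}}{\gamma_*^{2}}\bigl[\eta_*^{2}\,\mathbb{E}(\D^{2}+\eta_*-\gamma_*)^{-2}-1\bigr]$ obtained from \Cref{prop:AMPparamconverge} (using $\mathbb{E}(\D^{2}+\eta_*-\gamma_*)^{-1}=1/\eta_*$) with the key property $-zR'(z)/R(z)<1$ from \Cref{lem:cauchy}(d), evaluated at $z=1/\eta_*$, where $R(1/\eta_*)=-\gamma_*$ by \eqref{RCc} and $R'(1/\eta_*)=\eta_*^{2}-1/\mathbb{E}(\D^{2}+\eta_*-\gamma_*)^{-2}$ by \Cref{lem:cauchy}(c). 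Algebraic rearrangement then yields exactly $\kappa_*\gamma_*<\eta_*-\gamma_*$. Combining both bounds gives $\kappa_*\,\alpha<1$.

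Finally, with the contraction in hand I would conclude the Cauchy property by iteration. Writing $e_{s,t}:=\tau_{**}-\delta_{s,t}$ and using $\tau_{**}=g(\kappa_*\tau_{**})$ and $\delta_{s+1,t+1}=g(\kappa_*\delta_{s,t})$, the mean-value bound gives $|e_{s+1,t+1}|\le \kappa_*\alpha\cdot |e_{s,t}|$, whence, after WLOG assuming $s\le t$ and unfolding $s-1$ times,
\begin{equation*}
|e_{s,t}|\;\le\;(\kappa_*\alpha)^{s-1}\,|e_{1,t-s+1}|\;\le\;2\tau_{**}\,(\kappa_*\alpha)^{s-1}\;\longrightarrow\;0
\end{equation*}
as $\min(s,t)\to\infty$. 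This gives $\delta_{st}\to\tau_{**}=\delta_*$ and completes the proof; verification that the base case $|e_{1,t-s+1}|$ is uniformly bounded, and that the initialization $\mathbf{q}^0$ produces a valid $\mathsf{X}_1$ with the right covariance, are routine consequences of \Cref{prop:AMPparamconverge} and of the Lipschitz continuity of $F$ from \Cref{prop:ppt}.
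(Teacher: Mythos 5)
Your proposal is correct and arrives at the same key algebraic bound as the paper, but via a cleaner route. The paper analyzes the same scalar map $g$ but establishes that it is non-negative, increasing and convex on $[0,\delta_*]$ with fixed point $\delta_*$, and then shows $g'(\delta_*)<1$, so that $g^{(t)}(\delta)\to\delta_*$ follows from the convexity argument. You instead observe that Cauchy--Schwarz together with stationarity of the marginals gives the \emph{uniform} bound $\sup_c|g'(c)|\le\alpha:=\E F'(\sqrt{\taustar}\Zs,\Xstar)^2$, and then conclude by a direct Banach-type geometric contraction for $e_{st}=\tau_{**}-\delta_{st}$; this bypasses the sign/monotonicity/convexity bookkeeping entirely. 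The heart of the matter --- establishing $\kappa_*\alpha<1$ --- is identical in both arguments: your chain of estimates ($\E(\operatorname{Prox}')^2\le\E\operatorname{Prox}'=\gamma_*/\eta_*$, the explicit formula for $\kappa_*$, and \Cref{lem:cauchy}(c),(d)) is exactly the paper's chain \eqref{combo1}--\eqref{combb}, rearranged so that the proximal bound and the R-transform bound are isolated into two separate inequalities $\alpha\le\gamma_*/(\eta_*-\gamma_*)$ and $\kappa_*\gamma_*<\eta_*-\gamma_*$ whose product is strictly less than one. Two minor remarks: your Cauchy--Schwarz bound is tight at $c=\kappa_*\tau_{**}$, where it equals $g'(\delta_*)$, so the two proofs exhibit the same contraction constant; and the paper additionally observes $\delta_{1t}=\delta_{12}$ for all $t\ge 2$, making the base case a constant, whereas your weaker observation $|e_{1,u}|\le 2\tau_{**}$ (from $|\delta_{1u}|\le\delta_*$ by Cauchy--Schwarz on the iterates) is already sufficient for the geometric decay.
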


We can then obtain the convergence of vector iterates for the oracle VAMP algorithm. We defer the proof to \Cref{appendix:ovamp}.
\begin{Corollary}\label{Corc}
Under \Cref{AssumpD}---\ref{Assumph} and \ref{Assumpfix}, for $j=1,2$,
\begin{equation}
\begin{gathered}
\lim _{(s, t) \rightarrow \infty}\left(\lim _{p \rightarrow \infty} \frac{1}{p}\left\|\ampxt-\mathbf{x}^s\right\|^2\right)=\lim _{(s, t) \rightarrow \infty}\left(\lim _{p \rightarrow \infty} \frac{1}{p}\left\|\ampyt-\y^s\right\|^2\right) \\
\qquad \qquad \qquad =\lim _{(s, t) \rightarrow \infty}\left(\lim _{p \rightarrow \infty} \frac{1}{p}\left\|\mathbf{r}_{j t}-\mathbf{r}_{j s}\right\|^2\right)=\lim _{(s, t) \rightarrow \infty}\left(\lim _{p \rightarrow \infty} \frac{1}{p}\left\|\hat{ \mathbf{x} }_{j t}-\hat{ \mathbf{x} }_{j s}\right\|^2\right)=0
\end{gathered}
\end{equation}
where the inner limits exist almost surely for each fixed $t$ and $s$.
\end{Corollary}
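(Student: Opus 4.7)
\textbf{Proof plan for Corollary \ref{Corc}.} The strategy is to reduce all four Cauchy-type statements to the Cauchy property of the single sequence of matrices $\Delta_t$ already established in \Cref{prop:convsmallbeta}, and then to bootstrap from $\mathbf{x}^t$ to $\mathbf{y}^t$, $\mathbf{r}_{jt}$, and $\hat{\mathbf{x}}_{jt}$ using (a) the state evolution identities for $\kappa_*$, (b) the change of variables \eqref{changeofv}, (c) the 1-Lipschitz property of the proximal operator (\Cref{prop:proxp}(b)), and (d) a deterministic bound on the operator norm of $(\X^\top \X+(\eta_*-\gamma_*)\mathbf{I}_p)^{-1}$.

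\textbf{Step 1 (Cauchy for $\mathbf{x}^t$).} Fix integers $s\le t$. By \Cref{thm:ampSE}, almost surely as $p\to \infty$ the matrix $(\mathbf{x}^1,\ldots,\mathbf{x}^t)\toW (\mathsf{X}_1,\ldots,\mathsf{X}_t)$. The map $(u_1,\ldots,u_t)\mapsto (u_t-u_s)^2$ satisfies the quadratic growth bound \eqref{eq:wasslip}, so
\[
\lim_{p\to\infty}\frac{1}{p}\|\mathbf{x}^t-\mathbf{x}^s\|^2=\E(\mathsf{X}_t-\mathsf{X}_s)^2=\delta_{tt}-2\delta_{st}+\delta_{ss}\quad \text{a.s.}
\]
Since $\delta_{tt}=\delta_{ss}=\tau_{**}$ for all $s,t$ by \Cref{thm:ampSE} and $\delta_{st}\to \tau_{**}$ as $\min(s,t)\to\infty$ by \Cref{prop:convsmallbeta}, taking $(s,t)\to\infty$ makes the right-hand side vanish.

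\textbf{Step 2 (Cauchy for $\mathbf{y}^t$ and $\mathbf{r}_{jt}$).} The same Wasserstein-2 argument applied to $\mathbf{y}^t$ gives
\[
\lim_{p\to\infty}\frac{1}{p}\|\mathbf{y}^t-\mathbf{y}^s\|^2=\E(\mathsf{Y}_t-\mathsf{Y}_s)^2=\kappa_*\,(\delta_{tt}-2\delta_{st}+\delta_{ss}),
\]
which again tends to $0$. For $\mathbf{r}_{2t}$ and $\mathbf{r}_{1t}$, the change of variables \eqref{changeofv} reads $\mathbf{x}^t=\mathbf{r}_{2t}-\bbeta^{\star}$ and $\mathbf{y}^t=\mathbf{r}_{1t}-\bbeta^{\star}-\mathbf{e}$, so that $\mathbf{r}_{2t}-\mathbf{r}_{2s}=\mathbf{x}^t-\mathbf{x}^s$ and $\mathbf{r}_{1t}-\mathbf{r}_{1s}=\mathbf{y}^t-\mathbf{y}^s$. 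Hence the Cauchy property for $\mathbf{r}_{1t}$ and $\mathbf{r}_{2t}$ is inherited from that of $\mathbf{y}^t$ and $\mathbf{x}^t$ respectively.

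\textbf{Step 3 (Cauchy for $\hat{\mathbf{x}}_{jt}$).} For $\hat{\mathbf{x}}_{1t}$, the update \eqref{x1t} applies $\operatorname{Prox}_{\gamma_*^{-1} h}$ coordinate-wise to $\mathbf{r}_{1,t-1}$. By \Cref{prop:proxp}(b), this map is $1$-Lipschitz, so $\|\hat{\mathbf{x}}_{1t}-\hat{\mathbf{x}}_{1s}\|^2\le \|\mathbf{r}_{1,t-1}-\mathbf{r}_{1,s-1}\|^2$, and the conclusion follows from Step~2. For $\hat{\mathbf{x}}_{2t}$, the update \eqref{x2t} yields
\[
\hat{\mathbf{x}}_{2t}-\hat{\mathbf{x}}_{2s}=(\eta_*-\gamma_*)\bigl(\X^\top \X+(\eta_*-\gamma_*)\mathbf{I}_p\bigr)^{-1}(\mathbf{r}_{2t}-\mathbf{r}_{2s}),
\]
and the operator norm of the matrix on the right is at most $1$ since all eigenvalues of $\X^\top \X$ are nonnegative and $\eta_*-\gamma_*>0$ (which is nonnegative thanks to \eqref{RCc} and \Cref{lem:cauchy}(b)). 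Hence $\frac{1}{p}\|\hat{\mathbf{x}}_{2t}-\hat{\mathbf{x}}_{2s}\|^2\le \frac{1}{p}\|\mathbf{r}_{2t}-\mathbf{r}_{2s}\|^2$, and once again the conclusion follows from Step~2.

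The only non-routine ingredient is \Cref{prop:convsmallbeta}, which provides the Cauchy property for $\delta_{st}$; everything else is a short bootstrap through the VAMP update identities using Lipschitz/contraction bounds. The main obstacle a priori would be controlling $\hat{\mathbf{x}}_{2t}$ uniformly in $p$, but this is handled cleanly by the elementary operator-norm bound above, rather than requiring any additional probabilistic input.
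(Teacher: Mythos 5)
Your proof follows the same route as the paper's: apply state evolution (\Cref{thm:ampSE}) and the Wasserstein-2 growth condition to get $\lim_p p^{-1}\|\mathbf{x}^t-\mathbf{x}^s\|^2=\delta_{tt}+\delta_{ss}-2\delta_{st}$ (and the analogous identity with $\kappa_*$ for $\mathbf{y}^t$), invoke \Cref{prop:convsmallbeta}, pass to $\mathbf{r}_{jt}$ via the change of variables \eqref{changeofv}, and pass to $\hat{\mathbf{x}}_{jt}$ via the Lipschitz bounds from \eqref{x1t} and \eqref{x2t}. The one slip is the justification that $\eta_*-\gamma_*>0$: equation \eqref{RCc} together with \Cref{lem:cauchy}(b) only yields $\gamma_*>0$, not $\eta_*>\gamma_*$; the correct source is \eqref{RCa} combined with $\operatorname{Prox}'_{\gamma_*^{-1}h}\in[0,1]$ from \Cref{Extend} (with strictness from \Cref{noid}), which gives $\gamma_*/\eta_*<1$.
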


\subsubsection{Characterize limits of empirical distribution}\label{subsectionlimi}
Recall definition of $\rstar,\rstst$ from \eqref{defr1r2}. The following is a direct consequence of the Cauchy convergence of the VAMP iterates and the strong convexity in the penalized loss function. We defer the proof to \Cref{appendix:trackiovamp}. 
\begin{Proposition}\label{prop:sds}
Under Assumptions \ref{AssumpD}--\ref{Assumpfix}, for $j=1,2$,
\begin{equation}\label{tianshdf}
    \lim _{t \rightarrow \infty} \lim _{p \rightarrow \infty} \frac{1}{p}\left\|\hatbt-\hat{\mathbf{x}}_{j t}\right\|_2^2=\lim _{t \rightarrow \infty} \lim _{p \rightarrow \infty} \frac{1}{p}\left\|\mathbf{r}_{j t}-\mathbf{r}_{j *}\right\|_2^2=0.
\end{equation}
where the inner limits exist almost surely for each fixed $t$.
\end{Proposition}

Combining \Cref{prop:sds} and \Cref{SEovamp} yields the proof of \Cref{thm:empmain}. 

\begin{proof}[Proof of \Cref{thm:empmain}]
We prove \eqref{waka1} first. Fix function $\psi:\R^3 \mapsto \R$ satisfying, for some constant $C>0$, the pseudo-Lipschitz condition
$$
\left|\psi (\vbf)-\psi \left(\vbf^{\prime}\right)\right| \leq C\left(1+\|\vbf \|_2+\left\|\vbf^{\prime}\right\|_2\right)\left\|\vbf-\vbf^{\prime}\right\|_2.
$$ 
For any fixed $t$, we have
\begin{equation*}
\begin{aligned}
& \left|\frac{1}{p} \sum_{i=1}^p \psi\left(\hat{x}_{1 t, i}, r_{1 t, i}, \sti\right)-\frac{1}{p} \sum_{i=1}^p \psi\left(\hiatbti, r_{*, i}, \sti\right)\right| \\
& \leq \frac{C}{p} \sum_{i=1}^p\left(\left|\hat{x}_{1 t, i}-\hiatbti\right|^2+\left|r_{1 t, i}-r_{*, i}\right|^2\right)^{\frac{1}{2}}\\
& \qquad \qquad \times \left(1+\sqrt{\hat{x}_{1 t, i}^2+r_{1 t, i}^2+\beta_i^{\star 2}}+\sqrt{\hiatbti^2+r_{*, i}^2+\beta_i^{\star 2}}\right)\\
& \stackrel{(\star)}{\leq} C\left(\frac{1}{p} \sum_{i=1}^p\left|\hat{x}_{1 t, i}-\hiatbti\right|^2+\left|r_{1 t, i}-r_{*, i}\right|^2\right)^{\frac{1}{2}}\\
&\qquad \qquad \times \left(\frac{1}{p} \sum_{i=1}^p\left(1+\sqrt{\hat{x}_{1 t, i}^2+r_{1 t, i}^2+\beta_i^{\star 2}}+\sqrt{\hiatbti^2+r_{*, i}^2+\beta_i^{\star 2}}\right)^2\right)^{\frac{1}{2}} \\
& \leq C\left(\frac{1}{p}\left\|\xonet-\hatbt\right\|_2^2+\frac{1}{p}\left\|\xonet-\hatbt\right\|_2^2\right)^{\frac{1}{2}}\\
& \qquad \qquad \times \left(3+\frac{3}{p}\left(\left\|\xonet\right\|_2^2+3\left\|\ronet\right\|_2^2+2\left\|\st\right\|_2^2+2\left\|\ronet-\rstar\right\|_2^2\right)\right)^{\frac{1}{2}}
\end{aligned}
\end{equation*}
where $(\star)$ is by Cauchy-Schwarz inequality. This, along with \Cref{prop:sds}, \Cref{AssumpPrior}, \Cref{SEovamp} implies that
\begin{equation}\label{fkt1}
\lim _{t \rightarrow \infty} \lim _{p \rightarrow \infty}\left|\frac{1}{p} \sum_{i=1}^p \psi\left(\hat{x}_{1 t, i}, r_{1 t, i}, \sti\right)-\frac{1}{p} \sum_{i=1}^p \psi\left(\hiatbti, r_{*, i}, \sti\right)\right|=0
\end{equation}
Using \Cref{SEovamp} and \Cref{wasfact}, we have that
\begin{equation}\label{fkt2}
\lim _{p \rightarrow \infty}\left|\frac{1}{p} \sum_{i=1}^p \psi\left(\hat{x}_{1 t, i}, r_{1 t, i}, \sti\right)-\mathbb{E} \psi\left(\operatorname{Prox}_{\gamma_*^{-1} h}\left(\sqrt{\taustar} \Zs+\Xstar\right), \sqrt{\taustar} \Zs+\Xstar, \Xstar\right)\right|=0
\end{equation}
By triangle inequality, we also have
$$
\begin{aligned}
&\left|\mathbb{E} \psi\left(\operatorname{Prox}_{\gamma_*^{-1} h}\left(\sqrt{\taustar} \Zs+\Xstar\right), \sqrt{\taustar} \Zs+\Xstar, \Xstar\right)-\frac{1}{p} \sum_{i=1}^p \psi\left(\hiatbti, r_{*, i}, \sti\right)\right| \\
& \leq\left|\frac{1}{p} \sum_{i=1}^p \psi\left(\hat{x}_{1 t, i}, r_{1 t, i}, \sti\right)-\mathbb{E} \psi\left(\operatorname{Prox}_{\gamma_*^{-1} h}\left(\sqrt{\taustar} \Zs+\Xstar\right), \sqrt{\taustar} \Zs+\Xstar, \Xstar\right)\right| \\
&\qquad +\left|\frac{1}{p} \sum_{i=1}^p \psi\left(\hat{x}_{1 t, i}, r_{1 t, i}, \sti\right)-\frac{1}{p} \sum_{i=1}^p \psi\left(\hiatbti, r_{*, i}, \sti\right)\right|
\end{aligned}.
$$
Taking $p$ and then $t$ to infinity on both sides of the above, by \eqref{fkt1} and \eqref{fkt2}, 
\begin{equation*}
\lim _{p \rightarrow \infty}\left|\frac{1}{p} \sum_{i=1}^p \psi\left(\hiatbti, r_{*, i}, \sti\right)-\mathbb{E} \psi\left(\operatorname{Prox}_{\gamma_*^{-1} h}\left(\sqrt{\taustar} \Zs+\Xstar \right), \sqrt{\taustar} \Zs+\Xstar, \Xstar\right)\right|=0
\end{equation*}
where we used the fact that lhs does not depend on $t$. An application of \Cref{wasfact} with $\pfrak=2,k=3$ completes the proof for \eqref{waka1}. 

To see first result in \eqref{waka2}, note that
$$
\begin{aligned}
\bigg | \frac{1}{p}\left\|\X \rstst-\y\right\|^2 & -\frac{1}{p}\left\|\X \rtwot-\y\right\|^2 \bigg| = \bigg | \frac{1}{p}\left\langle \X \rstst-2\y+\X \rtwot, \X \rstst-\X \rtwot\right\rangle \bigg| \\
& \leq \frac{1}{p}\left\|\X \rstst-2\y+\X \rtwot\right\|_2\left\|\X \rstst-\X \rtwot\right\|_2 \\
& \leq \frac{1}{p}\left(\|\X\|_{\mathrm{op}}\left(\left\|\rstst-\st\right\|_2+\left\|\rtwot-\st\right\|\right)+2\|\epbm\|_2\right)\|\X\|_{\mathrm{op}}\left\|\rstst-\rtwot\right\|_2.
\end{aligned}
$$
Using this inequality and $\|\X\|_{\mathrm{op}}=\max _{i \in[p]}\left|d_i\right| \rightarrow \sqrt{d_{+}}$ (cf. \Cref{AssumpD}), we obtain that almost surely
\begin{equation}\label{win1}
    \lim _{t \rightarrow \infty} \limsup _{p \rightarrow \infty}\left|\frac{1}{p}\left\|\X \rstst-\y\right\|^2-\frac{1}{p}\left\|\X \rtwot-\y\right\|^2\right|=0.
\end{equation}
From triangle inequality, we have
$$
\begin{aligned}
\bigg | \frac{1}{p}\left\|\X \rstst-\y\right\|^2 & -\left(\tau_{**} \mathbb{E} \D^2+\delta\right) \bigg | \\
& \leq\left|\frac{1}{p}\left\|\X \rtwot-\y\right\|^2-\left(\tau_{**} \mathbb{E} \D^2+\delta\right)\right|+\left|\frac{1}{p}\left\|\X \rstst-\y\right\|^2-\frac{1}{p}\left\|\X \rtwot-\y\right\|^2\right|.
\end{aligned}
$$
Apply limit operation $\lim _{t \rightarrow \infty} \limsup _{p \rightarrow \infty}$ on both sides. Using \eqref{win1}, \eqref{consC} and the fact that the LHS does not depend on $t$, we have that almost surely
$$
\limsup _{p \rightarrow \infty}\left|\frac{1}{p}\left\|\X \rstst-\y\right\|^2-\left(\tau_{**} \mathbb{E} \D^2+\delta\right)\right|=0.
$$
The proof of the second result in \eqref{waka2} is analgous using \Cref{SEovamp}. This completes the proof.
\end{proof}

\subsection{Prove result B: Consistent estimation}\label{section:DEBIA}
We prove existence and uniqueness of the solution to the adjustment equation \eqref{gammasolvea} in \Cref{subsectionspsd}, show that the adjustment equation converges to a population limit in \Cref{subsectionWe}, and prove \Cref{NEIGMAIN} in \Cref{subsectionConsf}. To streamline the presentation, proofs of intermediate claims are collected in \Cref{SupportPII}. 

\subsubsection{Properties of the adjustment equation}\label{subsectionspsd}
Recall definition of function $g_p:(0,+\infty)\mapsto \R$ from \eqref{defgp}. We outline in \Cref{existslams} the conditions under which it is well-defined, strictly increasing and the equation 
\begin{equation}\label{solution}
        g_p(\gamma)=1
    \end{equation}
admits a unique solution on $(0,+\infty)$. The proof is deferred to \Cref{appendix:sampleadjeq}.

\begin{Lemma}\label{existslams}
    Fix $p\ge 1$. Assume that $h^{\prime \prime}(\hat{\beta_j})\ge 0$ for all $j\in [p]$. We then have the following statements:    
    \begin{itemize}
        \item [(a)] If $d_i\neq 0$ for all $i$, the function $\gamma\mapsto g_p(\gamma )$ is well-defined. If for some $i\in [p], d_i=0$, the function $\gamma\mapsto g_p(\gamma )$ is well-defined if and only if $\norm{h^{\prime\prime}(\hatbt)}_0>0$.
        \item [(b)] Given that $g_p$ is well-defined, it is strictly increasing if there exists some $j\in [p]$ such that $h^{\prime \prime}\left(\hjatbtj\right)\neq +\infty$, or else $ g_p(\gamma)=1,\forall \gamma \in (0,+\infty)$. 
        \item[(c)] Given that $\left\|h^{\prime \prime}(\hatbt)\right\|_0=p$ or for all $i, d_i\neq 0$, by which $g_p$ is well-defined from (a), \eqref{solution} has a unique solution if and only if there exists some $j\in [p]$ such that $h^{\prime \prime}\left(\hjatbtj\right)\neq +\infty$. 
        \item [(d)] Given that $\left\|h^{\prime \prime}(\hatbt)\right\|_0<p$ and for some $i$, $d_i= 0$, $g_p$ is well-defined and \eqref{solution} has a unique solution on $(0,+\infty)$ if and only if $\norm{d}_0+\norm{h^{\prime\prime}(\hatbt)}_0>p$. 
    \end{itemize}
\end{Lemma}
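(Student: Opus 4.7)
}
My plan is to introduce the auxiliary function $S(\gamma):= \frac{1}{p}\sum_{j=1}^{p}(\gamma+h''(\hjatbtj))^{-1}$ (with the convention $1/(+\infty)=0$), so that
\[
g_p(\gamma)=\frac{1}{p}\sum_{i=1}^{p}\frac{1}{D_i(\gamma)},\qquad D_i(\gamma):=1+(d_i^2-\gamma)S(\gamma).
\]
The proof then splits into three stages: (i) positivity of every $D_i(\gamma)$, which gives well-definedness; (ii) strict monotonicity of $g_p$ via a convenient reparametrization; (iii) limit analysis as $\gamma\downarrow 0$ and $\gamma\uparrow\infty$, which combined with (ii) and the intermediate value theorem yields existence and uniqueness.

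For part (a), I would verify $D_i(\gamma)>0$ by cases. When $d_i^2>0$ and $\gamma\le d_i^2$, trivially $D_i(\gamma)\ge 1$; when $d_i^2>0$ and $\gamma>d_i^2$, the bound $(\gamma-d_i^2)/(\gamma+h''(\hjatbtj))\le (\gamma-d_i^2)/\gamma<1$ (using $h''\ge 0$) summed over $j$ gives $(\gamma-d_i^2)S(\gamma)<1$, so $D_i(\gamma)>0$. When $d_i=0$, one has $D_i(\gamma)=1-\frac{1}{p}\sum_j \gamma/(\gamma+h''(\hjatbtj))$; each summand lies in $[0,1]$ and is strictly below $1$ precisely when $h''(\hjatbtj)>0$, so $D_i(\gamma)>0$ for all $\gamma>0$ iff $\|h''(\hatbt)\|_0>0$. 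This gives (a) exactly as stated.

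For part (b), I would rewrite $D_i(\gamma)=1+d_i^2 v(\gamma)-u(\gamma)$ with $v(\gamma):=S(\gamma)$ and $u(\gamma):=\frac{1}{p}\sum_j \gamma/(\gamma+h''(\hjatbtj))$. The key observation is that $v$ is non-increasing while $u$ is non-decreasing on $(0,+\infty)$, since each summand of $v$ decreases in $\gamma$ and each summand of $u$ increases in $\gamma$ for $h''(\hjatbtj)\ge 0$. Hence each $D_i$ is non-increasing (recall $d_i^2\ge 0$), and $g_p$ is non-decreasing. For strictness I would observe that $v$ is strictly decreasing iff some $h''(\hjatbtj)<+\infty$, while $u$ is strictly increasing iff some $h''(\hjatbtj)\in (0,+\infty)$; if some $h''(\hjatbtj)\ne +\infty$, at least one of these strict variations fires and, together with well-definedness from (a), propagates to at least one index $i$ to force $g_p$ strictly increasing. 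In the complementary case (all $h''(\hjatbtj)=+\infty$) one has $S\equiv 0$, so $D_i\equiv 1$ and $g_p\equiv 1$.

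For parts (c) and (d), existence and uniqueness reduce, by strict monotonicity and continuity, to proving $\lim_{\gamma\downarrow 0}g_p(\gamma)<1<\lim_{\gamma\uparrow \infty}g_p(\gamma)$. As $\gamma\uparrow\infty$, $v\to 0$ and $u\to N_f/p$ where $N_f:=|\{j:h''(\hjatbtj)<+\infty\}|$, so $D_i\to 1-N_f/p\ge 0$; since some $h''(\hjatbtj)\ne +\infty$ forces $N_f\ge 1$, the limit of $g_p$ is $\ge p/(p-N_f)>1$ (and $+\infty$ if $N_f=p$). As $\gamma\downarrow 0$, $u\to 0$ and $v\to v_0:=\frac{1}{p}\sum_j h''(\hjatbtj)^{-1}\in (0,+\infty]$, so $g_p\to \frac{1}{p}\sum_i (1+d_i^2 v_0)^{-1}$. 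Under (c), the hypothesis $\|h''(\hatbt)\|_0=p$ or $\X^\top\X$ non-singular ensures $v_0$ is strictly positive and every relevant $d_i^2 v_0$ contribution pulls the limit strictly below $1$; under (d), the counting condition $\|\mathbf{d}\|_0+\|h''(\hatbt)\|_0>p$ is equivalent to the existence of some index $i$ with $d_i^2>0$ whose complementary index $j$ has $h''(\hjatbtj)=0$, forcing $v_0=+\infty$, hence the corresponding term $(1+d_i^2 v_0)^{-1}\to 0$ and the limit of $g_p$ falls below $1$. For the converse direction in (d) (when $\|\mathbf{d}\|_0+\|h''(\hatbt)\|_0\le p$), a pigeonhole argument would produce an index with $d_i=0$ and $h''(\hjatbtj)=0$, which I expect to imply $g_p(\gamma)\ge 1$ uniformly in $\gamma>0$, so no solution exists. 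The main obstacle is the careful bookkeeping of the edge cases parametrized by which $d_i$'s vanish and which $h''(\hjatbtj)$'s are $0$ or $+\infty$; partitioning indices into four groups and tracking their contributions to $u$, $v$, and each $D_i$ in the two limits, then invoking strict monotonicity, should close all four statements.
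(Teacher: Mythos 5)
Your overall architecture---introduce $S(\gamma)$, verify positivity of $D_i(\gamma)$ for (a), rewrite $D_i=1+d_i^2 v(\gamma)-u(\gamma)$ with monotone $u,v$ for (b), and study the endpoint limits for (c),(d)---mirrors the paper's proof, and parts (a), (b), and the $\gamma\to\infty$ limit are correct. The gap is in the $\gamma\downarrow 0$ limit: you assert $u(\gamma)=\frac{1}{p}\sum_j \gamma/(\gamma+h''(\hjatbtj))\to 0$, but each summand with $h''(\hjatbtj)=0$ equals $1$ identically, so in fact $u(0^+)=\frac{1}{p}\lvert\{j:h''(\hjatbtj)=0\}\rvert=1-\|h''(\hatbt)\|_0/p$. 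Consequently your formula $g_p(0^+)=\frac{1}{p}\sum_i(1+d_i^2 v_0)^{-1}$ is wrong precisely when $d_i=0$ and some $h''=0$ coexist (the regime of part (d)): there $D_i(0^+)=1-u(0^+)=\|h''(\hatbt)\|_0/p$, so the correct limit is $g_p(0^+)=\frac{p-\|d\|_0}{\|h''(\hatbt)\|_0}$, which is $<1$ iff $\|d\|_0+\|h''(\hatbt)\|_0>p$. Your formula instead gives $\frac{p-\|d\|_0}{p}$, which is always $<1$, so the counting condition never appears and you cannot distinguish when a solution exists in (d).

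The attempted converse via pigeonhole is also off: $\|d\|_0+\|h''(\hatbt)\|_0\le p$ does not produce a common index with $d_k=0$ and $h''(\hat\beta_k)=0$ (e.g.\ $p=4$, $d=(1,1,0,0)$, $h''=(0,0,1,1)$), and the structure of $g_p$ never pairs the $i$-index with the $j$-index anyway---the two sums are independent. The correct route, as in the paper, is to compute $g_p(0^+)=\frac{p-\|d\|_0}{\|h''(\hatbt)\|_0}$ and read off the threshold directly. Your reading of (c) survives because under either hypothesis of (c) one of the two problematic groups ($d_i=0$ or $h''(\hjatbtj)=0$) is empty, so $u(0^+)$ either vanishes or drops out of the only surviving terms; but this coincidence should be stated rather than relied upon implicitly.
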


The following assumption is made to simplify the conditions outlined in \Cref{existslams}. 
\begin{Assumption}\label{Assumpgpweak}
    Fix $p\ge 1$ and suppose that \Cref{Assumph} holds. If $\left\|h^{\prime \prime}(\hatbt)\right\|_0=p$ or that $\X^\top  \X$ is non-singular, we require only that there exists some $i\in [p]$ such that $h^{\prime\prime}(\hiatbti)\neq +\infty$. Otherwise, we require in addition that $\norm{d}_0+\norm{h^{\prime\prime}(\hatbt)}_0>p$.
\end{Assumption}

The following is a direct consequence of \Cref{existslams} which in turn has \Cref{COR} as a special case. 
\begin{Proposition} \label{CORweak}
    Fix $p\ge 1$ and suppose that \Cref{Assumph} holds. Then, \Cref{Assumpgpweak} holds if and only if the function $\gamma\mapsto g_p(\gamma)$ is well-defined for any $\gamma>0$, strictly increasing, and the equation \eqref{solution} admits a unique solution contained in $(0,+\infty).$ 
\end{Proposition}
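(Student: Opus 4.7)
The plan is to deduce \Cref{CORweak} by a direct case analysis that invokes the four parts of \Cref{existslams}, which together cover all configurations of $\X$ and $\hatbt$ permitted by \Cref{Assumph}. I would partition according to whether (A) $\|h''(\hatbt)\|_0 = p$, (B) $\X^\top \X$ is non-singular (equivalently all $d_i \neq 0$), or (C) neither (A) nor (B) holds (so $\|h''(\hatbt)\|_0 < p$ and some $d_i = 0$). In each case, the three assertions---that $g_p$ is well-defined on $(0,+\infty)$, that it is strictly increasing, and that $g_p(\gamma)=1$ admits a unique solution in $(0,+\infty)$---will be certified separately using parts (a), (b), (c), (d) of \Cref{existslams}.

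For the forward direction, I would assume \Cref{Assumpgpweak} and check each case. In cases (A) or (B), the hypothesis reduces to the existence of some $i$ with $h''(\hat{\beta}_i)\neq +\infty$; part (a) then yields well-definedness, part (b) yields strict monotonicity, and part (c) yields uniqueness of the root. In case (C), \Cref{Assumpgpweak} additionally supplies the counting condition $\|d\|_0 + \|h''(\hatbt)\|_0 > p$, and part (d) directly delivers both well-definedness and uniqueness. Strict monotonicity then follows from part (b) upon observing that $\|h''(\hatbt)\|_0 < p$ already forces some index $j$ with $h''(\hat{\beta}_j) = 0$, which is in particular finite.

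For the reverse direction, I would assume that $g_p$ is well-defined, strictly increasing, and that $g_p(\gamma)=1$ admits a unique solution on $(0,+\infty)$, and then run the same case split. The ``only if'' content of parts (c) and (d) of \Cref{existslams} would then recover exactly the conditions of \Cref{Assumpgpweak}: in cases (A) and (B) the uniqueness hypothesis forces the existence of some $h''(\hat{\beta}_i)\neq +\infty$ via (c), while in case (C) it forces $\|d\|_0 + \|h''(\hatbt)\|_0 > p$ via (d).

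The main subtlety I anticipate lies in case (C), where well-definedness, monotonicity, and uniqueness must all be extracted from a single refined counting condition combined with the automatic existence of a vanishing $h''(\hat{\beta}_j)$. Since the delicate combinatorial and analytic bookkeeping has already been packaged inside \Cref{existslams}, the remaining task for \Cref{CORweak} reduces to organizing the case split cleanly and handling the ``always require some finite $h''(\hat{\beta}_i)$'' clause of \Cref{Assumpgpweak} uniformly across the three scenarios, rather than re-deriving any new analytic content.
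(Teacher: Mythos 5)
Your proof is correct and takes the same route the paper intends: the paper simply states that \Cref{CORweak} is a direct consequence of \Cref{existslams}, and your case split over (A) $\|h''(\hatbt)\|_0=p$, (B) all $d_i\neq 0$, and (C) neither, matched to parts (a)--(d) of that lemma, is precisely the bookkeeping left implicit there. One small imprecision worth tightening: in cases (A)/(B) you say part (a) yields well-definedness from the existence of a finite $h''(\hat\beta_i)$, but (a) actually runs off either ``all $d_i\neq 0$'' (case (B)) or ``$\|h''(\hatbt)\|_0>0$'' (automatic in case (A) since $\|h''(\hatbt)\|_0=p\ge 1$); neither invocation needs the finiteness condition, which enters only through (b) and (c). This is a presentation point, not a gap, and the rest of your argument (including the observation that $\|h''(\hatbt)\|_0<p$ forces a $j$ with $h''(\hat\beta_j)=0\neq+\infty$ so the ``always require a finite $h''$'' clause is automatic in case (C)) is exactly right.
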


\subsubsection{Population limit of the adjustment equation}\label{subsectionWe}
From now on, we use notation for the following random variable
$$\Us:= h^{\prime \prime}\left(\operatorname{Prox}_{\gamma_*^{-1} h}\left(\sqrt{\taustar} \Zs+\Xstar\right)\right).$$
Define $g_\infty:(0,+\infty)\mapsto \R$ by
$$
g_\infty(\gamma)=\mathbb{E} \frac{1}{\left(\D^2-\gamma\right) \mathbb{E} \frac{1}{\gamma+\Us}+1}.
$$
which is well-defined under \Cref{Assumph}, \ref{Assumpfix} as shown in \Cref{well} below. We defer its proof to \Cref{appendix:popuadjeq}.

\begin{Lemma}\label{well}
   Under \Cref{Assumph}, \ref{Assumpfix}, $g_\infty$ is well-defined on  and strictly increasing on $(0,+\infty)$. The equation $g_\infty(\gamma)=1$ admits a unique solution $\gamma_*$ on $(0,+\infty)$. 
\end{Lemma}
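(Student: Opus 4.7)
The plan is to split the proof into three steps: well-definedness of $g_\infty$, existence of a solution at $\gamma_*$, and strict monotonicity (which gives uniqueness).

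First I would study the auxiliary function $m(\gamma):=\mathbb{E}[1/(\gamma+\Us)]$. Since $\Us\in[\co,+\infty]$ by \Cref{Extend} and $\PP(\Us<+\infty)>0, \PP(\Us\neq 0)>0$ by \Cref{noid}, dominated convergence gives that $m$ is well-defined, continuously differentiable, and strictly decreasing on $(0,\infty)$ with $m'(\gamma)=-B_2(\gamma)$, where $B_k(\gamma):=\mathbb{E}[1/(\gamma+\Us)^k]$. The crucial observation is the strict inequality $m(\gamma)<1/\gamma$: indeed, $\mathbb{E}[1/\gamma - 1/(\gamma+\Us)]=\mathbb{E}[\Us/(\gamma(\gamma+\Us))]>0$, since $\Us\geq 0$ and is strictly positive with positive probability. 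Hence $1-\gamma m(\gamma)>0$, so the denominator $(\D^2-\gamma)m(\gamma)+1=m(\gamma)\D^2+(1-\gamma m(\gamma))$ is almost surely positive and bounded below by $1-\gamma m(\gamma)>0$. This yields well-definedness of $g_\infty$ on $(0,\infty)$.

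For existence, I would use the reparameterization $\eta(\gamma):=1/m(\gamma)$ to rewrite $g_\infty(\gamma)=\eta(\gamma)\,G(\eta(\gamma)-\gamma)$. The fixed-point identity \eqref{RCa}, combined with \Cref{Extend}, gives $1/\eta_*=\mathbb{E}[1/(\gamma_*+\Us)]=m(\gamma_*)$, so $\eta(\gamma_*)=\eta_*$. Meanwhile \eqref{RCc} is equivalent (via $R(z)=G^{-1}(z)-1/z$) to $G(\eta_*-\gamma_*)=1/\eta_*$. Multiplying these two yields $g_\infty(\gamma_*)=\eta_*\cdot(1/\eta_*)=1$, so $\gamma_*$ solves the equation.

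For strict monotonicity, I would differentiate $g_\infty$ under the expectation. Using $m'=-B_2$ and expanding, a direct computation yields
\begin{equation*}
g_\infty'(\gamma)=(B_1-\gamma B_2)\,\mathbb{E}\!\left[\frac{1}{(m\D^2+1-\gamma m)^2}\right]+B_2\,\mathbb{E}\!\left[\frac{\D^2}{(m\D^2+1-\gamma m)^2}\right].
\end{equation*}
The second coefficient $B_2>0$ by $\PP(\Us<+\infty)>0$, and $\mathbb{E}[\D^2/(\cdot)^2]>0$ since $\D^2$ has non-zero mean (\Cref{AssumpD}). The first coefficient rearranges to
\begin{equation*}
B_1-\gamma B_2=\mathbb{E}\!\left[\frac{1}{\gamma+\Us}-\frac{\gamma}{(\gamma+\Us)^2}\right]=\mathbb{E}\!\left[\frac{\Us}{(\gamma+\Us)^2}\right]>0,
\end{equation*}
again using $\Us\geq 0$ and $\PP(\Us>0)>0$ from \Cref{noid}. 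Thus $g_\infty'(\gamma)>0$ on $(0,\infty)$, which together with the existence established above yields uniqueness of the solution.

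The main obstacle, a priori, looks to be controlling $g_\infty'$: the natural R-transform reformulation $g_\infty(\gamma)=1\iff\gamma=-R(m(\gamma))$ invites a contraction argument via Lemma~\ref{lem:cauchy}(e) ($z^2R'(z)<1$), but Cauchy--Schwarz gives $B_2\geq m(\gamma)^2$ in the wrong direction and the resulting bound $R'(m(\gamma))\cdot B_2<1$ fails to close without more delicate input. The unexpected payoff of working directly with $g_\infty'$ is that both pieces decouple into manifestly non-negative terms, and the non-negativity of $B_1-\gamma B_2$ reduces to a one-line algebraic identity. So the only non-trivial probabilistic input needed is Lemma \ref{noid}.
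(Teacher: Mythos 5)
Your proof takes essentially the same route as the paper: well-definedness via showing the denominator is bounded below by $1-\gamma m(\gamma)>0$, existence by verifying $\gamma_*$ satisfies the equation using the fixed-point relations \eqref{RCa} and \eqref{RCc}, and uniqueness via $g_\infty'>0$; your reparameterization through $m(\gamma)$ and $\eta(\gamma)=1/m(\gamma)$, with the identity $g_\infty(\gamma)=\eta(\gamma)\,G(\eta(\gamma)-\gamma)$, packages the existence step more transparently than the paper's one-line citation and avoids the paper's manual case-split in the well-definedness and derivative computations. One inaccuracy to correct: the claim $B_1-\gamma B_2=\E[\Us/(\gamma+\Us)^2]>0$ does not follow from Lemma \ref{noid}. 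That lemma guarantees $\PP(\Us\neq 0)>0$ and $\PP(\Us\neq+\infty)>0$, but it permits $\Us$ to be supported on $\{0,+\infty\}$, in which case $\Us/(\gamma+\Us)^2\equiv 0$ almost surely and $B_1-\gamma B_2=0$ exactly. The correct assertion is $B_1-\gamma B_2\geq 0$, which is enough: your second term $B_2\,\E[\D^2/D(\gamma)^2]$ is strictly positive under the stated hypotheses, and that alone gives $g_\infty'(\gamma)>0$. (The paper's own proof handles this by splitting into the cases $\PP(0<\Us<+\infty)>0$ and $\PP(\Us=0)>0$, which together exhaust the possibilities allowed by Lemma \ref{noid}.)
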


\begin{Remark}
    We emphasize that the proof of \Cref{well} does not require \eqref{fp} admits a unique solution, only that a solution exists. 
\end{Remark}

We can show that the LHS of the sample adjustment equation converges to the LHS of the population adjustment equation. We defer its proof to \Cref{appendix:popuadjeq}.
\begin{Proposition}\label{ptconv}
    Under \Cref{AssumpD}---\ref{Assumpfix}, almost surely for all sufficiently large $p$, $g_p$ is well-defined and strictly increasing on $(0,+\infty)$ $g_p$ and equation \eqref{solution} admits a unique solution on $(0,+\infty)$. Furthermore, for any $\gamma>0$, almost surely,  
    \begin{equation}\label{desired}
        \lim_{p\to \infty} g_p(\gamma)=g_\infty (\gamma).
    \end{equation}
\end{Proposition}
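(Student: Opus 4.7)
\textbf{Proof plan for Proposition \ref{ptconv}.} The plan is to separately analyze the inner sample average $A_p := \frac{1}{p}\sum_{j=1}^p (\gamma + h''(\hjatbtj))^{-1}$ and then the outer average that defines $g_p(\gamma)$, invoking the empirical Wasserstein-$2$ criterion of Proposition \ref{prop:asW} for each. The key device throughout is to trade the irregular limit law of $\hatbt$ for the absolutely continuous limit law of $\rstar$ via the KKT identity.

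For the inner average, the first step is to exploit the KKT condition for $\hatbt$. First-order optimality gives $\rstar - \hatbt \in \gamma_*^{-1} \partial h(\hatbt)$, so Proposition \ref{prop:proxp}(a) yields the rewriting $\hjatbtj = \operatorname{Prox}_{\gamma_*^{-1} h}(r_{*,j})$ for every $j$. Define
$$\tilde\phi(x) := \frac{1}{\gamma + h''\bigl(\operatorname{Prox}_{\gamma_*^{-1} h}(x)\bigr)}$$
with $h''$ in its extended sense. Applying Lemma \ref{Extend} with $w = \gamma$ and $v = \gamma_*^{-1}$, the function $\tilde\phi$ is bounded in $[0, 1/\gamma]$ and piecewise continuous with only finitely many discontinuity points, at which it takes the value $0$. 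Since Theorem \ref{thm:empmain} provides $\rstar \stackrel{W_2}{\to} \sqrt{\taustar}\Zs + \Xstar$ almost surely and the Gaussian summand makes the limit law absolutely continuous, the finite discontinuity set of $\tilde\phi$ carries zero mass under the limit. Proposition \ref{prop:asW} therefore applies and yields, almost surely,
$$A_p = \frac{1}{p}\sum_{j=1}^p \tilde\phi(r_{*,j}) \longrightarrow \E\,\tilde\phi\bigl(\sqrt{\taustar}\Zs + \Xstar\bigr) = \E\,\frac{1}{\gamma + \Us} =: A.$$

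Next I would invoke Lemma \ref{noid}, which under Assumption \ref{Assumpfix} guarantees that $\Us \neq +\infty$ and $\Us \neq 0$ each with positive probability; hence $0 < A < 1/\gamma$ strictly. Together with $A_p \to A$ and $d_i^2 \in [0, d_+]$ from Assumption \ref{AssumpD}, this forces $\inf_{i \in [p]} [(d_i^2 - \gamma) A_p + 1] \ge \min(1, (1 - \gamma A)/2) > 0$ for all sufficiently large $p$, which simultaneously delivers the well-definedness of $g_p(\gamma)$ and a uniform lower bound on its denominators. For the outer average, set $\psi_t(x) := 1/((x^2 - \gamma) t + 1)$. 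On a neighbourhood of $A$ strictly contained in $[0, 1/\gamma)$ and for $x \in [0, d_+]$, the family $\{\psi_t\}$ is uniformly bounded and uniformly Lipschitz in $t$, so
$$\left| \frac{1}{p}\sum_{i=1}^p \psi_{A_p}(d_i) - \frac{1}{p}\sum_{i=1}^p \psi_A(d_i) \right| = O(|A_p - A|) \longrightarrow 0.$$
The remaining term converges almost surely to $\E\,\psi_A(\D) = g_\infty(\gamma)$ by another application of Proposition \ref{prop:asW} to $\mathbf{d} \stackrel{W_2}{\to} \D$ (Assumption \ref{AssumpD}), since $\psi_A$ is continuous and bounded on $[0, d_+] \supseteq \operatorname{supp}(\D^2)$.

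The main obstacle is the irregularity of the extended $h''$: the map $x \mapsto 1/(\gamma + h''(x))$ jumps at every point of $\mathcal{D}$, so a direct application of Proposition \ref{prop:asW} to the $W_2$-convergent sequence $\hatbt$ would fail precisely when the limit law of $\hatbt$ has atoms in $\mathcal{D}$ (e.g., the atom at $0$ for the Lasso). The reformulation $\hjatbtj = \operatorname{Prox}_{\gamma_*^{-1} h}(r_{*,j})$ is what sidesteps this difficulty and is the principal technical input.
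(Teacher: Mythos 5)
Your proof is correct and follows essentially the same route as the paper's: both hinge on the KKT/proximal identity $\hatbt=\operatorname{Prox}_{\gamma_*^{-1}h}(\rstar)$ combined with the absolutely continuous limit law of $\rstar$ from \Cref{thm:empmain}, which lets \Cref{prop:asW} deliver $A_p\to A=\E(\gamma+\Us)^{-1}$ despite the discontinuities of the extended $h''$, and both then push this scalar limit through the outer average over $\mathbf{d}$. Your uniform-Lipschitz-in-$t$ swap and explicit denominator lower bound (via \Cref{noid}) is a slightly more careful unpacking of the paper's one-line combination of \Cref{prop:combW}, \Cref{prop:asW}, and the positivity fact from \Cref{well}, and it also directly settles the well-definedness claim, but the two arguments are substantively the same.
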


\subsubsection{Consistent estimation of fixed points}\label{subsectionConsf}

We are now ready to prove \Cref{neig} which shows that the quantities defined in \eqref{DEFEFD} indeed converges to their population counterparts.

\begin{proof}[Proof of \Cref{neig}]
We first show that $\lim _{p \rightarrow \infty} \adj\left(p\right) \rightarrow \gamma_*$ almost surely. Fix any $0<\epsilon<\gamma_*$. Note that almost surely
$$
\begin{aligned}
& \lim _{p\rightarrow \infty} g_{p}\left(\gamma_*-\epsilon\right)=g_{\infty}\left(\gamma_*-\epsilon\right)<g_{\infty}\left(\gamma_*\right)=1, \\
& \lim _{p \rightarrow \infty} g_{p}\left(\gamma_*+\epsilon\right)=g_{\infty}\left(\gamma_*+\epsilon\right)>g_{\infty}\left(\gamma_*\right)=1
\end{aligned}
$$
as a direct consequence of \Cref{ptconv} and that $g_\infty$ is strictly increasing (cf. \Cref{well}). It follows that almost surely for all $p$ sufficiently large
\begin{equation}\label{dddde}
    g_{p}\left(\gamma_*-\epsilon\right)<1, \quad g_{p}\left(\gamma_*+\epsilon\right)>1.
\end{equation}
Since $g_p$ is increasing and continuous almost surely for all sufficiently large $p$, \eqref{dddde} implies that almost surely for all $p$ sufficiently large $\left|\adj\left(p\right)-\gamma_*\right|<\epsilon$. This completes the proof for $\lim _{p\rightarrow \infty} \adj\left(p\right) \rightarrow \gamma_*$. The consistency of $\hat{\eta}_*$ immediately follows. To show $\bhetah(p) \stackrel{W_2}{\to} \Xstar+\sqrt{\taustar}\Zs$ almost surely as $p\to \infty$, note that $\bhetah(p)-\rstar \stackrel{W_2}{\to} 0$ by consistency of $\adj$ and $\limsup_{p\to \infty }p^{-1}\left\|\X^{\top}(\y-\X \hatbt)\right\|_2^2<+\infty$, and the claims follow from \eqref{waka1} and an application of \Cref{prop:combW}. A similar argument shows that $\frac{1}{p}\norm{\hatrstst-\rstst}^2\rightarrow 0$ almost surely as $p\to \infty$. The consistency statements for $\hat{\sigma}^2, \hat{\tau}_{**}, \hat{\tau}_{*}$ follow from results above, \eqref{waka2}, \eqref{RCd} and \Cref{Assumpfix2}. 
\end{proof}

\subsection{Supporting proofs for result A}\label{SupportPI}

\subsubsection{Oracle VAMP proofs}\label{appendix:ovamp}

\begin{proof}[Proof of \Cref{prop:ppt}]
By \Cref{AssumpD}, and \Cref{Assumpfix}, \eqref{RCc},
$$\lim _{p \rightarrow \infty} \frac{1}{p} \operatorname{Tr}(\bm{\Lambda})=\mathbb{E}\left(\eta_*-\gamma_*\right)\left(\frac{\eta_*}{\gamma_*\left(\D^2+\left(\eta_*-\gamma_*\right)\right)}-\frac{1}{\gamma_*}\right)=0.$$
The limiting values of $\kappa_*:=\lim _{p \rightarrow \infty} \frac{1}{p} \operatorname{Tr}\left(\bm{\Lambda}^2\right)$ and $b_*:=\lim _{p \rightarrow \infty} \frac{1}{p}\left\|\ebit \right\|^2$ is found analogously under \Cref{AssumpD}. The identity $\taustar=b_*+\kappa_* \tau_{**}$ is obtained by rewriting \eqref{RCd} using definitions of $b_*,\kappa_*$. Using \eqref{RCa}, we have that
$$
\mathbb{E} F^{\prime}\left(\sqrt{\taustar} \mathsf{Z}, \Xstar\right)=\frac{\eta_*}{\eta_*-\gamma_*}\left(\mathbb{E} \operatorname{Prox}_{\gamma_{*}^{-1} h}^{\prime}\left(\Xstar+\sqrt{\taustar} \mathsf{Z}\right)-\frac{\gamma_*}{\eta_*}\right)=0. 
$$
The Lipschitz continuity of $(q,x)\mapsto F(q,x)$ on $\R$ follows from \Cref{prop:proxp}, (b). To show $\mathbb{E} F\left(\sqrt{\taustar} \Zs, \Xstar\right)^2=\tau_{**}$, note that
$$
\begin{aligned}
& \mathbb{E} F\left(\sqrt{\taustar} \Zs, \Xstar\right)^2=\mathbb{E}\left(\frac{\eta_*}{\eta_*-\gamma_*}\left(\operatorname{Prox}_{\gamma_*^{-1} h}\left(\sqrt{\taustar} \Zs+\Xstar\right)-\Xstar\right)-\frac{\gamma_*}{\eta_*-\gamma_*} \sqrt{\taustar} \Zs\right)^2 \\
& =\left(\frac{\eta_*}{\eta_*-\gamma_*}\right)^2 \mathbb{E}\left(\operatorname{Prox}_{\gamma_*^{-1} h}\left(\sqrt{\taustar} \Zs+\Xstar\right)-\Xstar\right)^2+\left(\frac{\gamma_*}{\eta_*-\gamma_*}\right)^2 \tau_{*} \\
& \quad-2 \frac{\gamma_*}{\eta_*-\gamma_*} \frac{\eta_*}{\eta_*-\gamma_*} \mathbb{E}\left(\sqrt{\taustar} \Zs\left(\operatorname{Prox}_{\gamma_*^{-1} h}\left(\sqrt{\taustar} \Zs+\Xstar\right)-\Xstar\right)\right) \\
& \stackrel{(a)}{=}\left(\frac{\eta_*}{\eta_*-\gamma_*}\right)^2 \mathbb{E}\left(\operatorname{Prox}_{\gamma_*^{-1} h}\left(\sqrt{\taustar} \Zs+\Xstar\right)-\Xstar\right)^2\\
& \qquad \qquad +\left(\frac{\gamma_*}{\eta_*-\gamma_*}\right)^2 \tau_{*}-2 \frac{\gamma_*}{\eta_*-\gamma_*} \frac{\eta_*}{\eta_*-\gamma_*} \frac{\gamma_*}{\eta_*} \tau_{*} \\
& =\left(\frac{\eta_*}{\eta_*-\gamma_*}\right)^2 \mathbb{E}\left(\operatorname{Prox}_{\gamma_*^{-1} h}\left(\sqrt{\taustar} \Zs+\Xstar\right)-\Xstar\right)^2-\left(\frac{\gamma_*}{\eta_*-\gamma_*}\right)^2 \tau_{*} \\
& \stackrel{(b)}{=} \tau_{**}
\end{aligned}
$$
where in $(a)$ we used Stein's lemma and \eqref{RCa} for the following
$$
\mathbb{E}\left(\Zs\left(\operatorname{Prox}_{\gamma_*^{-1} h}\left(\sqrt{\taustar} \Zs+\Xstar\right)-\Xstar\right)\right)=\mathbb{E}\left(\operatorname{Prox}_{\gamma^{-1} h}^{\prime}\left(\sqrt{\taustar} \Zs+\Xstar\right)\right)=\frac{\gamma_*}{\eta_*}
$$
and in (b) we used \eqref{RCc}. We remark that although the function $x\mapsto \operatorname{Prox}_{\gamma_*^{-1} h}(x)$ may not be differentiable on a finite set of points, Stein's lemma can still be applied (cf. \cite[Lemma 1]{stein1981estimation}). 
\end{proof}

\begin{proof}[Proof of \Cref{prop:AMPparamconverge}]
Note that $\bm{\xi}=\Qbm \epbm \sim N(\rm{0},\mathbf{I}_{n})$. Then $\Dbm^\top \bm{\xi} \in \R^n$
may be written as the entrywise product of $\Dbm^\top \bm{1}_{n \times 1} \in \R^p$
and a vector $\bar{\bm{\xi}} \sim N(\rm{0},\mathbf{I}_{p})$, both when $p \geq n$
and when $n \leq p$. The almost-sure convergence $H \toW \Hs$ is then
a straightforward consequence of Propositions \ref{prop:iidW}, \ref{prop:contW},
and \ref{prop:orthoW}, where all random variables of $\Hs$ have finite moments
of all orders under Assumptions \ref{AssumpD} and \ref{AssumpPrior}.
The identities $\kappa_*=\E \Ls^2$ and $b_*=\E \Es_b^2$ follows from definitions of $\kappa_*,b_*$ in \Cref{prop:ppt}. 
\end{proof}

\begin{proof}[Proof of \Cref{thm:ampSE}]
We have $\delta_{11}=\E \Xs_1^2=\delta_*$ by the last identity of
(\ref{eq:denoiserpp2}). Supposing that $\delta_{tt}=\E \Xs_t^2=\delta_*$, we
have by definition $\E \Ys_t^2=\kappa_* \delta_{tt}=\sigma_*^2=\delta_*\kappa_*$.
Since $\Ys_t$ is independent of $\Es$, we have $\Ys_t+\Es \sim N(0,\sigma_*^2+b_*)$
where this variance is $\sigma_*^2+b_*=\taustar$ by last identity of \eqref{eq:somide}. Then $\E \Xs_{t+1}^2=\delta_*$ by the last identity of
(\ref{eq:denoiserpp2}), so $\E \Xs_t^2=\delta_*$ and $\E \Ys_t^2=\sigma_*^2$ for
all $t \geq 1$.

Noting that $\Delta_t$ is the upper-left submatrix of $\Delta_{t+1}$, let us
denote
\[\Delta_{t+1}=\begin{pmatrix} \Delta_t & \delta_t \\ \delta_t^\top & \delta_*
\end{pmatrix}\]
We now show by induction on $t$ the following three statements:
\begin{enumerate}
\item $\Delta_t \succ 0$ strictly.
\item We have
	\begin{equation}\label{eq:extraSE}
	    \mathsf{Y}_t=\sum_{k=1}^{t-1} \mathsf{Y}_{k}\left(\Delta_{t-1}^{-1}
\delta_{t-1}\right)_k+\mathsf{U}_t, \quad \mathsf{S}_t=\sum_{k=1}^{t-1}
\mathsf{S}_{k}\left(\Delta_{t-1}^{-1} \delta_{t-1}\right)_{k}+\mathsf{U}^{\prime}_t
	\end{equation}
	where $\mathsf{U}_t,\mathsf{U}_t'$ are Gaussian variables
with strictly positive variance, independent of
$\mathsf{H}$, $\left(\mathsf{Y}_{1}, \ldots, \mathsf{Y}_{t-1}\right)$, and
$\left(\mathsf{S}_{1}, \ldots, \mathsf{S}_{t-1}\right)$.
\item $\left(\mathbf{H},\X_{t+1}, \mathbf{S}_t,\mathbf{Y}_t\right)
\stackrel{W_2}{\to}\left(\mathsf{H},
\mathsf{X}_{1}, \ldots, \mathsf{X}_{t+1}, 
\mathsf{S}_{1}, \ldots, \mathsf{S}_{t}, \mathsf{Y}_{1}, \ldots,
\mathsf{Y}_{t}\right)$.
\end{enumerate}

We take as base case $t=0$, where the first two statements are vacuous,
and the third statement requires $(\mathbf{H},\ampxone) \toW
(\Hs,\Xs_1)$ almost surely as $p \to \infty$.
Recall that $\ampxone=F(\mathbf{p}^0,\st)$, and that
$F(p,\beta)$ is Lipschitz by Proposition \Cref{prop:ppt}.
Then this third statement follows from
Propositions \ref{prop:AMPparamconverge} and \ref{prop:contW}.

Supposing that these statements hold for some $t \geq 0$,
we now show that they hold for $t+1$. To show the first statement $\Delta_{t+1}
\succ 0$, note that for $t=0$ this follows from $\Delta_1=\delta_*>0$ by \Cref{Assumpfix}. For $t \geq 1$,
given that $\Delta_{t}\succ0$, $\Delta_{t+1}$ is singular if and only if there exist constants $\alpha_{1}, \ldots, \alpha_{t} \in \mathbb{R}$ such that
$$
\Xs_{t+1}=F\left(\mathsf{Y}_{t}+\mathsf{E}, \Xstar\right)=\sum_{r=1}^t \alpha_{r} \mathsf{X}_{r}
$$
with probability 1. From the induction hypothesis,
$\mathsf{Y}_{t}=\sum_{k=1}^{t-1} \mathsf{Y}_{k}\left(\Delta_{r}^{-1}
\delta_{r}\right)_{k}+\mathsf{U}_t$ where $\mathsf{U}_t$ is independent of
$\mathsf{H},\mathsf{Y}_{1}, \ldots, \mathsf{Y}_{t-1}$ and hence also of
$\Es,\Xstar,\mathsf{X}_1,...,\mathsf{X}_t$. We now show that for any realized values
$(e_{0}, x_{0}, w_{0})$ of $$\left(\mathsf{E}+\sum_{k=1}^{t-1}
\mathsf{Y}_{k}\left(\Delta_{r}^{-1} \delta_{r}\right)_{k}, \quad \Xstar, \quad
\sum_{r=1}^t \alpha_{r} \mathsf{X}_{r}\right),$$ we have that
$\mathbb{P}\left(F\left(\mathsf{U}_t+e_{0}, x_{0}\right) \neq w_{0}\right)>0$. This would imply that $\Delta_{t+1}\succ 0$. Suppose to the contrary, we then have that 
$$
\mathbb{P}\left(\frac{\eta_*}{\eta_*-\gamma_*} \operatorname{Prox}_{\gamma_{\gamma_*^{-1}}}\left(\mathsf{U}_t+e_0+x_0\right)-\frac{\gamma_*}{\eta_*-\gamma_*} \mathsf{U}_t=w_0+\frac{\eta_*}{\eta_*-\gamma_*} x_0+\frac{\gamma_*}{\eta_*-\gamma_*} e_0\right)=1.
$$
Since $\Us_t$ is Gaussian with strictly positive variance, the above implies that the function
$$
u \mapsto \frac{\eta_*}{\eta_*-\gamma_*} \operatorname{Prox}_{\gamma_*^{-1} h}\left(u+e_0+x_0\right)-\frac{\gamma_*}{\eta_*-\gamma_*} u
$$
is constant almost everywhere. This in turn is equivalent to that
$
\operatorname{Prox}_{\gamma_*^{-1} h}(u)=C+\frac{\gamma_*}{\eta_*} u
$
almost everywhere for some constant $C\in \R $ by a change of variable. Noting that $u \mapsto \operatorname{Prox}_{\gamma_*^{-1} h}(u)$ is continuous, we thus have that $\operatorname{Prox}_{\gamma_*^{-1} h}(u)=C+\frac{\gamma_*}{\eta_*} u$ for all $u \in \mathbb{R}$. This implies that $\operatorname{Prox}_{\gamma_*^{-1} h}(u)$ is continuously differentiable and has constant derivative $\frac{\gamma_*}{\eta_*}$, which contradicts to the assumption that $x\mapsto \operatorname{Prox}_{\gamma_*^{-1} h}^{\prime}(x)$ is non-constant. We thus have proved the first inductive statement that $\Delta_{t+1}\succ 0$.

 To study the empirical limit of $s_{t+1}$, let $\mathbf{U}=\left(\mathbf{e}_{b}, \mathbf{S}_{t},
\bm{\Lambda} \mathbf{S}_{t}\right)$ and $\mathbf{V}=\left(\mathbf{e},\X_{t}, \mathbf{Y}_{t}\right)$.
(For $t=0$, this is simply $\mathbf{U}=\ebit $ and $\mathbf{V}=\mathbf{e}$.) By the induction
hypothesis, the independence of $(\Ss_1,\ldots,\Ss_t)$ with $(\Es_b,\Ls)$,
and the identities $\E \Es_b^2=b_*$ and $\E \Ls=0$ and $\E \Ls^2=\kappa_*$,
almost surely as $p \to \infty$,
\begin{equation*}
	\frac{1}{p}\left(\mathbf{e}_{b}, \mathbf{S}_{t}, \bm{\Lambda} \mathbf{S}_{t}\right)^{\top}\left(\mathbf{e}_{b}, \mathbf{S}_{t}, \bm{\Lambda} \mathbf{S}_{t}\right) \rightarrow\left(\begin{array}{ccc}
		b_{*} & 0 & 0 \\
		0 & \Delta_{t} & 0 \\
		0 & 0 & \kappa_{*} \Delta_{t}
	\end{array}\right)\succ0
\end{equation*}
So almost surely for sufficiently large $p$, conditional on
$(\mathbf{H},\X_{t+1},\mathbf{S}_t,\mathbf{Y}_t)$, the law of $\ampstpo$ is given by its law conditioned on
$\mathbf{U}=\Obm V$, which is (see \cite[Lemma B.2]{fan2022tap})
\begin{equation}\label{eq:SEconv3}
	\ampstpo\big|_{\mathbf{U}=\Obm \mathbf{V}}=\Obm \ampxtpo\big|_{\mathbf{U}=\Obm \mathbf{V}} \stackrel{L}{=}\bUm\left(\bUm^\top
\bUm\right)^{-1} \Vbf^{\top} \ampxtpo+ \bm{\Pi}_{\mathbf{U}^{\perp}} \tilde{\Obm} \bm{\Pi}_{\mathbf{V}^{\perp}}^{\top} \ampxtpo
\end{equation}
where $\tilde{\Obm} \sim \Haar(\mathbb{O}(p-(2 t+1)))$ and $\bm{\Pi}_{\mathbf{U}^{\perp}},
\bm{\Pi}_{\mathbf{V}^{\perp}} \in \mathbb{R}^{p \times(p-(2 t+1))}$ are matrices with
orthonormal columns spanning the orthogonal complements of the column spans of
$\mathbf{U},\mathbf{V}$ respectively. We may replace $\ampstpo$ by the right side of
\eqref{eq:SEconv3} without affecting the joint law of $\left(\mathbf{H},\X_{t+1}, \mathbf{S}_t,
\mathbf{Y}_{t},\ampstpo\right)$.

For $t=0$, we have $\E \Xs_1\Es=0$ since $\Xs_1$ is independent of $\Es$.
For $t \geq 1$, by the definition of $\Xs_{t+1}$, the
condition $\E F'(\Ps,\Xstar)=0$ from
(\ref{eq:denoiserpp2}), and Stein's lemma, we have $\E \Xs_{t+1}\Es=0$
and $\E \Xs_{t+1}\Ys_r=0$ for each $r=1,\ldots,t$. Then
by the induction hypothesis, almost surely as $p \rightarrow \infty$,
$$
\left(p^{-1}\bUm^\top \bUm\right)^{-1} \rightarrow\left(\begin{array}{ccc}
	b_{*} & 0 & 0 \\
	0 & \Delta_{t} & 0 \\
	0 & 0 & \kappa_{*} \Delta_{t}
\end{array}\right)^{-1}, \quad p^{-1}\Vbf^{\top} \mathbf{x}^{t+1} \rightarrow\left(\begin{array}{c}
	0 \\
	\delta_{t} \\
	0
\end{array}\right).
$$
Then by (\ref{eq:SEconv3}) and
Propositions \ref{prop:combW} and \ref{prop:orthoW}, it follows that
$$
\begin{aligned}
    &\left(\mathbf{H},\X_{t+1}, \mathbf{S}_{t}, \mathbf{Y}_{t}, \ampstpo\right) \\
    &\qquad \qquad \stackrel{W_2}{\to}\left(\mathsf{H}, \mathsf{X}_{1}, \ldots, \mathsf{X}_{t+1}, \mathsf{S}_{1}, \ldots, \mathsf{S}_{t}, \mathsf{Y}_{1}, \ldots \mathsf{Y}_{t}, \sum_{r=1}^{t} \mathsf{S}_{r}\left(\Delta_{t}^{-1} \delta_{t}\right)_{r}+\mathsf{U}^{\prime}_{t+1}\right)
\end{aligned}
$$
where $\Us^{\prime}_{t+1}$ is the Gaussian limit of the second term on the right
side of (\ref{eq:SEconv3}) and is independent of $\mathsf{H},
\mathsf{X}_{1}, \ldots, \mathsf{X}_{t+1}, \mathsf{S}_{1}, \ldots,
\mathsf{S}_{t}, \mathsf{Y}_{1}, \ldots \mathsf{Y}_{t}$.
We can thus set $\Ss_{t+1}:=\sum_{r=1}^{t} \Ss_{r}\left(\Delta_{t}^{-1}
\delta_{t}\right)_{r}+\Us^{\prime}_{t+1}$. Then $(\Ss_1,\ldots,\Ss_{t+1})$ is
multivariate Gaussian and remains independent of $\Hs$ and
$(\Ys_1,\ldots,\Ys_t)$. Since $p^{-1}\|\ampstpo\|^{2}=p^{-1}\|\ampxtpo\|^{2}
\rightarrow \delta_*$ almost surely as $p \rightarrow \infty$ by the induction
hypothesis, we have $\E\mathsf{S}_{t+1}^2=\delta_*$.
From the form of $\Ss_{t+1}$, we may check also
$\E\Ss_{t+1}(\Ss_1,\ldots,\Ss_t)=\delta_t$, so 
$(\Ss_1,\ldots,\Ss_{t+1})$ has covariance $\Delta_{t+1}$ as desired.
Furthermore $\sum_{r=1}^{t} \mathsf{~S}_{r}\left(\Delta_{t}^{-1}
\delta_{t}\right)_{r} \sim N\left(0, \delta_{t}^{\top} \Delta_{t}^{-1}
\delta_{t}\right)$. From $\Delta_{t+1} \succ 0$ 
and the Schur complement formula, $\delta_*-\delta_{t}^{\top} \Delta_{t}^{-1}
\delta_{t}>0$ strictly.
Then $\Us^{\prime}_{t+1}$ has strictly positive variance, since the variance of $\sum_{r=1}^{t} \mathsf{S}_{r}\left(\Delta_{t}^{-1} \delta_{t}\right)_{r}$ is less than the variance of $\Ss_{t+1}$. 
This proves the second equation in \eqref{eq:extraSE} for $t+1$.

Now, we study the empirical limit of $\ampytpo$. Let $\mathbf{U}=\left(\mathbf{e},\X_{t+1},
\mathbf{Y}_{t}\right)$, $\mathbf{V}=\left(\mathbf{e}_{b}, \mathbf{S}_{t+1}, \bm{\Lambda} \mathbf{S}_{t}\right)$. Similarly
by the induction hypothesis and the empirical convergence of $(\mathbf{H},\mathbf{S}_{t+1})$
already shown, almost surely as $p \rightarrow \infty$,
$$
\frac{1}{p}\left(\ebit , \mathbf{S}_{t+1}, \bm{\Lambda} \mathbf{S}_{t}\right)^{\top}\left(\ebit , \mathbf{S}_{t+1}, \bm{\Lambda} \mathbf{S}_{t}\right) \rightarrow\left(\begin{array}{ccc}
	b_{*} & 0 & 0 \\
	0 & \Delta_{t+1} & 0 \\
	0 & 0 & \kappa_{*} \Delta_{t}
\end{array}\right)\succ 0.
$$
Then the law of $\ampytpo$ conditional on $(\mathbf{H},\X_{t+1},\mathbf{S}_{t+1},\mathbf{Y}_t)$ is given by
its law conditioned on $\mathbf{U}=\Obm^\top \Vbf$, which is
\begin{equation}\label{eq:SEconv4}
	\ampytpo\big|_{\mathbf{U}=\Obm^\top \Vbf}=\Obm^{\top}\bm{\Lambda} \ampstpo\big|_{\mathbf{U}=\Obm^{\top} \Vbf}  \stackrel{L}{=}
\bUm\left(\Vbf^{\top} \Vbf\right)^{-1} \Vbf^{\top} \bm{\Lambda} \ampstpo+\bm{\Pi}_{\mathbf{U}^{\perp}} \tilde{\Obm}
\bm{\Pi}_{\mathbf{V}^{\perp}}^{\top} \bm{\Lambda} \ampstpo
\end{equation}
where $\tilde{\Obm} \sim \Haar(\mathbb{O}(p-(2 t+2)))$. From the convergence of
$(\mathbf{H},\mathbf{S}_{t+1})$ already shown, almost surely as $p \rightarrow \infty$,
$$
\left(n^{-1} \Vbf^{\top} \Vbf\right)^{-1} \rightarrow\left(\begin{array}{ccc}
	b_{*} & 0 & 0 \\
	0 & \Delta_{t+1} & 0 \\
	0 & 0 & \kappa_{*} \Delta_{t}
\end{array}\right)^{-1}, \quad n^{-1} \Vbf^{\top} \bm{\Lambda} \ampstpo \rightarrow\left(\begin{array}{c}
	0 \\
	0 \\
	\kappa_{*} \delta_{t}
\end{array}\right).
$$
Then by (\ref{eq:SEconv4}) and Propositions \ref{prop:combW} and
\ref{prop:orthoW},
$$
\begin{aligned}
    &\left(\mathbf{H},\X_{t+1}, \mathbf{S}_{t+1}, \mathbf{Y}_{t}, \ampytpo\right) \\ & \qquad \qquad \stackrel{W_2}{\to}\left(\mathsf{H}, \mathsf{X}_{1}, \ldots, \mathsf{X}_{t+1}, \mathsf{~S}_{1}, \ldots, \mathsf{S}_{t+1}, \mathsf{Y}_{1}, \ldots \mathsf{Y}_t, \sum_{r=1}^{t} \mathsf{Y}_{r}\left(\Delta_{t}^{-1} \delta_{t}\right)_{r}+\mathsf{U}_{t+1}\right)
\end{aligned}
$$
where $\Us_{t+1}$ is the limit of the second term on the right side of
(\ref{eq:SEconv4}), which is
Gaussian and independent of $ \mathsf{H},\Ss_{1}, \ldots,
\Ss_{t+1}, \Ys_{1}, \ldots \Ys_{t}$. Setting
$\Ys_{t+1}:=\sum_{r=1}^{t} \Ys_{r}\left(\Delta_{t}^{-1}
\delta_{t}\right)_{r}+\Us_{t+1}$, it follows that $(\Ys_1,\ldots,\Ys_{t+1})$
remains independent of $\Hs$ and $(\Ss_1,\ldots,\Ss_{t+1})$.
We may check that $\E \Ys_{t+1}(\Ys_1,\ldots,\Ys_t)=\kappa_* \delta_t$, and
we have also $n^{-1}\|\ampytpo\|^{2}=n^{-1}\|\bm{\Lambda}
\ampstpo\|^{2} \rightarrow \kappa_{*}\delta_*$ so $\E\mathsf{Y}_{t+1}^2
=\kappa_{*}\delta_*$. From $\Delta_{t+1} \succ 0$ and the Schur complement
formula, note that $\sum_{r=1}^{t}
\mathsf{Y}_{r}\left(\Delta_{t}^{-1} \delta_{t}\right)_{r}$ has variance
$\kappa_{*} \delta_{t}^{\top} \Delta_{t}^{-1} \delta_{t}$ which is strictly
smaller than $\kappa_* \delta_*$, so $\Us_{t+1}$ has strictly positive variance.
This proves the first equation in \eqref{eq:extraSE} for $t+1$, and completes
the proof of this second inductive statement.

Finally, recall $\mathbf{x}^{t+2}=F\left(\ampytpo+\mathbf{e}, \st\right)$ where $F$ is Lipschitz.
Then by Proposition \ref{prop:contW}, almost surely
$$
\left(\mathbf{H},\X_{t+2}, \mathbf{S}_{t+1}, \mathbf{Y}_{t+1}\right)
\stackrel{W_2}{\to}\left(\mathsf{H}, \mathsf{X}_{1}, \ldots,
\mathsf{X}_{t+2}, \mathsf{~S}_{1}, \ldots, \mathsf{S}_{t+1}, \mathsf{Y}_{1},
\ldots, \mathsf{Y}_{t+1}\right)
$$
where $\mathsf{X}_{t+2}=F\left(\mathsf{Y}_{t+1}+\mathsf{E}, \Xstar\right)$,
showing the third inductive statement and completing the induction.
\end{proof}

\begin{proof}[Proof of \Cref{SEovamp}]
    \eqref{consA} is a direct consequence of \Cref{thm:ampSE}, \eqref{changeofv}, \Cref{prop:contW}, \Cref{prop:combW}, \eqref{x1t} and the fact that proximal map is 1-Lipschitz. To see the first result in \eqref{consC}, note that
    $$
\X \rtwot-\y=\Qbm^{\top} \Dbm \Obm\left(\rtwot-\st\right)-\epbm=\Qbm^{\top} \Dbm \ampst-\epbm
$$
and thus almost surely
$$
\lim _{p \rightarrow \infty} \frac{1}{p}\left\|\X \rtwot-\y\right\|^2=\lim _{p \rightarrow \infty} \frac{1}{p} (\ampst)^{\top} \Dbm^{\top} \Dbm\ampst+\frac{1}{p}\|\epbm\|_2^2- \frac{2}{p} (\ampst)^{\top} \Dbm^{\top} \Qbm \epbm=\tau_{**} \mathbb{E}\D^2+\delta.
$$

To see the second result in \eqref{consC}, we first note the identities
\begin{equation}\label{identv1}
    \hat{\mathbf{x}}_{2 t}-\st=\mathbf{O}^{\top}\left(\mathbf{D}^{\top} \mathbf{D}+\left(\eta_*-\gamma_*\right) \cdot \mathbf{I}_p\right)^{-1}\left[\mathbf{D}^{\top} \mathbf{Q} \epbm+\left(\eta_*-\gamma_*\right) \cdot \mathbf{s}^t\right]
\end{equation}
We also have that
\begin{equation}\label{identv2}
    \begin{aligned}
        \frac{1}{p} \norm{\y-\X \hat{\mathbf{x}}_{2 t}}^2=\frac{1}{p}\norm{\epbm}^2&+\frac{1}{p}\qty(\hat{\mathbf{x}}_{2 t}-\st)\Obm^\top\Dbm^\top \Dbm \Obm(\hat{\mathbf{x}}_{2 t}-\st)\\
        & -2\epbm^\top \Qbm^\top \Dbm \Obm(\hat{\mathbf{x}}_{2 t}-\st)
    \end{aligned}
\end{equation}
Using \eqref{identv1}, \eqref{identv2} above, \Cref{AssumpD},\ref{AssumpPrior}, and \Cref{thm:ampSE}, we obtain that almost surely as $p\to\infty$
\begin{equation}
    \frac{1}{p} \norm{\y-\X \hat{\mathbf{x}}_{2 t}}^2 \to \tau_{**} \cdot \E \frac{\D^2(\eta_*-\gamma_*)^2}{(\D^2+\eta_*-\gamma_*)^2}+\frac{n-p}{p}+\E \qty(\frac{\eta_*-\gamma_*}{\D^2+\eta_*-\gamma_*})^2
\end{equation}
as required. 
\end{proof}

\begin{proof}[Proof of \Cref{prop:convsmallbeta}] 
Recall that $\delta_{tt}=\delta_*$ for all $t \geq 1$ from Theorem
\ref{thm:ampSE}. Then
	$\delta_{s t}=\mathbb{E}\left[\mathsf{X}_{s} \mathsf{X}_{t}\right] \leq
\sqrt{\mathbb{E}\left[\mathsf{X}_{s}^{2}\right]
\mathbb{E}\left[\mathsf{X}_{t}^{2}\right]} =\delta_*$ for all $s,t \geq 1$.
For $s=1$ and any $t \geq 2$, observe also that
\begin{equation}\label{eq:delta1t}
	\begin{gathered}
		\delta_{1t}=\mathbb{E} \Xs_{1} \mathsf{X}_{t}=\mathbb{E}\left[F\left(\mathsf{P}_{0}, \Xstar\right) F\left(\mathsf{Y}_{t-1}+\mathsf{E}, \Xstar\right)\right]=\mathbb{E}\left[\mathbb{E}\left[F\left(\mathsf{P}_{0}, \Xstar\right) F\left(\mathsf{Y}_{t-1}+\mathsf{E}, \Xstar\right) \mid \Xstar\right]\right] \\
		=\mathbb{E}[\mathbb{E}\left[F\left(\mathsf{P}_{0}, \Xstar\right) \mid \Xstar\right]^{2}] \geq 0
	\end{gathered}
\end{equation}
where the last equality holds because
$\mathsf{P}_{0}$, $\mathsf{Y}_{t-1}+\Es$, and $\Xstar$ are independent,
with $\Ps_0$ and $\Ys_{t-1}+\Es$ equal in law
(by the identity $\sigma_*^2+b_*=\taustar$). Consider now the map $\delta_{st} \mapsto \delta_{s+1,t+1}$.
Recalling that $\E \Ys_t^2=\sigma_{*}^{2}$ and $\E \Ys_s\Ys_t=\kappa_*
\delta_{st}$, we may represent
	$$
	\left(\mathsf{Y}_{s}+\mathsf{E}, \mathsf{Y}_{t}+\mathsf{E}\right)\stackrel{L}{=}\left(\sqrt{\kappa_{*} \delta_{s t}+b_{*}} \mathsf{G}+\sqrt{\sigma_{*}^{2}-\kappa_{*} \delta_{s t}} \mathsf{G}^{\prime}, \sqrt{\kappa_{*} \delta_{s t}+b_{*}} \mathsf{G}+\sqrt{\sigma_{*}^{2}-\kappa_{*} \delta_{s t}} \mathsf{G}^{\prime \prime}\right)
	$$
	where $\mathsf{G}, \mathsf{G}^{\prime}, \mathsf{G}^{\prime \prime}$ are
jointly independent standard Gaussian variables.
Denote
$$\mathsf{P}_{\delta}^{\prime}:=\sqrt{\kappa_{*} \delta+b_{*}} \cdot \mathsf{G}+\sqrt{\sigma_{*}^{2}-\kappa_{*} \delta} \cdot \mathsf{G}^{\prime}, \quad \mathsf{P}_{\delta}^{\prime \prime}:=\sqrt{\kappa_{*} \delta+b_{*}} \cdot \mathsf{G}+\sqrt{\sigma_{*}^{2}-\kappa_{*} \delta} \cdot \mathsf{G}^{\prime \prime}$$
and define $g:\left[0,\delta_*\right] \to \mathbb{R}$ by
$g(\delta):=\mathbb{E}\left[F\left(P_\delta',\Xstar\right)
F\left(P_\delta'',\Xstar\right)\right]$. Then $\delta_{s+1,t+1}=g(\delta_{st})$.

We claim that for any $\delta \in [0,\delta_*]$, we have $g(\delta) \geq 0$,
$g'(\delta) \geq 0$, and $g''(\delta) \geq 0$. The first bound $g(\delta) \geq 0$
follows from
$$
g(\delta)=\mathbb{E}\Big[\E[F\left(\mathsf{P}_{\delta}^{\prime}, \Xstar\right)
F\left(\mathsf{P}_{\delta}^{\prime \prime}, \Xstar\right) \mid
\Xstar,\Gs]\Big]=\mathbb{E}\left[\mathbb{E}\left[F\left(\mathsf{P}_{\delta}^{\prime},
\Xstar\right) \mid \Xstar, \mathsf{G}\right]^{2}\right] \geq 0,
$$ 
because $\Ps_\delta',\Ps_\delta''$ are independent and equal in law conditional
on $\Gs,\Xstar$. Differentiating in $\delta$ and applying
Gaussian integration by parts,
	$$
	\begin{aligned}
		&g^{\prime}(\delta) =2 \mathbb{E}\left[F^{\prime}\left(\mathsf{P}_{\delta}^{\prime}, \Xstar\right) F\left(\mathsf{P}_{\delta}^{\prime \prime}, \Xstar\right)\left(\frac{\kappa_{*}}{2 \sqrt{\kappa_{*} \delta+b_{*}}} \cdot \mathsf{G}-\frac{\kappa_{*}}{2 \sqrt{\sigma_{*}^{2}-\kappa_{*} \delta}} \cdot \mathsf{G}^{\prime}\right)\right] \\
		&=\frac{\kappa_{*}}{\sqrt{\kappa_{*} \delta+b_{*}}} \mathbb{E}\left[F^{\prime}\left(\mathsf{P}_{\delta}^{\prime}, \Xstar\right) F\left(\mathsf{P}_{\delta}^{\prime \prime}, \Xstar\right) \mathsf{G}\right]-\frac{\kappa_{*}}{\sqrt{\sigma_{*}^{2}-\kappa_{*} \delta}} \mathbb{E}\left[F^{\prime}\left(\mathsf{P}_{\delta}^{\prime}, \Xstar\right) F\left(\mathsf{P}_{\delta}^{\prime \prime}, \Xstar\right) \mathsf{G}^{\prime}\right] \\
		&=\kappa_{*}\mathbb{E}\left[F^{\prime \prime}\left(\mathsf{P}_{\delta}^{\prime}, \Xstar\right) F\left(\mathsf{P}_{\delta}^{\prime \prime}, \Xstar\right)+F^{\prime}\left(\mathsf{P}_{\delta}^{\prime}, \Xstar\right) F^{\prime}\left(\mathsf{P}_{\delta}^{\prime \prime}, \Xstar\right)\right]
		-\kappa_{*}\mathbb{E}\left[F^{\prime \prime}\left(\mathsf{P}_{\delta}^{\prime}, \Xstar\right) F\left(\mathsf{P}_{\delta}^{\prime \prime}, \Xstar\right)\right] \\
		&=\kappa_{*}
\mathbb{E}\left[F^{\prime}\left(\mathsf{P}_{\delta}^{\prime}, \Xstar\right)
F^{\prime}\left(\mathsf{P}_{\delta}^{\prime \prime}, \Xstar\right)\right].
	\end{aligned}
	$$
Then $g'(\delta)=\kappa_*\E\left[\E[F'(\Ps_\delta',\Xstar) \mid
\Gs,\Xstar]^2\right] \geq 0$, and a similar argument shows $g''(\delta) \geq 0$. Observe that at $\delta=\delta_*$, we have
$\mathsf{P}_{\delta_{*}}^{\prime}=\mathsf{P}_{\delta_{*}}^{\prime
\prime}=\sqrt{\sigma_*^2+b_{*}} \cdot \mathsf{G}=\sqrt{\taustar} \mathsf{G}$
which is equal in law to $\Ps \sim N(\rm{0},\taustar)$. Then
$g(\delta_{*})=\mathbb{E}[F(\Ps,\Xstar)^{2}]=\delta_{*}$
by \Cref{prop:ppt}. So $g:[0,\delta_*] \to
[0,\delta_*]$ is a non-negative, increasing, convex function with a fixed point
at $\delta_*$. We claim that
\begin{equation}\label{eq:gprimebound}
g'(\delta_*)<1
\end{equation}
This then implies that $\delta_*$ is the unique fixed point of $g(\cdot)$
over $[0,\delta_*]$, and $\lim_{t \to \infty} g^{(t)}(\delta)=\delta_*$ for
any $\delta \in [0,\delta_*]$. Observe from (\ref{eq:delta1t})
that $\delta_{1t}=\delta_{12}$ for all $t \geq 2$, so
$\delta_{t,t+s}=g^{(t-1)}(\delta_{1,1+s})=g^{(t-1)}(\delta_{12})$
for any $s \geq 1$. Then $\lim_{\min(s,t) \to \infty} \delta_{st}=\delta_*$
follows.

It remains to show (\ref{eq:gprimebound}). Using \Cref{prop:ppt},
\begin{equation}\label{combo1}
    \begin{aligned}
g^{\prime}\left(\delta_*\right)=\kappa_* \mathbb{E}[ & \left.F^{\prime}\left(\mathsf{P}_{\delta_*}^{\prime}, \Xstar\right)^2\right]=\kappa_* \mathbb{E}\left[\left(\frac{\eta_*}{\eta_*-\gamma_*} \operatorname{Prox}_{\gamma_*^{-1} h}^{\prime}\left(\mathsf{P}_{\delta_*}^{\prime}+\Xstar\right)-\frac{\gamma_*}{\eta_*-\gamma_*}\right)^2\right] \\
& =\left(\frac{\eta_*}{\eta_*-\gamma_*}\right)^2 \kappa_* \mathbb{E}\left[\left(\operatorname{Prox}_{\gamma_*^{-1} h}^{\prime}\left(\mathsf{P}_{\delta_*}^{\prime}+\Xstar\right)-\frac{\gamma_*}{\eta_*}\right)^2\right] \\
& =\left(\frac{\eta_*}{\gamma_*}\right)^2\left(\mathbb{E} \frac{\eta_*^2}{\left(\D^2+\eta_*-\gamma_*\right)^2}-1\right) \mathbb{E}\left[\left(\operatorname{Prox}_{\gamma_*^{-1} h}^{\prime}\left(\mathsf{P}_{\delta_*}^{\prime}+\Xstar\right)\right)^2-\left(\frac{\gamma_*}{\eta_*}\right)^2\right].
\end{aligned}
\end{equation}
Using \Cref{lem:cauchy} (c), we obtain that
\begin{equation}\label{combo2}
\begin{aligned}
& R^{\prime}\left(\eta_*^{-1}\right)=-\left(\mathbb{E} \frac{1}{\left(\D^2+\eta_*-\gamma_*\right)^2}\right)^{-1}+\eta_*^2 \implies  \frac{\eta_*^2}{\eta_*^2-R^{\prime}\left(\eta_*^{-1}\right)}=\mathbb{E} \frac{\eta_*^2}{\left(\D^2+\eta_*-\gamma_*\right)^2}.
\end{aligned}
\end{equation}
Note also that by Jensen's inequality and \eqref{RCc} that 
\begin{equation}\label{combo5}
    \mathbb{E} \frac{\eta_*^2}{\left(\D^2+\eta_*-\gamma_*\right)^2}-1\ge 0
\end{equation}
By \Cref{noid} and \eqref{eq:Jacprox}, we have
\begin{equation*}
\mathbb{E}\left[\left(\operatorname{Prox}_{\gamma_*^{-1} h}^{\prime}\left(\mathsf{P}_{\delta_*}^{\prime}+\Xstar\right)\right)^2\right]<\mathbb{E} \operatorname{Prox}_{\gamma_*^{-1} h}^{\prime}\left(\mathsf{P}_{\delta_*}^{\prime}+\Xstar\right)=\frac{\gamma_*}{\eta_*}.
\end{equation*}
This implies that
\begin{equation}\label{combo3}
0 \leq \mathbb{E}\left[\left(\operatorname{Prox}_{\gamma_*^{-1} h}^{\prime}\left(\mathsf{P}_{\delta_*}^{\prime}+\Xstar\right)\right)^2-\left(\frac{\gamma_*}{\eta_*}\right)^2\right]<\frac{\gamma_*}{\eta_*}-\left(\frac{\gamma_*}{\eta_*}\right)^2.
\end{equation}
Combining \eqref{combo1},\eqref{combo2},\eqref{combo5} and \eqref{combo3} above, we obtain that
$$
g^{\prime}\left(\delta_*\right)<\left(\frac{R^{\prime}\left(\eta_*^{-1}\right)}{\eta_*^2-R^{\prime}\left(\eta_*^{-1}\right)}\right)\left(\frac{\eta_*}{\gamma_*}-1\right).
$$
To show the rhs is less than 1, we observe that
\begin{equation}\label{combb}
    \begin{aligned}
&\left(\frac{R^{\prime}\left(\eta_*^{-1}\right)}{\eta_*^2-R^{\prime}\left(\eta_*^{-1}\right)}\right)\left(\frac{\eta_*}{\gamma_*}-1\right)<1 \Leftrightarrow \frac{R^{\prime}\left(\eta_*^{-1}\right)}{\eta_*^2-R^{\prime}\left(\eta_*^{-1}\right)}<\frac{\eta_* \gamma_*}{\eta_*^2-\eta_* \gamma_*} \stackrel{(i)}{\Leftrightarrow} R^{\prime}\left(\eta_*^{-1}\right) \\
&<\eta_* \gamma_* \stackrel{(i i)}{\Leftrightarrow}-\frac{\eta_*^{-1} R^{\prime}\left(\eta_*^{-1}\right)}{R\left(\eta_*^{-1}\right)}<1
\end{aligned}
\end{equation}
where in $(i)$ we used that $x \mapsto \frac{x}{\eta_*^2-x}$ is strictly increasing and in $(ii)$ we used \eqref{RCc}. Finally, we conclude the proof by noting that the rhs of \eqref{combb} holds true by \Cref{lem:cauchy}, (d).
\end{proof}

\begin{proof}[Proof of \Cref{Corc}]
Note that 
$$
\begin{aligned}
& \lim _{(s, t) \rightarrow \infty}\left(\lim _{p \rightarrow \infty} \frac{1}{p}\left\|\ampxt-\mathbf{x}^s\right\|^2\right)=\lim _{(s, t) \rightarrow \infty}\left(\delta_{s s}+\delta_{t t}-2 \delta_{s t}\right)=0 \\
&\lim _{(s, t) \rightarrow \infty}\left(\lim _{p \rightarrow \infty} \frac{1}{p}\left\|\ampyt-\y^s\right\|^2\right)=\lim _{(s, t) \rightarrow \infty} \kappa_*\left(\delta_{s s}+\delta_{t t}-2 \delta_{s t}\right)=0
\end{aligned}
$$
using \Cref{prop:convsmallbeta}. The convergence of iterates $\ronet, \rtwot$ follows from $\rtwot=\ampxt+\st, \ronet=\ampyt+\st+\mathbf{e}$. The convergence of $\xonet, \xtwot$ follows from the fact they can be expressed as Lipschitz function applied to iterates $\ronetm$ and $\rtwot$, i.e. \eqref{x1t} and \eqref{x2t}. 
\end{proof}

\subsubsection{Track regularized estimator using VAMP iterates}\label{appendix:trackiovamp}

Let us first prove the following lemma
\begin{Lemma}\label{subgConv}
Recall the objective function $\mathcal{L}$ defined in \eqref{deflasso}. The vector
$$
\mathcal{L}^{\prime}\left(\xonet\right):=\X^{\top}\left(\X \xonet-\y\right)+\gamma_*\left(\ronetm-\xonet\right)
$$
is a subgraident of $\mathcal{L}$ at $\xonet$. We also have that almost surely,
$$\lim _{t \rightarrow \infty} \lim _{p \rightarrow \infty} \frac{1}{p}\left\|\mathcal{L}^{\prime}\left(\xonet\right)\right\|_2^2=0.$$
\end{Lemma}

\begin{proof}[Proof of \Cref{subgConv}]
Let $\partial h$ denotes sub-gradients of $h$. We have that $$
\mathcal{L}^{\prime}\left(\xonet\right) =\X^{\top}\left(\X \xonet-\y\right)+\gamma_*\left(\ronetm-\xonet\right)\in \X^{\top}\left(\X \xonet-\y\right)+\partial h\left(\xonet\right)
$$
because
$$
\xonet=\operatorname{Prox}_{\gamma_*^{-1} h}\left(\ronetm\right) \Leftrightarrow \ronetm-\xonet \in \gamma_*^{-1} \partial h\left(\xonet\right).
$$

Now note that
$$
\begin{aligned}
 \mathcal{L}^{\prime}\left(\xonet\right)&=\left(\X^{\top} \X-\gamma_* I\right) \xonet-\X^{\top} \y+\gamma_* \ronetm \\
& \stackrel{(a)}{=}\left(1-\frac{\gamma_*}{\eta_*}\right)\left(\X^{\top} \X+\gamma_* \mathbf{I}_p\right)\left(\rtwot-\rtwotm \right)+\left(\X^{\top} \X+\left(\eta_*-\gamma_*\right) \mathbf{I}_p\right) \xtwotm \\
& \qquad \qquad -\X^{\top} \y-\left(\eta_*-\gamma_*\right) \rtwotm  \\
& \stackrel{(b)}{=}\left(1-\frac{\gamma_*}{\eta_*}\right)\left(\X^{\top} \X+\gamma_* \mathbf{I}_p\right)\left(\rtwot-\rtwotm \right)
\end{aligned}
$$
where we used in $(a)$
\begin{equation}\label{aux2}
    \xonet=\left(1-\frac{\gamma_*}{\eta_*}\right)\left(\rtwot-\rtwotm \right)+\xtwotm 
\end{equation}
 which follows from \eqref{r1t},\eqref{r2t} and in $(b)$,
$$
\left(\X^{\top} \X+\left(\eta_*-\gamma_*\right) \mathbf{I}_p\right) \xtwotm =\X^{\top} \y+\left(\eta_*-\gamma_*\right) \rtwotm 
$$
which follows from \eqref{x2t}. It then follows from \Cref{prop:convsmallbeta} that almost surely
$$
\lim _{t \rightarrow \infty} \lim _{p \rightarrow \infty} \frac{1}{p}\left\|\mathcal{L}^{\prime}\left(\xonet\right)\right\|_2^2\le \lim _{t \rightarrow \infty} \lim _{p \rightarrow \infty}\left(1-\frac{\gamma_*}{\eta_*}\right)\left\|\X^{\top} \X+\gamma_* \mathbf{I}_p \right\|_{\mathrm{op}}^2 \cdot \frac{1}{p}\left\|\rtwot-\rtwotm \right\|_2^2=0
$$
as required.

\end{proof}

\begin{proof}[Proof of \Cref{prop:sds}]
Let us first consider the case $\co>0$ from \Cref{Assumpgp}. From strong convexity of the penalty function, almost surely, for all sufficiently large $p$,
\begin{equation}\label{eqconv}
    \mathcal{L}\left(\xonet\right) \geq \mathcal{L}(\hatbt) \geq \mathcal{L}\left(\xonet\right)+\left\langle\mathcal{L}^{\prime}\left(\xonet\right), \hatbt-\xonet\right\rangle+\frac{1}{2} \co \left\|\hatbt-\xonet\right\|_2^2
\end{equation}
where $\mathcal{L}^{\prime}\left(\xonet\right)$ is the subgradient of $\mathcal{L}$ defined in \Cref{subgConv}.

By Cauchy-Schwartz inequality, we have that
\begin{equation}\label{aux1}
    \left\|\hatbt-\xonet\right\|_2 \leq \frac{2}{c_0}\left\|\mathcal{L}^{\prime}\left(\xonet\right)\right\|_2
\end{equation}
which along with \Cref{subgConv} implies that
\begin{equation}\label{x1tcon}
    \lim _{t \rightarrow \infty} \lim _{p \rightarrow \infty} \frac{1}{p}\left\|\hatbt-\xonet\right\|_2^2=0
\end{equation}
By \eqref{aux2} and \Cref{prop:convsmallbeta}, we also have that
\begin{equation}\label{x2tcon}
    \lim _{t \rightarrow \infty} \lim _{p \rightarrow \infty} \frac{1}{p}\left\|\hatbt-\xtwot\right\|_2^2=0.
\end{equation}
Rearranging \eqref{x1t}---\eqref{r1t}, we have
$$
\rtwot=\xtwot+\frac{1}{\left(\eta_*-\gamma_*\right)} \X^{\top}\left(\X \xtwot-\y\right), \quad \ronet=\xtwot+\frac{1}{\gamma_*} \X^{\top}\left(\y-\X \xtwot\right)
$$
which along with \eqref{x1tcon}, \eqref{x2tcon} implies that
$$
\lim _{t \rightarrow \infty} \lim _{p \rightarrow \infty} \frac{1}{p}\left\|\ronet-\rstar\right\|_2^2=\lim _{t \rightarrow \infty} \lim _{p \rightarrow \infty} \frac{1}{p}\left\|\rtwot-\rstar\right\|_2^2=0.
$$
The proof for the other case in \Cref{Assumpgp}—that is, when \(\lim_{p\to \infty} \min_{i\in [p]} d_i^2 > c_1\)—is almost identical; the only difference is that for all sufficiently large \(p\), \eqref{eqconv} and \eqref{aux1} hold with \(c_0\) replaced by \(c_1\).
\end{proof}

\subsection{Supporting proofs for result B} \label{SupportPII}
\subsubsection{Properties of sample adjustment equation}\label{appendix:sampleadjeq}
\begin{proof}[Proof of \Cref{existslams}]
We can write $g_p(\gamma)$ as
\begin{equation}\label{mao}
    \begin{aligned}
    g_p(\gamma)=&\frac{1}{p} \sum_{i: d_i \neq 0} \frac{1}{\frac{1}{p}\left(\sum_{j: h^{\prime \prime}\left(\hjatbtj\right) \neq+\infty, 0} \frac{d_i^2-\gamma}{\gamma+h^{\prime \prime}\left(\hjatbtj\right)}+\sum_{j: h^{\prime \prime}\left(\hjatbtj\right)=0} \frac{d_i^2-\gamma}{\gamma}\right)+1}\\
    &+\frac{1}{p}\sum_{i: d_i=0} \frac{1}{\frac{1}{p}\left(\sum_{j: h^{\prime \prime}\left(\hjatbtj\right) \neq 0,+\infty} \frac{-\gamma}{\gamma+h^{\prime \prime}\left(\hjatbtj\right)}-\sum_{j: h^{\prime \prime}\left(\hjatbtj\right)=0} 1\right)+1}
    \end{aligned}.
\end{equation}
Let us first consider the case where $d_i \neq 0$ for all $i$. In this case, only the first sum remain and the denominators of the summands are
\begin{equation*}
    \begin{aligned}
        &\frac{d_i^2}{p}\left(\sum_{j: h^{\prime \prime}\left(\hjatbtj\right) \neq+\infty, 0} \frac{1}{\gamma+h^{\prime \prime}\left(\hjatbtj\right)}+\sum_{j: h^{\prime \prime}\left(\hjatbtj\right)=0} \frac{1}{\gamma}\right)\\
        &\qquad+1-\frac{1}{p}\left(\sum_{j: h^{\prime \prime}\left(\hjatbtj\right) \neq+\infty, 0} \frac{\gamma}{\gamma+h^{\prime \prime}\left(\hjatbtj\right)}+\sum_{j: h^{\prime \prime}\left(\hjatbtj\right)=0} 1\right)
    \end{aligned}
\end{equation*}
Observe that
\[
1 -\frac{1}{p}\left(\sum_{j: h^{\prime \prime}\left(\hjatbtj\right)_{\neq+\infty, 0}} \frac{\gamma}{\gamma+h^{\prime \prime}\left(\hjatbtj\right)}+\sum_{j: h^{\prime \prime}\left(\hjatbtj\right)=0} 1\right) \geq 0
\]
and
$$
\begin{aligned}
& \sum_{j: h^{\prime \prime}\left(\hjatbtj\right) \neq+\infty, 0} \frac{1}{\gamma+h^{\prime \prime}\left(\hjatbtj\right)}+\sum_{j: h^{\prime \prime}\left(\hjatbtj\right)=0} \frac{1}{\gamma}=0 \\
&\qquad \qquad \Leftrightarrow \sum_{j: h^{\prime \prime}\left(\hjatbtj\right) \neq+\infty, 0} \frac{\gamma}{\gamma+h^{\prime \prime}\left(\hjatbtj\right)}+\sum_{j: h^{\prime \prime}\left(\hjatbtj\right)=0} 1=0.
\end{aligned}
$$
These two observations and the assumption that $d_i \neq 0$ for all $i$ implies that for all $i \in[p]$, $g_p$ is well-defined on $(0,+\infty)$. For the case where $d_i=0$ for some $i$, all the denominators in \eqref{mao} are non-zero (and thus $g_p$ is well defined on $(0,+\infty)$) if
\begin{equation}\label{sdfaldfk}
    1-\frac{1}{p} \sum_{j: h^{\prime \prime}\left(\hjatbtj\right) \neq 0,+\infty} \frac{\gamma}{\gamma+h^{\prime \prime}\left(\hjatbtj\right)}-\frac{1}{p} \sum_{j: h^{\prime \prime}\left(\hjatbtj\right)=0} 1>0
\end{equation}
which is equivalent to $\frac{1}{p} \sum_{j: h^{\prime \prime}\left(\hjatbtj\right) \neq 0} 1>\frac{1}{p} \sum_{j: h^{\prime \prime}\left(\hjatbtj\right) \neq 0,+\infty} \frac{\gamma}{\gamma+h^{\prime \prime}\left(\hjatbtj\right)}$. The condition \eqref{sdfaldfk} is also necessary when $\exists i\in [p], d_i>0$. Meanwhile, we have that
$$
\frac{1}{p} \sum_{j: h^{\prime \prime}\left(\hjatbtj\right) \neq 0} 1 \stackrel{(a)}{\geq} \frac{1}{p} \sum_{j: h^{\prime \prime}\left(\hjatbtj\right) \neq 0,+\infty} 1 \stackrel{(b)}{\geq} \frac{1}{p} \sum_{j: h^{\prime \prime}\left(\hjatbtj\right) \neq 0,+\infty} \frac{\gamma}{\gamma+h^{\prime \prime}\left(\hjatbtj\right)} .
$$
Therefore, \eqref{sdfaldfk} holds if and only if at least one of $(a),(b)$ is strict. Note that $(a)$ is strict if and only if $\frac{1}{p} \sum_{j: h^{\prime \prime}\left(\hjatbtj\right)=+\infty} 1>0$ and $(b)$ is strict if and only if
$$
\frac{1}{p} \sum_{j: h^{\prime \prime}\left(\hjatbtj\right) \neq 0,+\infty}\left(1-\frac{\gamma}{\gamma+h^{\prime \prime}\left(\hjatbtj\right)}\right)>0 \Leftrightarrow \frac{1}{p} \sum_{j: h^{\prime \prime}\left(\hjatbtj\right) \neq 0,+\infty} 1>0.
$$
Note that $\frac{1}{p} \sum_{j: h^{\prime \prime}\left(\hjatbtj\right) \neq 0} 1>0$ if and only if $\frac{1}{p} \sum_{j: h^{\prime \prime}\left(\hjatbtj\right) \neq 0,+\infty} 1>0$ or $\frac{1}{p} \sum_{j: h^{\prime \prime}\left(\hjatbtj\right)=+\infty} 1>0$. This shows that \eqref{sdfaldfk} holds if and only if there exists some $i \in[p]$ such that $h^{\prime \prime}\left(\hiatbti\right) \neq 0$. The latter statement holds if $\|d\|_0+\left\|h^{\prime \prime}(\hatbt)\right\|_0>p$.

From now on, suppose that $g_p$ is well-defined. It follows from \eqref{sdfaldfk} that it is differentiable. Taking derivative of \eqref{sdfaldfk} yields
\begin{equation}
    \begin{aligned}
        g^{\prime}_p(\gamma)= &\frac{1}{p} \sum_{i: d_i \neq 0} \frac{\frac{1}{p}\left(\sum_{j: h^{\prime \prime}\left(\hjatbtj\right) \neq+\infty, 0} \frac{h^{\prime \prime}\left(\hjatbtj\right)+d_i^2}{\left(\gamma+h^{\prime \prime}\left(\hjatbtj\right)\right)^2}+\sum_{j: h^{\prime \prime}\left(\hjatbtj\right)=0} \frac{d_i^2}{\gamma^2}\right)}{\left(\frac{1}{p}\left(\sum_{j: h^{\prime \prime}\left(\hjatbtj\right) \neq+\infty, 0} \frac{d_i^2-\gamma}{\gamma+h^{\prime \prime}\left(\hjatbtj\right)}+\sum_{j: h^{\prime \prime}\left(\hjatbtj\right)=0} \frac{d_i^2-\gamma}{\gamma}\right)+1\right)^2} \\
        &+\frac{1}{p} \sum_{i: d_i=0} \frac{\frac{1}{p}\left(\sum_{j: h^{\prime \prime}\left(\hjatbtj\right) \neq 0,+\infty} \frac{h^{\prime \prime}\left(\hjatbtj\right)}{\left(\gamma+h^{\prime \prime}\left(\hjatbtj\right)\right)^2}\right)}{\left(\frac{1}{p}\left(\sum_{j: h^{\prime \prime}\left(\hjatbtj\right) \neq 0,+\infty} \frac{-\gamma}{\gamma+h^{\prime \prime}\left(\hjatbtj\right)}-\sum_{j: h^{\prime \prime}\left(\hjatbtj\right)=0} 1\right)+1\right)^2}>0
    \end{aligned}
\end{equation}
We claim that given $\gamma \mapsto g(\gamma)$ is well-defined, $g_p^{\prime}(\gamma)>0, \forall \gamma \in(0,+\infty)$ if and only if for some $j, \frac{1}{p} \sum_{j: h^{\prime \prime}\left(\hjatbtj\right) \neq+\infty} 1>0$. Note that if $\frac{1}{p} \sum_{j: h^{\prime \prime}\left(\hjatbtj\right) \neq 0,+\infty} 1>0$, then 
\begin{equation*}
    \frac{1}{p} \sum_{j: h^{\prime \prime}\left(\hjatbtj\right) \neq 0,+\infty} \frac{h^{\prime \prime}\left(\hjatbtj\right)}{\left(\gamma+h^{\prime \prime}\left(\hjatbtj\right)\right)^2}>0
\end{equation*}
and the above will be positive. Also note that if $\frac{1}{p} \sum_{j: h^{\prime \prime}\left(\hjatbtj\right)=0} 1>0$, then the assumption $D \neq$ 0 implies that there exists some $i \in[p]$ such that $\frac{1}{p} \sum_{j: h^{\prime \prime}\left(\hjatbtj\right)=0} \frac{d_i^2}{\gamma^2}>0$ and the above will be positive. Note that $\frac{1}{p} \sum_{j: h^{\prime \prime}\left(\hjatbtj\right) \neq+\infty} 1>0$ if and only if $\frac{1}{p} \sum_{j: h^{\prime \prime}\left(\hjatbtj\right) \neq 0,+\infty} 1>0$ or $\frac{1}{p} \sum_{j: h^{\prime \prime}\left(\hjatbtj\right)=0} 1>0$. Therefore, the positivity of the above follows from the assumption that there exists some $j \in[p]$ such that $h^{\prime \prime}\left(\hjatbtj\right) \neq+\infty$. Conversely, if $h^{\prime \prime}\left(\hjatbtj\right)=+\infty, \forall j, g_p(\gamma)=$ $1, \forall \gamma \in(0,+\infty).$

Note that if $\left\|h^{\prime \prime}(\hatbt)\right\|_0<p$ and for all $i, d_i\neq 0$, $\lim _{\gamma \rightarrow 0} g_p(\gamma)=0$; if $\left\|h^{\prime \prime}(\hatbt)\right\|_0=0$ and for some $i, d_i=0$, $g_p$ is not well-defined per discussion above; if $0<\left\|h^{\prime \prime}(\hatbt)\right\|_0<p$ and for some $i, d_i=0$, 
$$\lim _{\gamma \rightarrow 0} g_p(\gamma)=\frac{p-\|d\|_0}{\left\|h^{\prime \prime}(\hatbt)\right\|_0}<1$$
given that $\|d\|_0+\left\|h^{\prime \prime}(\hatbt)\right\|_0>p$; if $\left\|h^{\prime \prime}(\hatbt)\right\|_0=p,$
$$\lim _{\gamma \rightarrow 0} g_p(\gamma)=\frac{1}{p}\left(\sum_{i: d_i \neq 0} \frac{1}{\frac{1}{p}\left(\sum_{j: h^{\prime \prime}\left(\hjatbtj\right) \neq+\infty, 0 } h^{\prime \prime}\left(\hjatbtj\right)\right)+1}+\sum_{i: d_i=0} 1\right)<1$$ since $\Dbm \neq 0$. We also have that 
$$\lim _{\gamma \rightarrow+\infty} g_p(\gamma)=\frac{1}{1-\left(\frac{1}{p} \sum_{j: h^{\prime \prime}\left(\hjatbtj\right) \neq+\infty}1\right)} \in(1,+\infty]$$ 
if for some $i$, $h^{\prime \prime}\left(\hiatbti\right) \neq+\infty$. The proof is complete after combining these facts. 
\end{proof}

\subsubsection{Population limit of the adjustment equation}\label{appendix:popuadjeq}
\begin{proof}[Proof of \Cref{well}]
We can write $g_\infty (\gamma)$ as
\begin{equation}\label{zdgm}
\begin{aligned}
    & g_\infty(\gamma)=\mathbb{E} \frac{\mathbb{I}\left(\D^2>0\right)}{\left(\D^2-\gamma\right) \mathbb{E} \frac{\mathbb{I}(\Us \neq+\infty, 0)}{\gamma+\Us}+\left(\D^2-\gamma\right) \frac{1}{\gamma} \mathbb{P}(\Us=0)+1}\\
    & \qquad \qquad \qquad +\frac{\mathbb{P}\left(\D^2=0\right)}{\mathbb{E} \frac{-\gamma \mathbb{I}(\Us \neq+\infty, 0)}{\gamma+\Us}-\mathbb{P}(\Us=0)+1}
\end{aligned}
\end{equation}
Note that the denominators of both terms in \eqref{zdgm} are non-zero (and thus $g_\infty$ is well-defined) if
\begin{equation}\label{zdgm1}
    1-\E \frac{\gamma \mathbb{I}(\Us \neq+\infty, 0)}{\gamma+\Us}-\mathbb{P}(\Us=0)>0
\end{equation}
which is equivalent to $\mathbb{P}(\Us \neq 0)>\E \frac{\gamma \mathbb{I}(\Us \neq+\infty, 0)}{\gamma+\Us}$. Meanwhile we have that
$$
\mathbb{P}(\Us \neq 0) \stackrel{(a)}{\geq} \mathbb{P}(\Us \neq 0,+\infty) \stackrel{(b)}{\geq} \E \frac{\gamma \mathbb{I}(\Us \neq+\infty, 0)}{\gamma+\Us}
$$
Therefore, \eqref{zdgm1} holds if at least one of $(a),(b)$ is strict. Note that $(a)$ is strict if and only if $\mathbb{P}(\Us=+\infty)>0$ and $(b)$ is strict if and only if
$$
\E \mathbb{I}(\Us \neq+\infty, 0)\left(1-\frac{\gamma}{\gamma+\Us}\right)>0 \Leftrightarrow \mathbb{P}(\Us \neq 0,+\infty)>0.
$$
Note that $\mathbb{P}(\Us \neq 0)>0$ if and only if $\mathbb{P}(\Us \neq 0,+\infty)>0$ or $\mathbb{P}(\Us=+\infty)>0$. This shows that \eqref{zdgm1} holds and thus $g_\infty$ is well-defined  since $\mathbb{P}(\Us \neq 0)>0$ by \Cref{noid}. 

It follows from \eqref{RCa}, \eqref{RCc} and \eqref{eq:Jacprox} that $\gamma_*$ is a solution of the equation $g_\infty(\gamma)=1$ . We prove that $\gamma_*$ is a unique solution by showing $g_\infty$ is strictly increasing. Applying \cite[Proposition A.2.1]{talagrand2010mean}, we obtain that $g_\infty$ is differentiable and can be differentiated inside the expectation as follows
$$
\begin{aligned}
g_{\infty}^{\prime}(\gamma)=&\mathbb{E} \frac{\mathbb{I}\left(\D^2>0\right)\left(\mathbb{E} \frac{\Us \mathbb{I}(\Us \neq+\infty, 0)}{(\gamma+\Us)^2}+\D^2 \mathbb{E} \frac{\mathbb{I}(\Us \neq+\infty, 0)}{(\gamma+\Us)^2}+\left(\D^2 \frac{1}{\gamma^2}\right) \mathbb{P}(\Us=0)\right)}{\left(\left(\D^2-\gamma\right) \mathbb{E} \frac{\mathbb{I}(\Us \neq+\infty, 0)}{\gamma+\Us}+\left(\D^2-\gamma\right) \frac{1}{\gamma} \mathbb{P}(\Us=0)+1\right)^2} \\
&+\mathbb{E} \frac{\mathbb{I}\left(\D^2=0\right)\left(\mathbb{E} \frac{\Us \mathbb{I}(\Us \neq+\infty, 0)}{(\gamma+\Us)^2}\right)}{\left(\mathbb{E} \frac{-\gamma \mathbb{I}(\Us \neq+\infty, 0)}{\gamma+\Us}-\mathbb{P}(\Us=0)+1\right)^2}
\end{aligned}
$$
To prove $g_{\infty}^{\prime}(\gamma)>0, \forall \gamma \in(0,+\infty)$, note that if $\mathbb{P}(\Us \neq+\infty, 0)>0$, then $\mathbb{E} \frac{\Us \mathbb{I}(\Us \neq+\infty, 0)}{(\gamma+\Us)^2}>0$ and the above will be positive. Also note that if $\mathbb{P}(\Us=0)>0$, then $\mathbb{I}\left(\D^2>0\right)\left(\D^2 \frac{1}{\gamma^2}\right) \mathbb{P}(\Us=0)>$ 0 with positive probability and the above will be positive. Note that $\mathbb{P}(\Us \neq+\infty)>0$ if and only if $\mathbb{P}(\Us \neq 0$ and $\Us \neq+\infty)>0$ or $\mathbb{P}(\Us=0)>0$. Therefore, the positivity of $g_\infty^{\prime}(\gamma)$ follows from $\mathbb{P}(\Us \neq+\infty)>0$ which holds by \Cref{noid}. The proof is now complete.  
\end{proof}

\begin{proof}[Proof of \Cref{ptconv}]
We first note that
\begin{equation}\label{651}
    \hatbt=\operatorname{Prox}_{\gamma_*^{-1} h}\left(\rstar\right).
\end{equation}
This follows from $\rstar \in \hatbt+\frac{1}{\gamma_*} \partial h(\hatbt)$ and the equivalence relation $\rstar \in \hatbt+\frac{1}{\gamma_*} \partial h(\hatbt) \Leftrightarrow \hatbt=\operatorname{Prox}_{\gamma_*^{-1} h}\left(\rstar\right)$. The former is a consequence of the KKT condition $\X^{\top}(\y-\X \hatbt) \in \partial h(\hatbt)$ and the latter follows from \Cref{prop:proxp}, $(a)$. Also note that for any $\gamma>0$,
\begin{equation}\label{sa51}
    \mathbb{P}\left(\sqrt{\taustar} \Zs+\Xstar \in\left\{x \in \mathbb{R}: \frac{1}{\gamma+ h^{\prime \prime}\left(\operatorname{Prox}_{\gamma_*^{-1} h}(x)\right)} \text { is continuous at } x\right\}\right)=1
\end{equation}
which follows from that $x\mapsto \frac{1}{\gamma+ h^{\prime \prime}\left(\operatorname{Prox}_{\gamma_*^{-1} h}(x)\right)}$ has only finitely many discontinuities (cf. \Cref{Extend}) and that $\taustar>0$. Then, almost surely,
\begin{equation}\label{hassabids}
    \begin{aligned}
    \lim _{p \rightarrow \infty} \frac{1}{p} \sum_{i=1}^p \frac{1}{\gamma+h^{\prime \prime}\left(\hiatbti\right)} & \stackrel{(a)}{=} \lim _{p \rightarrow \infty} \frac{1}{p} \sum_{j=1}^p \frac{1}{\gamma+h^{\prime \prime}\left(\operatorname{Prox}_{\gamma_*^{-1} h}\left(r_{*, j}\right)\right)}\\
    &\stackrel{(b)}{=} \mathbb{E} \frac{1}{\gamma+h^{\prime \prime}\left(\operatorname{Prox}_{\gamma_*^{-1} h}\left(\sqrt{\taustar} \Zs+\Xstar\right)\right)}
\end{aligned}
\end{equation}
where $(a)$ follows from \eqref{651} and $(b)$ follows from \Cref{thm:empmain}, \Cref{prop:asW} and \eqref{sa51}. An immediate consequence is that almost surely for all sufficiently large $p$, there must exist some $i\in [p]$ such that $h^{\prime \prime} (\hat{\beta}_i)\neq +\infty$. This is because the RHS is bounded away from $0$ for any fixed $\gamma>0$; for if not, we must have $h^{\prime \prime}\left(\operatorname{Prox}_{\gamma_*^{-1} h}\left(\sqrt{\taustar} \Zs+\Xstar\right)\right)=+\infty$ almost surely, which implies that 
$$\frac{1}{\eta_*}=\gamma_*^{-1} \E \operatorname{Prox}_{\gamma_{*}^{-1} h}^{\prime}\left(\Xstar+\sqrt{\taustar} \Zs\right)=\E \frac{1}{\gamma_*+h^{\prime \prime}\left(\operatorname{Prox}_{\gamma_*^{-1} h}\left(\Xstar+\sqrt{\taustar} \Zs\right)\right)}=0,$$
contradicting \Cref{Assumpfix}. By this and \Cref{Assumpgp}, we know that \Cref{Assumpgpweak} holds almost surely for all sufficiently large $p$. By \Cref{CORweak}, almost surely for sufficiently large $p$, $g_p$ is well-defined, strictly increasing and equation \eqref{solution} admits a unique solution on $(0,+\infty)$. 

Now, \eqref{hassabids}, along with \Cref{AssumpD} and \Cref{prop:combW}, implies that almost surely
\begin{equation}\label{thisffdf}
\left(\operatorname{diag}\left(\Dbm^{\top} \Dbm\right)-\gamma\right)\left(\frac{1}{p} \sum_{j=1}^p \frac{1}{\gamma+h^{\prime \prime}\left(\hjatbtj\right)}\right) \stackrel{W_2}{\rightarrow}\left(\D^2-\gamma\right) \mathbb{E} \frac{1}{\gamma+\Us}.
\end{equation}
Almost sure convergence \eqref{desired} follows from \eqref{thisffdf}, \Cref{prop:asW} and the fact from \Cref{well} that $1+\left(\D^2-\gamma\right) \mathbb{E} \frac{1}{\gamma+\Us}>0$ almost surely.
\end{proof}

\subsection{Finite or single coordinate inference under exchangeability}\label{appendix:single}

\begin{proof}[Proof of \Cref{sgoods}]
We only show that $\frac{{r}_{*, i}-\sti}{\sqrt{\taustar}} \Rightarrow N(0,1)$ for $\mathbf{r}_*$ defined in \eqref{defr1r2}. \eqref{werbaoz} then follows from consistency of $\hat{\tau}_{*}$ and $\adj$ (cf. \Cref{neig}) and the Slutsky's theorem. Let $\bUm \in \mathbb{R}^{p \times p}$ denote a permutation operator drawn uniformly at random independent of $\st, \X, \epbm$. We have that
$$
\left(\X \bUm^\top, \bUm \st, \epbm\right) \stackrel{L}{=}\left(\X, \st, \epbm\right)
$$
where we use $\stackrel{L}{=}$ to denote equality in law. Note that
$$
\begin{aligned}
& \hatbt=\underset{\bm{\beta}}{\operatorname{argmin}} \frac{1}{2}\left\|\X \bUm^\top \bUm\left(\st-\bm{\beta}\right)+\epbm\right\|^2+h\left(\bUm^\top \bUm \bm{\beta}\right) \\
&\qquad =\bUm^\top \underset{\bUm \beta}{\operatorname{argmin}} \frac{1}{2}\left\|\X \bUm^\top\left(\bUm \st-\bUm \bm{\beta} \right)+\epbm\right\|^2+h(\bUm \bm{\beta})
\end{aligned}
$$
where $h$ applies entry-wise to its argument. The above then implies
\begin{equation}\label{aaaf}
\begin{aligned}
& \left(\bUm \hatbt, \X \bUm^\top, \bUm \st, \epbm\right) \\
& \qquad =\left(\underset{\bm{\beta}}{\operatorname{argmin}} \frac{1}{2}\left\|\X \bUm^\top\left(\bUm \st-\bm{\beta}\right)+\epbm\right\|^2+h(\bm{\beta}), \X \bUm^\top, \bUm \st, \epbm\right) \\
& \qquad \stackrel{L}{=}\left(\underset{\beta}{\operatorname{argmin}} \frac{1}{2}\left\|\X\left(\st-\bm{\beta}\right)+\epbm\right\|^2+h(\bm{\beta}), \X, \st, \epbm\right) \\
& \qquad =\left(\hatbt, \X, \st, \epbm\right)
\end{aligned}
\end{equation}
Below we prove the Corollary for $\mathcal{L}=\{i,k\},i\neq k$. The general case is analogous. For standard basis $\mathbf{e}_i, \mathbf{e}_k$, and any constant $c_1, c_2 \in \R$,
$$
\begin{aligned}
 \mathbb{P}&\left(\frac{\mathbf{e}_i^{\top} \rstar-\mathbf{e}_i^{\top} \st}{\sqrt{\taustar}}<c_1, \frac{\mathbf{e}_k^{\top} \rstar-\mathbf{e}_k^{\top} \st}{\sqrt{\taustar}}<c_2 \right) \\
&\stackrel{(a)}{=} \mathbb{P}\left(\frac{\mathbf{e}_i^{\top} \bUm \rstar-\mathbf{e}_i^{\top} \bUm \st}{\sqrt{\taustar}}<c_1, \frac{\mathbf{e}_k^{\top} \bUm \rstar-\mathbf{e}_k^{\top} \bUm \st}{\sqrt{\taustar}}<c_2\right) \\
& \stackrel{(b)}{=} \mathbb{E}\left(\mathbb{P}\left(\frac{\mathbf{e}_i^{\top} \bUm \rstar-\mathbf{e}_i^{\top} \bUm \st}{\sqrt{\taustar}}<c_1, \frac{\mathbf{e}_k^{\top} \bUm \rstar-\mathbf{e}_k^{\top} \bUm \st}{\sqrt{\taustar}}<c_2 \mid \mathcal{F}\left(\st, \epbm, \X\right)\right)\right) \\
& \stackrel{(c)}{=} \mathbb{E} \frac{1}{p(p-1)} \sum_{j_1\neq j_2\in [p]} \mathbb{I}\left(\frac{1}{\sqrt{\taustar}}\left(r_{*, j_1}-\beta_{j_1}^{\star}\right)<c_1\right) \mathbb{I}\left(\frac{1}{\sqrt{\taustar}}\left(r_{*, j_2}-\beta_{j_2}^{\star}\right)<c_2\right)
\end{aligned}
$$
where in $(a)$ we used \eqref{aaaf} above, in $(b)$ we used $ \mathcal{F}\left(\st, \epbm, \X\right)$ to denote sigma-field generated by $\st, \epbm, \X$ and in $(c)$ we used that $\mathbf{U}$ is a permutation operator drawn uniformly at random.

Note that almost surely as $p\to \infty$,
\begin{equation*}
    \begin{aligned}
    &\bigg|\frac{1}{p(p-1)} \sum_{j_1\neq j_2\in [p]} \mathbb{I}\left(\frac{1}{\sqrt{\taustar}}\left(r_{*, j_1}-\beta_{j_1}^{\star}\right)<c_1\right) \mathbb{I}\left(\frac{1}{\sqrt{\taustar}}\left(r_{*, j_2}-\beta_{j_2}^{\star}\right)<c_2\right)- \\
    &\qquad \frac{1}{p^2} \sum_{j=1}^{p} \mathbb{I}\left(\frac{1}{\sqrt{\taustar}}\left(r_{*, j}-\beta_j^{\star}\right)<c_1\right) \sum_{j=1}^{p} \mathbb{I}\left(\frac{1}{\sqrt{\taustar}}\left(r_{*, j}-\beta_j^{\star}\right)<c_2\right)\bigg|\to 0.
    \end{aligned}
\end{equation*}
Note also that for $\iota=1,2$ almost surely
$$
\lim _{p\to \infty} \frac{1}{p} \sum_{j=1}^{p} \mathbb{I}\left(\frac{1}{\sqrt{\taustar}}\left(r_{*, j}-\beta_j^{\star}\right)<c_\iota\right)=\mathbb{P}(\Zs<c_\iota)
$$
where $\Zs\sim N(0,1)$. Here, we used \Cref{neig} and \Cref{prop:asW}. Using dominated convergence theorem, we conclude that
$$
 \mathbb{P}\left(\frac{\mathbf{e}_i^{\top} \rstar-\mathbf{e}_i^{\top} \st}{\sqrt{\taustar}}<c_1, \frac{\mathbf{e}_k^{\top} \rstar-\mathbf{e}_k^{\top} \st}{\sqrt{\taustar}}<c_2 \right) \rightarrow \mathbb{P}(\Zs<c_1)\mathbb{P}(\Zs<c_2)
$$
as required.
\end{proof}

\subsection{Hypothesis testing and confidence intervals}\label{appendix:testing}
\begin{proof}[Proof of \Cref{weeren}]
To see $(a)$, We have that almost surely
$$
\begin{gathered}
\lim _{p\to \infty} \frac{\frac{1}{p} \sum_{j=1}^{p} \mathbb{I}\left(P_j \leq \alpha, \beta_j^{\star}=0\right)}{\frac{1}{p} \sum_{j=1}^{p} \mathbb{I}\left(\beta_j^{\star}=0\right)}=\lim _{p\to \infty} \frac{\frac{1}{p} \sum_{j=1}^{p} \mathbb{I}\left(\left|\frac{\hat{r}_{*, j}-\beta_j^{\star}}{\sqrt{\hat{\tau}_{*}}}\right| \geq \Phi^{-1}\left(1-\frac{\alpha}{2}\right), \abs{\beta_j^{\star}} \leq \frac{\mu_0}{2}\right)}{\frac{1}{p} \sum_{j=1}^{p} \mathbb{I}\left(\abs{\beta_j^{\star}} \leq \frac{\mu_0}{2}\right)} \\
=\frac{\mathbb{P}\left(|\Zs| \geq \Phi^{-1}\left(1-\frac{\alpha}{2}\right), \abs{\Xstar} \leq \frac{\mu_0}{2}\right)}{\mathbb{P}\left(\abs{\Xstar} \leq \frac{\mu_0}{2}\right)}=\mathbb{P}\left(|\Zs| \geq \Phi^{-1}\left(1-\frac{\alpha}{2}\right)\right)=\alpha
\end{gathered}
$$
by \Cref{neig} and \Cref{prop:asW}. Using exchangeability of columns of $\X$
$$
\mathbb{E} \frac{\frac{1}{p} \sum_{j=1}^{p} \mathbb{I}\left(P_j \leq \alpha, \beta_j^{\star}=0\right)}{\frac{1}{p} \sum_{j=1}^{p} \mathbb{I}\left(\beta_j^{\star}=0\right)}=\frac{\mathbb{P}\left(T_i=1\right) \frac{1}{p} \sum_{j=1}^{p} \mathbb{I}\left(\beta_j^{\star}=0\right)}{\frac{1}{p} \sum_{j=1}^{p} \mathbb{I}\left(\beta_j^{\star}=0\right)}=\mathbb{P}\left(T_i=1\right)
$$
The the coordinate-wise result follows from an application of the dominated convergence theorem. 

To see $(b)$, note that by \Cref{neig} and \Cref{prop:asW}, almost surely
    $$
\lim _{p\to \infty} \frac{1}{p} \sum_{i=1}^{p} \mathbb{I}\left(\sti \in \mathsf{CI}_i\right)=\lim _{p\to \infty} \frac{1}{p} \sum_{i=1}^{p} \mathbb{I}\left(a<\frac{\sti-\bhetahi}{\sqrt{\hat{\tau}_{*}}}<b\right)=\mathbb{P}(a<\Zs<b)=1-\alpha.
$$
\end{proof}

\begin{Remark}[Asymptotic limit of TPR]\label{TPRlimit}
Note that we can further calculate the exact asymptotic limit of the TPR as follows. Under the assumption of \Cref{weeren} (a), we have that almost surely
$$
\begin{aligned}
 \lim _{p\to \infty} \mathsf{TPR}(p) &= \lim _{p\to \infty} \frac{\sum_{j=1}^{p} \mathbb{I}\left(P_j \leq \alpha,\left|\beta_j^{\star}\right| \geq \mu_0\right)}{\sum_{j=1}^p \mathbb{I}\left(\abs{\beta_j^{\star}} \geq \mu_0\right)}\\
&=\lim _{p\to \infty} \frac{\frac{1}{p} \sum_{j=1}^{p} \mathbb{I}\left(\left|\frac{r_{*, j}}{\sqrt{\hat{\tau}_{*}}}\right| \geq \Phi^{-1}\left(1-\frac{\alpha}{2}\right),\left|{\beta_j^{\star}}\right| \geq \mu_0\right)}{\frac{1}{p} \sum_{j=1}^{p} \mathbb{I}\left(\abs{\beta_j^{\star}} \geq \mu_0\right)} \\
&=\frac{\mathbb{P}\left(\left|\frac{1}{\sqrt{\taustar}} \Xstar+\Zs\right| \geq \Phi^{-1}\left(1-\frac{\alpha}{2}\right),\left|\Xstar\right| \geq \mu_0\right)}{\mathbb{P}\left(\left|\Xstar\right| \geq \mu_0\right)}
\end{aligned}
$$
where we used in the second line \Cref{neig} and \Cref{prop:asW}.
\end{Remark}

\section{Proofs for PCR-Spectrum-Aware Debiasing}\label{appendix:pcrdeb}
\subsection{Pseudo-code for PCR-Spectrum-Aware Debiasing}

\Cref{algodebiaspcr} below summarizes the PCR-Spectrum-Aware Debiasing procedure in algorithmic format. 

\begin{algorithm}[t]
\caption{PCR-Spectrum-Aware Debiasing}\label{algodebiaspcr}
\begin{algorithmic}[1]
\Require Response and design $(\y,\X)$, a penalty function $h$ and an index set of PCs $\Js \subset \mathcal{N}$ (see \eqref{defJJ}). 

\State \textbf{Conduct} eigen-decomposition: $\X^\top \X =\Obm^\top \Dbm^\top \Dbm \Obm$ and let $\Obm_\Js, \Obm_{\Jsb}$ be PCs indexed by $\Js$ and $\Jsb=\mathcal{N}\setminus \Js$ respectively. 
\State \textbf{Compute} alignment PCR estimator
$$\pcrt\gets \Obm_{\Js}^\top \left(\bm{W}_\Js^{\top} \bm{W}_\Js\right)^{-1} \bm{W}_\Js^{\top}\y$$
where $\bm{W}_\Js:=\X \Obm_\Js^{\top}.$
\State \textbf{Construct} new data
\begin{equation*}
        \ynew \gets \qty(\Dbm_{\Jsb}^\top \Dbm_{\Jsb})^{1/2} \left(\bm{W}_{\Jsb}^{\top} \bm{W}_{\Jsb}\right)^{-1} \bm{W}_{\Jsb}^{\top} \y, \quad \Xnew \gets \qty(\Dbm_{\Jsb}^\top \Dbm_{\Jsb})^{1/2} \Obm_{\Jsb}
    \end{equation*}
where $\bm{W}_{\Jsb}=\X \Obm_{\Jsb}^\top $ and $\Dbm_{\Jsb}$ consists of columns of $\Dbm$ indexed by $\Jsb$. 
\State \textbf{Find} minimizer $\hatbt$ of $\mathcal{L}(\bm{\cdot} \; ;\Xnew,\ynew)$ for $\mathcal{L}$ defined in \eqref{deflasso}
\State \textbf{Compute} the eigenvalues $(d_i^2)_{i=1}^p$ of $\Xnew^\top\Xnew$
\State \textbf{Find} solution $\adj(\Xnew,\ynew,h)$ of \eqref{gammasolvea} and compute complement PCR estimator
    \begin{equation}
        \pcrc \gets \hatbt+\adj^{-1} \Xnew^\top (\ynew-\Xnew \hatbt)
    \end{equation}
    and $\hat{\tau}_{*}(\Xnew,\ynew,h)$ from \eqref{DEFEFD}
\Ensure PCR-Spectrum-Awaure estimator $$\pcrdb \gets \pcrt+\pcrc$$ and the associated variance estimator $\hat{\tau}_{*}\gets \hat{\tau}_{*}(\Xnew,\ynew,h)$. 
\end{algorithmic}
\end{algorithm}

\subsection{Asymptotic normality}
\begin{proof}[Proof of \Cref{PCRTHM}]
(a) \textbf{Alignment PCR.} Let $\Dbm_{\Js} \in \mathbb{R}^{n \times J}$ consist of columns of $\Dbm$ indexed by $\Js, \Obm_{\Js} \in \mathbb{R}^{J \times p}$ consist of rows of $\Obm$ indexed by $\Js$, and $\mathbf{P}_{\Js}=\Obm_{\Js}^{\top} \Obm_{\Js}$. Note that
\begin{equation}\label{algepcrt}
\begin{aligned}
\pcrt(\Js)&=\Obm_{\Js}^{\top} \pcr(\Js)\\
&=\Obm_{\Js}^{\top}\left(\mathbf{W}_{\Js}^{\top} \mathbf{W}_{\Js}\right)^{-1} \mathbf{W}_{\Js}^{\top} \y\\
&=\Obm_{\Js}^{\top}\left(\Dbm_{\Js}^{\top} \Dbm_{\Js}\right)^{-1} \Dbm_{\Js}^{\top}\left(\Dbm \Obm \st+\Qbm \epbm\right) \\
& =\Obm_{\Js}^{\top} \Obm_{\Js} \st+\Obm_{\Js}^{\top}\left(\Dbm_{\Js}^{\top} \Dbm_{\Js}\right)^{-1} \Dbm_{\Js}^{\top} \Qbm \epbm\\
&=\stal+\Obm_{\Js}^{\top} \Obm_{\Js} \zetr+\Obm_{\Js}^{\top}\left(\Dbm_{\Js}^{\top} \Dbm_{\Js}\right)^{-1} \Dbm_{\Js}^{\top} \Qbm \epbm 
\end{aligned}
\end{equation}
where we used that
\begin{equation*}
\mathbf{W}_{\Js}=\Qbm^{\top} \Dbm \Obm \Obm_{\Js}^{\top}=\Qbm^{\top} \Dbm_{\Js}, \quad \y=\X \st+\epbm=\Qbm^{\top} \Dbm \Obm \st+\epbm
\end{equation*}
in the penultimate equality and \eqref{stwerif} in the last equality.

Using rotational invariance of $\Obm$, we have 
\begin{equation}\label{useuse}
\begin{aligned}
    \E \qty[\qty(\frac{1}{p}\left\|\mathbf{P}_{\Js} \zetr\right\|_2^2)^2\mid \zetr] &=\frac{1}{p^2} \E \qty[\left\|\Obm_{\Js} \zetr\right\|_2^4\mid \zetr] \\
    & = \qty(\frac{\left\|\zetr\right\|_2^2}{p})^2 \E \qty[\sum_{i=1}^J O_{1 i}^2]^2=O\qty(\frac{1}{p^2})
\end{aligned}
\end{equation}
where we used that $J$ is finite not growing with $p$ and basic moment property of entries of $\Obm$ (see e.g. \cite[Proposition 2.5]{meckes2014concentration}). It follows from a straightforward application of Markov inequality and Borel-Cantelli lemma that almost surely
\begin{equation}\label{asffsnone}
\lim_{p\to\infty}\frac{1}{p}\left\|\mathbf{P}_{\Js} \zetr\right\|_2^2=0.
\end{equation}
Meanwhile, using $\Qbm \epbm \stackrel{L}{=} \epbm$, we have 
\begin{equation}\label{useuse2}
\frac{1}{p}\left\|\Obm_{\Js}^{\top}\left(\Dbm_{\Js}^{\top} \Dbm_{\Js}\right)^{-1} \Dbm_{\Js}^{\top} \Qbm \epbm\right\|_2^2 \stackrel{L}{=} \frac{1}{p} \epbm^{\top} \Dbm_{\Js}\left(\Dbm_{\Js}^{\top} \Dbm_{\Js}\right)^{-2} \Dbm_{\Js}^{\top} \epbm=\frac{1}{p} \sum_{i \in \Js} \frac{\epi^2}{d_i^2}.
\end{equation}
Using $J$ is finite and the assumption that $\limsup_{p\to \infty} \max_{i\in \Js } d_i^{-2}/p\to 0$, we obtain that almost surely,
\begin{equation}\label{asffstwo}
    \lim_{p\to\infty} \frac{1}{p}\left\|\Obm_{\Js}^{\top}\left(\Dbm_{\Js}^{\top} \Dbm_{\Js}\right)^{-1} \Dbm_{\Js}^{\top} \Qbm \epbm\right\|_2^2 =0.
\end{equation}
The result then follows from \eqref{asffsnone} and \eqref{asffstwo}. 

(b) \textbf{Complement PCR.} Similarly to \eqref{algepcrt}, we have that
\begin{equation}\label{algepcrtBBC}
\pcr(\Jsb)=\Obm_{\Jsb} \zetr+\left(\Dbm_{\Jsb}^{\top} \Dbm_{\Jsb}\right)^{-1} \Dbm_{\Jsb}^{\top} \Qbm \epbm.
\end{equation}
It follows that
$$\ynew=\left(\Dbm_{\Jsb}^{\top} \Dbm_{\Jsb}\right)^{\frac{1}{2}} \pcr(\Jsb) \in \mathbb{R}^{N-J}, \qquad \Xnew=\left(\Dbm_{\Jsb}^{\top} \Dbm_{\Jsb}\right)^{\frac{1}{2}} \Obm_{\Jsb} \in \mathbb{R}^{(N-J) \times p}$$ 
defined in \eqref{xnewYnew} satisfy the following relation:
\begin{equation}\label{filteredpcr}
    \ynew=\Xnew \zetr+\epnew
\end{equation}
for 
\begin{equation*}
    \epnew=\left(\Dbm_{\Jsb}^{\top} \Dbm_{\Jsb}\right)^{-\frac{1}{2}} \Dbm_{\Jsb}^{\top} \Qbm \epbm \sim N\left(\bm{0}, \sigma^2 \mathbf{I}_{N-J}\right).
\end{equation*}
Note that the new design matrix $\Xnew$ admits singular value decomposition
$$
\Xnew=\Qnew^{\top} \Dnew \Obm
$$
where 
$$
\Qnew=\mathbf{I}_{N-J}, \qquad \Dnew=\left[\left(\Dbm_{\Jsb}^{\top} \Dbm_{\Jsb}\right)^{\frac{1}{2}}, \bm{0}_{(N-J) \times(p+J-N)}\right] \in \mathbb{R}^{(N-J) \times p}.
$$
Note that since $J$ is finite not growing with $n,p$, $$\Dnew^\top \bm{1}_{(N-J)\times 1} \equiv \mathbf{d}_{\Js^c}\stackrel{W_2}{\to} \D.$$ 
The above, along with the assumption we made in \Cref{PCRTHM}, reduces the new regression problem defined by \eqref{filteredpcr} to the same one considered in \Cref{subsectionmrdeb}. Since $\pcrc$ is Spectrum-Aware debiased estimator with respect to the new regression problem, the result follows from \Cref{NEIGMAIN}. The consistency of $\hat{\sigma}^2$ follows a similar reasoning. 

(c) \textbf{Debiased PCR.} By the definition of $\pcrdb$, we have that
$$
\tauc^{-1/2}\qty(\pcrdb-\st)=\tauc^{-1/2}\qty(\pcrt-\stal)+\tauc^{-1/2}\qty(\pcrc-\zetr).
$$
The result then follows from (a), (b) above and \Cref{prop:combW}. 
\end{proof}

\subsection{Finite or single coordinate inference}\label{appendix:finitecoorpcr}
\begin{Corollary}\label{CORPCRTHM}
    Suppose Assumptions \ref{Assumph}---\ref{AssumpPriorAL} hold. If $\qty(\zetrj)_{j=1}^p$ are exchangeable as in \Cref{exchangedef}, then for any fixed, finite index set $\mathcal{I}\subset [p],$ we have that almost surely as $p\to \infty$, 
    \begin{equation}\label{centralclaim}
        \begin{aligned}
            & \pcrtI(\Js)\to \stalI,\;\;  \tauc^{-1/2}\qty(\pcrcI(\Jsb)-\zetr_\mathcal{I})\Rightarrow N(\bm{0},\mathbf{I}_{|\mathcal{I}|})\\  & \tauc^{-1/2}\qty(\pcrdbI-\st_\mathcal{I})\Rightarrow N \qty(\bm{0},\mathbf{I}_{|\mathcal{I}|}).
        \end{aligned}
    \end{equation}
\end{Corollary}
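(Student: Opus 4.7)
The plan is to adapt the permutation argument used in \Cref{sgoods} to the PCR setting, exploiting two symmetries: the exchangeability of $\zetr$ and the column-exchangeability of the transformed design matrix $\Xnew$. The first step is to establish that, for a uniformly random permutation $\bUm \in \R^{p\times p}$ independent of everything else,
\begin{equation*}
    (\Xnew \bUm^\top,\; \bUm \zetr,\; \epnew) \stackrel{L}{=} (\Xnew,\; \zetr,\; \epnew).
\end{equation*}
This follows because the Haar measure on $\O(p)$ is invariant under right multiplication by permutations, so $\Obm \bUm^\top \stackrel{L}{=} \Obm$ jointly with $\epbm$; this yields $\Xnew \bUm^\top = (\Dbm_{\Jsb}^\top \Dbm_{\Jsb})^{1/2}(\Obm \bUm^\top)_{\Jsb} \stackrel{L}{=} \Xnew$ jointly with $\epnew$, while exchangeability of $\zetr$ and its independence from $(\Obm,\epbm)$ give $\bUm \zetr \stackrel{L}{=} \zetr$ jointly with the above.

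For part (b), I would observe that $\pcrc$ is column-equivariant under separable $h$, i.e., $\pcrc(\Xnew \bUm^\top,\ynew,h)=\bUm\,\pcrc(\Xnew,\ynew,h)$, since the regression problem \eqref{filteredpcr} is symmetric in the columns of $\Xnew$. Combined with the distributional identity above, this yields $(\bUm\,\pcrc,\; \bUm\zetr)\stackrel{L}{=}(\pcrc,\zetr)$, after which the proof of \Cref{sgoods} transfers essentially verbatim: rewrite joint probabilities over coordinates in $\mathcal{I}$ as $\tfrac{1}{p(p-1)\cdots}$ sums over distinct index tuples, apply \Cref{prop:asW} together with the $W_2$ convergence from \Cref{PCRTHM}(b), and handle the studentization via consistency of $\tauc$ (from \Cref{NEIGMAIN} applied to the transformed dataset) and Slutsky's theorem.

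For part (a), I would use the decomposition \eqref{algepcrt}, namely $\pcrt(\Js)-\stal=\mathbf{P}_{\Js}\zetr+\Obm_{\Js}^\top(\Dbm_{\Js}^\top \Dbm_{\Js})^{-1}\Dbm_{\Js}^\top \Qbm\epbm$. An analogous permutation-equivariance check shows that the coordinates of $\pcrt(\Js)-\stal$ are identically distributed. The second-moment bounds underlying \eqref{useuse} and \eqref{useuse2} then give $\E\|\pcrt(\Js)-\stal\|^2=O(1)$ uniformly in $p$ (using that $J$ is finite and $d_i^2>0$ for $i\in\Js$), so $\E|(\pcrt(\Js)-\stal)_i|^2=p^{-1}\E\|\pcrt(\Js)-\stal\|^2\to 0$ for any fixed $i\in\mathcal{I}$; hence $\pcrtI(\Js)\to\stalI$ in $L^2$ and thus in probability. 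Part (c) then follows by combining (a) and (b) via Slutsky. The main obstacle I anticipate is precisely the first step---verifying that the Haar symmetry of $\Obm$ and the independence structure of $(\zetr,\epbm)$ survive the basis change and the projection onto rows indexed by $\Jsb$ that define $(\Xnew,\ynew)$; once this structural translation is in place, no new asymptotic machinery is needed beyond what the paper has already developed.
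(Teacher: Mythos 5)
Your proposal follows essentially the same route as the paper: use the decomposition \eqref{algepcrt}, a permutation-symmetry argument to obtain a second-moment bound of order $p^{-1}\E\|\pcrt(\Js)-\stal\|^2$ for the coordinates of the alignment-PCR error (part (a)), adapt the exchangeability argument of \Cref{sgoods} via the equality in law $(\Xnew \bUm^\top,\bUm\zetr,\epnew)\stackrel{L}{=}(\Xnew,\zetr,\epnew)$ together with the column-equivariance of $\pcrc$ for separable $h$ (part (b)), and then assemble (c) with Slutsky. This is exactly the paper's strategy; your treatment of (b) actually makes explicit the equivariance step that the paper leaves implicit with ``can be proved similarly to \Cref{sgoods}.''

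One small gap worth closing: you conclude part (a) only in $L^2$ (hence in probability), whereas the statement asserts almost sure convergence. Your bound $\E|(\pcrt(\Js)-\stal)_i|^2 = O(1/p)$ is not summable, so Markov plus Borel--Cantelli (which the paper invokes) does not directly apply at the second-moment level; one needs a higher-order moment bound. Since $(\mathbf{P}_\Js \zetr)_i$ concentrates like a mean-zero quantity with variance $O(1/p)$, the natural fix is to control the fourth moment, $\E|(\pcrt(\Js)-\stal)_i|^4 = O(1/p^2)$, using the moment formulas for Haar orthogonal matrices (and Gaussianity of $\epbm$) analogous to \eqref{useuse}, after which Markov and Borel--Cantelli give the almost-sure statement. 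With that upgrade your argument is complete and matches the paper.
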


\begin{proof}[Proof of \Cref{CORPCRTHM}]
To see the first result in \eqref{centralclaim}, recall from \eqref{algepcrt}, we have that
\begin{equation}
\pcrt(\Js)=\stal+\Obm_{\Js}^{\top} \Obm_{\Js} \zetr+\Obm_{\Js}^{\top}\left(\Dbm_{\Js}^{\top} \Dbm_{\Js}\right)^{-1} \Dbm_{\Js}^{\top} \Qbm \epbm 
\end{equation}
Note that when $\zetr$ is exchangeable, we have that for any fixed $i \in[p]$
\begin{equation}\label{dawg11}
\begin{aligned}
    \mathbb{E}\left[\left(\left(\Obm_{\Js}^{\top} \Obm_{\Js} \zetr\right)_i\right)^2 \right]&=\E \qty[\qty(\mathbf{e}_i^\top \bUm \Obm_{\Js}^\top \Obm_{\Js} \bUm^\top \bUm \zetr )^2] \\
    &=\mathbb{E}\left[\left(\frac{1}{p}\left\|\mathbf{P}_{\Js} \zetr\right\|_2^2\right)^2 \right] =O\left(\frac{1}{p^2}\right)
\end{aligned}
\end{equation}
where we used that for a permutation matrix $\bUm\in \R^{p\times p}$ drawn uniformly, $(\Obm_{\Js} \bUm^\top, \bUm \zetr)\stackrel{L}{=}(\Obm_{\Js},\zetr)$ and \eqref{useuse}. And by rotational invariance of $\Obm$,
\begin{equation}\label{dawg12}
\mathbb{E}\left(\Obm_{\Js}^{\top}\left(\Dbm_{\Js}^{\top} \Dbm_{\Js}\right)^{-1} \Dbm_{\Js}^{\top} \Qbm \epbm\right)_i^2=\mathbb{E} \frac{1}{p}\left\|\Obm_{\Js}^{\top}\left(\Dbm_{\Js}^{\top} \Dbm_{\Js}\right)^{-1} \Dbm_{\Js}^{\top} \Qbm \epbm\right\|_2^2=O \qty(\frac{1}{p^2})
\end{equation}
where we used \eqref{useuse2} at the last equality. The first result in \eqref{centralclaim} then follows from Markov inequality and Borel-Cantelli lemma. The second result in \eqref{centralclaim} can be proved similarly to \Cref{sgoods}. The third result in \eqref{centralclaim} follows from the first two results and an application of the Slutsky's theorem. 
\end{proof}

\subsection{Alignment test}\label{apptestcor}
\begin{proof}[Proof of \Cref{testCor}]
Similarly to \eqref{algepcrt}, we have that
\begin{equation}\label{algepcrtBB}
\pcr(\Js)-\alphstar=\Obm_{\Js} \zetr+\left(\Dbm_{\Js}^{\top} \Dbm_{\Js}\right)^{-1} \Dbm_{\Js}^{\top} \Qbm \epbm
\end{equation}
Now note that by basic properties of Haar measure on orthogonal groups \cite{meckes2014concentration}, as $p \rightarrow \infty$,
\begin{equation*}
\Obm_{\Js} \zetr \Rightarrow N\left(\bm{0}, \E\left(\Zetr\right)^2\cdot \mathbf{I}_{J}\right)
\end{equation*}
where we used the assumption that $\zetr \stackrel{W_2}{\to} \Zetr$, and that
\begin{equation*}
\left(\Dbm_{\Js}^{\top} \Dbm_{\Js}\right)^{-1} \Dbm_{\Js}^{\top} \Qbm \epbm \sim N\left(\bm{0}, \sigma^2 \cdot \left(\Dbm_{\Js}^{\top} \Dbm_{\Js}\right)^{-1} \right).
\end{equation*}
By independence of $\Obm$ and $\epbm$, we have that
\begin{equation*}
\Obm_{\Js} \zetr+\left(\Dbm_{\Js}^{\top} \Dbm_{\Js}\right)^{-1} \Dbm_{\Js}^{\top} \Qbm \epbm \Rightarrow N\left(\bm{0}, \E\left(\Zetr\right)^2\cdot \mathbf{I}_{J}+\sigma^2\cdot \left(\Dbm_{\Js}^{\top} \Dbm_{\Js}\right)^{-1}\right)
\end{equation*}
Desired result then follows from the fact that $\magzetr$ consistently estimates $\E\left(\Zetr\right)^2$ and $\hat{\sigma}^2$ consistently estimate $\sigma^2$. That is, almost surely
$$\hat{\omega}= p^{-1} \norm{\pcrc}^2 -\tauh \to \E (\Zetr)^2, \quad \hat{\sigma}^2\to \sigma^2$$
as $p\to \infty$. The former follows from the fact that almost surely $\pcrc \toW \Zetr+\sqrt{\tau_*}\mathsf{Z}$ for $\mathsf{Z}$ independent of $\Zetr$. 
\end{proof}

\section{Further remarks on right-rotationally invariant designs} \label{RIDint}
As discussed in the main text, assuming right singular vectors $\Obm$ of the design $\X$ to be Haar lands $\X$ in the class of right-rotationally invariant designs (Definition \ref{def:Rotinv}). Varied research communities realized the strength of such designs \cite{takeda2006analysis,takeuchi2019rigorous,rangan2019vector,barbier2018mutual,opper2001tractable,ma2017orthogonal,fan2021replica,rush2015capacity}. In particular, \cite{dudeja2022spectral,wang2022universality} established that properties of high-dimensional systems proven under such designs continue to hold for a broad class of designs (including nearly deterministic designs as observed in compressed sensing \cite{donoho2009observed}) as long as they satisfy certain spectral properties. In fact, the universality class for such designs is far broader than that for Gaussians, suggesting that these may serve as a solid prototype for modeling high-dimensional phenomena arising in non-Gaussian data. Despite such exciting developments, there are hardly any results when it comes to debiasing or inference under such designs (with the exception of \cite{takahashi2018statistical} which we discuss later). This paper develops this important theory and methodology. 

Despite the generality of right-rotationally invariant designs, studying these presents new challenges. For starters, analogs of the leave-one-out approach \cite{mezard1987spin,talagrand2003spin,bean2013optimal,el2018impact,sur2019likelihood,sur2019modern,chen2021spectral,jiang2022new} and Stein's method \cite{stein1981estimation,chatterjee2010spin,bellec2019biasing,anastasiou2023stein}, both of which form fundamental proof techniques for Gaussian designs, are nonexistent or under-developed for this more general class. To mitigate this issue, we resort to an algorithmic proof strategy that the senior authors' earlier work and that of others have used in the context of Gaussian designs. 
To study $\bhetah$, we observe that it depends on the regularized estimator $\boldsymbol{\hat{\beta}}$. However, $\boldsymbol{\hat{\beta}}$ does not admit a closed form in general, thus studying these turns out difficult. To circumvent this, we create surrogate estimators using vector approximate message passing (VAMP) algorithms \cite{rangan2019vector} (see details in \Cref{vamp} and \Cref{section:DIST}). { The proof relies on several new theoretical developments for VAMP algorithms, including a Cauchy convergence guarantee (cf. \Cref{Corc}), existence of fixed points (cf. \eqref{fp} and the discussion that follows), and a universality result (cf. \Cref{bigsecuni}). For the challenging case of debiasing the Lasso, we introduce a novel covering argument to control the behavior of the design submatrix (see \Cref{secmainLasso} and \ref{ssvods} for details). We believe these technical contributions may be of independent interest to the signal processing \cite{takeda2006analysis}, probability \cite{tulino2013support}, statistical physics \cite{takeuchi2019rigorous}, and information and coding theory \cite{rangan2019vector,pandit2020inference,xu2023capacity} communities, where right-rotationally invariant designs arise in a range of problems.}

Among the literature related to right-rotationally invariant designs, two prior works are the most relevant for us. Of these, \cite{gerbelot2022asymptotic}
initiated a study of the risk of $\boldsymbol{\hat{\beta}}$ under right-rotationally invariant designs using the VAMP machinery. However, their characterization is partially heuristic, meaning that they assume certain critical exchange of limits is allowed and that limits of certain fundamental quantities exist. 
The former assumption may often not hold, and the latter is unverifiable without proof (see  Remark \ref{rem:cedric} for further details). { In addition, they simply assumed that the system of fixed point equations (cf. \eqref{fp}) admit a solution in their proof.} As a by-product of our work on debiasing, we provide a complete rigorous characterization of the risk of regularized estimators under right-rotationally invariant designs (\Cref{thm:empmain}) without these unverifiable assumptions. { We also extend the result to the broader spectral universality class identified in \cite{dudeja2022spectral,dudeja2023universal,wang2022universality}. }
The second relevant work is \cite{takahashi2018statistical}, which conjectures a population version of a debiasing formula for the Lasso using non-rigorous statistical physics tools.  To be specific, they conjecture a debiasing formula that involves unknown parameters related to the underlying limiting spectral distribution of the sample covariance matrix. This formula does not provide an estimator that can be calculated from the observed data. In contrast, we develop a complete data-driven pipeline for debiasing and  develop a consistent estimator for its asymptotic variance.

\section{Extensions}\label{section:extention}

{ \subsection{Universality}\label{bigsecuni}
We show that \Cref{NEIGMAIN} and \Cref{PCRTHM}, i.e. asymptotic normality of the Spectrum-Aware debiased and the PCR-Spectrum-Aware debiased estimators, hold for a broader universality class of designs proposed in \cite{dudeja2022spectral}. 

Let us first review the spectral universality class defined in \cite{dudeja2022spectral}, Definition 1. 
\begin{Definition}[Spectral Universality Class]\label{uniclass}
Given a compactly supported probability measure $\mu$ on $[0, \infty)$, we say that a sensing matrix $\X$ lies in the universality class $\mathscr{U}(\mu)$ if:
\begin{enumerate}
    \item[(i)] Random Signs. $\X=\Jbm \Sbm$ where $\Jbm \in \mathbb{R}^{n \times p}$ is a deterministic matrix and $\Sbm=\operatorname{diag}\left(s_{1: p}\right)$ is a diagonal matrix of i.i.d. Rademacher signs $s_{1: p} \stackrel{\text { i.i.d. }}{\sim} \operatorname{Unif}(\{ \pm 1\})$.
    \item[(ii)] Bounded Operator norm. $\|\Jbm\|_{\mathrm{op}} \lesssim 1$.
    \item[(iii)] Convergence of Empirical Spectral Measure. For any fixed $k \in \mathbb{N}$,
    \begin{equation*}    \operatorname{Tr}\left[\left(\Jbm^{\top} \Jbm\right)^k\right] / p \rightarrow \int \lambda^k \mu(\mathrm{~d} \lambda) \quad \text { as } p \rightarrow \infty
    \end{equation*}
    \item[(iv)] Generic Right Singular Vectors. For any fixed $k \in \mathbb{N}, \epsilon>0$,
    \begin{equation*}
    \left\|\left(\Jbm^{\top} \Jbm\right)^k-\frac{\operatorname{Tr}\left[\left(\Jbm^{\top} \Jbm\right)^k\right]}{p} \mathbf{I}_p\right\|_{\infty} \lesssim p^{-1 / 2+\epsilon} .
    \end{equation*}
    This means that for any $k \in \mathbb{N}, \epsilon>0$ there are constants $C(k, \epsilon)>0, p_0(k, \epsilon) \in \mathbb{N}$ such that:
    \begin{equation*}
    \left\|\left(\Jbm^{\top} \Jbm\right)^k-\frac{\operatorname{Tr}\left[\left(\Jbm^{\top} \Jbm\right)^k\right]}{p} \mathbf{I}_p\right\|_{\infty} \leq C(k, \epsilon) \cdot p^{-1 / 2+\epsilon} \quad \forall p \geq p_0(k, \epsilon)
    \end{equation*}
\end{enumerate}
In the above display, for a matrix $\mathbf{A} \in \mathbb{R}^{p \times p},\|\mathbf{A}\|_{\infty} \stackrel{\text { def }}{=} \max _{i, j \in[p]}\left|A_{i j}\right|$ is the entry-wise infinity norm.
\end{Definition}

\begin{Example}[Examples of Spectral Universality Class]

The spectral universality class includes right-rotationally invariant design matrices defined in \Cref{AssumpD} along with a variety of design matrices. We include the following examples from \cite{dudeja2022spectral} Section 2. 

\begin{itemize}
\item \textbf{Linear transformations of i.i.d.\ matrices.}
  \[
    \mathbf{X}=\mathbf{T}\mathbf{Z},
  \]
  where $\mathbf{T}\in\mathbb{R}^{n\times n}$ is deterministic with $\|\mathbf{T}\|_{\mathrm{op}}\lesssim 1$ and the empirical distribution of the eigenvalues of $\mathbf{T}\mathbf{T}^\top$ converges; $\mathbf{Z}\in\mathbb{R}^{n\times p}$ has i.i.d.\ entries with $\sqrt{p}\,(\mathbf{Z})_{ij}$ mean $0$, variance $1$, finite moments, and a symmetric distribution ($( \mathbf{Z})_{ij}\stackrel{d}{=}-(\mathbf{Z})_{ij}$). This unifies the standard i.i.d.\ model ($\mathbf{T}=\mathbf{I}_n$), the \emph{elliptic} model ($\mathbf{T}=\operatorname{diag}(t_1,\dots,t_n)$), and more general preconditioned designs. 
  
\item \textbf{Sign- and permutation-invariant matrices.}
Let $\mathbf{X}=\mathbf{Q}^\top \mathbf{D}\,\mathbf{O}$, where $\mathbf{Q}\in O(n)$ is deterministic (here $O(n)$ denotes the group of $n\times n$ orthogonal matrices), $\mathbf{D}\in\mathbb{R}^{n\times p}$ is a deterministic rectangular diagonal matrix (with $\|\mathbf{D}\|_{\mathrm{op}}=O(1)$ and a well-defined limiting spectral measure), and
\[
  \mathbf{O}=\mathbf{S}\mathbf{V}\mathbf{P},
\]
with $\mathbf{V}\in O(p)$ deterministic and delocalized ($\|\mathbf{V}\|_\infty\lesssim p^{-1/2+\varepsilon}$, $\varepsilon$ is arbitrarily small constant), $\mathbf{S}=\operatorname{diag}(s_1,\ldots,s_p)$ a diagonal Rademacher sign matrix, and $\mathbf{P}$ a uniformly random permutation matrix (independent of $\mathbf{S}$). One important example is the randomized partial Hadamard–Walsh matrix, $$\mathbf{X}=[\mathbf{I}_n,\mathbf{0}]\,\mathbf{P}^\top \mathbf{H}_p \mathbf{S}$$
where $\mathbf{H}_p$ is the orthonormal Hadamard–Walsh matrix. This type of matrix is commonly used as a structured dimension-reduction map in numerical linear algebra and high-dimensional data analysis \cite{halko2011finding,nguyen2009fast}. 

 \item \textbf{Randomized/subsampled orthogonal matrices.}
  With an integer aspect ratio $L=p/n$ (fixed), define
  \[
    \mathbf{X}=\big[\,\mathbf{D}_1\mathbf{O}\ \ \mathbf{D}_2\mathbf{O}\ \ \cdots\ \ \mathbf{D}_L\mathbf{O}\,\big]\mathbf{S},
  \]
  where $\mathbf{O}\in\mathbb{R}^{n\times n}$ is deterministic, delocalized, orthogonal ($\|\mathbf{O}\|_\infty\lesssim n^{-1/2+\varepsilon}$); for each $\ell\in[L]$, $\mathbf{D}_\ell=\operatorname{diag}(d_{\ell,1},\dots,d_{\ell,n})$ has i.i.d.\ bounded, symmetric entries and $\mathbf{D}_1,\dots,\mathbf{D}_L$ are independent; and $\mathbf{S}$ is a diagonal Rademacher sign matrix. Instances of this form arise in compressed sensing (e.g., subsampled randomized Hadamard-Walsh sensing matrices, masked orthogonal sensing matrices), fast random features (ORF/SORF), SRHT-based subspace embeddings and least-squares preconditioning, and locally private frequency estimation via the Hadamard Response \citep{dudeja2022spectral,rauhut2010structured,yu2016orthogonal,tropp2011srht,acharya2019hadamard}.

\item \textbf{Signed incoherent tight frames.}
Let $\mathbf{F}_{n,p}\in\mathbb{R}^{n\times p}$ satisfy $\mathbf{F}\mathbf{F}^\top=I_n$, $(\mathbf{F}^\top\mathbf{F})_{ii}=n/p$, and $\|\mathbf{F}^\top\mathbf{F}-(n/p)I_p\|_\infty\lesssim p^{-1/2+\varepsilon}$. For a diagonal Rademacher matrix $\mathbf{S}$, set $\mathbf{X}=\mathbf{F}\mathbf{S}$. Instances of this form arise in compressed sensing (partial Fourier/Hadamard and low-coherence/equiangular constructions), randomized numerical linear algebra (SRHT/SRFT subspace embeddings for least squares/PCA), and coding/line packing via equiangular tight frames \citep{rauhut2010structured,halko2011randomized,strohmer2003grassmannian,foucart2013cs}.
%
  %
\end{itemize}
\end{Example}

The proposition below is a direct consequence of \cite{dudeja2022spectral}, Lemma 3 and Example 2.
\begin{Proposition}\label{runri}
    If $\X$ satisfies \Cref{AssumpD}, then $\X \in \mathscr{U}(\mu)$ where $\mu$ is probability measure of random variable $\mathsf{D}^2$. 
\end{Proposition}

The first of our main results in this section is the following. We note that the convergence is in probability as opposed to almost sure as in \Cref{NEIGMAIN}. This is because the universality result in \cite{dudeja2022spectral} (i.e. Theorem 3) is only established in in-probability sense. Another difference is that we require entries of the signal vector $\st$ to be i.i.d. draws from $\Xstar$, instead of $\st \stackrel{W_2}{\to} \Xstar$. This stronger requirement is a consequence of condition required for the universality result (i.e. \cite{dudeja2022spectral}, Theorem 3). We defer the proof to \Cref{proofsunidf}.

\begin{Theorem}\label{universethm}
    Suppose that $\X \in \mathscr{U}(\mu)$ for a compactly supported probability measure $\mu$ and \Cref{AssumpPrior}---\ref{Assumpfix2} hold. Furthermore, suppose that entries of $\st$ are i.i.d.~draws from the law of $\Xstar$ (defined in \Cref{AssumpPrior}). Then in probability as $p\to \infty$,
\begin{equation*}
        \tauh^{-1/2}(\bhetah-\st)\stackrel{W_2}{\to} N(0,1).
\end{equation*}
\end{Theorem}

We now introduce a universality principle for the PCR-Spectrum-Aware estimator. To start, we present an analog of \Cref{AssumpPriorAL}. Note that in \eqref{wdfm} we assumed that the projection of $\zetr$ onto the low-dimensional subspace spanned by the eigenvectors indexed by $\Js$ is small. Under \Cref{AssumpPriorAL}, this was automatically true since when $\X$ is right-rotationally invariant, the projection subspace is chosen uniformly at random. When we work the general spectral universality class studied in this section, one needs to assume this projection property separately, hence condition \eqref{wdfm} below. 
\begin{Assumption}\label{univ8}
We assume that $\Js$ is of finite size and for some real-valued vectors $\alphr \in \R^{J},\zetr\in \R^{p}$,
    \begin{equation}\label{stwerif}
        \st=\stal+\zetr, \qquad  \stal=\sum_{i=1}^{J} \alphstari \cdot \mathbf{o}_{\Js(i)}.
    \end{equation}
where we used $\Js(i)$ to denote the $i$-th index in $\Js$. We assume that enties of $\zetr$ are i.i.d. copies of a  random variable $\Zetr$ with finite variance. Furthermore, we assume that $\Xnew$, defined in \eqref{xnewYnew}, lies in the spectral universality class from  \Cref{uniclass}. 
In probability, the projection of $\Zetr$ satisfies
\begin{equation}\label{wdfm}
    \lim_{p\to \infty}p^{-1} \norm{\mathbf{P}_{\Js} \zetr}^2=0.
\end{equation}
Finally, as in \Cref{AssumpPriorAL}, the sizes of eigenvalues corresponding to indices in $\Js$ satisfy 
$\limsup_{p\to \infty} \max_{i\in \Js } d_i^{-2}/p\to 0.$
\end{Assumption}

We now state our universality result for the PCR-Spectrum-Aware estimator. We defer the proof to \Cref{proofsunidf}.

\begin{Theorem}\label{univPCRthm}
    Suppose Assumptions \ref{Assumph}---\ref{Assumpfix2} and \Cref{univ8} hold. Then, inprobability as $p\to \infty$, we have the following: (a) Alignment PCR: $\frac{1}{p}\norm{\pcrt(\Js)-\stal}^2\to 0$; (b) Complement PCR: $\tauc^{-1/2}\qty(\pcrc(\Jsb)-\zetr)\stackrel{W_2}{\to} N(0,1)$; (c) Debiased PCR: $\tauc^{-1/2}\qty(\pcrdb-\st)\stackrel{W_2}{\to} N(0,1).$
\end{Theorem}

\subsection{Lasso Penalty}\label{secmainLasso}
Thus far, we have operated under \Cref{Assumpgp}, i.e., the penalty needs to be strongly convex or that $\X^\top \X$ needs to be positive definite, which excludes the case of the Lasso penalty in high dimensions. In this section, we extend our results to the Lasso under mild additional assumptions. Our first assumption 

\begin{Assumption}\label{AssumpLassoo}
    There exists some $c>0$ such that for all sufficiently large $p$, the minimum \textit{positive} eigenvalues of $\X^\top \X$ is bounded away from zero, i.e.
    \begin{equation}\label{hahabao}
         \lim_{p \to \infty} \min \{d_i^2: d_i^2>0, i\in [p]\}>c.
    \end{equation}
\end{Assumption}

\begin{Assumption}\label{AssumpLassoo2}
    We require that either all eigenvalues of $\X^\top \X$ are bounded away from zero, i.e. for some $c>0$,
    $d_-\equiv\lim_{p\to \infty }\min_{i\in [p]}(d_i^2)>c$
    or there exists a solution to the system of fixed point equation \eqref{fp} such that
    \begin{equation}\label{shichi}
        \frac{\gamma_*}{\eta_*}\equiv \mathbb{P}\left(\left|\sqrt{\tau_*} \Zs+\Xstar\right| \geq \gamma_*^{-1} \lambda_1\right)<C_{\mathrm{abs}}\cdot \qty(\frac{\E \D^2}{d_+})^3.
    \end{equation} 
\end{Assumption}

\begin{Remark}
The condition \eqref{shichi} in \Cref{AssumpLassoo2} is not explicit. However, we note that under mild conditions, $\gamma_* / \eta_*$ tends to zero as $\lambda_1 \rightarrow 0$ (see \Cref{jiros} for precise statement). So, condition \eqref{shichi} is satisfied for all  $\lambda_1$ above a certain threshold. Meanwhile, we note that if conditions of \Cref{Smdfkf} below are satisfied, then $\frac{\hat{s}}{p}\to \frac{\gamma_*}{\eta_*}$ almost surely as $n,p\to \infty$. Therefore, \eqref{shichi} may be interpreted as requiring the active set to be not too large, that is, we are in the regime where the Lasso solutions are suitably sparse, which is quite natural to assume for the lasso. Analogous assumptions have also appeared in the prior Lasso literature (c.f.~ \cite{lockhart2014significance,ChatterjeeLahiri2013AdaptiveLassoRates,LiuYu2013LassoMLS}). 

\end{Remark}

The following result is proved in \Cref{wakakafff}. Our proof begins with the strategy used in \cite{bayati2011lasso}; we seek to show that the VAMP iterates converge  to the Lasso solution by proving that the design submatrix consisting of columns selected by the active support of VAMP iterates has its smallest singular value bounded away from zero. However, this turns out to be a significantly difficult task for right rotationally invariant designs where \cite{bayati2011lasso}'s argument no longer applies. We establish the result using a novel $\varepsilon$-net argument.

\begin{Theorem}\label{Smdfkf}
Assume that Assumptions \ref{AssumpD}, \ref{AssumpPrior}, \ref{AssumpLassoo} and \ref{AssumpLassoo2} hold and that the penalty function is given by 
$h(x)=\lambda_1 |x|$
for some \(\lambda_1>0\). Then, all of our aforementioned results, i.e. Theorems \ref{NEIGMAIN}, \ref{PCRTHM} \ref{universethm}, \ref{univPCRthm} and Corollaries \ref{sgoods}, \ref{weeren}, \ref{testCor}, hold without requiring Assumption \ref{Assumpgp}.
\end{Theorem}
}

{
\section{Proofs for Extensions to the Spectral Universality Class of Designs} \label{proofsunidf}
Analogously to \cite{dudeja2022spectral} Definition 2, we introduce the asymptotic equivalence of two random vectors for the ease of presentation. 
\begin{Definition}\label{defPW2}
Let $\left(\boldsymbol{v}^{(1)}, \ldots, \boldsymbol{v}^{(k)}\right)$ and $\left(\widetilde{\boldsymbol{v}}^{(1)}, \ldots, \widetilde{\boldsymbol{v}}^{(k)}\right)$ be two collections of $p$-dimensional vectors. We say that $\left(\boldsymbol{v}^{(1)}, \ldots, \boldsymbol{v}^{(k)}\right)$ and $\left(\widetilde{\boldsymbol{v}}^{(1)}, \ldots, \widetilde{\boldsymbol{v}}^{(k)}\right)$ are asymptotically equivalent with respect to the Wasserstein-2 metric if for any continuous test function $h: \mathbb{R}^{k} \rightarrow \mathbb{R}$ (independent of $p$) that satisfies:
\begin{equation*}
|h(\vbf)-h(\vbf^\prime)| \leq C\|\vbf-\vbf^\prime\|\left(1+\|\vbf\|+\|\vbf^\prime\|\right) ,\quad \forall \vbf, \vbf^\prime \in \mathbb{R}^k
\end{equation*}
for some finite constants $C \geq 0$, we have, in probability,
\begin{equation*}
\frac{1}{p} \sum_{i=1}^p h\left(v_i^{(1)}, v_i^{(2)}, \ldots, v_i^{(k)} \right)-\frac{1}{p} \sum_{i=1}^p h\left(\widetilde{v}_i^{(1)}, \widetilde{v}_i^{(2)}, \ldots, \widetilde{v}_i^{(k)} \right) \to 0.
\end{equation*}
We denote equivalence in the above sense using the notation $\left(\boldsymbol{v}^{(1)}, \boldsymbol{v}^{(2)}, \ldots, \boldsymbol{v}^{(k)} \right) \stackrel{PW_2}{\simeq}\left(\widetilde{\boldsymbol{v}}^{(1)}, \ldots, \widetilde{\boldsymbol{v}}^{(k)} \right)$.
\end{Definition}
We also denote singular value decomposition of $\Jbm$ as 
$$\Jbm=\Qbm^\top \Dbm \Rbm$$
whereby $\X$ can be decomposed as
$$\X=\Qbm^\top \Dbm \Obm, \quad \Obm=\Rbm \Sbm.$$
Note that the only source of randomness in the design now comes from $\Sbm$. 

We now proceed to prove \Cref{universethm}. 

\begin{proof}[Proof of \Cref{universethm}]
We note that the main task is to establish a universality principle for the state evolution of the VAMP algorithm, i.e. we want to prove \Cref{SEovamp} for $\X \in \mathscr{U}(\mu)$. The rest of the proof is identical to that of \Cref{NEIGMAIN} since right-rotationally invariance of the design $\X$ is only used to establish \Cref{SEovamp}. That is, it suffices to prove the following for the VAMP algorithm.

\begin{Claim}\label{universalSE}
Under the assumptions of \Cref{universethm}, as $p,n\to \infty$, we have in probability
    \begin{equation}\label{vv1}
        \begin{aligned}
            \left(\xonet, \ronet, \st\right) \stackrel{W_2}{\to}\left(\operatorname{Prox}_{\gamma_*^{-1} h}\left(\sqrt{\taustar} \mathsf{Z}+\Xstar\right), \sqrt{\taustar} \mathsf{Z}+\Xstar, \Xstar\right).
        \end{aligned}
    \end{equation}
    Furthermore, as $p,n\to \infty$, we have in probability
\begin{equation}\label{vv2}
\begin{aligned}
    &\frac{1}{p}\left\|\X \rtwot-\y\right\|^2 \rightarrow \tau_{**} \mathbb{E} \D^2+\delta \\
    & \frac{1}{p} \norm{\y-\X \hat{\mathbf{x}}_{2 t}}^2 \to \tau_{**} \cdot \E \frac{\D^2(\eta_*-\gamma_*)^2}{(\D^2+\eta_*-\gamma_*)^2}+\frac{n-p}{p}+\E \qty(\frac{\eta_*-\gamma_*}{\D^2+\eta_*-\gamma_*})^2.
\end{aligned}
\end{equation}
\end{Claim}

Let us recall the oracle VAMP algorithm \eqref{empvamp} may be written, upon change of variable 
\begin{equation}\label{werbadfd}
    \ampxt=\rtwot-\st, \quad \ampyt=\ronet-\st-\mathbf{e}
\end{equation}
as the following iterations: with initialization $\mathbf{q}^0\sim N(\rm{0},\taustar \cdot \mathbf{I}_p)$, $ \ampxone = F(\mathbf{q}_0,\st)$, for $t=1,2,3,\ldots,$
\begin{equation}\label{vmpwer}
\mathbf{y}^{t}=\Obm^\top \bm{\Lambda} \Obm \ampxt, \quad \mathbf{x}^{t+1}=F(\ampyt+\mathbf{e},\st)
\end{equation}
Recall that here, $F$ is a scalar-valued function defined in \eqref{Fdef} applied entry-wise, and $\bm{\Lambda}, \mathbf{e}$ defined in \eqref{Quant}.
To ease notation, let us further define vector 
$$\bm{\nu}:=\Obm^\top (\Dbm^\top \Dbm)^{1/2} (\Dbm^\top \Dbm+(\eta_*-\gamma_*)\cdot \mathbf{I}_p)^{-1}\Dbm^\top \Qbm \epbm 
$$
and functions $h_1:\R \mapsto \R, h_2:\R \mapsto \R$ and $h_3:\R \mapsto \R$
\begin{equation*}
    \begin{aligned}
        &h_1(x):=\frac{\sqrt{x}}{x+(\eta_*-\gamma_*)},  \quad h_2(x):=\frac{x}{x+(\eta_*-\gamma_*)}\\
        &h_3(x):=\frac{\eta_*\left(\eta_*-\gamma_*\right)}{\gamma_*\left(x+\left(\eta_*-\gamma_*\right) \right)}-\left(\frac{\eta_*-\gamma_*}{\gamma_*}\right).
    \end{aligned}
\end{equation*}
We observe that for some $\mathbf{z}\sim N(0, \mathbf{I}_p)$ independent of $\X, \st$,
$$ (\mathbf{e}, \X\epbm, \bm{\nu})|_{\X}= \qty(\frac{\eta_*}{\gamma_*}h_1(\X^\top \X) \mathbf{z}, \sqrt{\X^\top \X}\mathbf{z},  h_2(\X^\top \X) \mathbf{z}), \quad \Obm^\top \bm{\Lambda} \Obm=h_3(\X^\top \X).$$ 
Therefore, the iterations in \eqref{vmpwer} may be written as follows: with initialization $\mathbf{q}^0\sim N(\rm{0},\taustar \cdot \mathbf{I}_p)$, $\mathbf{y}^1=h_2(\X^\top \X) F(\mathbf{q}_0, \st)$, for $t=1,2,3,\dots,$
\begin{equation}\label{itAA}
    \mathbf{e}=\frac{\eta_*}{\gamma_*}h_1(\X^\top \X) \mathbf{z}, \qquad \mathbf{y}^{t+1}=h_3(\X^\top \X)F(\ampyt+\mathbf{e},\st).
\end{equation}
Meanwhile, we introduce the following auxiliary iterates:
\begin{equation}\label{itBB}
    \begin{aligned}
    &\mathbf{c}=\sqrt{\X^\top \X} \mathbf{z}, \qquad \mathbf{d}=h_2({\X^\top \X}) \mathbf{z},\\
    &\mathbf{r}^t=\sqrt{\X^\top \X}F(\ampyt+\mathbf{e},\st), \qquad \mathbf{w}^t=h_1(\X^\top  \X) F(\ampyt+\mathbf{e},\st). 
\end{aligned}
\end{equation}
For ease of notation, we let
$$\mathbf{Y}_t:=(\mathbf{y}^1,...,\mathbf{y}^t), \quad \mathbf{R}_t:=(\mathbf{r}^t,...,\mathbf{r}^t), \quad \mathbf{W}_t:=(\mathbf{w}^1,...,\mathbf{w}^t). $$

Now we claim the following universality principle regarding the iteration above: Let $\X, \widetilde{\X}$ be two independent design matrices in the same universality class $\mathscr{U}(\mu)$ and 
\begin{equation}\label{rbsm}
\begin{aligned}
&(\mathbf{Y}_t(\X^\top \X), \mathbf{R}_t(\X^\top \X),\mathbf{W}_t(\X^\top \X), \mathbf{c}(\X^\top \X), \mathbf{d}(\X^\top \X),\mathbf{e}(\X^\top \X), \st)\\
&\qquad \stackrel{PW_2}{\simeq}(\mathbf{Y}_t(\widetilde{\X}^\top \widetilde{\X}), \mathbf{R}_t(\widetilde{\X}^\top \widetilde{\X}),\mathbf{W}_t(\widetilde{\X}^\top \widetilde{\X}), \mathbf{c}(\widetilde{\X}^\top \widetilde{\X}), \mathbf{d}(\widetilde{\X}^\top \widetilde{\X}),\mathbf{e}(\widetilde{\X}^\top \widetilde{\X}),\st) 
\end{aligned}
\end{equation}
where $\stackrel{PW_2}{\simeq}$ is defined in \Cref{defPW2}. This equivalence follows from the universality principle for general first order methods as in \cite{dudeja2022spectral}, Theorem 3.
To apply \cite{dudeja2022spectral}, Theorem 3, we need to verify condition 1---3 for the above iteration. Condition 2 and 3 are immediate from our assumption that entries of $\st$ are iid copies of $\Xstar$ and that $(q,x)\mapsto F(q,x)$ are Lipschitz continuous. For condition 1, we use the same argument as in the proof of \cite{dudeja2022spectral}, Theorem 1. That is, we may first approximate preconditioning matrices $h_1(\X^\top \X), h_2(\X^\top \X), h_3(\X^\top \X), \sqrt{\X^\top \X}$ with polynomials of $\X^\top \X$. By \cite{dudeja2022spectral}, Remark 8, the preconditioning matrices form a strongly semi-random ensemble for any such approximation with finite $k$-degree polynomials, which implies that the equivalence \eqref{rbsm} holds for the iterates for any $k\in \N$. We may then obtain \eqref{rbsm} by invoking  Stone-Weierstrass theorem and taking $k\to \infty$. 

Now, \eqref{vv1} follows immediately from \eqref{rbsm}, \eqref{werbadfd} and \Cref{SEovamp}. To show \eqref{vv2}, note that LHS may be expressed in terms of iterates in \eqref{itAA},\eqref{itBB}
\begin{equation*}
\begin{aligned}
    \frac{1}{p}\left\|\X \rtwot-\y\right\|^2 &= \frac{1}{p}\norm{\epbm}^2+\frac{1}{p}\norm{\mathbf{r}^t}^2+\frac{1}{p}\mathbf{c}^\top\mathbf{r}^t,  \\
     \frac{1}{p} \norm{\y-\X \hat{\mathbf{x}}_{2 t}}^2 &= \frac{1}{p}\big(\norm{\epbm}^2+\norm{\bm{\nu}}^2+(\eta_*-\gamma_*)^2\norm{\mathbf{w}^t}^2+(\eta_*-\gamma_*)\bm{\nu}^\top 
    \mathbf{w}^t\\
    & \qquad \qquad -2(\eta_*-\gamma_*)\mathbf{c}^\top\mathbf{w}^t-2\mathbf{c}^\top\mathbf{\nu}\big).
\end{aligned}
\end{equation*}
where for the second line we used the identities \eqref{identv1} and \eqref{identv2}. Therefore, \eqref{vv2} follows from \eqref{rbsm} and \Cref{SEovamp}. This concludes the proof of \Cref{universalSE} and thus \Cref{universethm}. 
\end{proof}

As a direct corollary of the proof above, we obtain universality of the distributional characterization. 
\begin{Corollary}[Universality of distributional characterizations]\label{thm:empmainuniv}
Suppose that $\X \in \mathscr{U}(\mu)$ for a compactly supported probability measure $\mu$ and \Cref{AssumpPrior}---\ref{Assumpfix2} hold. Furthermore, we require that entries of $\st$ are i.i.d. copies of the random variable $\Xstar$ from \Cref{AssumpPrior}. Then in probability as $p\to \infty$,
\begin{equation}\label{waka1}
\left(\hatbt, \rstar, \st\right) \stackrel{W_2}{\rightarrow}\left(\operatorname{Prox}_{\gamma_*^{-1} h}\left(\sqrt{\taustar} \Zs+\Xstar \right), \sqrt{\taustar} \Zs+\Xstar, \Xstar \right),
\end{equation}
where $\Zs\sim N(0,1)$ is independent of $\Xstar$. Furthermore, in probability as $p\to \infty$
\begin{equation}\label{waka2}
    \begin{aligned}
        &\frac{1}{p}\left\|\X \rstst-\y\right\|^2 \to \tau_{**} \cdot \mathbb{E}\D^2+\sigma^2\cdot \delta, \\
        &\frac{1}{p} \norm{\y-\X \hatbt}^2\to \tau_{**} \cdot \E \frac{\D^2(\eta_*-\gamma_*)^2}{(\D^2+\eta_*-\gamma_*)^2}+\sigma^2\cdot \qty(\frac{n-p}{p}+\E \qty(\frac{\eta_*-\gamma_*}{\D^2+\eta_*-\gamma_*})^2).
    \end{aligned}
\end{equation}
\end{Corollary}

\begin{proof}[Proof of \Cref{univPCRthm}]
    The proof of part (a) follows the same lines as in \Cref{PCRTHM}, except that \eqref{asffsnone} now immediately follows from our assumption. Similarly, the proof of part (b) mirrors that in \Cref{PCRTHM}, with the asymptotic normality of the Spectrum-Aware debiased estimator for the new problem \eqref{filteredpcr} now derived from \Cref{universethm} rather than \Cref{NEIGMAIN}. Finally, part (c) is an immediate consequence of (a) and (b).
\end{proof}

\section{Proofs for Extension to the Lasso}\label{sec:Lasso}

\subsection{Existence and properties of fixed points}\label{sec:Lassoexist}
\begin{Lemma}\label{tobound}
Let \(\Zs\sim N(0,1)\). Then, for any \(a>0\), we have
\[
\mathbb{E}\Bigl[(|\Zs|-a)^2\mid |\Zs|>a\Bigr]<1.
\]
\end{Lemma}

\begin{proof}[Proof of \Cref{tobound}]
We begin by writing the conditional expectation as
\[
\mathbb{E}\Bigl[(|\Zs|-a)^2\mid |\Zs|>a\Bigr]=\frac{\mathbb{E}\Bigl[(|\Zs|-a)^2\,\mathbb{I}({|\Zs|>a})\Bigr]}{\mathbb{P}(|\Zs|>a)}.
\]
Since \(\Zs\) is symmetric, we can write the numerator as
\[
\mathbb{E}\Bigl[(|\Zs|-a)^2\,\mathbb{I}({|\Zs|>a})\Bigr]
=2\int_a^\infty (x-a)^2\,\phi(x)\,dx.
\]
Next, expand the square:
\[
\int_a^\infty (x-a)^2\,\phi(x)\,dx=\int_a^\infty x^2\,\phi(x)\,dx-2a\int_a^\infty x\,\phi(x)\,dx+a^2\int_a^\infty \phi(x)\,dx.
\]
Using the standard identities for the normal distribution:
\[
\int_a^\infty \phi(x)\,dx=1-\Phi(a),\quad
\int_a^\infty x\,\phi(x)\,dx=\phi(a),\quad
\int_a^\infty x^2\,\phi(x)\,dx=a\phi(a)+1-\Phi(a),
\]
we obtain:
\[
\begin{split}
\int_a^\infty (x-a)^2\,\phi(x)\,dx 
&=\Bigl[a\phi(a)+1-\Phi(a)\Bigr]-2a\,\phi(a)+a^2\Bigl[1-\Phi(a)\Bigr]\\[1mm]
&=(1-\Phi(a))(1+a^2)-a\phi(a).
\end{split}
\]
Thus, the numerator of the conditional expectation is
\[
2\Bigl[(1-\Phi(a))(1+a^2)-a\phi(a)\Bigr],
\]
and the probability in the denominator is
\[
\mathbb{P}(|\Zs|>a)=2(1-\Phi(a)).
\]
Hence,
\[
\mathbb{E}\Bigl[(|\Zs|-a)^2\mid |\Zs|>a\Bigr]
=\frac{2\Bigl[(1-\Phi(a))(1+a^2)-a\phi(a)\Bigr]}{2(1-\Phi(a))}
=1+a^2-\frac{a\phi(a)}{1-\Phi(a)}.
\]
Using the classical inequality bound for Mills ratio,
\[
\frac{a\phi(a)}{1-\Phi(a)}\ge a^2
\]
we conclude that
\[
\mathbb{E}\Bigl[(|\Zs|-a)^2\mid |\Zs|>a\Bigr]<1.
\]
This completes the proof.
\end{proof}

\begin{Lemma}\label{lassodfdf}
Let \(\D\) be the random variable defined in \Cref{AssumpD}. There exists a random variable \(\D_+^2\) with \(\mathbb{P}(\D_+^2=0)=0\) such that for every Borel set \(A\subseteq [0,\infty)\)
\[
\mathbb{P}(\D^2\in A)= (1-w)\cdot \delta_0(A)+ w\cdot \mathbb{P}(\D_+^2\in A).
\]
where $w=\mathbb{P}(\D^2>0).$ 
\end{Lemma}

\begin{proof}[Proof of \Cref{lassodfdf}]
We have \(w>0\) or else \(\D^2=0\) violating \Cref{AssumpD}. Define \(\D_+^2\) as the random variable whose law is the conditional distribution of \(\D^2\) given \(\D^2>0\); that is, for any Borel set \(A\subset (0,\infty)\)
\[
\mathbb{P}(\D_+^2\in A)=\frac{\mathbb{P}(\D^2\in A)}{w}.
\]
Since \(\D_+^2\) is supported on \((0,\infty)\), we have \(\mathbb{P}(\D_+^2=0)=0\). By the law of total probability, for every Borel set \(A\subseteq [0,\infty)\) we obtain
\[
\begin{aligned}
    \mathbb{P}(\D^2\in A)&= \mathbb{P}(\D^2\in A\mid \D^2=0)\,(1-w)+\mathbb{P}(\D^2\in A\mid \D^2>0)w\\
&= (1-w)\,\delta_0(A) + w\,\mathbb{P}(\D_+^2\in A).
\end{aligned}
\]
This completes the proof.
\end{proof}

We first require an additional condition on $\D^2$. We note that this condition is mild. It rules out the edge case where $\D^2$ has no mass at zero but eigenvalues of $\X^\top \X$ are not bounded away zero. When $\X$ has i.i.d. sub-Gaussian entries and $n/p\to\varsigma$, $\D^2$ follows the Marchenko–Pastur law; this condition then excludes only the edge case $\varsigma= 1$.

\begin{Assumption}\label{AssumpDLasso}
Let \(\D\) be the random variable defined in \Cref{AssumpD}. Recall from \Cref{lassodfdf} that there exists a random variable \(\D_+^2\) with \(\mathbb{P}(\D_+^2=0)=0\) such that for every Borel set \(A\subseteq [0,\infty)\)
\begin{equation}\label{XL11}
\mathbb{P}(\D^2\in A)= (1-w)\cdot \delta_0(A)+ w\cdot \mathbb{P}(\D_+^2\in A).
\end{equation}
where $w=\mathbb{P}(\D^2>0).$ We require that either $w<1$ (i.e. the distribution of $\D^2$ has a positive probability mass on 0), or if $w=1$, we must have $d_->0$ (i.e. if $\D^2$ has zero probability mass on 0, it must then be bounded away from 0). 
\end{Assumption}

It is easy to see that \Cref{AssumpDLasso} above is implied by \Cref{AssumpLassoo} under \Cref{AssumpD}. 

Below is a restatement of \Cref{fixexist}'s Lasso case. 
\begin{Proposition}\label{fixexistLasso}
    Let $\mathsf{D}^2$ be the random variable defined in \Cref{AssumpD} and satisfy \Cref{AssumpDLasso}. Then \Cref{Assumpfix} holds for $h(x)=\lambda_1 \abs{x}$ for any $\lambda_1>0$. 
\end{Proposition}

\begin{proof}[Proof of \Cref{fixexistLasso} (\Cref{fixexist} Lasso case)]
Recall from the proof of \Cref{fixexist}, we may obtain a new system of fixed equation
\begin{subequations}\label{fpstren}
\begin{align}
& \gamma_*^{-1}=\frac{1}{-R\left(\eta_*^{-1}\right)} \label{RSaLasso}\\
& \eta_*^{-1}=\gamma_*^{-1} \mathbb{P}\left(\left|\frac{1}{\gamma^{-1}_*} \Xstar+\frac{1}{\alpha_*} \mathsf{Z}\right|>\lambda_1\right)  \label{RSbLasso} \\
& 1=\alpha_*^2 R^{\prime}\left(\eta_*^{-1}\right)\mathbb{E}\left(\operatorname{ST}_{\gamma_*^{-1} \lambda_1}\left(\Xstar+\frac{\gamma_*^{-1}}{\alpha_*} \mathsf{Z}\right)-\Xstar\right)^2+\sigma^2\frac{\alpha_*^2}{\gamma_*^{-1}}\left[1+\frac{\eta_*^{-1} R^{\prime}\left(\eta_*^{-1}\right)}{R\left(\eta_*^{-1}\right)}\right] \label{RScLasso}
\end{align}
\end{subequations}
 from \eqref{fp} by eliminating $\tau_{**}$ and introducing a change of variable $\tau_*=\gamma_*^{-2} \alpha_*^{-2}$. 

 Similarly to the proof of \Cref{fixexist}, we also introduce 
 $$\gamma_{+}^{-1}:=\lim _{z \rightarrow G\left(-d_{-}\right)} \frac{1}{-R(z)}.$$ 
 Recall from \eqref{iff} in the proof of \Cref{fixexist},
 \begin{equation}
    \gamma_+^{-1}=+\infty \text{   if and only if   }  G(-d_{-})=+\infty \text{ and } d_{-}=0.
\end{equation}
Combining this with \Cref{AssumpDLasso}, we observe that there are only two possible cases:
\begin{itemize}
    \item[(i)]: $\gamma_+^{-1}<+\infty$: $w=1$, $d_{-}>0$ and \begin{equation}\label{ggin}
        G(-d_-)/\gamma_+^{-1}>1;
    \end{equation}
    \item[(ii)]: $\gamma_+^{-1}=+\infty$: $w\in (0,1)$, $G(-d_{-})=+\infty$ and $d_{-}=0$;
\end{itemize}
 In case (i) above, we obtained \eqref{ggin} from \eqref{cc1} and \eqref{cc2}. 

  We will consider case (i) first. We now proceed to consider finding a solution $\gamma^{-1}=\gamma^{-1}(\alpha)$ from the equation
 \begin{equation}\label{smdfLasso}
    \gamma R^{-1}(-\gamma)=\mathbb{P}\left(\left|\frac{1}{\gamma^{-1}} \Xstar+\frac{1}{\alpha} \mathsf{Z}\right|>\lambda_1\right)
\end{equation}
for $\alpha \in (0,+\infty).$ This amounts to solving for $\gamma_*^{-1}, \eta_*^{-1}$ in terms of $\alpha_*$ from \eqref{RSaLasso} and \eqref{RSbLasso}. 

We have already showed in the proof of \Cref{fixexist} that the LHS is a strictly increasing function in $\gamma^{-1} \in [\frac{1}{\E \D^2},\gamma_+^{-1})$ whereas we know that RHS is a non-increasing function in $\gamma^{-1} \in [\frac{1}{\E \D^2},\gamma_+^{-1})$. We also have that
$$\E \D^2 R^{-1}\qty(-\E \D^2)=0\le  \inf_{\alpha \in (0,+\infty)}\mathbb{P}\left(\left|(\E \D^2) \cdot \Xstar+\frac{1}{\alpha} \mathsf{Z}\right|>\lambda_1\right)$$
and that
 \begin{equation}
    \lim_{\gamma^{-1} \to \gamma_+^{-1}}\gamma R^{-1}(-\gamma)=\frac{G(-d_-)}{\gamma_+^{-1}}>1\ge\lim_{\gamma^{-1} \to \gamma_+^{-1}}\sup_{\alpha \in (0,+\infty)}\mathbb{P}\left(\left|\frac{1}{\gamma^{-1}} \Xstar+\frac{1}{\alpha} \mathsf{Z}\right|>\lambda_1\right)
\end{equation}
where we used \eqref{thething} and \eqref{ggin}. The above ensures that there exists a solution $\gamma^{-1}(\alpha) \in [\frac{1}{\E \D^2},\gamma_+^{-1})$ and that
\begin{equation}\label{bddd1}
    \sup_{\alpha \in (0,+\infty)} \gamma^{-1}(\alpha) < \gamma_+^{-1}.
\end{equation}
Let $\eta^{-1}(\alpha)=R^{-1}(-\gamma(\alpha))$. Since $\gamma^{-1}\mapsto R^{-1} \qty(-\frac{1}{\gamma^{-1}})$ is strictly increasing, we have from \eqref{thething} that
\begin{equation}\label{bddd2}
    \sup_{\alpha \in (0,+\infty)} \eta^{-1}(\alpha) < G(-d_-).
\end{equation}

The next step is to plug $\gamma^{-1}(\alpha)$ and $\eta^{-1}(\alpha)$ into the RHS of \eqref{RScLasso} to obtain the function $v:(0,+\infty)\mapsto (0,+\infty)$
$$\begin{gathered}v(\alpha)=\alpha^2 R^{\prime}\left(\eta^{-1}(\alpha)\right)\left[\mathbb{E}\left(\operatorname{ST}_{\gamma^{-1}(\alpha) \lambda_1}\left(\Xstar+\frac{\gamma^{-1}(\alpha)}{\alpha} \mathsf{Z}\right)-\Xstar\right)^2\right] \\ +\sigma^2 \alpha^2 \frac{1}{\gamma^{-1}(\alpha)}\left[1+\frac{\eta^{-1}(\alpha) R^{\prime}\left(\eta^{-1}(\alpha)\right)}{R\left(\eta^{-1}(\alpha)\right)}\right]\end{gathered}$$
and show that the RHS of \eqref{RSc}, i.e. $v(\alpha)$, diverges to $+\infty$ as $\alpha \to +\infty$ and goes to some value less than $1$ as $\alpha \to 0$. Given \eqref{bddd1} and \eqref{bddd2}, This step is identical to the same step in the proof of \Cref{fixexist}. 

We now proceed to consider case (ii). From \eqref{lem:cauchy}, we see that under case (ii), $G^{-1}(z)$ and $R(z)$ are defined on the domain $(0,+\infty)$ and $z\mapsto R^{-1}(-1/z)$ is defined on the domain $[\frac{1}{\E \D^2},+\infty)$. Before proving the existence of fixed points, we first prove the asymptotic statements in \eqref{blefd} and \eqref{thebig2}.

We let $x\left(\eta^{-1}\right)=\eta^{-1} G^{-1}\left(\eta^{-1}\right)$ for $\eta^{-1} \in(0,+\infty)$. We then have that
\begin{equation}\label{yadd}
\mathbb{E}\left[\frac{1}{\frac{x\left(\eta^{-1}\right)}{\eta^{-1}}+\D^2}\right]=\eta^{-1} \Leftrightarrow(1-w)+w \mathbb{E}\left[\frac{x\left(\eta^{-1}\right)}{x\left(\eta^{-1}\right)+\eta^{-1} \D_{+}^2}\right]=x\left(\eta^{-1}\right)
\end{equation}
Note that since $\lim _{\eta^{-1} \rightarrow+\infty} G^{-1}\left(\eta^{-1}\right)=0$, dominated convergence theorem implies that as $\eta^{-1} \rightarrow+\infty$
\begin{equation*}
\mathbb{E}\left[\frac{x\left(\eta^{-1}\right)}{x\left(\eta^{-1}\right)+\eta^{-1} \D_{+}^2}\right]=\mathbb{E}\left[\frac{G^{-1}\left(\eta^{-1}\right)}{G^{-1}\left(\eta^{-1}\right)+\D_{+}^2}\right] \rightarrow 0
\end{equation*}
Combining this and \eqref{yadd} implies that
\begin{equation*}
\lim _{\eta^{-1} \rightarrow+\infty} x\left(\eta^{-1}\right) \equiv \lim _{\eta^{-1} \rightarrow+\infty} \eta^{-1} G^{-1}\left(\eta^{-1}\right) \rightarrow 1- w.
\end{equation*}
Hence, 
\begin{equation}\label{blefd}
\lim _{\gamma^{-1} \rightarrow+\infty} \gamma R^{-1}(-\gamma)=\lim _{\eta^{-1} \rightarrow+\infty}-\eta^{-1} R\left(\eta^{-1}\right)=1-\lim _{\eta^{-1} \rightarrow+\infty} \eta^{-1} G^{-1}\left(\eta^{-1}\right)=w.
\end{equation}
We then have that
\begin{equation}\label{blefdB}
\begin{aligned}
& \lim _{\eta^{-1} \rightarrow \infty} \mathbb{E} \frac{1}{\left(\eta^{-1} \D^2+\eta^{-1} G^{-1}\left(\eta^{-1}\right)\right)^2} \\
& =(1-w) \lim _{\eta^{-1} \rightarrow \infty} \frac{1}{\left(\eta^{-1} G^{-1}\left(\eta^{-1}\right)\right)^2}+w \lim _{\eta^{-1} \rightarrow \infty} \mathbb{E} \frac{1}{\left(\eta^{-1} \D_{+}^2+\eta^{-1} G^{-1}\left(\eta^{-1}\right)\right)^2} \\
& =\frac{1}{1-w}
\end{aligned}
\end{equation}
where we used \eqref{blefd} and dominated convergence theorem for the last line. It follows that
\begin{equation}\label{thebig}
\begin{aligned}
& \lim _{\eta^{-1} \rightarrow \infty}-\frac{\eta^{-1} R^{\prime}\left(\eta^{-1}\right)}{R\left(\eta^{-1}\right)} \\
& =\lim _{\eta^{-1} \rightarrow \infty} \frac{\left(\mathbb{E} \frac{1}{\left(\eta^{-1} \D^2+\eta^{-1} G^{-1}\left(\eta^{-1}\right)\right)^2}\right)^{-1} \frac{1}{\eta^{-1} G^{-1}\left(\eta^{-1}\right)}-\frac{1}{\eta^{-1} G^{-1}\left(\eta^{-1}\right)}}{1-\frac{1}{z G^{-1}\left(\eta^{-1}\right)}} \\
& =\frac{(1-w) \frac{1}{1-w}-\frac{1}{1-w}}{1-\frac{1}{1-w}} \\
& =1
\end{aligned}
\end{equation}
where the second line can be seen from the proof of \Cref{lem:cauchy} and the third line uses \eqref{blefd} and \eqref{blefdB}. Combining \eqref{thebig} and \eqref{blefd}, we also have that
\begin{equation}\label{thebig2}
    \lim _{\eta^{-1} \rightarrow \infty} \eta^{-2} R^{\prime}\left(\eta^{-1}\right) \rightarrow w.
\end{equation}

Let us define $\alpha_{\min}=\alpha_{\min}(\lambda_1, w)$ as the solution of the following equation (in terms of $\alpha$
 \begin{equation}\label{amindef}
    w =\mathbb{P}\left(\left|\mathsf{Z}\right|>\alpha \lambda_1\right).
\end{equation}
We note that $\alpha_{\min}\in (0,+\infty)$ is well-defined since under case (ii), $w\in (0,1)$ and RHS is strictly decreasing in $\alpha$ for any $\lambda_1>0$. We now proceed to consider finding a solution $\gamma^{-1}=\gamma^{-1}(\alpha)$ from the equation
 \begin{equation}\label{bbb}
    \gamma R^{-1}(-\gamma)=\mathbb{P}\left(\left|\frac{1}{\gamma^{-1}} \Xstar+\frac{1}{\alpha} \mathsf{Z}\right|>\lambda_1\right)
\end{equation}
for $\alpha \in (\alpha_{\min},+\infty).$ This amounts to solving for $\gamma_*^{-1}, \eta_*^{-1}$ in terms of $\alpha_*$ from \eqref{RSaLasso} and \eqref{RSbLasso}. We have already showed in the proof of \Cref{fixexist} that the LHS is a strictly increasing function in $\gamma^{-1} \in [\frac{1}{\E \D^2},+\infty)$ whereas we know that RHS is a non-increasing function in $\gamma^{-1} \in [\frac{1}{\E \D^2},+\infty)$. We also have that for any $\alpha \in (\alpha_{\min},+\infty)$
\begin{equation*}
    \E \D^2 R^{-1}\qty(-\E \D^2)=0<\mathbb{P}\left(\left|(\E \D^2) \cdot \Xstar+\frac{1}{\alpha} \mathsf{Z}\right|>\lambda_1\right)
\end{equation*}
and that
 \begin{equation*}
    \lim_{\gamma^{-1} \to +\infty}\gamma R^{-1}(-\gamma)=w>\mathbb{P}\left(\left|\mathsf{Z}\right|>\alpha \lambda_1\right)=\lim_{\gamma^{-1} \to +\infty} \mathbb{P}\left(\left|\frac{1}{\gamma^{-1}} \Xstar+\frac{1}{\alpha} \mathsf{Z}\right|>\lambda_1\right)
\end{equation*}
where we used definition of $\alpha_{\min}$ via \eqref{amindef} and the fact that $\mathbb{P}\left(\left|\mathsf{Z}\right|>\alpha \lambda_1\right)$ is strictly decreasing on $\alpha \in (\alpha_{\min},+\infty)$. The above ensures that there exists a solution $\gamma^{-1}(\alpha) \in [\frac{1}{\E \D^2},+\infty)$. 

Let us define $\eta^{-1}(\alpha)=R^{-1}(-\gamma(\alpha))$. The next step is to plug $\gamma^{-1}(\alpha)$ and $\eta^{-1}(\alpha)$ into the RHS of \eqref{RScLasso} to obtain the function $v:(\alpha_{\min},+\infty)\mapsto (0,+\infty)$
$$\begin{gathered}v(\alpha)=\alpha^2 R^{\prime}\left(\eta^{-1}(\alpha)\right)\left[\mathbb{E}\left(\operatorname{ST}_{\gamma^{-1}(\alpha) \lambda_1}\left(\Xstar+\frac{\gamma^{-1}(\alpha)}{\alpha} \mathsf{Z}\right)-\Xstar\right)^2\right] \\ +\sigma^2 \alpha^2 \frac{1}{\gamma^{-1}(\alpha)}\left[1+\frac{\eta^{-1}(\alpha) R^{\prime}\left(\eta^{-1}(\alpha)\right)}{R\left(\eta^{-1}(\alpha)\right)}\right]\end{gathered}$$
and show that the RHS of \eqref{RSc}, i.e. $v(\alpha)$, diverges to $+\infty$ as $\alpha \to +\infty$ and goes to some value less than $1$ as $\alpha \to \alpha_{\min}$. 

First consider any positive increasing sequence $\left(\alpha_m\right)_{m=1}^{+\infty}$ such that $\alpha_m \rightarrow+\infty$ as $m \rightarrow \infty$. We must have that 
$$C_1=\limsup _{m \rightarrow \infty} \gamma^{-1}\left(\alpha_m\right)<+\infty.$$ 
If not, we would have a subsequence $\alpha_{m_t}$ such that
$$\lim_{t\to \infty} \gamma(\alpha_{m_t})R^{-1}(-\gamma(\alpha_{m_t}))\to w<1$$
while
$$\lim_{t\to \infty} \mathbb{P}\left(\left|\frac{1}{\gamma(\alpha_{m_t})} \Xstar+\frac{1}{\alpha_{m_t}} \mathsf{Z}\right|>\lambda_1\right) \to 1.
$$
It follows from this and monotonicity of $z\mapsto R^{-1}(-1/z)$ that
\begin{equation*}
\limsup _{m \rightarrow \infty} \eta^{-1}\left(\alpha_m\right)\le R^{-1}\qty(-\frac{1}{C_1})<+\infty
\end{equation*}
from which we conclude that
\begin{equation*}
C_2:=\liminf _{m \rightarrow \infty} 1+\frac{\eta^{-1}\left(\alpha_m\right) R^{\prime}\left(\eta^{-1}\left(\alpha_m\right)\right)}{R\left(\eta^{-1}\left(\alpha_m\right)\right)}>0
\end{equation*}
This follows from the fact that $\lim _{x \rightarrow 0} 1+\frac{x R^{\prime}(x)}{R(x)}=1$ using \Cref{lem:cauchy}, (f) and continuity of the function $x \mapsto$ $1+\frac{x R^{\prime}(x)}{R(x)}$ on $\left(0, G\left(-d_{-}\right)\right)$. Note that by the above discussion, we have $\liminf _{\alpha \rightarrow+\infty} \frac{v(\alpha)}{\alpha^2} \geq \sigma^2 \frac{C_2}{C_1}$ by lower-bounding second summand in $v(\alpha)$ which then implies that
\begin{equation}\label{liminffixeLasso}
\liminf _{\alpha \rightarrow+\infty} v(\alpha) \rightarrow+\infty.
\end{equation}

Now consider any positive decreasing sequence $\left(\alpha_m\right)_{m=1}^{+\infty}$ such that $\alpha_m \rightarrow \alpha_{\min}$ as $m \rightarrow \infty$. 

First let us define $\gamma^{-1}_0(\alpha)$ as the unique solution of the following equation (in terms of $\gamma^{-1}$)
$$\gamma R^{-1}(-\gamma)=\mathbb{P}\left(\left|\mathsf{Z}\right|>\alpha \lambda_1\right).$$
Similarly to $\gamma^{-1}(\alpha)$, we can show that there $\gamma^{-1}_0(\alpha)\in [\frac{1}{\E \D^2},\infty)$ is well-defined for any $\alpha\in (\alpha_{\min},+\infty)$. Meanwhile, we note that RHS of \eqref{bbb} is non-increasing in $\gamma^{-1}$ and that it converges to $\mathbb{P}\left(\left|\mathsf{Z}\right|>\alpha \lambda_1\right)$ as $\gamma^{-1} \to \infty$ for each fixed $\alpha\in (\alpha_{\min},+\infty)$. It follows that for any $\alpha\in (\alpha_{\min},+\infty)$
\begin{equation}\label{weiwu1}
    \gamma^{-1}(\alpha)\ge \gamma^{-1}_0(\alpha).
\end{equation}
We also that 
\begin{equation}\label{weiwu2}
    \lim_{\alpha \to \alpha_{\min}}\gamma^{-1}_0(\alpha)=+\infty
\end{equation}
which follows from (i) LHS of \eqref{bbb} is strictly increasing in $\gamma^{-1}$ and converges to $w$ as $\gamma^{-1}\to +\infty$ and (ii) $\mathbb{P}\left(\left|\mathsf{Z}\right|>\alpha \lambda_1\right)$ is strictly decreasing in $\alpha$ and converges to $w$ as $\alpha \to \alpha_{\min}$. Combining \eqref{weiwu1} and \eqref{weiwu2}, we obtain that
\begin{equation}\label{weiwu}
    \lim_{m \to+\infty}\gamma^{-1}(\alpha_m)=+\infty 
\end{equation}
This, \eqref{blefd} and \eqref{thebig2} imply that
\begin{equation}\label{weiwueta}
    \begin{aligned}
        &\lim_{m \to+\infty} \eta^{-1}(\alpha_m)=+\infty, \quad \lim_{m \to+\infty} -\eta^{-2}(\alpha_m)R^{\prime}(\eta^{-1}(\alpha_m))=w\\
        &\lim_{m \to+\infty} -\eta^{-1}(\alpha_m)R(\eta^{-1}(\alpha_m))=w
    \end{aligned}
\end{equation}

We first show that the second summand of $v(\alpha_m)$ vanishes as $\alpha_m \rightarrow \alpha_{\min}$. Using \Cref{lem:cauchy}, (d) and $\lim_{m\to +\infty}\gamma^{-1}(\alpha_m)\to +\infty$ and $\lim_{m\to +\infty} \alpha_m \to \alpha_{\min}<+\infty$, we have that
\begin{equation}\label{ppinfd1}
\lim _{m \rightarrow+\infty} \frac{\sigma^2 \alpha_m^2}{\gamma^{-1}\left(\alpha_m\right)}\left[1+\frac{\eta^{-1}\left(\alpha_m\right) R^{\prime}\left(\eta^{-1}\left(\alpha_m\right)\right)}{R\left(\eta^{-1}\left(\alpha_m\right)\right)}\right]=0
\end{equation}
as required. 

We now proceed to show that the first summand of $v(\alpha_m)$ converges to a constant less than 1 as $\alpha_m \rightarrow 0$. We note that the first summand of $v(\alpha_m)$ can be rewritten as follows
\begin{equation*}
\begin{aligned}
& \alpha_m^2 R^{\prime}\left(\eta^{-1}\left(\alpha_m\right)\right) \mathbb{E}\left(\operatorname{ST}_{\gamma^{-1}\left(\alpha_m\right) \lambda_1}\left(\Xstar+\frac{\gamma^{-1}\left(\alpha_m\right)}{\alpha_m} \Zs\right)-\Xstar\right)^2 \\
& =\eta^{-2}\left(\alpha_m\right) R^{\prime}\left(\eta^{-1}\left(\alpha_m\right)\right) \mathbb{E}\Bigg(\operatorname{sgn}\left(\frac{\alpha_m}{\eta^{-1}\left(\alpha_m\right)} \Xstar +\frac{1}{-\eta^{-1}\left(\alpha_m\right) R\left(\eta^{-1}\left(\alpha_m\right)\right)} \Zs\right) \\
& \qquad \times \left(\left|\frac{\alpha_m}{\eta^{-1}\left(\alpha_m\right)} \Xstar+\frac{1}{-\eta^{-1}\left(\alpha_m\right) R\left(\eta^{-1}\left(\alpha_m\right)\right)} \Zs\right|\right. \\
& \qquad \qquad \qquad \left.\left.-\frac{\alpha_m}{-\eta^{-1}\left(\alpha_m\right) R\left(\eta^{-1}\left(\alpha_m\right)\right)} \lambda_1\right)_{+}-\frac{\alpha_m}{\eta^{-1}\left(\alpha_m\right)} \Xstar\right)^2 \Bigg).
\end{aligned}
\end{equation*}
Using \eqref{weiwueta}, we have that
\begin{equation}\label{ppinfd2}
    \begin{aligned}
        \lim_{m\to +\infty}\alpha_m^2 R^{\prime}\left(\eta^{-1}\left(\alpha_m\right)\right) \mathbb{E}\left(\operatorname{ST}_{\gamma^{-1}\left(\alpha_m\right) \lambda_1}\left(\Xstar+\frac{\gamma^{-1}\left(\alpha_m\right)}{\alpha_m} \Zs\right)-\Xstar\right)^2\\
        =w \mathbb{E}\left(\operatorname{sgn}(\Zs)\left(\left|\frac{1}{w} \Zs \right|-\frac{\alpha_{\min }}{w} \lambda_1\right)_{+}\right)^2=\E \qty[\qty(\abs{\Zs}-\alpha_{\min}\lambda_1)^2\mid \abs{\Zs}>\alpha_{\min}\lambda_1]<1
    \end{aligned}
\end{equation}
as required. The last inequality follows from \Cref{tobound} along with the fact that $\alpha_{\min}\lambda_1>0$. Combining \eqref{ppinfd1} and {ppinfd2}, we have that
\begin{equation}\label{supvlimLasso}
    \limsup_{\alpha \to \alpha_{\min}} v(\alpha) <1.
\end{equation}

Combine \eqref{liminffixeLasso} and \eqref{supvlimLasso}. By continuity of $\alpha \mapsto v(\alpha)$ on $(0,+\infty)$, we know that there exists a solution $\alpha_* \in(\alpha_{\min},+\infty)$ to the equation $v\left(\alpha_*\right)=1$. Therefore, a solution of \eqref{fpstren} is $(\gamma^{-1}, \eta^{-1}, \alpha)=\left(\gamma^{-1}\left(\alpha_*\right), \eta^{-1}\left(\alpha_*\right), \alpha_*\right)$ by construction. This concludes the proof.
\end{proof}

As a Corollary of the proof of \Cref{fixexistLasso}, we have the following. 

\begin{Corollary}\label{jiros}
Assume that both \Cref{AssumpD} and \Cref{AssumpDLasso} are satisfied and   $h(x)=\lambda_1|x|,\forall \lambda_1>0.$ Furthermore, assume that $\E \Xstar, \V(\D^2)$ are both finite. 
Let \(\gamma_*=\gamma_*(\lambda_1)\) and \(\eta_*=\eta_*(\lambda_1)\) be any fixed points defined in \(\eqref{fp}\) (whose existence is guaranteed by \Cref{fixexistLasso}). Then,  
\begin{equation}\label{lkf1}
    \lim_{\lambda_1\to+\infty}\;\;\frac{\gamma_*(\lambda_1)}{\eta_*(\lambda_1)}=0.
\end{equation}
Meanwhile, we have that 
\begin{equation}\label{lkf2}
    \frac{\gamma_*(\lambda_1)}{\eta_*(\lambda_1)}<w,\qquad \forall \gamma_1>0
\end{equation}
when $w<1$.
\end{Corollary}

\begin{proof}[Proof of \Cref{jiros}]
    We continue from the proof of \Cref{fixexistLasso}. Recall that we have shown that for each fixed $\alpha\in (\alpha_{\min},+\infty)$ (case (ii) when $\gamma_+^{-1}$) or $\alpha\in (0,+\infty)$ (case (i) when $\gamma_+^{-1}<+\infty$), 
    the following equation 
    \begin{equation*}
\gamma R^{-1}(-\gamma)=\mathbb{P}\left(\left|\frac{1}{\gamma^{-1}} \Xstar+\frac{1}{\alpha} \Zs\right|>\lambda_1\right)
\end{equation*}
admits a unique solution $\gamma^{-1}\left(\alpha, \lambda_1\right)$ on $\left[\frac{1}{\mathbb{E} \D^2}, \gamma_{+}^{-1}\right)$. Since RHS is a non-increasing function in $\gamma^{-1}$, we have that
$$\gamma\left(\alpha, \lambda_1\right) R^{-1}\left(-\gamma\left(\alpha, \lambda_1\right)\right) \leq \mathbb{P}\left(\left|\Xstar\cdot\left(\mathbb{E} \D^2\right)+\frac{1}{\alpha} \Zs\right|>\lambda_1\right).$$
Note that that $\gamma^{-1} \mapsto \gamma R^{-1}(-\gamma)$ is strictly increasing on $\left[\frac{1}{\E \D^2}, \gamma_{+}^{-1}\right)$, taking value 0 as $\gamma^{-1}=$ $\frac{1}{\E \D^2}$ and that $\lim _{\lambda_1 \rightarrow \infty} \mathbb{P}\left(\left|\Xstar\cdot\left(\E \D^2\right)+\frac{1}{\alpha} \Zs\right|>\lambda_1\right)=0$ for each fixed $\alpha$. We must have that for each fixed $\alpha$
\begin{equation}\label{shid2}
\lim _{\lambda_1 \rightarrow \infty} \gamma^{-1}\left(\alpha, \lambda_1\right)=\frac{1}{\E \D^2}
\end{equation}
which implies that $\eta^{-1}(\alpha,\lambda_1):=\gamma(\alpha,\lambda_1)R^{-1}(\gamma(\alpha,\lambda_1))$ satisfies that
\begin{equation}\label{shid}
\lim _{\lambda_1 \rightarrow \infty} \eta^{-1}\left(\alpha, \lambda_1\right)=0
\end{equation}
Recall that we showed that there exists a solution $\alpha_*=\alpha_*(\lambda_1)$ for the equation
\begin{equation*}
v\left(\alpha_*\left(\lambda_1\right), \lambda_1\right)=1
\end{equation*}
where
$$
\begin{gathered}
v(\alpha,\lambda_1)=\alpha^2 R^{\prime}\left(\gamma^{-1}(\alpha,\lambda_1)\right)\left[\mathbb{E}\left(\operatorname{ST}_{\gamma^{-1}(\alpha,\lambda_1) \lambda_1}\left(\Xstar+\frac{\gamma^{-1}(\alpha,\lambda_1)}{\alpha} \mathsf{Z}\right)-\Xstar\right)^2\right] \\ +\sigma^2 \alpha^2 \frac{1}{\gamma^{-1}(\alpha,\lambda_1)}\left[1+\frac{\gamma^{-1}(\alpha,\lambda_1) R^{\prime}\left(\gamma^{-1}(\alpha,\lambda_1)\right)}{R\left(\gamma^{-1}(\alpha,\lambda_1)\right)}\right],
\end{gathered}
$$
By \eqref{shid2}, \eqref{shid} and \Cref{lem:cauchy}, (f),  we obtain that for each fixed $\alpha$,
\begin{equation*}
\lim _{\lambda_1 \rightarrow \infty} v\left(\alpha, \lambda_1\right)=\alpha^2 \mathbb{V}\left(\D^2\right) \mathbb{E}\left(\Xstar\right)+\sigma^2 \alpha^2 \mathbb{E} \D^2.
\end{equation*}
Thus,
\begin{equation*}
\lim _{\lambda_1 \rightarrow \infty} \alpha_*\left(\lambda_1\right)=\frac{1}{ \mathbb{V}\left(\D^2\right) \mathbb{E}\left(\Xstar\right)+\sigma^2 \mathbb{E} \D^2}.
\end{equation*}
It follows that $\gamma_*\left(\lambda_1\right):=\gamma\left(\alpha_*\left(\lambda_1\right), \lambda_1\right), \eta_*\left(\lambda_1\right):=\eta\left(\alpha_*\left(\lambda_1\right), \lambda_1\right)$ satisfies that
\begin{equation*}
\lim _{\lambda_1 \rightarrow \infty} \frac{\gamma_*\left(\lambda_1\right)}{\eta_*\left(\lambda_1\right)} \leq \lim _{\lambda_1 \rightarrow \infty} \mathbb{P}\left(\left|\Xstar \cdot\left(\mathbb{E} \D^2\right)+\frac{1}{\alpha_*\left(\lambda_1\right)} \Zs\right|>\lambda_1\right)=0.
\end{equation*}
The proof of \eqref{lkf1} is complete. 

From \eqref{fp} (c), we have that
\begin{equation*}
\begin{aligned}
& 1=\mathbb{E} \frac{\eta_*}{\D^2+\eta_*-\gamma_*}=(1-w) \cdot \frac{1}{1-\frac{\gamma_*}{\eta_*}}+w \mathbb{E} \frac{\eta_*}{\D_{+}^2+\eta_*-\gamma_*} \\
& \Leftrightarrow \frac{w-\frac{\gamma_*}{\eta_*}}{1-\frac{\gamma_*}{\eta_*}}=w \mathbb{E} \frac{1}{\D_{+}^2 \eta_*^{-1}+1-\frac{\gamma_*}{\eta_*}}
\end{aligned}
\end{equation*}
which implies that
\begin{equation*}
\frac{w-\frac{\gamma_*}{\eta_*}}{1-\frac{\gamma_*}{\eta_*}}>0 \Leftrightarrow \frac{\gamma_*}{\eta_*}<w
\end{equation*}
The proof of \eqref{lkf2} is complete. 
\end{proof}

\subsection{Convergence of VAMP to Lasso solution}\label{secmfmdfma}
In this section, we establish \Cref{prop:sds} specifically for the Lasso penalty under additional conditions. We state our main result in \Cref{prop:sdsLasso} below where $\xonet,\mathbf{r}_{j t}$ are oracle VAMP iterates defined in \eqref{empvamp}. 

\begin{Proposition}\label{prop:sdsLasso}
Suppose that Assumptions \ref{AssumpD}, \ref{AssumpPrior}, \ref{AssumpLassoo} and \ref{AssumpLassoo2} hold and $h(x)=\lambda_1 |x|$ for some $\lambda_1>0$. Then  for $j=1,2$,
$$
\lim _{t \rightarrow \infty} \lim _{p \rightarrow \infty} \frac{1}{p}\left\|\hatbt-\hat{\mathbf{x}}_{j t}\right\|_2^2=\lim _{t \rightarrow \infty} \lim _{p \rightarrow \infty} \frac{1}{p}\left\|\mathbf{r}_{j t}-\mathbf{r}_{j *}\right\|_2^2=0.
$$
where the inner limits exist almost surely for each fixed $t$. 
\end{Proposition}

We first restate Lemma 3.1 from \cite{bayati2011lasso}.

\begin{Lemma}[\cite{bayati2011lasso}, Lemma 3.1]\label{B5317}
Let us recall from \eqref{deflasso} that
$
\mathcal{L}(\mathbf{x})=\frac{1}{2} \norm{\y-\X \mathbf{x}}^2+\norm{\mathbf{x}}_1.
$
There exists a function $\xi\left(\vartheta, c_1, \ldots, c_5\right)$ such that the following happens.
If $\mathbf{x}, \mathbf{r} \in \mathbb{R}^p$ satisfy the following conditions
\begin{enumerate}
    \item $\|\mathbf{r}\|_2 \leq c_1 \sqrt{p}$;
    \item $\mathcal{L}(\mathbf{x}+\mathbf{r}) \leq \mathcal{L}(\mathbf{x})$;
    \item There exists some subgradient of $\mathcal{L}$ evaluated at $\mathbf{x}$, i.e. $\operatorname{sg}(\mathcal{L}, \mathbf{x}) \in \partial \mathcal{L}(\mathbf{x})$ such hat $\|\operatorname{sg}(\mathcal{L}, \mathbf{x})\|_2 \leq \sqrt{p} \vartheta$;
    \item Let $\mathbf{v} \equiv(1 / \lambda_1)\left[\X^\top (\y-\X \mathbf{x})+\operatorname{sg}(\mathcal{L}, \mathbf{x})\right] \in \partial\|\mathbf{x}\|_1$, and $S\left(c_2\right) \equiv\left\{i \in[p]:\left|v_i\right| \geq 1-c_2\right\}$. Then, for any $S^{\prime} \subseteq[p],\left|S^{\prime}\right| \leq c_3 p$, the minimum singular value of submatrix of $\X$ consisting of columns indexed by $S(c_1)\cup S^{\prime}$ is bounded away from zero, i.e.
    $s_{\min }\left(\X_{S\left(c_2\right) \cup S^{\prime}}\right) \geq c_4$;
    \item The maximum singular value of $\X$ is bounded: $s_{\max}(\X) \leq c_5$.
\end{enumerate}
Then $\|\mathbf{r}\|_2 \leq \sqrt{p} \xi\left(\vartheta, c_1, \ldots, c_5\right)$. Further for any $c_1, \ldots, c_5>0, \xi\left(\vartheta, c_1, \ldots, c_5\right) \rightarrow 0$ as $\vartheta \rightarrow 0$. Further, if $\operatorname{ker}(\X)=\{0\}$, the same conclusion holds under conditions 1, 2, 3, 5 above.
\end{Lemma}

\begin{proof}[Proof of \Cref{prop:sdsLasso}]

We apply \Cref{B5317} to $\mathbf{x}=\xonet$, the VAMP estimate iterate in \Cref{subsectionVAMP} and $\mathbf{r}=\hatbt-\xonet$ the distance from the LASSO optimum $\hatbt$. The thesis follows by checking conditions $1-5$. Namely we need to show that there exists constants $c_1, \ldots, c_5>0$ and, for each $\vartheta>0$ some $t=t(\vartheta)$ exists such that condition $1-5$ hold almost surely as $p \rightarrow \infty$.

We first show Condition 1 holds. First note that  
$$\norm{\mathbf{r}}_2\le \norm{\xonet}_2+\norm{\hatbt}_2.$$
So it suffices to show that there exists some constant $C>0$ such that almost surely
\begin{equation}
    \lim_{t\to \infty } \lim_{p \to \infty} \norm{\xonet}^2_2<C, \qquad \lim_{p \to \infty} \norm{\hatbt}^2_2<C.
\end{equation}
The first statement follows from \Cref{SEovamp}. The second statement can be proved in the same way as in \cite{bayati2011lasso}, under \Cref{AssumpD} and \ref{AssumpLassoo}. 

Condition 2 holds because $\mathbf{x}+\mathbf{r}=\hatbt$ minimizes $\mathcal{L}(\cdot)$. 

Condition 3 follows from \Cref{subgConv} with $\vartheta$ arbitrarily small for $t$ large enough. Here, we have chosen the subgradient to be
$$
\operatorname{sg}(\mathcal{L}, \mathbf{x})\equiv\mathcal{L}^{\prime}\left(\xonet\right)=\X^{\top}\left(\X \xonet-\y\right)+\gamma_*\left(\ronetm-\xonet\right)
$$
as in \Cref{subgConv}. 

We now proceed to consider Condition 4. Note that it is not needed for the case where $d_->0$, since in this case, kernel space of $\X$ is $\{0\}$ for all sufficiently large $p$. So we prove that it holds for our choices of $\operatorname{sg}(\mathcal{L}, \mathbf{x})$
$$\mathbf{v} \equiv \mathbf{v}_t=\frac{\gamma_*}{\lambda_1}\left(\ronetm-\xonet\right)$$
for any $t\ge 1$, when \eqref{shichi} holds. We have
$$S(\psi)\equiv S_t(\psi):=\{i\in [p]: \left|v_{t,i}\right| \geq 1-\psi\}$$
for $\psi\in (0,1)$. From \Cref{SEovamp}, we have that
almost surely
\begin{equation*}
\begin{aligned}
\lim _{p \rightarrow+\infty} \frac{\left|S_t(\psi)\right|}{p} & =\lim _{p \rightarrow+\infty} \frac{1}{p} \sum_{i=1}^p \mathbb{I}\left(\left|\frac{\gamma_*}{\lambda_1}\left(r_{1, t-1, i}-\hat{x}_{1 t, i}\right)\right| \geq 1-\psi\right) \\
& =\mathbb{P}\left(\frac{\gamma_*}{\lambda_1}\left|\sqrt{\tau_*} \Zs+\Xstar-\mathrm{ST}_{\gamma_*^{-1} \lambda_1}\left(\sqrt{\tau_*} \Zs+\Xstar\right)\right| \geq 1-\psi\right).
\end{aligned}
\end{equation*}
Note that
\begin{equation*}
\left|\sqrt{\tau_*} \Zs+\Xstar-\mathrm{ST}_{\gamma_*^{-1} \lambda_1}\left(\sqrt{\tau_*} \Zs+\Xstar\right)\right|= \begin{cases}\gamma_*^{-1} \lambda_1 & \text { when }\left|\sqrt{\tau_*} \Zs+\Xstar\right| \geq \gamma_*^{-1} \lambda_1 \\ \left|\sqrt{\tau_*} \Zs+\Xstar\right| & \text { otherwise }\end{cases}.
\end{equation*}
Therefore, from the law of total probability,
\begin{equation*}
\begin{aligned}
& \mathbb{P}\left(\frac{\gamma_*}{\lambda_1}\left|\sqrt{\tau_*} \Zs+\Xstar-\mathrm{ST}_{\gamma_*^{-1} \lambda_1}\left(\sqrt{\tau_*} \Zs+\Xstar\right)\right| \geq 1-\psi\right) \\
& =\mathbb{P}\left(\left|\sqrt{\tau_*} \Zs+\Xstar\right| \geq \gamma_*^{-1} \lambda_1\right)+\mathbb{P}\left(\left|\sqrt{\tau_*} \Zs+\Xstar\right|<\gamma_*^{-1} \lambda_1\right) \\
& \qquad \qquad \times \mathbb{P}\bigg(\frac{\gamma_*}{\lambda_1}\left|\sqrt{\tau_*} \Zs+\Xstar\right| \geq 1-\psi \bigg| \abs{\sqrt{\tau_*} \Zs+\Xstar} <\gamma_*^{-1} \lambda_1\bigg) \\
& =\mathbb{P}\left(\left|\sqrt{\tau_*} \Zs+\Xstar\right| \geq \gamma_*^{-1} \lambda_1\right)+\mathbb{P}\left(1-\psi \leq \frac{1}{\gamma_*^{-1} \lambda_1}\left|\sqrt{\tau_*} \Zs+\Xstar\right| \leq 1\right).
\end{aligned}
\end{equation*}
Note that the second term goes to $0$ as $\psi\to 0$. So we have that almost surely
\begin{equation*}
\lim _{\psi \rightarrow 0} \lim _{p \rightarrow+\infty} \frac{\left|S_t(\psi)\right|}{p}=\mathbb{P}\left(\left|\sqrt{\tau_*} \Zs+\Xstar\right| \geq \gamma_*^{-1} \lambda_1\right)=\frac{\gamma_*}{\eta_*}<C_{\mathrm{abs}}\left(\frac{\mathbb{E}\D^2}{d_{+}}\right)^3
\end{equation*}
The last inequality follows from \Cref{AssumpLassoo}. Condition 4 follows from this and \Cref{34ba}. We state and prove \Cref{34ba} separately in the next section. 

Condition 5 follows from \Cref{AssumpD}.
    
\end{proof}

\subsection{Smallest singular value of design submatrix}\label{ssvods}
Recall that our proof of Condition 4 of \Cref{B5317} requires \Cref{34ba}, which controls the smallest singular value of certain design submatrix. 

For the following, we define the sigma-field generated by outputs of the VAMP algorithm $\mathcal{G}_t, t\ge 1$  (in the probability space of $\mathbf{O}, \st$ and $\epbm$) as
    $$\mathcal{G}_t:= \mathcal{G}(\mathbf{H},\X_{t},\mathbf{S}_t,\mathbf{Y}_t)$$
    where $\mathbf{H}$ is defined in \Cref{prop:AMPparamconverge} and $\X_{t},\mathbf{S}_t,\mathbf{Y}_t$ are stacked VAMP iterates defined in \Cref{thm:ampSE}.
The following the matrix quantity 
    \begin{equation*}
\X|\mathcal{G}_t=\mathbf{Q}^{\top} \mathbf{D U}\left(\mathbf{V}^{\top} \mathbf{V}\right)^{-1} \mathbf{V}^{\top}+\mathbf{Q}^{\top} \mathbf{D} \boldsymbol{\Pi}_{\mathbf{U}^{\perp}} \widetilde{\mathbf{O}} \boldsymbol{\Pi}_{\mathbf{V}^{\perp}}^{\top}
\end{equation*}
is the design matrix $\X$ conditioned on $\mathcal{G}_t$ (cf. proof of \Cref{thm:ampSE}). Here, $\mathbf{U}, \mathbf{V}, \boldsymbol{\Pi}_{\mathbf{U}^{\perp}},$ and $ \boldsymbol{\Pi}_{\mathbf{V}^{\perp}}$ are measurable to $\mathcal{G}_t$ with
$$\mathbf{U}=\left(\mathbf{e}_{b}, \mathbf{S}_{t},
\bm{\Lambda} \mathbf{S}_{t}\right), \quad \mathbf{V}=\left(\mathbf{e},\X_{t}, \mathbf{Y}_{t}\right)$$
for $\mathbf{e}_{b}, \mathbf{e}$ defined in \eqref{Quant}, $\tilde{\Obm} \sim \Haar(\mathbb{O}(p-(2 t+1)))$ an independent copy of Haar matrix and $\bm{\Pi}_{\mathbf{U}^{\perp}},
\bm{\Pi}_{\mathbf{V}^{\perp}} \in \mathbb{R}^{p \times(p-(2 t+1))}$. For convenience, we further introduce notations for the projections 
\begin{equation*}
\mathbf{P}_{\mathbf{V}}=\mathbf{V}\left(\mathbf{V}^{\top} \mathbf{V}\right)^{-1} \mathbf{V}^{\top}, \quad \mathbf{P}_{\mathbf{U}}=\mathbf{U}\left(\mathbf{U}^{\top} \mathbf{U}\right)^{-1} \mathbf{U}^{\top}, \quad \mathbf{P}_{\mathbf{V}}^{\perp}=\boldsymbol{\Pi}_{\mathbf{V}^{\perp}} \boldsymbol{\Pi}_{\mathbf{V}^{\perp}}^{\top}, \quad \mathbf{P}_{\mathbf{U}}^{\perp}=\boldsymbol{\Pi}_{\mathbf{U}^{\perp}} \boldsymbol{\Pi}_{\mathbf{U}^{\perp}}^{\top}.
\end{equation*}

The following is a counterpart of Lemma 5.3 of \cite{bayati2011lasso}. Due to the difficulty of studying a submatrix of right-rotationally invariant design, we resort to a covering argument. This allows us to establish the same result as Lemma 5.3 of \cite{bayati2011lasso} for design submatrices of sufficiently small number of columns.

\begin{Lemma}\label{baorbaoer}
    Fix $S \subset [p]$. There exists absolute constant $C_{\mathrm{abs}}>0$ such that if
    $$\frac{|S|}{p}<C_{\mathrm{abs}}  \qty (\frac{\E \D^2}{d_+})^3$$
    we have for some $\alpha_1>0,\alpha_2>0$ such that for any fixed $t\ge 1$,
    $$
    \begin{aligned}
        & \mathbb{P}\left\{\min _{\|\mathbf{v}\|_2=1, \operatorname{supp} (\mathbf{v}) \subseteq S}\left\|\X \mathbf{v}\right\|_2 \leq \alpha_2 \mid \mathcal{G}_t\right\}\\
        &\equiv \mathbb{P}\left\{\min _{\|\mathbf{v}\|_2=1, \operatorname{supp} (\mathbf{v}) \subseteq S}\left\|\mathbf{Q}^{\top} \mathbf{D} \mathbf{U}\left(\mathbf{V}^{\top} \mathbf{V}\right)^{-1} \mathbf{V}^{\top} \mathbf{v}+\mathbf{Q}^{\top} \mathbf{D} \boldsymbol{\Pi}_{\mathbf{U}^{\perp}} \widetilde{\mathbf{O}} \boldsymbol{\Pi}_{\mathbf{V}^{\perp}}^{\top} \mathbf{v}\right\|_2 \leq \alpha_2 \mid \mathcal{G}_t\right\} \\
    & <\exp(-p \alpha_1).
    \end{aligned}
    $$
    almost surely as $p\to \infty$.    
\end{Lemma}

\begin{Remark}
    Our proof shows that the result holds when \(C_{\mathrm{abs}} \le 0.00148\). The estimate comes from the absolute constants in several concentration inequalities and covering number estimates. Although we do not expect this bound to be optimal, it appears challenging to improve it significantly with the current argument.
\end{Remark}

This lemma immediately implies the following, which is the counterpart of Lemma 3.4, \cite{bayati2011lasso}. 
\begin{Lemma}\label{34ba}
    Let $S \subset [p]$ be measurable on $\mathcal{G}_t$. If
    $$\frac{|S|}{p}<C_{\mathrm{abs}} \qty (\frac{\E \D^2}{d_+})^3$$
    we have for some $a_1>0,a_2>0$ such that for any fixed $t\ge 1$,
    \begin{equation*}
\min _{S^{\prime}}\left\{s_{\min }\left(\X_{S \cup S^{\prime}}\right): \quad S^{\prime} \subseteq[N],\left|S^{\prime}\right| \leq a_1 N\right\} \geq a_2
\end{equation*}
almost surely as $p\to \infty$. 
\end{Lemma}

\begin{proof}[Proof of \Cref{34ba}]
By Borel-Cantelli, it is sufficient to show that, for $S$ measurable on $\mathcal{G}_t$ and $|S|$ satisfying conditions given in the lemma statement, there exist $a_1>0$ and $a_2>0$, such that
\begin{equation*}
\mathbb{P}\left\{\min _{\left|S^{\prime}\right| \leq a_1 p} \min _{\|\mathbf{v}\|=1, \operatorname{supp}(\mathbf{v}) \subseteq S \cup S^{\prime}}\|\X \mathbf{v}\|<a_2\right\} \leq \frac{1}{p^2}.
\end{equation*}
for all $p$ large enough. Note that cardinality of the set of all possible $S^\prime$ that satisfies $|S^\prime|\le a_1 p$ can be estimated by 
$$\sum_{k=1}^{pa_1}{{p}\choose{k}}\le \exp(p)h(a_1)$$
where $h(x)=-x \log x-(1-x) \log (1-x), x\in [0,1]$ is binary entropy function. Then we have from union bound
\begin{equation*}
\begin{aligned}
&\mathbb{P}\left\{\min _{\left|S^{\prime}\right| \leq a_1 p} \min _{\|\mathbf{v}\|=1, \operatorname{supp}(\mathbf{v}) \subseteq S \cup S^{\prime}}\|\X \mathbf{v}\|<a_2\right\} \\
&\qquad \leq e^{p h\left(a_1\right)} \mathbb{E}\left\{\max _{\left|S^{\prime}\right| \leq a_1 p} \mathbb{P}\left\{\min _{\|\mathbf{v}\|=1, \operatorname{supp}(\mathbf{v}) \subseteq S \cup S^{\prime}}\|\X \mathbf{v}\|<a_2 \mid \mathcal{G}_t\right\}\right\}.
\end{aligned}
\end{equation*}
From this and \Cref{baorbaoer}, we see that we can take some small enough $a_1$ such that $|S\cup S^\prime|< C_{\mathrm{abs}}\left(\frac{\mathbb{E }\D^2}{d_{+}}\right)^3$ and $h(a_1)<\alpha_1$ from \Cref{baorbaoer}. 
\end{proof}

\begin{proof}[Proof of \Cref{baorbaoer}]

Note that for any $\mathbf{v}$
\begin{equation*}
\begin{aligned}
&\left\|\mathbf{Q}^{\top} \mathbf{D U}\left(\mathbf{V}^{\top} \mathbf{V}\right)^{-1} \mathbf{V}^{\top} \mathbf{v}+\mathbf{Q}^{\top} \mathbf{D} \boldsymbol{\Pi}_{\mathbf{U}^{ \top}} \widetilde{\mathbf{O}} \boldsymbol{\Pi}_{\mathbf{v}^{\perp}}^{\top} \mathbf{v}\right\|_2^2 =A_2(\mathbf{v})+A_1(\mathbf{v})
\end{aligned}
\end{equation*}
where  
$$A_1(\mathbf{v}):=\mathbf{v}^{\top} \mathbf{V}\left(\mathbf{V}^{\top} \mathbf{V}\right)^{-1} \mathbf{U}^{\top} \mathbf{D}^{\top} \mathbf{D U}\left(\mathbf{V}^{\top} \mathbf{V}\right)^{-1} \mathbf{V}^{\top} \mathbf{v}$$
and
$$A_2(\mathbf{v}):=\mathbf{v}^{\top} \boldsymbol{\Pi}_{\mathbf{v}^{\perp}}^{\top} \widetilde{\mathbf{O}} \Pi_{\mathbf{U}^{\perp}}^{\top} \mathbf{D}^{\top} \mathbf{D} \boldsymbol{\Pi}_{\mathbf{U}^{\perp}} \widetilde{\mathbf{O}} \boldsymbol{\Pi}_{\mathbf{v}^{\perp}}^{\top} \mathbf{v}+2 \mathbf{v}^{\top} \mathbf{V}\left(\mathbf{V}^{\top} \mathbf{V}\right)^{-1} \mathbf{U}^{\top} \mathbf{D}^{\top} \mathbf{D} \boldsymbol{\Pi}_{\mathbf{U}^{\perp}} \widetilde{\mathbf{O}} \boldsymbol{\Pi}_{\mathbf{V}^{\perp}}^{\top} \mathbf{v}.$$
It follows from \Cref{SEovamp} that almost surely as $p\to \infty$
\begin{equation*}
\frac{1}{p} \mathbf{U}^{\top} \mathbf{D}^{\top} \mathbf{D U} \rightarrow \mathbb{E} \D^2 \cdot\left(\begin{array}{ccc}
b_* & 0 & 0 \\
0 & \Delta_t & 0 \\
0 & 0 & \kappa_* \Delta_t
\end{array}\right)
\end{equation*}
where the RHS is positive positive definite matrix defined in \Cref{SEovamp}. Using this and the identity that $\mathbf{V}^\top \mathbf{V}=\mathbf{U}^\top \mathbf{U}$ (cf. proof of \Cref{SEovamp}), it follows that almost surely as $p\to \infty$,
$$A_1(\mathbf{v})\to \E \D^2 \cdot \norm{\mathbf{P}_{\mathbf{V}}\mathbf{v}}_2^2.$$

We now establish the following claim: there exists some constant $c_1,c_2>0$ such that the event
\begin{equation*}
\mathcal{E}:=\left\{A_2(\mathbf{v})<c_1\cdot \norm{\mathbf{P}_{\mathbf{V}^\bot}\mathbf{v}}_2^2, \forall \mathbf{v} \text { s.t. }\|\mathbf{v}\|_2=1, \operatorname{supp}(\mathbf{v}) \subseteq S\right\}
\end{equation*}
satisfies 
$$\mathbb{P}(\mathcal{E}\mid \mathcal{G}_t)\ge 1-\exp(-c_2 p).$$ 
almost surely as $p\to \infty$. Observe that if $\mathcal{E}$ holds, we then have
\begin{equation*}
\begin{aligned}
    \min _{\|\mathbf{v}\|_2=1, \operatorname{supp}(\mathbf{v}) \subseteq S} A_1(\mathbf{v})+A_2(\mathbf{v}) &\geq \min _{\|\mathbf{v}\|_2=1, \operatorname{supp}(\mathbf{v}) \subseteq S} \mathbb{E} \D^2 \cdot\left\|\mathbf{P}_{\mathbf{V}} \mathbf{v}\right\|_2^2+c_1\left\|\mathbf{P}_{\mathbf{V}}^{\perp} \mathbf{v}\right\|_2^2 \\
    &\geq \min \left(\mathbb{E} \D^2, c_1\right)>0
\end{aligned}
\end{equation*}
which conclude the proof. 

We will now prove the claim above. We will apply a covering argument. For any \textit{fixed} $\mathbf{v}$ such that $\norm{\mathbf{v}}_2=1$, we have the following equality in law
\begin{equation*}
\widetilde{\mathbf{O}} \boldsymbol{\Pi}_{\mathbf{v}^{\perp}}^{\top} \mathbf{v} \stackrel{d}{=} \frac{\left\|\mathbf{P}_{\mathbf{V}}^{\perp} \mathbf{v}\right\|_2}{\left\|\mathbf{P}_{\mathbf{U}}^{\perp} \mathbf{z}\right\|_2} \boldsymbol{\Pi}_{\mathbf{U}^{\perp} }^{\top} \mathbf{z}
\end{equation*}
where $\mathbf{z}\sim N(\bm{0}, \mathbf{I}_p)$. From this, we obtain that
\begin{equation*}
{A }_2(\mathbf{v}) \stackrel{d}{=} \frac{\frac{1}{p} \mathbf{z}^{\top} \mathbf{P}_{\mathbf{V}}^{\perp} \mathbf{D}^{\top} \mathbf{D} \mathbf{P}_{\mathbf{V}}^{\perp} \mathbf{z}-\frac{2}{p} \mathbf{a}^{\top} \mathbf{z}\left\|\mathbf{P}_{\mathbf{U}}^{\perp} \mathbf{z}\right\|_2}{\frac{1}{p}\left\|\mathbf{P}_{\mathbf{U}}^{\perp} \mathbf{z}\right\|_2^2}\left\|\mathbf{P}_{\mathbf{V}}^{\perp} \mathbf{v}\right\|_2^2
\end{equation*}
where $\mathbf{a}=\left\|\mathbf{P}_{\mathbf{V}}^{\perp} \mathbf{v}\right\|_2^{-1} \mathbf{P}_{\mathbf{U}}^{\perp} \mathbf{D}^{\top} \mathbf{D} \mathbf{U}\left(\mathbf{V}^{\top} \mathbf{V}\right)^{-1} \mathbf{V}^{\top} \mathbf{v}$ satisfies that
\begin{equation}\label{pgor}
\|\mathbf{a}\|_2 \leq \max _{i \in[p]}\left(d_i^2\right) \cdot \frac{\left\|\mathbf{U}\left(\mathbf{V}^{\top} \mathbf{V}\right)^{-1} \mathbf{V}^{\top} \mathbf{v}\right\|_2}{\left\|\mathbf{P}_{\mathbf{V}}^{\perp} \mathbf{v}\right\|_2}=\max _{i \in[p]}\left(d_i^2\right) \frac{\left\|\mathbf{P}_{\mathbf{V}}^{\perp} \mathbf{v}\right\|_2}{\left\|\mathbf{P}_{\mathbf{V}}^{\perp} \mathbf{v}\right\|_2}=\max _{i \in[p]}\left(d_i^2\right).
\end{equation}
Let us denote
$$R:=\frac{1}{p} \mathbf{z}^{\top}\left(\mathbf{P}_{\mathbf{V}}^{\perp} \mathbf{D}^{\top} \mathbf{D} \mathbf{P}_{\mathbf{V}}^{\perp}\right) \mathbf{z}.$$
Using Hanson-Wright inequality, we have that for any $\epsilon>0$,
\begin{equation*}
\begin{aligned}
& \mathbb{P}\left(\left|R-\E R\right| \geq \epsilon \E R\right) \\
& \quad \leq 2 \exp \left(-p C_{\mathrm{HW}}\left(\frac{\epsilon^2 (\E R)^2}{\max _{i \in[p]}\left(d_i^4\right)} \wedge \frac{\epsilon \E R}{\max _{i \in[p]}\left(d_i^2\right)}\right)\right).
\end{aligned}
\end{equation*}
Using standard Gaussian tail upper bound, we have that
\begin{equation*}
\mathbb{P}\left(\left|\frac{1}{p} \mathbf{z}^{\top} \mathbf{a}\right| \geq \frac{\E R}{\sqrt{p}}\right) \leq 2 \exp \left(-p\frac{(\E R)^2 }{2 \max _{i \in[p]}\left(d_i^4\right)}\right).
\end{equation*}
Using standard concentration inequality for chi-squared distribution, we obtain that for any $\epsilon^\prime>0$,
\begin{equation*}
\mathbb{P}\left(\frac{1}{p}\left\|\mathbf{P}_{\mathbf{U}}^{\perp} \mathbf{z}\right\|_2^2 \geq 1+\frac{2 \sqrt{\epsilon^\prime p}}{p}+2 \epsilon^\prime\right) \leq \exp (-p \epsilon^\prime)
\end{equation*}
where we have used \eqref{pgor}. 

Combining the above concentration inequalities, we obtain the concentration inequality,
\begin{equation}\label{bigpg}
\begin{aligned}
    &\mathbb{P}\qty(\frac{\frac{1}{p} \mathbf{z}^{\top} \mathbf{P}_{\mathbf{U}}^{\perp} \mathbf{D}^{\top} \mathbf{D} \mathbf{P}_{\mathbf{U}}^{\perp} \mathbf{z}-\frac{2}{p} \mathbf{a}^{\top} \mathbf{z}\left\|\mathbf{P}_{\mathbf{U}}^{\perp} \mathbf{z}\right\|_2}{\frac{1}{p}\left\|\mathbf{P}_{\mathbf{U}}^{\perp} \mathbf{z}\right\|_2^2} \geq \mathbb{E} R \frac{1-\epsilon-2\left(1+2 \sqrt{\frac{\epsilon^{\prime}}{p}}++2\epsilon^{\prime}\right)^{\frac{1}{2}} \frac{1}{\sqrt{p}}}{1+2 \sqrt{\frac{\epsilon^{\prime}}{p}}+2\epsilon^{\prime}})\\
    &\qquad \le 5 \exp \left(-\min \left(\epsilon^{\prime}, C_{\mathrm{HW}} \frac{\left(\mathbb{E} R\right)^2 \epsilon^2}{d_{+}^2}\right) p\right).
\end{aligned}
\end{equation}
Let $\mathcal{N}$ by a $r$-net that covers the set $\mathcal{B}:=\{\mathbf{v}\in \R^{p}:\|\mathbf{v}\|_2=1, \operatorname{supp}(\mathbf{v}) \subseteq S\}$ such that for any $\mathbf{v}\in \mathcal{B}$, there exists a point $\mathbf{v}_r\in \mathcal{B}$ such that $\norm{\mathbf{v}-\mathbf{v}_r}_2<r$. It follows from the definition of $A_2$ and the basic inequality $\left|\mathbf{a}^{\top} \mathbf{A} \mathbf{a}-\mathbf{b}^{\top} \mathbf{A b}\right| \leq\|\mathbf{A}\|_{\mathrm{o p}}\left(\|\mathbf{a}\|_2+\|\mathbf{b}\|_2\right)\|\mathbf{a}-\mathbf{b}\|_2$ for any real-valued matrix and vectors $\mathbf{a},\mathbf{b}$ that
$$\abs{A_2(\mathbf{v})-A_2(\mathbf{v}_r)}\le 6 \max_{i\in [p]}(d_i^2)\cdot r.$$ It is well-established that there exists $r$-net $\mathcal{N}$ such that
$$|\mathcal{N}|\le \qty(1+\frac{2}{r})^{|S|}.$$
Combining \eqref{bigpg} with the above, we obtain that for any $r,\epsilon,\epsilon^{\prime}>0$,
\begin{equation}
    \begin{aligned}
 &\mathbb{P}\qty(A_2(\mathbf{v})\le \left(\mathbb{E} R \frac{1-\epsilon-2\left(1+2 \sqrt{\frac{\epsilon^{\prime}}{p}}+2\epsilon^{\prime}\right)^{\frac{1}{2}} \frac{1}{\sqrt{p}}}{1+2 \sqrt{\frac{\epsilon^{\prime}}{p}}+2\epsilon^{\prime}}-6 \max _{i \in[p]} d_i^2 \cdot r\right)\left\|\mathbf{P}_{\mathbf{V}}^{\perp} \mathbf{v}\right\|_2^2)\\
 &\qquad \le 5 \exp \left(-p\left[\min \left(\epsilon^{\prime}, C_{\mathrm{HW}} \frac{(\mathbb{E} R)^2 \epsilon^2}{\max_{i\in[p]}(d_i^4)}\right)-\frac{|S|}{p} \log \left(1+\frac{2}{r}\right)\right]\right).
    \end{aligned}
\end{equation}

We note that as $p\to \infty$
$$\E R= \frac{1}{p} \operatorname{Tr}(\mathbf{P}_{\mathbf{V}}^{\perp} \mathbf{D}^{\top} \mathbf{D} \mathbf{P}_{\mathbf{V}}^{\perp})\to \E \D^2$$
which follows from \Cref{AssumpD} and trace inequalities. Meanwhile we also have from \Cref{AssumpD} that as $p\to \infty$
$$d_{+}:=\limsup _{p \rightarrow \infty} \max _{i \in[p]} d_i^2<+\infty.$$
Set $\epsilon^{\prime}=C_{\mathrm{HW}}$. For any constant $c\in (0,1)$ and any $\epsilon\in (0,1-c)$, for all $p$ sufficiently large we have that
\begin{equation*}
\begin{aligned}
& \mathbb{E} R \frac{1-\epsilon-2\left(1+2 \sqrt{\frac{\epsilon^{\prime}}{p}}+2 \epsilon^{\prime}\right)^{\frac{1}{2}} \frac{1}{\sqrt{p}}}{1+2 \sqrt{\frac{\epsilon^{\prime}}{p}}+2 \epsilon^{\prime}}-6 \max _{i \in[p]} d_i^2 \cdot r \\
& \qquad \geq(1-c)\left(\mathbb{E}\D^2\right) \frac{1-c-\epsilon}{1+3 C_{\mathrm{HW}}}-6(1+c) d_{+} \cdot r \\
& \min \left(\epsilon^{\prime}, C_{\mathrm{HW}} \frac{(\mathbb{E} R)^2 \epsilon^2}{d_{+}^2}\right)-\frac{|S|}{p} \log \left(1+\frac{2}{r}\right) \geq C_{\mathrm{HW}} \frac{(1-c)\left(\mathbb{E }\D^2\right)^2 \epsilon^2}{d_{+}^2}-\frac{|S|}{p} \frac{2}{r}.
\end{aligned}
\end{equation*}
Thus, the claim is proved if we can find some $r>0, c\in (0,1), \epsilon \in (0,1-c)$ such that
\begin{equation*}
(1-c)\left(\mathbb{E}\D^2\right) \frac{1-c-\epsilon}{1+3 C_{\mathrm{HW}}}-6(1+c) d_{+} \cdot r>0, \quad C_{\mathrm{HW}} \frac{(1-c)\left(\mathbb{E }\D^2\right)^2 \epsilon^2}{d_{+}^2}-\frac{|S|}{p} \frac{2}{r}>0.
\end{equation*}
Rearranging the first term gives an upper bound on $r$ and the second term a lower bound on $r$. Thus, it suffices for some $c\in (0,1), \epsilon \in (0,1-c)$, the lower bound is smaller than the upper bound
\begin{equation*}
\begin{aligned}
    \frac{2|S|}{p C_{\mathrm{HW}}} \frac{d_{+}^2}{(1-c)\left(\mathbb{E}\D^2\right)^2 \epsilon^2}<\frac{(1-c)\left(\mathbb{E}\D^2\right)}{6(1+c) d_{+}} \frac{1-c-\epsilon}{1+3 C_{\mathrm{HW}}} \\
    \iff \frac{|S|}{p}<\frac{C_{\mathrm{HW}}}{1+3 C_{\mathrm{HW}}} \frac{(1-c)^2}{12(1+c)}(1-c-\epsilon) \epsilon^2 \frac{\left(\mathbb{E }\D^2\right)^3}{d_{+}^3}.
\end{aligned}
\end{equation*}
Select $c=0.0001, \epsilon=3/4$ and note that $C_{\mathrm{HW}}=0.145$ using estimates from \cite{moshksar2021absolute}. The second line above is satisfied if
$$\frac{|S|}{p}<C_{\mathrm{abs}} \frac{\left(\mathbb{E }\D^2\right)^3}{d_{+}^3}, \qquad C_{\mathrm{abs}} =0.00148$$
which is guaranteed by the assumption. 
\end{proof}

\subsection{Proof of main results}\label{wakakafff}
In this section, we prove \Cref{Smdfkf} using the main result, \Cref{prop:sdsLasso}, from \Cref{secmfmdfma}. 


\begin{proof}[Proof of \Cref{Smdfkf}]
We first note that \eqref{hahabao} in \Cref{AssumpLassoo} is a stronger condition than \Cref{AssumpDLasso}. Thus, by \Cref{fixexistLasso},  we know that under \eqref{hahabao} and \Cref{AssumpD}, there always exists a solution $\gamma_*,\eta_*, \tau_*, \tau_{**} \in (0,+\infty)$ with $\eta_*>\gamma_*$ to the fixed point equation \eqref{fp}. Similarly to the proof under \Cref{Assumpgp}, we do not require fixed points to be unique. We consider the oracle VAMP with respect to one set of such fixed points and show that it eventually tracks the Lasso solution. 

Inspecting the rest of the proof, \Cref{prop:sds} and \Cref{ptconv} are the only remaining instances where we require  \Cref{Assumpgp}. In particular, the distribution characterization \Cref{thm:empmain} only requires us to show \Cref{prop:sds} holds in the Lasso case under Assumptions \ref{AssumpD}, \ref{AssumpPrior}, \ref{AssumpLassoo} and \ref{AssumpLassoo2}. 

For \Cref{prop:sds}, we have from \Cref{prop:sdsLasso} that the claims \eqref{tianshdf} still hold under Assumptions \ref{AssumpD}, \ref{AssumpPrior}, \ref{AssumpLassoo} and \ref{AssumpLassoo2}. 

For  \Cref{ptconv}, \Cref{Assumpgp} is required so that \Cref{thm:empmain} and \Cref{Assumpgpweak} holds. Note that we just showed that \Cref{thm:empmain} holds for Lasso under Assumptions \ref{AssumpD}, \ref{AssumpPrior}, \ref{AssumpLassoo} and \ref{AssumpLassoo2}. For \Cref{Assumpgpweak}, we showed in the proof of \Cref{ptconv} that if \Cref{thm:empmain} holds, then almost surely for all sufficiently large $p$, there must exist some $i\in [p]$ such that $h^{\prime \prime} (\hat{\beta}_i)\neq +\infty$. When $d_->0$, we have that $\X^\top \X$ is non-singular for all sufficiently large $p$. Otherwise, we must have $w<1$. Then, we have that almost surely as $p\to \infty$
    $$\frac{1}{p} \norm{d}_0+\frac{1}{p}\norm{h^{\prime \prime}(\hatbt)}\to w+1-\frac{\gamma_*}{\eta_*}>1.$$
Here, the convergence follows from  \Cref{thm:empmain}, \eqref{eq:Jacprox} and \eqref{fp} (a) and \eqref{XL11}; the inequality follows from \Cref{jiros}. Therefore, we showed that \Cref{Assumpgpweak} holds almost surely for all sufficiently large $p$. 
\end{proof}
}

\section{Conjectures for Ellipsoidal Models}\label{section:CONJE}
We conjecture that debiasing is possible in a more general settings than considered in this paper. Namely, one would like to consider the design matrix $\X=\Qbm^\top \Dbm \Obm \bm{\Sigma}^{1/2}$ where $\bm{\Sigma}\in \R^{p\times p}$ is non-singular, $\Qbm \in \R^{n\times n}, \Obm \in \R^{p\times p}$ are orthogonal matrices and $\Dbm \in \R^{n\times p}$ is diagonal matrix. We assume that $\bm{\Sigma}\in \R^{p\times p}$ is observed and $\Obm$ is drawn uniformly from the orthogonal group $\O(p)$ independent of $\epbm,\Dbm,\Qbm$. We refer to this class of random design matrices as ellipsoidal invariant designs. The special case where $\Qbm^\top \Dbm \Obm$ is an isotropic Gaussian matrix is studied extensively in prior literature \cite{bellec2019biasing,celentano2020lasso,javanmard2014confidence,javanmard2014hypothesis,bellec2022biasing}. Similarly to the anisotropic Gaussian design case, the challenge in applying such a model arises when $\bm{\Sigma}$ is unknown. We discuss this in \Cref{unknownSigma} at the end. 

Furthermore, one would like to consider the case where the convex penalty function $\hv:\R^p\mapsto \R$ is non-separable (e.g. SLOPE, group-Lasso) and  
$
\hatbt \in \underset{\mathbf{b} \in \mathbb{R}^p}{\arg \min } \frac{1}{2}\|\y-\X \mathbf{b}\|^2+\hv\left(\mathbf{b}\right).
$
where $\hv$ is assumed to be proper and closed. To illustrate, we give debiasing formulas under the case $\sigma^2=1$: 
\begin{equation}\label{dbbdbnon}
    \bhetah=\hatbt+\frac{1}{\adj} \bm{\Sigma}^{-1} \X^{\top}(\y-\X \hatbt)
\end{equation}
where $\adj$ is solution of the following equation
\begin{equation}\label{solvegammanon}
\frac{1}{p} \bigsum_{i=1}^p \frac{1}{\frac{d_i^2-\adj}{p} \Tr\left(\left(\adj\cdot \mathbf{I}_p+\bm{\Sigma}^{-1}\left(\nabla^2\hv(\hatbt)\right)\right)^{-1}\right)+1}=1.
\end{equation}
Here, we assumed that $\hv$ is twice-differentiable or that it admits a twice-differentiable extension as in \Cref{Extend}. Notice that the equation \eqref{solvegammanon} becomes \eqref{gammasolvea} if one let $\bm{\Sigma}=\mathbf{I}_p$ and $(\hv(x))_i=h(x_i),\forall i \in p$ for some $h:\R\mapsto \R$. Analogous to \eqref{DEFEFD}, we define
\begin{equation}\label{DEFEFDnon}
    \begin{aligned}
&\hat{\eta}_* (p) :=\left(  \frac{1}{p} \Tr \qty(\adj\cdot \mathbf{I}_p+\bm{\Sigma}^{-1} \nabla^2 \hv (\hatbt))  \right)^{-1}\\
& \hatrstst(p):=\hatbt+\frac{1}{\hat{\eta}_*-\adj} \bm{\Sigma}^{-1} \X^{\top}(\X \hatbt-\y),\quad \hat{\tau}_{**}(p) :=\frac{\frac{1}{p}\left\|\X \hatrstst-\y\right\|^2-\frac{n}{p}}{\frac{1}{p} \sum_{i=1}^p d_i^2}\\
& \tauh(p):=\left(\frac{\hat{\eta}_*}{\adj}\right)^2 \frac{1}{p} \bigsum_{i=1}^p \frac{d_i^2}{\left(d_i^2+\hat{\eta}_*-\adj \right)^2}\\
& \qquad \qquad +\left(\frac{\hat{\eta}_*-\adj}{\adj}\right)^2\left(\frac{1}{p} \bigsum_{i=1}^p\left(\frac{\hat{\eta}_*}{d_i^2+\hat{\eta}_*-\adj}\right)^2-1\right) \hat{\tau}_{**}
\end{aligned}
\end{equation}
One can then make the following conjecture on the distribution of $\bhetah$.
\begin{Conjecture} \label{conjectnon}
Under suitable conditions, there is a unique solution $\adj$ of \eqref{solvegammanon} and
$$\tauh^{-1/2}(\bhetah-\st) = \bm{\Sigma}^{1/2}\mathbf{z}+O\qty(p^{-1/2})$$
where $\mathbf{z} \sim N(\bm{0},\mathbf{I}_p)$ and $O\qty(p^{-1/2})$ denotes a vector $\mathbf{v}\in \R^p$ satisfying $\frac{1}{p} \norm{\mathbf{v}}^2\to 0$ almost surely as $p\to \infty$. 
\end{Conjecture}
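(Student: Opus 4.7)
The plan is to reduce \Cref{conjectnon} to a non-separable extension of \Cref{thm:empmain} and \Cref{neig} via the whitening change of variables $\tilde{\X}:=\X\bm{\Sigma}^{-1/2}=\Qbm^\top\Dbm\Obm$, $\tilde{\st}:=\bm{\Sigma}^{1/2}\st$, and $\tilde{\hv}(\tilde{b}):=\hv(\bm{\Sigma}^{-1/2}\tilde{b})$. Then $\tilde{\X}$ is right-rotationally invariant in the sense of \Cref{def:Rotinv} and $\y=\tilde{\X}\tilde{\st}+\epbm$. A direct calculation shows that $\tilde{\hatbt}:=\bm{\Sigma}^{1/2}\hatbt$ is the regularized estimator for the transformed data $(\y,\tilde{\X},\tilde{\hv})$, and the ellipsoidal debiased estimator \eqref{dbbdbnon} satisfies $\bm{\Sigma}^{1/2}\bhetah=\tilde{\hatbt}+\adj^{-1}\tilde{\X}^\top(\y-\tilde{\X}\tilde{\hatbt})$, which is exactly the Spectrum-Aware debiased estimator applied to the transformed problem. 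Using $\nabla^2\tilde{\hv}(\tilde{\hatbt})=\bm{\Sigma}^{-1/2}\nabla^2\hv(\hatbt)\bm{\Sigma}^{-1/2}$, the identity
\begin{equation*}
\Tr\bigl((\adj\mathbf{I}_p+\nabla^2\tilde{\hv}(\tilde{\hatbt}))^{-1}\bigr)=\Tr\bigl((\adj\bm{\Sigma}+\nabla^2\hv(\hatbt))^{-1}\bm{\Sigma}\bigr)=\Tr\bigl((\adj\mathbf{I}_p+\bm{\Sigma}^{-1}\nabla^2\hv(\hatbt))^{-1}\bigr)
\end{equation*}
shows that the adjustment equation \eqref{solvegammanon} is precisely the non-separable analog of \eqref{eq:adjspectrum} applied to $(\tilde{\X},\tilde{\hv})$. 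Thus the conjecture reduces to establishing asymptotic normality of $\tilde{\bhetah}:=\bm{\Sigma}^{1/2}\bhetah$ around $\tilde{\st}$ in the transformed problem, and then transforming back via $\bm{\Sigma}^{-1/2}$.

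\textbf{Non-separable VAMP state evolution.} It remains to develop a non-separable extension of the oracle VAMP theory in \Cref{section:DIST}. I would follow the non-separable VAMP formalism of \cite{fletcher2018plug}, replacing the scalar proximal map $\operatorname{Prox}_{\gamma_*^{-1}h}$ in the iteration \eqref{empvamp} and the scalar denoiser $F$ of \eqref{Fdef} by the vector-valued proximal map $\operatorname{Prox}_{\gamma_*^{-1}\tilde{\hv}}:\R^p\to\R^p$ and a vector denoiser $\mathbf{F}$. The Onsager coefficient $\gamma_*/\eta_*=\E \operatorname{Prox}'_{\gamma_*^{-1}h}(\,\cdot\,)$ in \eqref{RCa} is replaced by the averaged divergence $p^{-1}\nabla\cdot\operatorname{Prox}_{\gamma_*^{-1}\tilde{\hv}}$, and the structural facts used throughout \Cref{section:DIST}---firm non-expansiveness of the proximal map, the multivariate Stein identity, and the identities $\E[\nabla\cdot\mathbf{F}]=0$ and $\E\|\mathbf{F}\|^2/p=\tau_{**}$ of \Cref{prop:ppt}---should all carry over. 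This yields a joint distributional characterization $(\tilde{\hatbt},\tilde{\bhetah},\tilde{\st})\stackrel{W_2}{\to}(\operatorname{Prox}_{\gamma_*^{-1}\tilde{\hv}}(\sqrt{\tau_*}\mathbf{z}+\tilde{\st}),\,\sqrt{\tau_*}\mathbf{z}+\tilde{\st},\,\tilde{\st})$ in an appropriate sense; undoing the whitening then gives \Cref{conjectnon} up to the stated $O(p^{-1/2})$ error. Consistency of the estimators in \eqref{DEFEFDnon} should follow by the same monotonicity and pointwise-convergence arguments as in \Cref{subsectionConsf}, with coordinate sums over $h''(\hjatbtj)$ replaced by trace functionals of $\bm{\Sigma}^{-1}\nabla^2\hv(\hatbt)$.

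\textbf{Main obstacle.} The principal obstacle, as in the separable setting, is establishing the Cauchy convergence of the VAMP iterates (analog of \Cref{prop:convsmallbeta}) and in particular the contraction bound \eqref{eq:gprimebound}. In the scalar case, the decisive step is a strict Jensen inequality $\E[(\operatorname{Prox}')^2]<\E[\operatorname{Prox}']$ combined with $-zR'(z)/R(z)<1$ from \Cref{lem:cauchy}(d). Its non-separable analog remains plausible by firm non-expansiveness, but the averaged divergence $p^{-1}\nabla\cdot\operatorname{Prox}_{\gamma_*^{-1}\tilde{\hv}}$ is a random scalar whose concentration about a deterministic limit is no longer automatic: the penalty $\tilde{\hv}(\tilde{b})=\hv(\bm{\Sigma}^{-1/2}\tilde{b})$ destroys the coordinate symmetry that makes the separable case tractable. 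I expect this concentration to come from the Haar randomness of $\Obm$, exploited via orthogonal-group concentration inequalities. Secondary obstacles include a case-by-case verification of fixed-point existence (analog of \Cref{Assumpfix}) and the non-singularity condition \Cref{Assumpfix2}, both of which I anticipate can be handled as in the proof of \Cref{elastfpfp}.
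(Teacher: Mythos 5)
This statement is a conjecture that the paper deliberately leaves open, offering only a derivation sketch (whitening change of variables $\tilde{\bm{\beta}}=\bm{\Sigma}^{1/2}\hatbt$, the non-separable VAMP of \cite{fletcher2018plug}, its state evolution), and your roadmap follows essentially that same approach, with the whitening and trace identities worked out correctly. One small difference in emphasis: the paper names the main gap as an analog of \Cref{prop:sds} (that the non-separable VAMP iterates track $\hatbt$), whereas you single out the Cauchy convergence and contraction bound \eqref{eq:gprimebound} (analog of \Cref{prop:convsmallbeta}); these are compatible since \Cref{prop:sds} builds on the Cauchy property via \Cref{Corc} while its strong-convexity piece transfers more readily, so your framing arguably points more precisely at the hard step. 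Your added observation that the averaged divergence $p^{-1}\nabla\cdot\operatorname{Prox}_{\gamma_*^{-1}\tilde{\hv}}$ no longer concentrates automatically once coordinate symmetry is destroyed, and that such concentration must come from the Haar randomness of $\Obm$, is a sound technical point that the paper does not make explicit.
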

The derivation of the above is by considering a change of variable $\tilde{\bm{\beta}}=\bm{\Sigma}^{1/2}\hatbt$ whereby $\tilde{\bm{\beta}}\in \underset{\mathbf{b} \in \mathbb{R}^p}{\arg \min } \frac{1}{2}\|\y-\Qbm^\top \Dbm \Obm \mathbf{b} \|^2+h\left(\bm{\Sigma}^{-\frac{1}{2}} \mathbf{b} \right)$ and using the iterates of the VAMP algorithm (for non-separable penalties \cite[Algorithm 1]{fletcher2018plug}) to track $\tilde{\bm{\beta}}$. One can then obtain \eqref{DEFEFDnon} and \Cref{conjectnon} from the state evolution of the VAMP algorithm \cite[Eq. (19), Theorem 1]{fletcher2018plug}. If \Cref{conjectnon} holds, it will be straightforward to develop inference procedure for $\st$. A main gap to prove \Cref{conjectnon} in our opinion is to establish an analogue of \Cref{prop:sds}, i.e. the non-separable VAMP iterates indeed tracks $\hatbt$. We leave the proof of \Cref{conjectnon} as an open problem. 

\begin{Remark}\label{unknownSigma}
    When $\bm{\Sigma}$ is unknown, we require access to a large unlabeled dataset $\X_0 \in \mathbb{R}^{n_0 \times p}$, with $n_0 \gg p$, similar to the anisotropic Gaussian case. Let $\X_0= \mathbf{Z}_0 \bm{\Sigma}^{1/2}$ where $ \mathbf{Z}_0=\Qbm_0^\top \Dbm_0 \Obm_0, \Qbm_0 \in \R^{n \times n}, \Obm_0 \in \R^{p\times p}, \Dbm_0 \in \R^{n \times p}$. We require the modeling assumption that $\Dbm_0^\top \Dbm_0 \approx \mathbf{I}_p$. This assumes without loss of generality that as $n \to \infty$ and $p$ is fixed, the spectrum of $\mathbf{Z}_0^\top \mathbf{Z}_0$ converges to a point mass.
    
    Under this assumption, we can estimate $\bm{\Sigma}$ using the standard sample covariance estimator $\hat{\bm{\Sigma}} = \X_0^\top \X_0$. The ellipsoidal-invariant assumption then requires that $\X \hat{\bm{\Sigma}}^{-1/2}$ can be modeled as a right-rotationally invariant random matrix. Unlike the anisotropic Gaussian assumption, we do not require the spectrum of the sample covariance matrix of $\X \hat{\bm{\Sigma}}^{-1/2}$ to converge to the Marchenko-Pastur law and is expected to lead to more robust debiasing performance.
\end{Remark}

\section{Numerical Experiments}\label{appendix:Additional}
\subsection{Details of the design matrices}
Throughout the paper, we have illustrated our findings using different design matrices. We provide additional details in this section. 

\begin{Remark}[Notations used in caption]\label{Notationforfig}
    we use $\mathsf{InverseWishart}\qty(\bm{\Psi}, \nu)$ to denote inverse-Wishart distribution \cite{WikipediaInvWishart} with scale matrix $\bm{\Psi}$ and degrees-of-freedom $\nu$, $\mathsf{Mult}$-$\mathsf{t}(\nu, \bm{\Psi})$ to denote multivariate-t distribution \cite{WikipediamultT} with location $\bm{0}$, scale matrix $\bm{\Psi}$, and degrees-of-freedom $\nu$.
\end{Remark} 

\begin{Remark}[Right-rotationally invariant]\label{remark:RO}
    All design matrices in \Cref{fig1}, \ref{figPCRA} satisfies that $\X \stackrel{L}{=} \X \Obm$ for $\Obm\sim \Haar(\O(p))$ independent of $\X$. It is easy to verify that this is equivalent to right-rotational invariance as defined in \Cref{def:Rotinv}. 
\end{Remark}

\begin{Remark}[Comparison between designs in \Cref{fig1} and \Cref{figPCRA}]\label{CompGroupAB}
    The designs featured in \Cref{figPCRA} can be seen as more challenging variants of the designs in \Cref{fig1}, characterized by heightened levels of correlation, heterogeneity, or both. 
    
    Specifically, $\mathbf{\Sigma}^{\mathrm{(col)}}$ under $\mathsf{MatrixNormal}$-$\mathsf{B}$ has a higher correlation coefficient (0.9) compared to the correlation coefficient (0.5) in $\mathsf{MatrixNormal}$. This results in a stronger dependence among the rows of the matrix $\X$. Concurrently, the $\mathbf{\Sigma}^{\mathrm{(row)}}$ in $\mathsf{MatrixNormal}$-$\mathsf{B}$ is sampled from an inverse-Wishart distribution with fewer degrees of freedom, leading to a more significant deviation from the identity matrix compared to the MatrixNormal design presented in \Cref{fig1}.
    
    In $\mathsf{Spiked}$-$\mathsf{B}$, there are three significantly larger spikes when compared to $\mathsf{Spiked}$ in \Cref{fig1}, which contains 50 spikes of smaller magnitudes. Consequently, issues related to alignment and outlier eigenvalues are much more pronounced in the case of $\mathsf{Spiked}$-$\mathsf{B}$.
    
    Design under $\mathsf{LLN}$-$\mathsf{B}$ is product of four independent isotropic Gaussian matrices whereas $\mathsf{LLN}$-$\mathsf{B}$ contains 20th power of the same $\X_1$. The latter scenario presents greater challenge for DF or Spectrum-Aware Debiasing, primarily because the exponentiation step leads to the emergence of eigenvalue outliers.
    
    Larger auto-regressive coefficients are used in $\mathsf{VAR}$-$\mathsf{B}$, leading to stronger dependence across rows.
    
    When designs are sampled from $\mathsf{MultiCauchy}$, it is equivalent to scaling each row of an isotropic Gaussian matrix by a Cauchy-distributed scalar. This results in substantial heterogeneity across rows, with some rows exhibiting significantly larger magnitudes compared to others.
\end{Remark}

\begin{Definition}[Simulated Designs Specification]\label{GroupS}
Below we give more detailed information on simulated designs from \Cref{figPCRA}, top-left experiment. Without loss of generality, all designs below are re-scaled so that average of the eigenvalues of $\X^\top \X$ is 1. 
\begin{itemize}
    \item [(i)] $\mathsf{MatrixNormal}$-$\mathsf{B}$: $\X\sim N(\rm{0},\mathbf{\Sigma}^{\mathrm{(col)}}\otimes \mathbf{\Sigma}^{\mathrm{(row)}})$ where $\mathbf{\Sigma}^{\mathrm{(col)}}_{ij}=0.9^{|i-j|},\forall i,j\in [n]$ and $\bm{\Sigma}^{\mathrm{(row)}}\sim \mathsf{InverseWishart}\qty(\mathbf{I}_p, 1.002\cdot p)$ (see \Cref{Notationforfig} for notation);
    \item [(ii)] $\mathsf{Spiked}$-$\mathsf{B}$: $\X= \mathbf{V} \mathbf{R} \mathbf{W}^\top +n^{-1}N(\rm{0}, \mathbf{I}_n\otimes \mathbf{I}_p)$ where $\mathbf{V}, \mathbf{W}$ are drawn randomly from Haar matrices of dimensions $n,p$ respectively with 3 columns retained, and $\mathbf{R}=\diag(500,250,50)$;
    \item [(iii)]$\mathsf{LNN}$-$\mathsf{B}$: $\X=\X_1^{15}\cdot \X_2$ where $\X_1\in \R^{n\times n},\X_2\in \R^{n\times p}$ have iid entries from $N(0,1)$;
    \item[(iv)]$\mathsf{VAR}$-$\mathsf{B}$: $\X_{i,\bullet}=\sum_{k=1}^{\tau\vee i}\alpha_k \X_{i-k,\bullet}+\epbm_i$ where $\X_{i,\bullet}$ denotes the $i$-th row of $\X$. Here, $\epbm_i \sim N(\mathbf{0}, \mathbf{\Sigma})$ with $\mathbf{\Sigma} \sim \mathsf{InverseWishart}(\mathbf{I}_p, 1.1\cdot p)$. We set $\tau=3,\mathbf{\alpha}=\qty(0.7, 0.14, 0.07)$, $\X_1=0$;
    \item[(v)] $\mathsf{MultiCauchy}$: rows of $\X$ are sampled iid from $\mathsf{Mult}$-$\mathsf{t}(1, \mathbf{I}_p)$ (see \Cref{Notationforfig} for notation).
\end{itemize} 
\end{Definition}

\begin{Definition}[Real-data Designs Specification]\label{GroupD}
Below we give more detailed information on real-data designs from \Cref{figPCRA}, bottom-left experiment. Without loss of generality, all designs below are re-scaled so that average of the eigenvalues of $\X^\top \X$ is 1. 
\begin{itemize}
    \item [(i)] $\mathsf{Speech}$: $200\times 400$ with each row being i-vector (see e.g. \cite{ibrahim2018vector}) of the speech segment of a English speaker. We imported this dataset from the OpenML repository \cite{OpenMLSpeech} (ID: 40910) and retained only the last 200 rows of the original design matrix. The original dataset is published in \cite{goldstein2016comparative}. 
    \item [(ii)] $\mathsf{DNA}$: $100\times 180$ entries with each row being one-hot representation of primate splice-junction gene sequences (DNA). We imported this dataset from the OpenML repository \cite{OpenMLDNA} (ID: 40670) and retained only the last 100 rows of the original design matrix. The original dataset is published in \cite{noordewier1990training}. 
    \item [(iii)] $\mathsf{SP500}$: $300\times 496$ entries where each column representing a time series of daily stock returns (percentage change) for a company listed in the S\&P 500 index. These time series span 300 trading days, ending on January 1, 2023.. We imported this dataset from Yahoo finance API \cite{YahooFin};
    \item[(iv)]$\mathsf{FaceImage}$: $1348\times 2914$ entries where each row corresponds to a JPEG image of a single face. We imported this dataset from the scikit-learn package, using the handle sklearn.datasets.fetch\_lf2\_people \cite{sklearn}. The original dataset is published in \cite{LFWTech}
    \item[(v)] $\mathsf{Crime}$: $50 \times 99$ entries where each column corresponds to a socio-economic metric in the UCI communities and crime dataset \cite{misc_communities_and_crime_183}. Only the last 50 rows of the dataset is retained. We also discarded categorical features: state, county, community, community name, fold from the original dataset.  
\end{itemize} 
\end{Definition}

\subsection{QQ plots}
\Cref{figQQA}, \ref{figQQB} and \ref{figQQD} are QQ-plots of \Cref{fig1}, \ref{figPCRA} top and bottom row experiments respectively.

\begin{figure}[H]
    \centering
    \includegraphics[width=\linewidth]{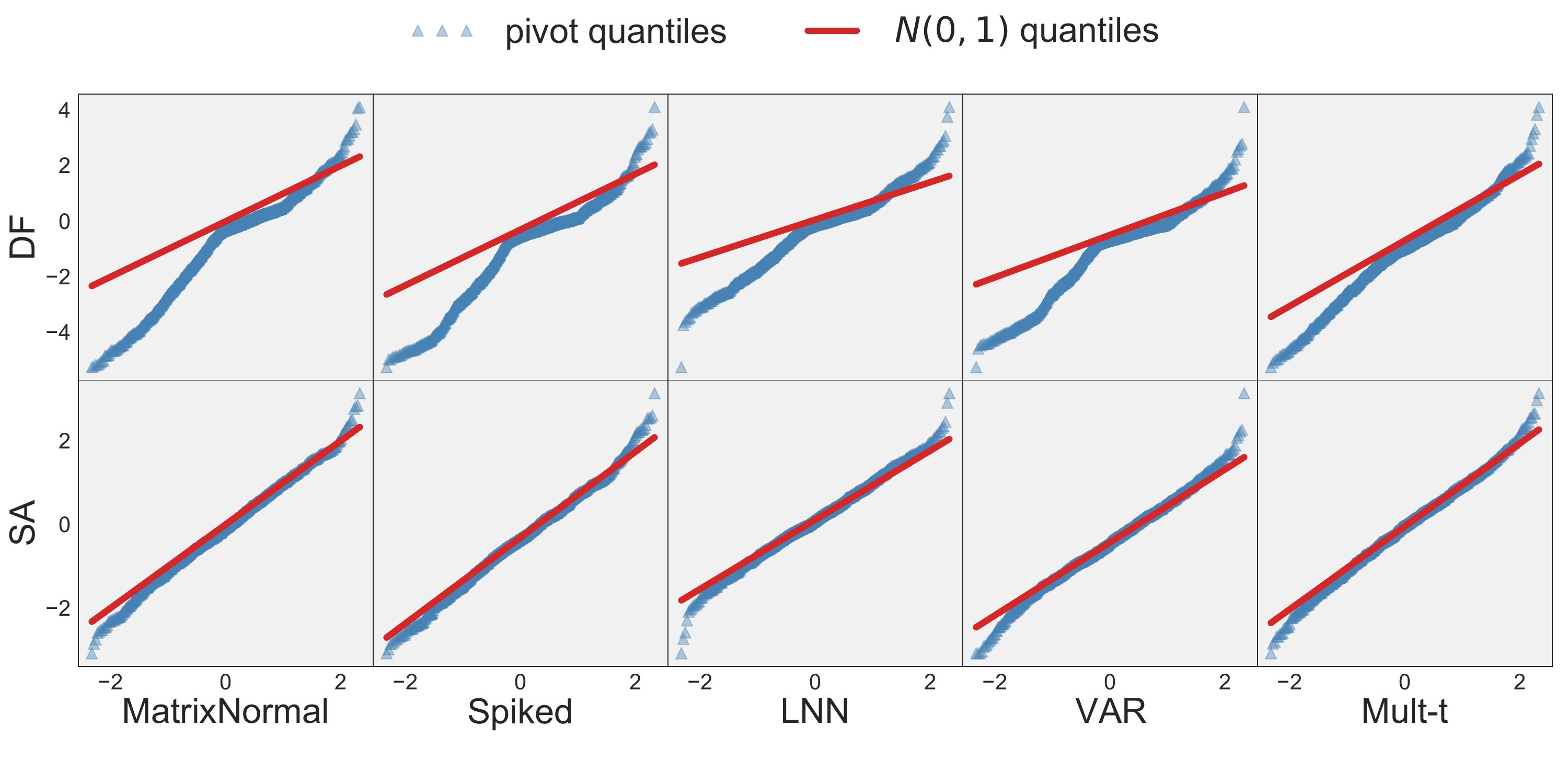}
    \caption{QQ plots corresponding to  \Cref{fig1}.}
    \label{figQQA}
\end{figure}

\begin{figure}[H]
    \centering\includegraphics[width=\linewidth]{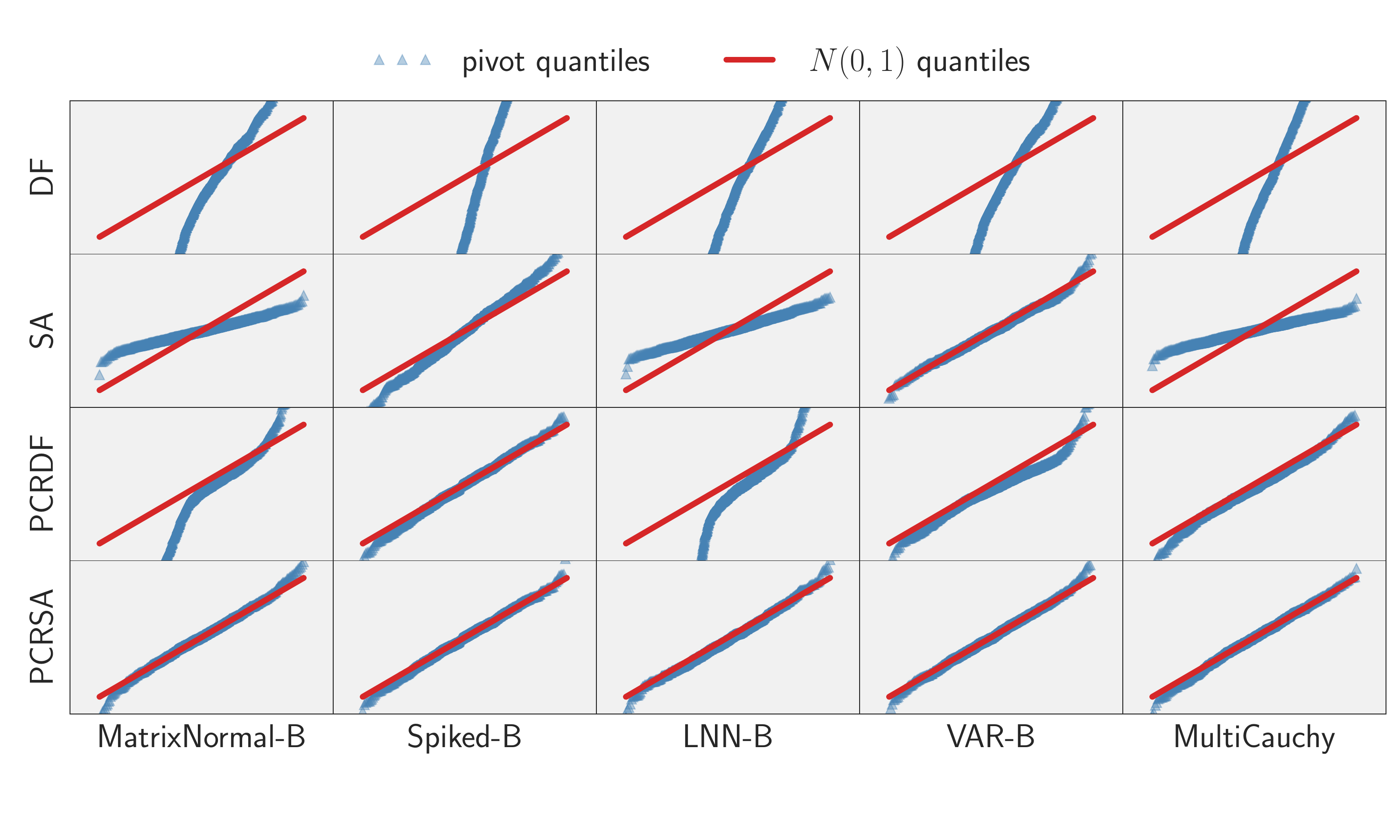}
    \vspace{-6mm}
    \caption{QQ plots corresponding to \Cref{figPCRA}, top-left.}
    \label{figQQB}
\end{figure}

\begin{figure}[H]
    \centering
    \includegraphics[width=\linewidth]{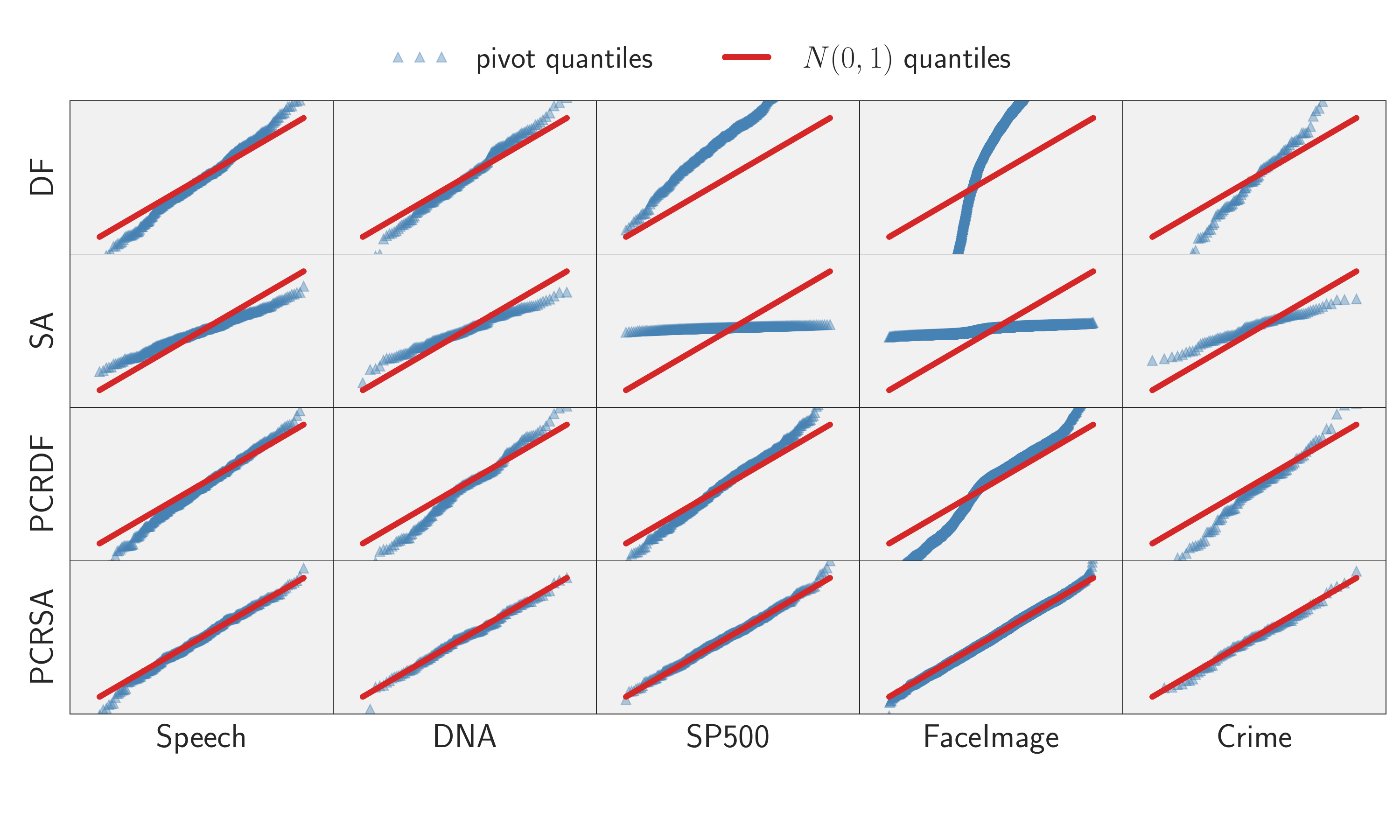}
    \vspace{-6mm}
    \caption{QQ plots corresponding to \Cref{figPCRA}, bottom-left.}
    
    \label{figQQD}
\end{figure}

\subsection{Marginal inference under exchangeability}\label{margex}
\Cref{fig3} below shows an illustration for this result focusing on $\mathcal{I}=\{1\}$. Observe that we once again outperform degrees-of-freedom debiasing.
\begin{figure}[H]
    \centering
    \includegraphics[width=\linewidth]{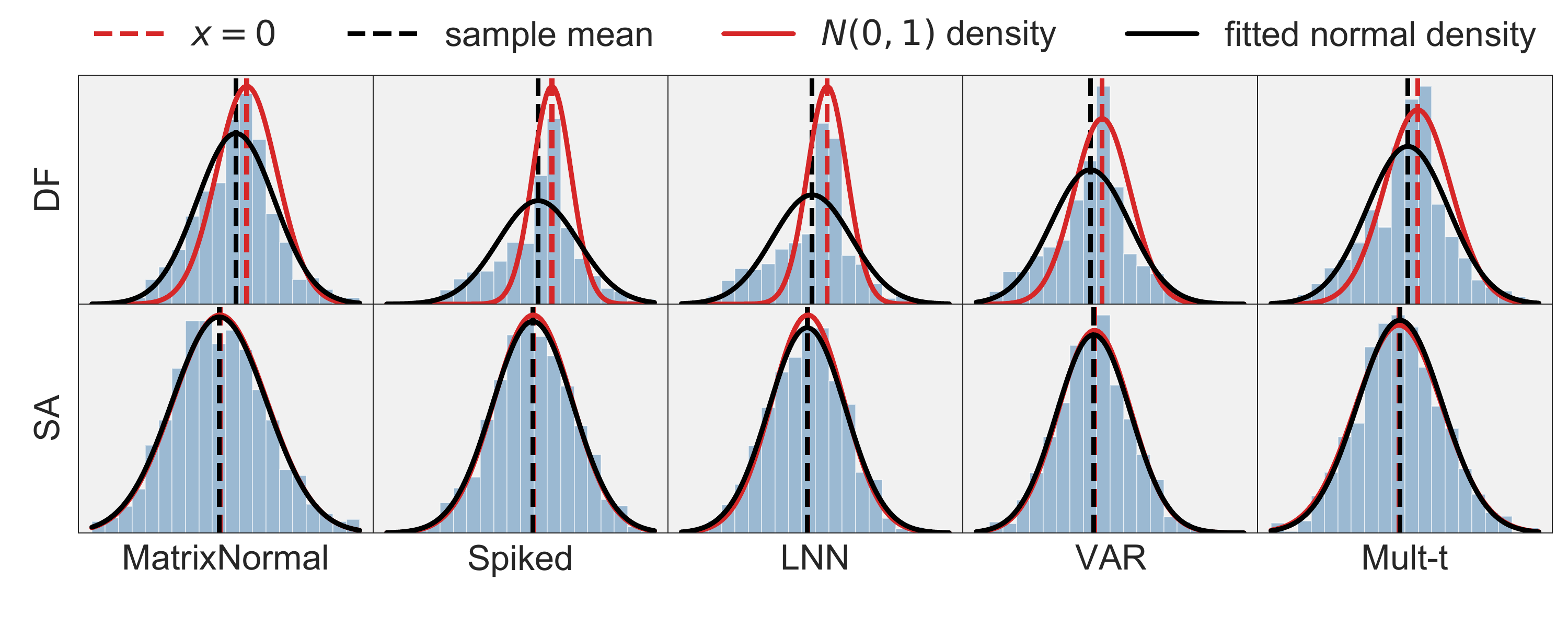}
    \vspace{-0.6cm}
    \caption{Histograms of $\frac{\hat{\beta}_1-\beta_1^{\star}}{\sqrt{\tauh}}$ across from 1000 Monte-Carlo trials using DF and Spectrum-Aware Debiasing. The setting is identical to \Cref{fig1} except that here we set $n=100, p=200$ for computational tractability. }
    \label{fig3}
\end{figure}


\subsection{Alignment tests for simulated designs}\label{dnf}
\Cref{figtestsimu} shows results of hypothesis tests for the alignment coefficients $\alphstari, i=1,...,6$ for experiments in \Cref{figPCRA}. 

\begin{figure}[H] 
  \centering

  \includegraphics[width=0.32\columnwidth]{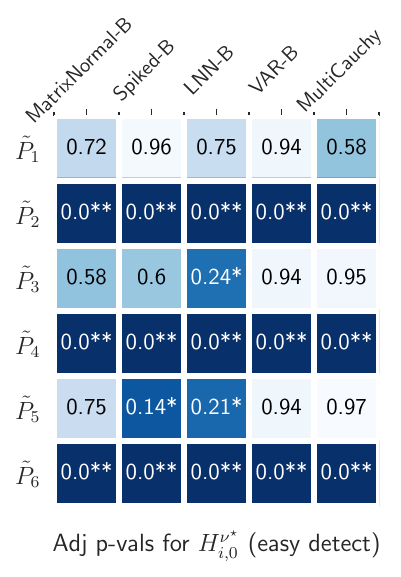} 
    \includegraphics[width=0.4\columnwidth]{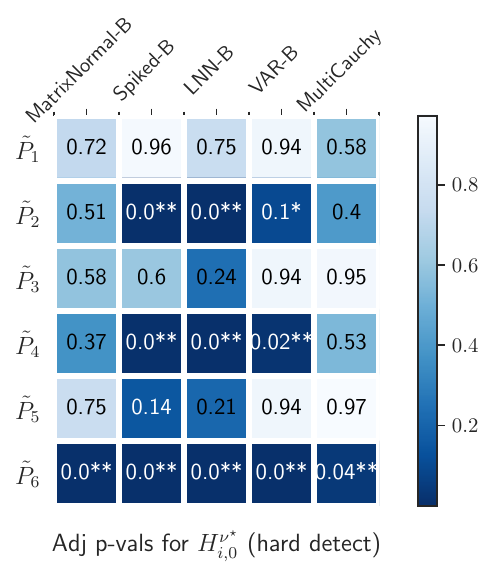} 
  
\includegraphics[width=0.32\columnwidth]{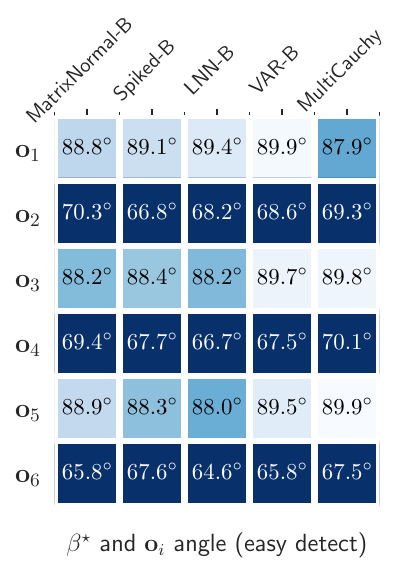} 
  \includegraphics[width=0.4\columnwidth]{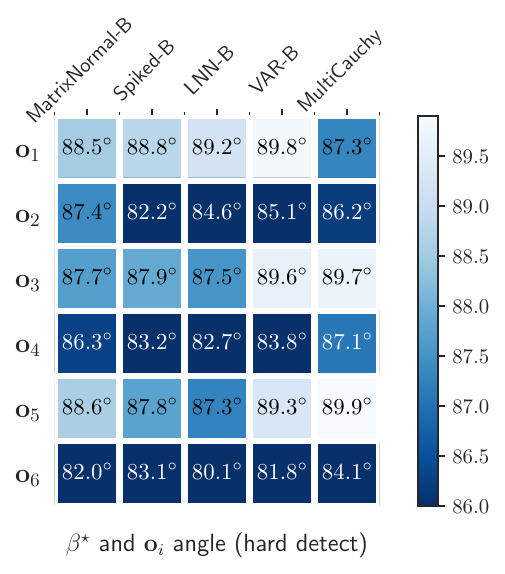} 

  \caption{\textbf{top-row}: Same setting as \Cref{figPCRA}, bottom-left experiment. \textbf{bottom-row}: Same setting as \Cref{figPCRA}, bottom-left experiment, except that we increase difficulty of alignment detection by considering $v_i^*=\sqrt{p}, i \in\{2,4,6\}$ as opposed to $v_i^*=5\cdot \sqrt{p}, i \in\{2,4,6\}$. \textbf{left-column}: Benjamini-Hochberg adjusted p-values $\tilde{P}_i$ for $H_{i,0}^{\alphstar},i=1,...,6$. ** indicates rejection under FDR level 0.05 and * rejection under FDR level 0.1. Recall from \Cref{section:pcar}.\ref{Altest} that rejection of $H_{i,0}^{\alphstar}$ indicates alignment between $\st$ and $\mathbf{o}_i$. \textbf{right-column}: True alignment angles  between the signal $\st$ and $i$-th PC $\mathbf{o}_i$ calculated using $\mathrm{argcos}\{\mathbf{o}_i^\top \st/(\norm{\mathbf{o}_i}_2 . \norm{\st}_2 )\}$.}
  \label{figtestsimu}
\end{figure}



\subsection{Misspecified setting}
Our theory assumes that all relevant covariates are observed. In practice, there may be unobserved features $\mathbf{Z}$ and responses is generated from $\y=\X \st +\mathbf{Z} \bm{\theta}^\star+\epbm$. We found that our method is in fact relatively robust to such mis-specification. We demonstrate by introducing unobserved features $\mathbf{Z}$ of shape $n\times \frac{p}{2}$ (i.e. half as many as the observed features $\X\in \R^{n \times p}$) drawn from different distributions. 

\begin{figure}[H]
    \centering
    \includegraphics[width=0.7\columnwidth]{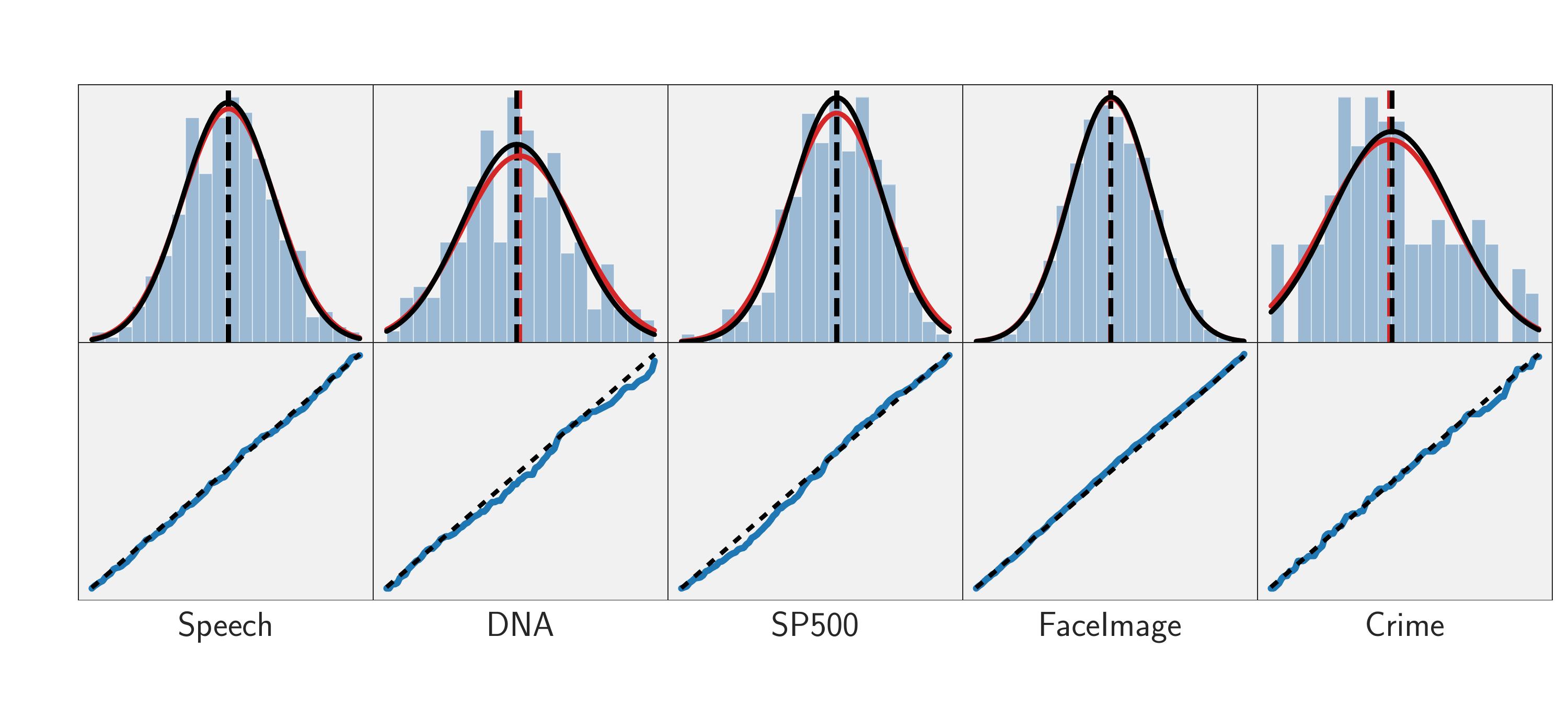}
    \caption{\textbf{Z from MatrixNormal} Same settings and legends as \Cref{figPCRA},  bottom-left experiment, except that the response is generated from a misspecified model $\y=\X \st +\mathbf{Z} \bm{\theta}^\star+\epbm$. We set ${\theta}^\star_i \sim 0.2\cdot N(-5,1)+0.3\cdot N(2,1)+0.5\cdot \delta_0$. Here, we let $\mathbf{Z}$ be sampled from MatrixNormal as described in \Cref{fig1} with dimension half as many as $\X$.  }
\end{figure}

\begin{figure}[H]
    \centering
    \includegraphics[width=0.7\columnwidth]{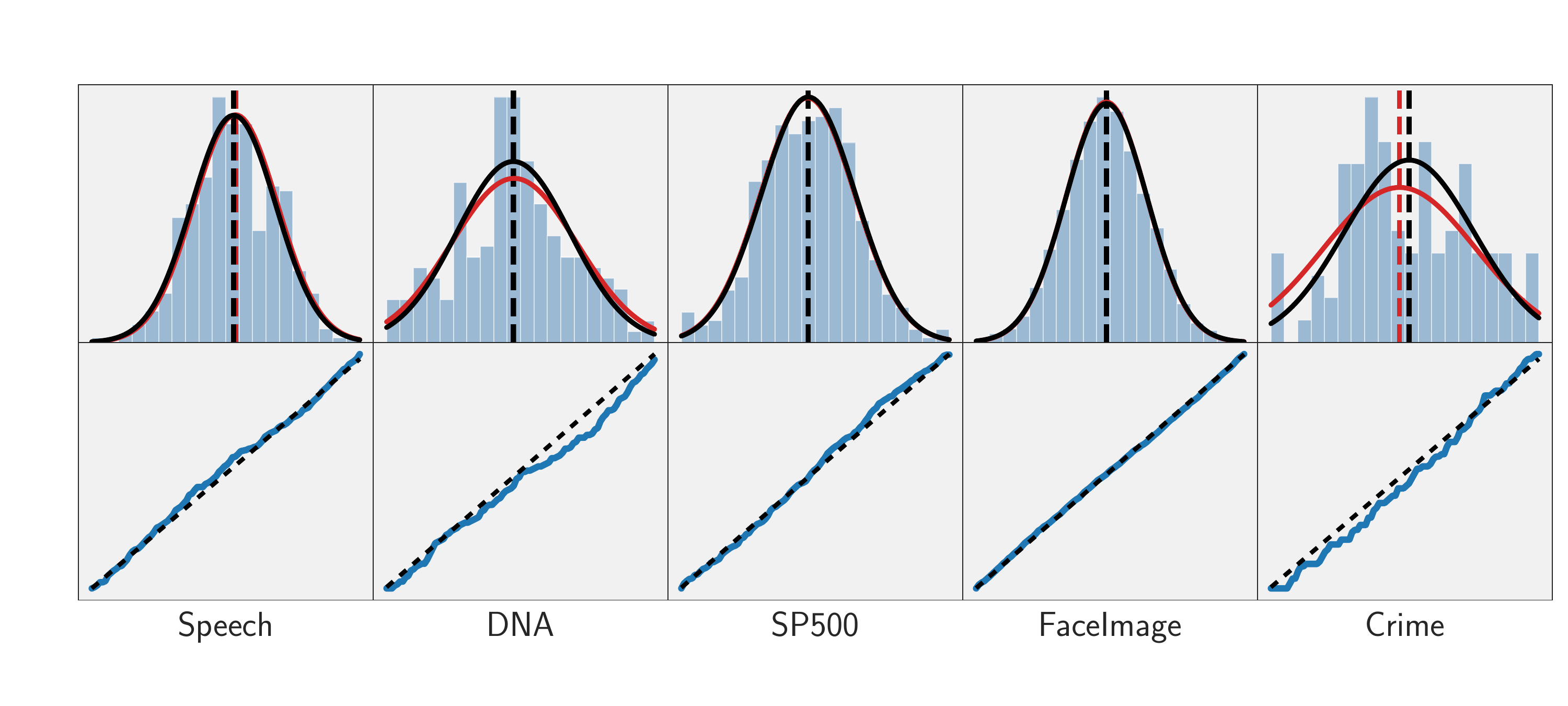}
    \caption{ \textbf{Z from Spike.} Same settings and legends as \Cref{figPCRA},  bottom-left experiment, except that the response is generated from a misspecified model $\y=\X \st +\mathbf{Z} \bm{\theta}^\star+\epbm$. We set ${\theta}^\star_i \sim 0.2\cdot N(-5,1)+0.3\cdot N(2,1)+0.5\cdot \delta_0$. Here, we let $\mathbf{Z}$ be sampled from Spike as described in \Cref{fig1} with dimension half as many as $\X$.}
\end{figure}

\begin{figure}[H]
    \centering
    \includegraphics[width=0.7\columnwidth]{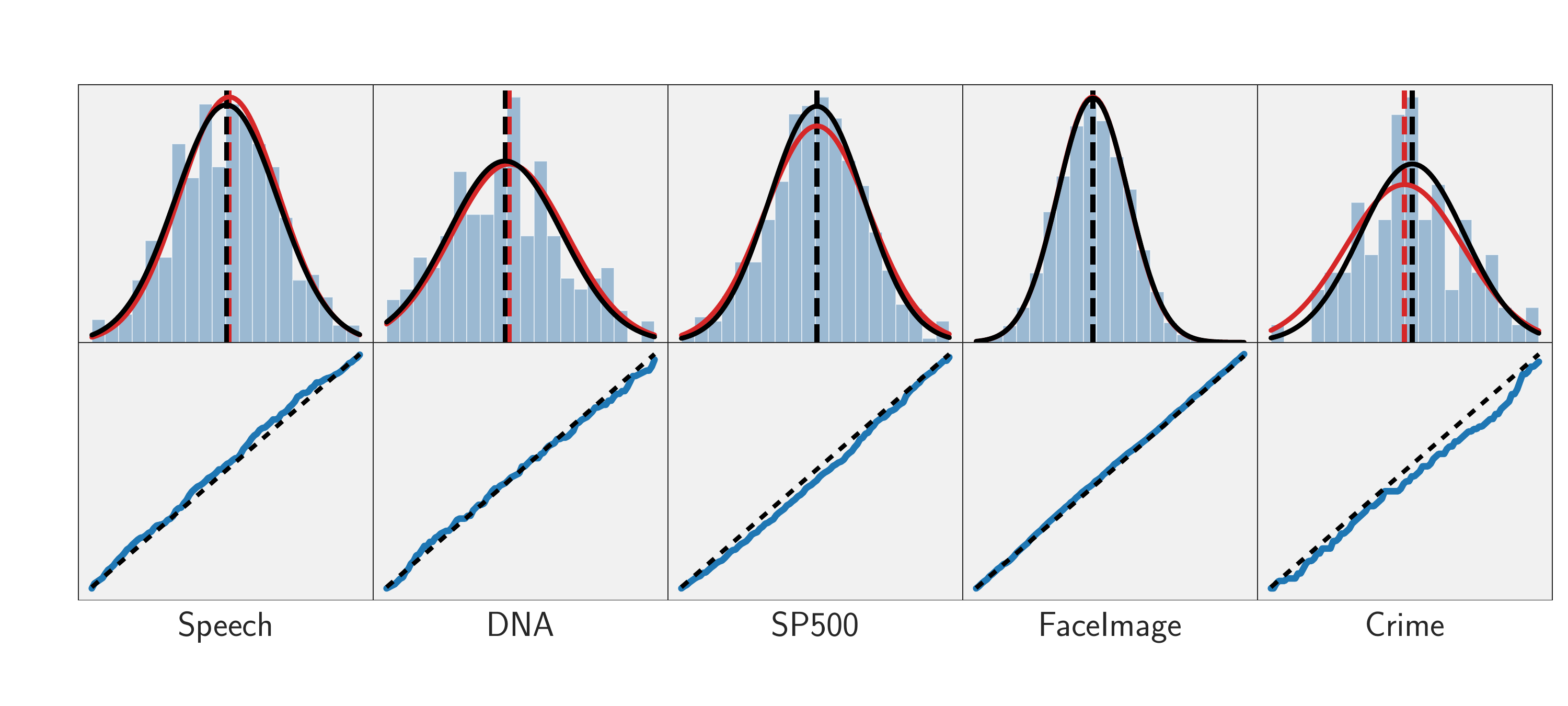}
    \vspace{-6mm}
    \caption{ \textbf{Z from LNN.} Same settings and legends as \Cref{figPCRA},  bottom-left experiment, except that the response is generated from a misspecified model $\y=\X \st +\mathbf{Z} \bm{\theta}^\star+\epbm$. We set ${\theta}^\star_i \sim 0.2\cdot N(-5,1)+0.3\cdot N(2,1)+0.5\cdot \delta_0$. Here, we let $\mathbf{Z}$ be sampled from LNN as described in \Cref{fig1} with dimension half as many as $\X$.}
\end{figure}

\begin{figure}[H]
    \centering
    \includegraphics[width=0.7\columnwidth]{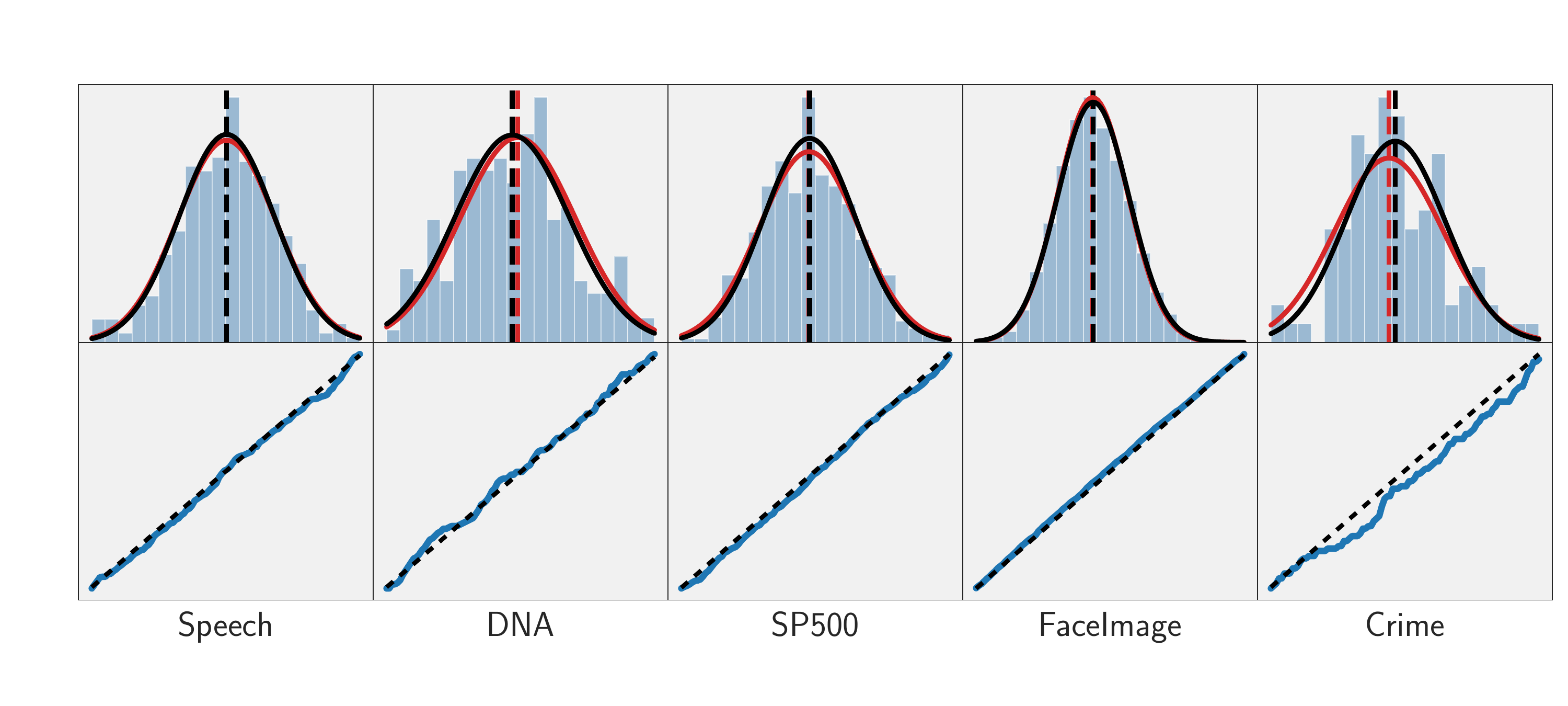}
    \vspace{-6mm}
    \caption{ \textbf{Z from VAR.} Same settings and legends as \Cref{figPCRA},  bottom-left experiment, except that the response is generated from a misspecified model $\y=\X \st +\mathbf{Z} \bm{\theta}^\star+\epbm$. We set ${\theta}^\star_i \sim 0.2\cdot N(-5,1)+0.3\cdot N(2,1)+0.5\cdot \delta_0$. Here, we let $\mathbf{Z}$ be sampled from VAR as described in \Cref{fig1} with dimension half as many as $\X$.}
\end{figure}

\begin{figure}[H]
    \centering
    \includegraphics[width=0.7\columnwidth]{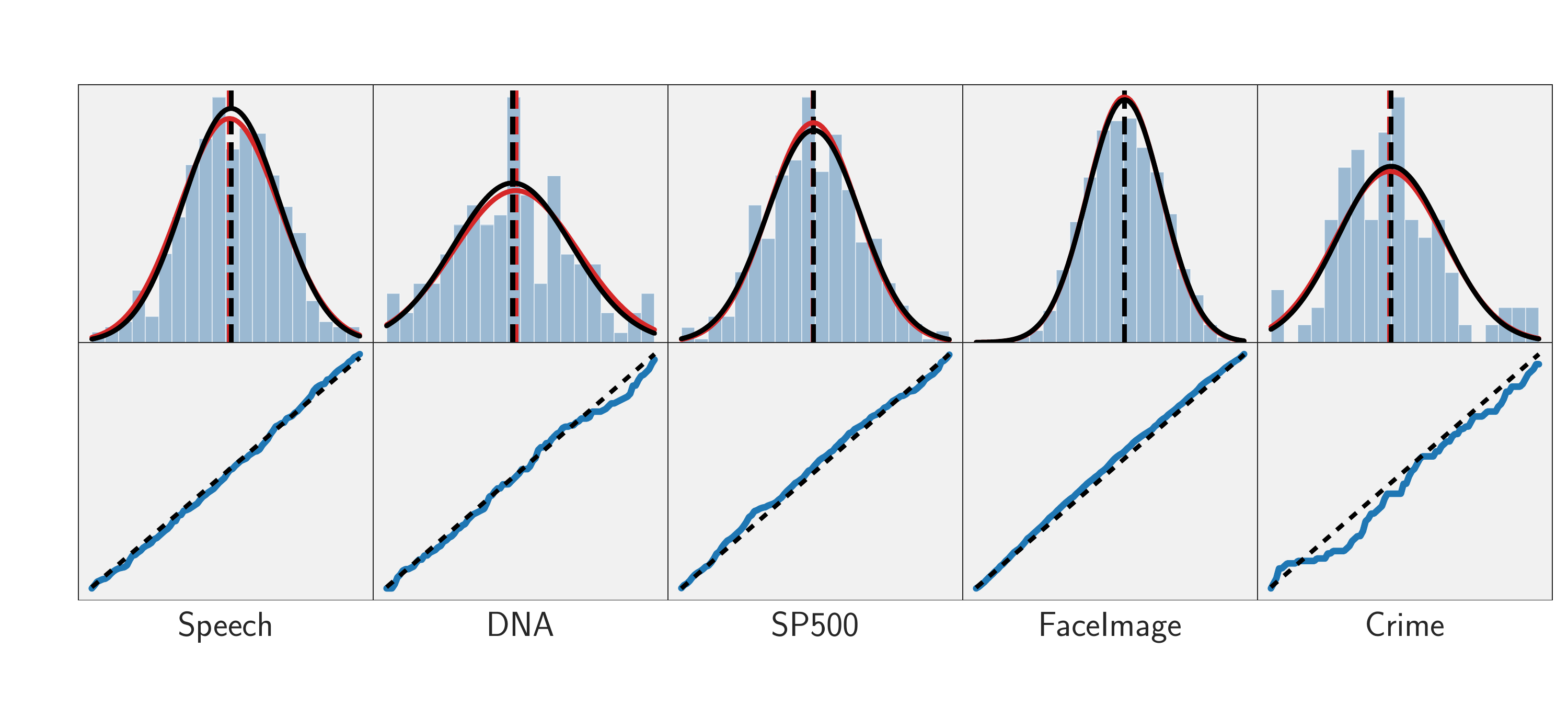}
    \vspace{-6mm}
    \caption{ \textbf{Z from Mult-t.} Same settings and legends as \Cref{figPCRA},  bottom-left experiment, except that the response is generated from a misspecified model $\y=\X \st +\mathbf{Z} \bm{\theta}^\star+\epbm$. We set ${\theta}^\star_i \sim 0.2\cdot N(-5,1)+0.3\cdot N(2,1)+0.5\cdot \delta_0$. Here, we let $\mathbf{Z}$ be sampled from Mult-t as described in \Cref{fig1} with dimension half as many as $\X$.}
\end{figure}




\end{document}